\documentclass[11pt]{amsart}
\usepackage{amscd,amsmath,amssymb,amsfonts,verbatim}
\usepackage[cmtip, all]{xy}
\usepackage{MnSymbol}
\usepackage{comment}

\setlength{\textwidth}{5.8in}             
\setlength{\textheight}{9.2in}
\setlength{\topmargin}{-0.0in}

\setlength{\oddsidemargin}{.25in}
\setlength{\evensidemargin}{.25in}


%

\newtheorem{thm}{Theorem}[section]
\newtheorem{prop}[thm]{Proposition}
\newtheorem{lem}[thm]{Lemma}
\newtheorem{cor}[thm]{Corollary}

\theoremstyle{definition}
\newtheorem{ques}[thm]{Question}

\newtheorem{defn}[thm]{Definition}
\theoremstyle{remark}
\newtheorem{remk}[thm]{Remark}
\newtheorem{remks}[thm]{Remarks}

\newtheorem{exm}[thm]{Example}
\newtheorem{exms}[thm]{Examples}
\newtheorem{notat}[thm]{Notation}
\numberwithin{equation}{section}

{\hfill$\square$\end{defn}}
{\hfill$\square$\end{remk}}
{\hfill$\square$\end{remks}}
{\hfill$\square$\end{exm}}
{\hfill$\square$\end{exms}}
{\hfill$\square$\end{notat}}

\newcommand{\thmref}{Theorem~\ref}
\newcommand{\propref}{Proposition~\ref}
\newcommand{\corref}{Corollary~\ref}

\newcommand{\lemref}{Lemma~\ref}

\newcommand{\refone}{{\ensuremath 1}}
\newcommand{\reftwo}{{\ensuremath 2}}

\newcommand{\sC}{{\mathcal C}}
\newcommand{\sD}{{\mathcal D}}

\newcommand{\sF}{{\mathcal F}}

\newcommand{\sH}{{\mathcal H}}
\newcommand{\sI}{{\mathcal I}}

\newcommand{\sK}{{\mathcal K}}
\newcommand{\sL}{{\mathcal L}}

\newcommand{\sO}{{\mathcal O}}

\newcommand{\sR}{{\mathcal R}}

\newcommand{\sX}{{\mathcal X}}
\newcommand{\sY}{{\mathcal Y}}
\newcommand{\sZ}{{\mathcal Z}}
\newcommand{\A}{{\mathbb A}}

\newcommand{\F}{{\mathbb F}}
\newcommand{\G}{{\mathbb G}}

\newcommand{\N}{{\mathbb N}}

\newcommand{\Q}{{\mathbb Q}}

\newcommand{\T}{{\mathbb T}}

\newcommand{\W}{{\mathbb W}}

\newcommand{\Z}{{\mathbb Z}}

\newcommand{\fm}{{\mathfrak m}}

\newcommand{\fp}{{\mathfrak p}}

\newcommand{\fl}{{\mathfrak l}}
\newcommand{\ff}{{\mathfrak f}}

\newcommand{\aj}{{\rm AJ}}
\newcommand{\lw}{{\rm lw}}

\newcommand{\Ker}{{\rm Ker}}

\newcommand{\CH}{{\rm CH}}

\newcommand{\surj}{\twoheadrightarrow}
\newcommand{\inj}{\hookrightarrow}
\newcommand{\red}{{\rm red}}

\newcommand{\Pic}{{\rm Pic}}
\newcommand{\Div}{{\rm Div}}
\newcommand{\Hom}{{\rm Hom}}

\newcommand{\Spec}{{\rm Spec \,}}
\newcommand{\sing}{{\rm sing}}
\newcommand{\Char}{{\rm char}}

\newcommand{\Tr}{{\rm Tr}}

\newcommand{\divf}{{\rm div}}

\newcommand{\id}{{\operatorname{id}}}

\newcommand{\pfd}{{\operatorname{\mathbf{Pfd}}}} 
\newcommand{\Sch}{{\operatorname{\mathbf{Sch}}}}

\newcommand{\Top}{{\mathbf{Top}}}

\newcommand{\<}{\langle}
\renewcommand{\>}{\rangle}

\newcommand{\Sm}{{\mathbf{Sm}}}

\newcommand{\can}{{\operatorname{\rm can}}}

\newcommand{\Ab}{{\mathbf{Ab}}}

\newcommand{\cyc}{{\operatorname{\rm cyc}}}

\newcommand{\Sym}{{\operatorname{\rm Sym}}}

\newcommand{\et}{{\text{\'et}}}

\newcommand{\ds}{{/\kern-3pt/}}

\newcommand{\res}{{\operatorname{res}}}

\renewcommand{\log}{{\operatorname{log}}}

\newcommand{\Tor}{{\operatorname{Tor}}}

\newcommand{\Br}{{\operatorname{Br}}}
\newcommand{\sm}{{\operatorname{sm}}}
\newcommand{\Proj}{{\operatorname{Proj}}}

\newcommand{\lci}{{\rm l.c.i.\!}}

\newcommand{\Picc}{{\mathbf{Pic}}}

\newcommand{\tr}{{\operatorname{tr}}}

\newcommand{\un}{\underline}
\newcommand{\ov}{\overline}

\renewcommand{\dim}{\text{\rm dim}}

\newcommand{\tuborg}{\left\{\begin{array}{ll}}
\newcommand{\sluttuborg}{\end{array}\right.}

\newcommand{\zar}{{\rm zar}}
\newcommand{\nis}{{\rm nis}}

\newcommand{\irr}{{\rm Irr}}
\newcommand{\reg}{{\rm reg}}

\newcommand{\tor}{{\rm tor}}

\newcommand{\dlog}{{\rm dlog}}
\newcommand{\ft}{{\rm ft}}
\newcommand{\cf}{{\rm cf}}
\newcommand{\pf}{{\rm pf}}
\newcommand{\inv}{{\rm inv}}
\newcommand{\alb}{{\rm alb}}
\newcommand{\Cores}{{\rm Cores}}
\newcommand{\Ress}{{\rm Res}}
\newcommand{\dt}{{\rm dt}}
\newcommand{\divv}{{\rm div}}

\newcommand{\Sw}{{\rm sw}}

\newcommand{\wt}{\widetilde}
\newcommand{\wh}{\widehat}

\newcommand{\coker}{{\rm Coker}}
\newcommand{\Fil}{{\rm fil}}

\newcommand{\Tab}{{\mathbf {Tab}}}

\newcommand{\etl}{{\acute{e}t}}


\newcounter{elno}

\newcounter{elno-abc}   

\newcounter{elno-abc-prime}

\begin{document}
\title{Duality for cohomology of split tori on curves over local fields}
\author{Amalendu Krishna, Jitendra Rathore, Samiron Sadhukhan}
\address{Department of Mathematics, Indian Institute of Science,  
Bangalore, 560012, India.}
\email{amalenduk@iisc.ac.in}
\address{School of Mathematics, Tata Institute of Fundamental Research, Homi Bhabha
  Road, Mumbai-400005, India.}
\email{jitendra@math.tifr.res.in}
\address{School of Mathematics, Tata Institute of Fundamental Research, Homi Bhabha
  Road, Mumbai-400005, India.}
\email{samiron@math.tifr.res.in}


\keywords{Local fields, 0-cycles, Brauer group, Milnor $K$-theory}

\subjclass[2020]{Primary 14C25, 14F22; Secondary 14F30, 19D45}

\maketitle

\begin{quote}\emph{Abstract.}
  We prove duality theorems for the {\'e}tale cohomology of logarithmic Hodge-Witt
  sheaves and split tori on smooth curves over a local field of positive
  characteristic. As an application, we obtain a description of the Brauer group of the
  function fields of curves over local fields in terms of the characters of the
  idele groups. We also show that the classical Brauer-Manin pairing between the Brauer
  and Picard groups of smooth projective curves over local fields has analogues for
  arbitrary smooth curves, smooth projective curves with modulus and singular projective
  curves over such fields. 
  
\end{quote}
\setcounter{tocdepth}{1}
\tableofcontents

\section{Introduction}\label{sec:Intro}
The duality theorems of Tate (see \cite{Tate-1}, \cite{Tate-2}) and Lichtenbaum
\cite{Lichtenbaum} for the {\'e}tale cohomology of split tori over $p$-adic fields and
smooth projective curves over such fields are fundamental results in
arithmetic geometry. In recent years, these duality theorems have
been extended by Scheiderer and van Hamel \cite{Sh-Hamel}
(see also \cite{Harari-Szamuely} and \cite{Yamazaki})
to more general settings of smooth affine curves over $p$-adic fields.
These generalizations have found many applications,
especially in the study of local-global principles for cohomology of commutative group
schemes over the function fields of curves over $p$-adic fields.

The analogue of Tate's duality over local fields of
positive characteristics was proven by Milne \cite{Milne-Duality} while the analogue of
Lichtenbaum duality over such fields was proven
by Saito \cite{Saito-Invent} (see also \cite{Milne-Duality}).
However, the analogues of the results of Scheiderer and van Hamel for smooth affine
curves and their function fields
over local fields of positive characteristics are currently unknown.
This paper is an attempt to fill this gap.

To obtain these generalizations, we prove a new duality theorem for the logarithmic
Hodge-Witt cohomology on smooth projective curves over a local field of positive
characteristic. We introduce Brauer group with modulus, and extend the classical
Brauer-Manin pairing to the setting of 0-cycles and Brauer groups with modulus over
an arbitrary local field.
The duality theorem for the  logarithmic Hodge-Witt cohomology on smooth projective
varieties over finite fields was established long ago by Milne \cite{Milne-Zeta}. Over
a local field, a partial result was obtained by Kato-Saito \cite{Kato-Saito-Ann}.
Below, we describe the main results of this paper.

\subsection{Duality for cohomology of $\G_m$}\label{sec:MR**}
We fix a local field (i.e., a complete discrete valuation field with finite residue
field) $k$ of characteristic $p > 0$.
Let $X$ be a geometrically connected smooth projective
curve over $k$ and let $j \colon X^o \inj X$ be a dense open immersion.
Let $\iota \colon D \inj X$ be the inclusion of an effective Cartier divisor
whose support is the complement of $X^o$. We let $\G_{m}$ denote the rank one split 
torus over $k$. We let $H^q_{cc}(X^o, \G_m) = {\varprojlim}_n \
H^q_\et(X, \G_{m,(X,nD)})$, where $\G_{m,(X,nD)} = \Ker(\G_{m,X} \surj \iota_*(\G_{m,nD}))$.
This is an enriched version of the usual cohomology with compact support
$H^q_{\et, c}(X^o, \G_m)$ of $X^o$ in positive characteristic. One easily checks that
the canonical map $H^q_{\et, c}(X^o, \G_m) \to H^q_{\et}(X, \G_m)$ factors through
$H^q_{cc}(X^o, \G_m)$.

In characteristic zero, the duality theorem for $\G_m$
on $X^o$ (e.g., see \cite{Sh-Hamel}) is essentially insensitive to the topology of the
underlying {\'e}tale cohomology groups because it holds true if we simply endow these
groups with discrete topology and pass to their profinite completions. This makes
the proof of duality relatively simpler. In contrast,
endowing each cohomology group with correct topology is a challenging part of the proof
of the duality theorem in positive characteristic. We briefly explain the topologies
that we use and refer to the body of the paper for details.

Recall that $\Pic(X)$ is the group of
$k$-points of the Picard scheme $\Picc(X)$. Since the latter is a locally of finite type
$k$-scheme, $\Pic(X)$ is endowed with the adic topology induced by the
valuation topology of $k$. Since $\Pic(X^o)$ is a quotient of $\Pic(X)$, it is equipped
with the quotient topology. As a key step in our proofs, we show in this
paper that the relative Picard group
$\Pic(X|D)$ is also equipped with the adic topology such that the canonical
map $\Pic(X|D) \surj \Pic(X)$ is a topological quotient.
We endow $H^1_{cc}(X^o, \G_m)$ with
the inverse limit topology using a canonical isomorphism
$\Pic(X|D) \cong H^1_\et(X, \G_{m,(X,D)})$.
All other cohomology groups will be endowed with the discrete topology.

For a Hausdorff topological abelian group $G$ with the topology $\tau$, we let
$G^{\pf}$ denote the profinite $\tau$-completion of $G$. That is, ${G}^\pf$ is
the inverse limit ${\varprojlim}_U \ {G}/{U}$ with the inverse limit topology,
where $U$ runs through $\tau$-open subgroups of finite index in $G$. It is clear that
${G}^{\pf}$ is a profinite abelian group. We refer to \S~\ref{sec:Duality-X} for the
definitions of non-degenerate and perfect pairings of topological abelian groups.
In this paper, we prove the following duality theorem for {\'e}tale cohomology.

\begin{thm}\label{thm:Main-0}
  For every integer $q \neq 0$, there is a bilinear pairing
  \[
    H^q(X^o, \G_m) \times H^{3-q}_{cc}(X^o, \G_m) \to {\Q}/{\Z}
  \]
  which induces perfect pairings of topological abelian groups
  \[
    H^1(X^o, \G_{m})^{\pf} \times H^2_{cc}(X^o, \G_{m}) \to {\Q}/{\Z},
    \]
    \[
      H^2(X^o, \G_{m}) \times H^1_{cc}(X^o, \G_{m})^{\pf} \to {\Q}/{\Z}
      \]
      and
      \[
      H^3(X^o, \G_{m}) \times H^0_{cc}(X^o, \G_{m})^{\pf} \to {\Q}/{\Z}.  
      \]
      \end{thm}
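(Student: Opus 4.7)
The plan is to reduce the duality for $X^o$ to Saito's duality for $\G_m$ on the projective curve $X$ and local duality at the closed points of $D$, by working at each finite modulus $nD$ via the defining short exact sequence
\[
0 \to \G_{m,(X,nD)} \to \G_{m,X} \to \iota_* \G_{m,nD} \to 0,
\]
and then passing to the inverse limit in $n$. The associated long exact sequence on $X$ relates $H^*_\et(X, \G_{m,(X,nD)})$, whose limit is $H^*_{cc}(X^o, \G_m)$, to $H^*_\et(X, \G_m)$ and to the cohomology of the Artinian scheme $nD$, and this three-term picture is what I would dualize.

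For the construction of the pairing I would use a modulus-enriched version of Saito's cup-product pairing on $X$: the inclusion $\G_{m,(X,nD)} \hookrightarrow \G_{m,X}$ combined with a compatible cup product and Saito's trace $H^3_\et(X, \G_m) \to \Q/\Z$ produces, for each $n$, a bilinear map $H^q_\et(X^o, \G_m) \times H^{3-q}_\et(X, \G_{m,(X,nD)}) \to \Q/\Z$; taking $\varprojlim_n$ on the second factor yields the pairing of the theorem.

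To prove non-degeneracy at each finite level $n$, I would apply the $5$-lemma to the ladder of long exact sequences coming from the short exact sequence above and its Pontryagin dual. The input dualities on the remaining terms are: (i) Saito's/Lichtenbaum's duality for $H^*_\et(X, \G_m)$, supplemented by the logarithmic Hodge--Witt duality proved earlier in the paper to handle the $p$-primary part that $\ell$-adic duality misses; and (ii) local duality (Tate/Milne) at the closed points of $\supp D$, obtained by filtering $\G_{m,nD}$ on the Artinian scheme $nD$ by its principal-unit subsheaves and reducing to the duality for $\G_m$ on the residue fields, which are finite extensions of $k$. Taking $\varprojlim_n$ then requires Mittag--Leffler on each system, which for the Brauer-type and unit-type systems follows from eventual stabilization, and for $\{\Pic(X|nD)\}$ from the structure of the transition maps.

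The main obstacle is topological. I would first establish that $\Pic(X|D)$ carries an adic topology from the valuation of $k$ with $\Pic(X|D) \surj \Pic(X)$ a topological quotient; this equips $H^1_{cc}(X^o, \G_m) = \varprojlim_n \Pic(X|nD)$ with a sensible inverse-limit topology. The heart of the matter is to describe the kernels of $\Pic(X|nD) \to \Pic(X|D)$ in terms of local units at $D$, since it is precisely here that the arithmetic of the characteristic-$p$ local field $k$ enters and where the argument departs from the characteristic zero case of Scheiderer--van Hamel. Finally I would verify that the pairing is continuous with closed annihilators, so that it descends to perfect pairings after taking the profinite completion $(-)^\pf$ on the specific factor prescribed in each of the three cases $q = 1, 2, 3$.
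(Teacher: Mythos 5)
Your architecture --- reduce to the duality on the projective curve $X$ plus a local duality at the points of $D$, run a five-lemma on the resulting ladder, and pass to $\varprojlim_n$ with the adic topology on $\Pic(X|nD)$ --- matches the paper's (which organizes the ladder around the localization sequence $0 \to \Br(X) \to \Br(X^o) \to \bigoplus_{x\in D} H^3_x(X,\sO^\times_X) \to H^3(X,\sO^\times_X) \to 0$ rather than the relative sequence for $\G_{m,(X,nD)}$, but the two carry the same information). The gap is in your identification of the local input. The term you must dualize against $\sO^\times(nD)$ is $\bigoplus_x H^3_x(X,\sO^\times_X)$, a quotient of $\Br(K_x)$ for the \emph{two-dimensional} local field $K_x = \mathrm{Frac}(\sO^h_{X,x})$; what is needed is Kato's duality $\Br(K_x)\cong ((K_x^\times)^{\pf})^\star$ for the Kato topology, together with the fact that his ramification filtration $\Fil_n H^2(K_x)$ is exactly the annihilator of the unit filtration $1+\fm_x^n$ --- this is what ties the modulus $nD$ to a subgroup of $\Br(X^o)$. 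Filtering $\G_{m,nD}$ by principal units and quoting ``Tate/Milne duality for $\G_m$ on the residue fields'' only handles the tame graded piece $k(x)^\times$ (via local CFT for $k(x)$); the wild graded pieces are copies of $\W_m(k(x))$, whose character groups are governed by Artin--Schreier--Witt theory and Kato's refined Swan conductor, not by any duality for $\G_m$ over $k(x)$. In characteristic zero those pieces are uniquely divisible and die after completion (which is why the modulus is invisible there); in characteristic $p$ they are the entire content of the theorem.

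The second missing idea is compatibility. For your ladder to commute you need the degree-$(3,0)$ duality on $X$ --- a perfect pairing $H^3(X,\sO^\times_X)\times (k^\times)^{\pf}\to\Q/\Z$ --- to be compatible with Kato's local duality at each $x\in D$ under the forget-support maps, and you need $\bigoplus_x H^3_x(X,\sO^\times_X)\to H^3(X,\sO^\times_X)$ to be surjective so that the four-term sequence you dualize is exact on the right. Neither is automatic: the Hochschild--Serre isomorphism $H^3(X,\sO^\times_X)\cong (k^\times)^\star$ is \emph{not} known to commute with the local duality maps in characteristic $p$, and the paper must build a different global pairing from an explicit trace on $H^2(X,W_n\Omega^2_{X,\log})$, normalized by the Gysin maps at closed points, precisely to force this square to commute. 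Without an argument here the five-lemma has nothing to stand on. (Smaller omissions of the same kind: surjectivity of the resulting Brauer--Manin map is first proved when $\supp D\subset X(k)$ and then descended by a norm argument using alterations, and the continuity you defer to the end rests on generic smoothness of the Albanese map to $\Picc^0(X|D)$ --- both are substantial steps, not verifications.)
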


For smooth curves over $p$-adic fields, this theorem is
      due to Lichtenbaum \cite{Lichtenbaum} in the proper case, and to
      Scheiderer-van Hamel \cite{Sh-Hamel} (see also \cite{Harari-Szamuely}) in the open
      case. In positive characteristic, the $q =1$ case of the theorem is
      due to Saito \cite{Saito-Invent}  (see also \cite{Milne-Duality}) in the proper
      case. 

      \begin{remk}\label{remk:q=0}
        One can construct the pairing of \thmref{thm:Main-0} for
      $q = 0$ as well but it will not be perfect (not even non-degenerate).
    \end{remk}

      \subsection{Duality for cohomology of $\G_{m,K}$}\label{sec:MR***}
      Let $X$ be as above and let $K$ denote the function field of $X$.
      We let $\wh{I}(X) = {\underset{x \in X_{(0)}}{\prod'}} \wh{K}^\times_x$ denote the
      restricted product with respect to the subgroups $(\wh{\sO_{X,x}})^\times$,
      where $\wh{K}_x$ is the quotient field of $\wh{\sO_{X,x}}$.
      The idele class group $C(K)$ is the cokernel of the canonical inclusion
      $K^\times \inj \wh{I}(X)$.
      This group is endowed with the inverse limit of the adic topologies of the
      relative Picard groups of $X$ (cf. \S~\ref{sec:Dual-K}). We let $\Br(K)$ have the
      discrete topology. As an application of \thmref{thm:Main-0}, we prove the
      following.

     \begin{thm}\label{thm:Main-10} 
There is a perfect pairing of topological abelian groups 
 \[
   \Br(K) \times C(K)^{\pf} \to {\Q}/{\Z}.
 \]
\end{thm}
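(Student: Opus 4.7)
The plan is to deduce \thmref{thm:Main-10} from the $q=2$ case of \thmref{thm:Main-0} by passing to a limit as $X^o$ shrinks to the generic point $\Spec K$. First, since each dense open $X^o \subset X$ is a regular one-dimensional scheme we have $\Br(X^o) = H^2(X^o, \G_m)$, and continuity of {\'e}tale cohomology along the cofiltered limit $\Spec K = \varprojlim X^o$ yields
\[
\Br(K) = \colim_{X^o \subset X} H^2(X^o, \G_m),
\]
the colimit being over dense opens. This presents $\Br(K)$ as a discrete torsion group.

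For the idele side, the key step is to identify $C(K)^{\pf}$ with the inverse limit of the groups $H^1_{cc}(X^o, \G_m)^{\pf}$. For each pair $(X^o, n)$ with $D = X \setminus X^o$, the long exact sequence attached to $0 \to \G_{m,(X,nD)} \to \G_{m,X} \to \iota_*\G_{m,nD} \to 0$, together with $H^0(X, \G_m) = k^\times$, produces a canonical continuous homomorphism $\wh{I}(X) \to \Pic(X|nD)$ vanishing on $K^\times$, hence a map $C(K) \to \Pic(X|nD)$. These are compatible as $nD$ grows and $X^o$ shrinks, and by the very definition of the topology on $C(K)$ as the inverse limit of the adic topologies on the $\Pic(X|nD)$, the resulting homomorphism induces a topological isomorphism
\[
C(K)^{\pf} \xrightarrow{\sim} \varprojlim_{X^o} H^1_{cc}(X^o, \G_m)^{\pf}
\]
of profinite groups.

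Finally, \thmref{thm:Main-0} with $q=2$ supplies, for every $X^o$, a perfect pairing $H^2(X^o, \G_m) \times H^1_{cc}(X^o, \G_m)^{\pf} \to \Q/\Z$, compatible with restriction to smaller $X^o$ on the left and the forgetful maps $H^1_{cc}(X^{o'}, \G_m) \to H^1_{cc}(X^o, \G_m)$ on the right. Since Pontryagin duality between discrete torsion groups and profinite groups interchanges filtered colimits with inverse limits, passing to the colimit in the Brauer variable and the inverse limit in the idele-class variable yields the perfect pairing
\[
\Br(K) \times C(K)^{\pf} \to \Q/\Z
\]
of \thmref{thm:Main-10}. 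The main obstacle is the second step: precisely identifying $C(K)^{\pf}$ with $\varprojlim H^1_{cc}(X^o, \G_m)^{\pf}$. This reduces to a delicate comparison of the idele-class description of $C(K)$ with the inverse system of relative Picard (ray class) groups, and rests on the paper's earlier analysis of the adic topology on $\Pic(X|D)$ together with the quotient map $\Pic(X|D) \surj \Pic(X)$. In particular, one must verify that every finite-index open subgroup of $C(K)$ arises from a compatible system of such subgroups in the $\Pic(X|nD)$, ensuring that profinite completion commutes with the inverse limit on the right.
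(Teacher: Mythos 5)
Your strategy is the same as the paper's: the paper proves \thmref{thm:Main-10} (as \corref{cor:Main-6-0}, via \thmref{thm:Main-6}) by writing $\Br(K) = \varinjlim_{D} \Br(X|D)$ (\lemref{lem:Brauer-reln}), identifying $C(K)$ with $\varprojlim_{D}\Pic(X|D)$, and passing the modulus Brauer--Manin duality of \thmref{thm:Main-1} (equivalently the $q=2$ case of \thmref{thm:Main-0}) through the limit using \lemref{lem:Prof-dual} and \lemref{lem:dual-surj}. So the outline is right, and your final paragraph correctly locates where the real work is.

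But that is also where your proposal stops short of a proof. Two things are asserted rather than established. First, the topological isomorphism $C(K) \cong \varprojlim_{D}\Pic(X|D)$ is not ``by the very definition of the topology'': the topology is pulled back along this comparison map, but its \emph{bijectivity}, and the \emph{surjectivity} of each projection $C(K) \to \Pic(X|D)$, are genuine statements about the restricted product $\wh{I}(X) = {\prod}'_x \wh{K}^\times_x$ versus the inverse system $\sZ_0(X\setminus D)\oplus\prod_{x\in D}\wh{K}^\times_x$. The paper proves these in \lemref{lem:Limit-nD} and \lemref{lem:Limit-U}, using the completeness of $\wh{\sO_{X,x}}$, the strictness of the pro-system $\{\wh{K}^\times_x/\Fil_n\}$, the approximation lemma (to see that $K^\times$ maps onto the right relations, i.e.\ that the idele presentation computes $\CH_0(X|D)$ at all), and a careful check that an element of $\varprojlim_D(\sZ_0(X\setminus D)\oplus\prod_{x\in D}\wh{K}^\times_x)$ has almost all components of valuation zero, hence lies in $\wh{I}(X)$. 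None of this is in your step 2; your derivation of the map $\wh{I}(X)\to\Pic(X|nD)$ from the sequence $0\to\G_{m,(X,nD)}\to\G_{m,X}\to\iota_*\G_{m,nD}\to 0$ only produces the $\sO^\times(nD)$-component and does not address why the composite from $K^\times$ dies. Second, the interchange of profinite completion (equivalently, of continuous duals) with the inverse limit is not automatic; it holds here precisely because the projections $C(K)\to\Pic(X|D)$ are surjective and the topologies are generated by open subgroups, which is the content of \lemref{lem:dual-surj} — and its hypothesis is exactly the surjectivity you have not verified. Since you yourself flag this identification as ``the main obstacle'' and leave it open, the proposal is a correct reduction to the paper's \lemref{lem:Limit-U} and \lemref{lem:dual-surj}, but not a complete argument.
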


This result provides an explicit description of $\Br(K)$ in terms of the
characters of the idele class group. In fact, we prove a duality theorem
for all {\'e}tale cohomology groups of $\G_{m,K}$ in this paper (cf. \thmref{thm:Main-6}).
When $K$ is the quotient field of an excellent normal 2-dimensional complete local domain
with finite residue field, an analogue of \thmref{thm:Main-10} was shown by
Saito \cite{Saito-Invent}.

\subsection{Duality for logarithmic Hodge-Witt cohomology}
      \label{sec:DHW}
      The proof of \thmref{thm:Main-0} is based on the duality theorem for the $p$-adic
      {\'e}tale motivic cohomology groups of smooth projective curves over $k$, which
      we now describe. We fix an integer $n \ge 1$.
      Let $X$ be as above and let $W_n\Omega^\bullet_{X,\log}$ denote the
      logarithmic Hodge-Witt complex on $X_\et$ {\`a} la Bloch-Deligne-Illusie.

      For smooth projective schemes (of arbitrary dimensions) over a finite field, Milne
      showed that the logarithmic Hodge-Witt cohomology (i.e., the
      {\'e}tale cohomology of $W_n\Omega^\bullet_{X, \log}$) groups satisfy 
      Poincar{\'e} duality. However, the analogous result over local fields (of
      positive characteristics) is presently unknown. The following result settles this
      problem for curves.

      To state the duality theorem, we need to recall that over finite fields,
      Milne's duality yields a perfect pairing of finite groups. However, the
     logarithmic Hodge-Witt cohomology groups over local fields are generally
     not finite. To offset this problem, we need to equip these groups with
     suitable non-discrete topologies. In fact, dealing with the infinitude of
     cohomology groups and choosing correct topology on them are perhaps the major part
     of proving a duality theorem over local fields. For the topologies that we
     endow these groups with, the reader is referred to \S~\ref{sec:Top-X}. The following
     theorem was proven by Kato-Saito \cite[Prop.~4]{Kato-Saito-Ann}
     when $j = 0$ by a different method.

\begin{thm}\label{thm:Main-5}
        Let $X$ be as in \thmref{thm:Main-0} and let $q, j$ be any integers. Then the cup
        product on the {\'e}tale cohomology of the logarithmic Hodge-Witt sheaves
        induces a perfect pairing of topological abelian groups
        \[
          H^q(X, W_n\Omega^j_{X,\log}) \times H^{2-q}(X, W_n\Omega^{2-j}_{X,\log})
          \to {\Q}/{\Z}.
        \]
      \end{thm}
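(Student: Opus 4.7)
\emph{Plan of proof.} The pairing is the cup product on the logarithmic de Rham-Witt complex
\[
H^q(X, W_n\Omega^j_{X,\log}) \times H^{2-q}(X, W_n\Omega^{2-j}_{X,\log})
\xrightarrow{\cup} H^2(X, W_n\Omega^2_{X,\log}),
\]
composed with a trace $\tr \colon H^2(X, W_n\Omega^2_{X,\log}) \to \Q/\Z$ built from the Gysin pushforward along the structure morphism $X \to \Spec(k)$ combined with the local reciprocity isomorphism for the local field $k$. Only the indices $q, j \in \{0, 1, 2\}$ are non-trivial. The case $j = 0$ is \cite[Prop.~4]{Kato-Saito-Ann}, and the case $j = 2$ follows from it by transposing the factors in the cup product. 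The essential new content is therefore the self-duality at $j = 1$.

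For $j = 1$, I would reduce to $n = 1$ by induction using the short exact sequence of {\'e}tale sheaves
\[
0 \to \Omega^1_{X,\log} \xrightarrow{V^{n-1}} W_n\Omega^1_{X,\log} \xrightarrow{R} W_{n-1}\Omega^1_{X,\log} \to 0
\]
together with its compatibility with cup products, applying the five-lemma in the category of topological abelian groups. For $n = 1$, I would invoke the Milne-Bloch-Kato exact sequence
\[
0 \to \Omega^1_{X,\log} \to Z_1\Omega^1_X \xrightarrow{C - 1} \Omega^1_X \to 0
\]
on $X_\et$, where $Z_1\Omega^1_X = \Ker(d \colon \Omega^1_X \to \Omega^2_X)$ and $C$ is the Cartier operator. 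The right two terms are coherent, so their {\'e}tale and Zariski cohomologies coincide and are finite-dimensional $k$-vector spaces carrying the canonical locally compact topology. Grothendieck-Serre duality on the curve $X/k$ provides $k$-linear perfect pairings
\[
H^q(X, \sF) \times H^{1-q}(X, \sF^{\vee} \otimes \omega_{X/k}) \to H^1(X, \omega_{X/k}) \cong k,
\]
and composing with the Pontryagin self-duality $k \times k \to \Q/\Z$ of the locally compact field $k$ (via the residue pairing against a fixed non-trivial additive character) produces perfect pairings of topological abelian groups on the coherent layer. A diagram chase exploiting that the Cartier operator is self-adjoint up to sign under these pairings, combined with the topological five-lemma, then transfers perfectness to $\Omega^1_{X,\log}$ and completes the induction.

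The principal obstacle is the topological bookkeeping. One must verify that every connecting homomorphism in the long exact sequences arising from the Milne-Bloch-Kato resolution and the $n$-step devissage is continuous and strict (i.e., a topological quotient onto its image), that the topology induced on $H^q(X, W_n\Omega^1_{X,\log})$ by these sequences agrees with the one defined in \S~\ref{sec:Top-X}, and -- the most delicate point -- that the Cartier operator is genuinely adjoint to itself under Serre duality after the correct identifications of coherent duals. Once these compatibilities are secured, termwise Pontryagin duality applied to the long exact sequences yields perfectness of the topological pairing, and hence the theorem.
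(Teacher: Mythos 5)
Your route is genuinely different from the paper's. The paper never attempts coherent duality directly on $X/k$: it chooses a projective semi-stable model $\sX$ over $\sO_k\cong\F_q[[t]]$, invokes Zhao's duality between $H^i(\sX,W_n\Omega^j_{\sX,\log})$ and $H^{3-i}_Y(\sX,W_n\Omega^{2-j}_{\sX,\log})$ (which packages Sato's duality over the \emph{perfect} residue field together with coherent duality on the two-dimensional scheme $\sX$), and then transfers the statement to the generic fibre by a five-lemma argument on the localization sequence --- the topology on $H^q(X,W_n\Omega^j_{X,\log})$ being \emph{defined} by that same sequence. What you propose is instead Milne's direct method for $\nu_n(1)$, relativized over the imperfect local field $k$. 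That is a legitimate strategy in outline, but as written it has a genuine gap beyond ``topological bookkeeping''.

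The gap is that the termwise dualization of your single exact sequence against itself does not typecheck. On the curve $X$ the absolute sheaf $\Omega^1_X$ has rank $2$ and is its own Serre dual via $\wedge\colon\Omega^1_X\times\Omega^1_X\to\Omega^2_X\cong\omega_{X/k}$; but $Z_1\Omega^1_X$ is not an $\sO_X$-module (only an $\sO_X^p$-module), and its coherent dual, computed on the Frobenius twist via the pairing $(\omega,\eta)\mapsto C(\omega\wedge\eta)$, is $\Omega^1_X/d\sO_X$, \emph{not} $\Omega^1_X$. So in the two long exact sequences you want to pair, the term $H^q(X,Z_1\Omega^1_X)$ has no partner: you must resolve one factor by $Z_1\Omega^1_X\xrightarrow{C-1}\Omega^1_X$ and the other by the second resolution $0\to\Omega^1_{X,\log}\to\Omega^1_X\xrightarrow{1-C^{-1}}\Omega^1_X/d\sO_X\to 0$, match $Z_1\Omega^1_X$ with $\Omega^1_X/d\sO_X$ and $\Omega^1_X$ with itself, and prove that $C-1$ is adjoint to $1-C^{-1}$ using $C(\omega\wedge C^{-1}\eta)=C(\omega)\wedge\eta$ (with the indeterminacy of $C^{-1}$ modulo exact forms controlled). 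This is the technical heart of Milne's argument, not a routine verification, and here it must all be redone for \emph{absolute} differentials and the absolute Cartier operator on a regular curve over the imperfect field $k$ (where $[k:k^p]=p$), together with the compatibility of the coherent trace $H^1(X,\Omega^2_X)\cong k$ with the residue-theoretic trace on $H^2(X,\Omega^2_{X,\log})$ --- the analogue of the hardest compatibility in Zhao's paper.

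Two further points you flag but underestimate. First, one must prove that $C-1$ has closed (equivalently, open-onto-its-image) image on the finite-dimensional $k$-vector spaces $H^q(X,\Omega^1_X)$ before kernels and cokernels are locally compact Hausdorff and a topological five lemma is even available; these cokernels are in general neither finite nor discrete (indeed $\Pic(X)/p$, which is infinite profinite, must appear inside $H^1(X,\Omega^1_{X,\log})$), so this is where the torsion-by-profinite structure is actually created. Second, the topology in the statement of the theorem is the one of \S~\ref{sec:Top-X}, defined via the integral model; proving that your coherently-induced topology coincides with it requires reintroducing $\sX$ anyway. These two difficulties are precisely what the paper's detour through Zhao's duality is designed to avoid, since over $\F_q[[t]]$ the base of the coherent duality is perfect and the topological structure comes for free from the profiniteness of $H^i(\sX,W_n\Omega^j_{\sX,\log})$.
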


\begin{remk}\label{remk:Dual-gen-sm}
        We actually prove this theorem under weaker assumptions on $X$. Namely, we 
        need to assume that $X$ is regular but do not require it to be smooth over $k$.
        We work under the weaker hypothesis that $X$ is generically smooth over $k$.
       The validity of
        the theorem under generic smoothness is useful for some concrete applications.
        For instance, we hope that it will allow one to prove the duality theorem for the
        $p$-adic {\'e}tale motivic cohomology groups with compact support for smooth
        but non-proper curves. Such a result will have applications to the
        class field theory of smooth open curves over local fields. This is the topic of
        \cite{KM}.
      \end{remk}

\vskip .2cm

      \subsection{Brauer-Manin pairing for modulus pairs}\label{sec:Appln}
      Let $\iota \colon D \inj X$ be the inclusion of an effective Cartier divisor
      as above. We shall refer to the pair $(X,D)$ as a (1-dimensional) modulus pair.
      Let $\CH_0(X|D)$ be the Chow group of 0-cycles for the modulus pair $(X,D)$.
      We shall show in this paper that $\CH_0(X|D)$ is naturally endowed with 
      an adic topology such that the canonical map
      $\CH_0(X|D) \surj \CH_0(X)$ is a topological quotient.
      In order to extend the classical Brauer-Manin pairing to the modulus setting, we
      introduce the Brauer group of the modulus pair $(X,D)$ which we denote by
      $\Br(X|D)$ (see Definition~\ref{defn:BGM-0}).
      This is a certain subgroup of $\Br(X^o)$ which is
      defined in terms of Kato's filtration on the {\'e}tale motivic cohomology group
      $H^2(K, {\Q}/{\Z}(1))$, where $K$ is the function field of $X$. We endow
      $\Br(X|D)$ with the discrete topology. As a key step for proving
      \thmref{thm:Main-0}, we establish the following new Brauer-Manin pairing.

      \begin{thm}\label{thm:Main-1}
        There is a continuous pairing of topological abelian groups
        \[
          \Br(X|D) \times \CH_0(X|D) \to {\Q}/{\Z}
        \]
        such that the induced pairing
        \[
          \Br(X|D) \times \CH_0(X|D)^{\pf} \to {\Q}/{\Z}
        \]
        is perfect.
        \end{thm}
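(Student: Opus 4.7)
The strategy is to reduce Theorem~\ref{thm:Main-1} to the Lichtenbaum-style duality of Theorem~\ref{thm:Main-0}, via two identifications: on the cycle side, $\CH_0(X|D)\cong \Pic(X|D)=H^1_\et(X,\G_{m,(X,D)})$ via the divisor class map; and on the Brauer side, a local characterization of $\Br(X|D)$ as the annihilator of higher-modulus Picard pieces under the pairing of Theorem~\ref{thm:Main-0}. The first identification is the curve analogue of the classical $\CH_0\cong \Pic$: for a smooth curve, a 0-cycle on $X^o$ is just a divisor supported away from $D$, and rational equivalence with modulus $D$ is by definition the equivalence generated by $\divv(f)$ for $f\in K^\times$ with $f\equiv 1\mod D$, matching the defining exact sequence $0\to\G_{m,(X,D)}\to \G_{m,X}\to \iota_*\G_{m,D}\to 0$; the adic topologies on both sides, induced from the valuation topology of $k$, therefore agree.

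\textbf{Construction of the pairing.} For $\alpha\in \Br(X|D)\subseteq \Br(X^o)$ and a 0-cycle $z=\sum_x n_x x$ on $X^o$, I set
\[
\langle\alpha,z\rangle := \sum_x n_x\cdot \inv_k\bigl(\Cor_{k(x)/k}(\alpha|_x)\bigr)\ \in\ \Q/\Z.
\]
To descend this to $\CH_0(X|D)$, apply the global reciprocity $\sum_{v\in X_{(0)}}\inv_v((\alpha,f)_v)=0$ to $f\in K^\times$ with $f\equiv 1\mod D$. The sum over $v\in X^o$ equals $\langle\alpha,\divv(f)|_{X^o}\rangle$, while for $v\in D$ of multiplicity $n_v=\ord_v(D)$, the defining property of $\Br(X|D)$ is that $\alpha_v\in \Fil_{n_v}\Br(K_v)$; by Kato's local duality this piece annihilates $U_v^{(n_v)}\ni f_v$, so these terms vanish. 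Continuity in the stated topologies is automatic since $\Br(X|D)$ is discrete and, for each fixed $\alpha$, the induced map $\CH_0(X|D)\to\Q/\Z$ factors through an open subgroup of finite index.

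\textbf{Perfectness.} Here I use pairing (b) of Theorem~\ref{thm:Main-0}, namely $\Br(X^o)\times H^1_{cc}(X^o,\G_m)^{\pf}\to\Q/\Z$. Since $H^1_{cc}(X^o,\G_m)=\varprojlim_n \Pic(X|nD)$, the natural surjection $\pi\colon H^1_{cc}(X^o,\G_m)\surj \Pic(X|D)\cong \CH_0(X|D)$ yields, after a Mittag-Leffler check, a short exact sequence
\[
0\to N\to H^1_{cc}(X^o,\G_m)^{\pf}\to \CH_0(X|D)^{\pf}\to 0.
\]
By Pontryagin duality, pairing (b) identifies $\CH_0(X|D)^{\pf}$ with the orthogonal complement of $N$ inside $\Br(X^o)$. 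It remains to verify that this orthogonal complement coincides with $\Br(X|D)$, which then yields the asserted perfect pairing.

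\textbf{Main obstacle.} The delicate step is the final identification of the annihilator of $N$ inside $\Br(X^o)$ with $\Br(X|D)$. Locally at each $v\in D$ with $n_v=\ord_v(D)$, this reduces to the statement that $\Fil_{n_v}\Br(K_v)$ is the exact orthogonal complement of the higher unit group $U_v^{(n_v)}$ under the local reciprocity pairing $\Br(K_v)\times K_v^\times\to\Q/\Z$. While this is a known feature of Kato's ramification filtration for a complete discrete valuation field with local residue field, patching the local identifications into a global statement about $\Br(X^o)$, and simultaneously controlling the interaction with the profinite completions on both sides, is the main technical burden of the proof.
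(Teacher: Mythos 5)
Your construction of the pairing agrees with the paper's (\propref{prop:BMP-1}): evaluation at closed points of $X^o$ plus Kato's local pairing at the points of $D$, descended via the Kato--Saito reciprocity law (\lemref{lem:Rec-Law}), with well-definedness at $D$ coming from the fact that $\Fil_{n_v}\Br(K_v)$ annihilates $1+\fm_v^{n_v}$; the identification $\CH_0(X|D)\cong\Pic(X|D)$ is \lemref{lem:CCM-iso}. Beyond that, however, the proposal has two genuine gaps.

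First, the perfectness argument is circular. In this paper \thmref{thm:Main-0} is \emph{deduced from} \thmref{thm:Main-1}: the perfect pairing $H^2(X^o,\G_m)\times H^1_{cc}(X^o,\G_m)^{\pf}\to\Q/\Z$ is obtained precisely by passing to the limit over $n$ of the perfect pairings $\Br(X|nD)\times\CH_0(X|nD)^{\pf}\to\Q/\Z$ (see the end of \S~\ref{sec:Pf-0} and \corref{cor:Open-perf-pairing}). So you cannot invoke pairing (b) of \thmref{thm:Main-0} as an input without supplying an independent proof of it, which you do not. Moreover, even granting \thmref{thm:Main-0}, the step you yourself flag as the ``main obstacle'' --- identifying the annihilator of $N$ inside $\Br(X^o)$ with $\Br(X|D)$ --- is exactly where the content lies and is not carried out. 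The local statement (that $\Fil_{n}\Br(K_v)$ is the exact annihilator of $1+\fm_v^{n}$, \lemref{lem:Hensel-complete}(4)) is correct, but globalizing it requires the compatibility of a global duality for $H^3(X,\sO^\times_X)$ with Kato's local duality at every closed point (\lemref{lem:LD-GD} and the five-column diagram in the proof of \lemref{lem:Br-Main-00}), which in turn rests on the explicit trace map of \propref{prop:Gersten-9} and the duality \thmref{thm:GD-units-main}; none of this is addressed.

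Second, continuity is not ``automatic.'' The assertion that for fixed $\alpha$ the character $\CH_0(X|D)\to\Q/\Z$ factors through an open finite-index subgroup is precisely what must be proven, and in characteristic $p>0$ it does not follow from the discreteness of $\Br(X|D)$ alone: for $\alpha$ of $p$-power order one would need $\CH_0(X|D)/p^m$ to be a finite discrete quotient, which is not known a priori (the Kummer-sequence argument only handles orders prime to $p$). The paper's proof (\propref{prop:BM-cont}) needs the smoothness and openness of the albanese map $U_r(X^o)(k)\to\Pic^0(X|D)$ (\thmref{thm:ALB-rel}), the continuity of $x\mapsto\inv_{k(x)}(\alpha|_x)$ on $X^o(k)$, and, when $X^o(k)=\emptyset$, a reduction via Temkin's $p$-alterations together with the $p$-divisibility of the cokernel of the norm map on Brauer groups. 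Your proposal gives no substitute for any of this.
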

      
        The case $D = \emptyset$ of this theorem is due to Lichtenbaum \cite{Lichtenbaum}
        in characteristic zero and Saito \cite{Saito-Invent}
        (see also \cite[Thm.~III.7.8]{Milne-Duality}) in positive characteristic.
        In this paper, we provide an independent and short proof of the harder part of
        Saito's result as an
        immediate corollary of \thmref{thm:Main-5} (see \corref{cor:LS-thm}).

  \subsection{Brauer-Manin pairing for singular curves}
        \label{sec:Appln-0}
        Recall that the Brauer group of a singular variety $Y$ does not satisfy
 the Brauer injectivity property, i.e., the canonical map $\Br(Y) \to \Br(K)$ fails
 to be injective in general, for $K$ the function field of $Y$.
 This defect is analogous to the
 one the classical Chow group (one defined in \cite{Fulton}) $\CH_0(Y)$ suffers from. 
 In this paper, we introduce a refined Brauer group of $Y$ which we denote by
 $\Br^{\lw}(Y)$. We refer to this as the Levine-Weibel Brauer group.
 The refined Brauer group satisfies some nice properties, including the Brauer
 injectivity property.

 Let $\CH^{\lw}_0(Y)$ denote the Levine-Weibel Chow group of
 $Y$. One may recall that this coincides with $\Pic(Y)$ if $\dim(Y) = 1$.
 We show in this paper that $\CH^{\lw}_0(Y)$ is naturally endowed with an
 adic topology such that the pull-back map $\CH^{\lw}_0(Y) \to \CH_0(Y_n)$
 is continuous if $Y$ is a geometrically integral
 projective curve over $k$ with smooth normalization $Y_n$.
 We endow $\Br^{\lw}(Y)$ with the discrete topology and prove the following extension of
 the Brauer-Manin pairing to singular curves. 

\begin{thm}\label{thm:Main-3}
  Let $Y$ be a geometrically integral projective curve over a local field $k$ whose
  normalization is smooth over $k$. Then there is a continuous pairing of topological
  abelian groups
        \[
          \Br^{\lw}(Y) \times \CH^{\lw}_0(Y) \to {\Q}/{\Z}
        \]
        such that the induced pairing
        \[
          \Br^{\lw}(Y) \times \CH^{\lw}_0(Y)^{\pf} \to {\Q}/{\Z}
        \]
        is perfect.
        \end{thm}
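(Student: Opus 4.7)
The plan is to reduce \thmref{thm:Main-3} to the modulus Brauer-Manin pairing of \thmref{thm:Main-1} via the normalization $\pi \colon Y_n \to Y$. Let $\sC \subset \sO_Y$ be the conductor ideal of $\pi$, let $C \inj Y$ be the closed subscheme it defines, and set $D := \pi^{-1}(C)$; since $Y_n$ is smooth of dimension one, $D$ is an effective Cartier divisor, and $(Y_n, D)$ is a 1-dimensional modulus pair to which \thmref{thm:Main-1} applies. The proof then amounts to matching both sides of the pairing for $Y$ with the corresponding modulus objects on $Y_n$.

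For 0-cycles, using the identification $\CH^{\lw}_0(Y) = \Pic(Y)$ recalled in \S\ref{sec:Appln-0}, I would exhibit a natural topological isomorphism $\pi^* \colon \Pic(Y) \xrightarrow{\sim} \Pic(Y_n|D) = \CH_0(Y_n|D)$. Its existence and bijectivity come from the conductor square for $\pi$: the kernels of $\Pic(Y) \surj \Pic(Y_n)$ and $\Pic(Y_n|D) \surj \Pic(Y_n)$ are both controlled by units along the conductor, and the conductor ideal is by definition the largest ideal making these kernels canonically isomorphic. Continuity and openness for the adic topologies introduced in \S\ref{sec:Appln} and \S\ref{sec:Appln-0} follow because both groups arise as $k$-points of locally finite-type $k$-group schemes, and $\pi^*$ is induced by a map of such schemes.

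For Brauer groups, I would identify $\Br^{\lw}(Y) \inj \Br(K)$ with $\Br(Y_n|D) \subset \Br(K)$; this should be essentially built into the definition of $\Br^{\lw}(Y)$. Namely, Levine-Weibel Brauer classes on $Y$ are those classes in $\Br(K)$ that extend to Azumaya algebras on $Y \setminus Y_{\sing}$ with ramification at the singular points bounded by the conductor, and via pullback along $\pi$ this is precisely the modulus condition defining $\Br(Y_n|D)$ in terms of Kato's filtration on $H^2(K, \Q/\Z(1))$. Since both groups carry the discrete topology, the identification is automatically a topological isomorphism.

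These two identifications intertwine the Brauer-Manin pairings for $Y$ and for $(Y_n, D)$ because both pairings are computed by the same cup product and local trace on the normalization. The theorem is thus inherited from \thmref{thm:Main-1} by transport of structure. The main obstacle is the Brauer side identification: matching the Levine-Weibel Brauer group---whose definition is dictated by the requirement of Brauer injectivity for singular $Y$---with Kato's ramification filtration on $Y_n$ requires a careful local analysis at each singular point of $Y$, relating the conductor of $\pi$ to the depth of Kato's filtration. This local comparison is where the substantive work of the proof lies; once it is in hand, everything else is formal.
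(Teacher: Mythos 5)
Your overall strategy---reducing to Theorem \ref{thm:Main-1} for a modulus pair $(Y_n,D)$ built from the conductor---is the right one and is what the paper does, but both identifications along which you propose to transport the pairing are false in general, and this is not a repairable detail: the correct relationship is a subgroup/quotient duality, not an isomorphism. On the cycle side the map goes the wrong way: there is a surjection $\Pic(Y_n|D)\surj\Pic(Y)$ whose kernel is $\sO^\times(Z)/k^\times$, where $Z\subset Y$ is the conductor subscheme (see \S\ref{sec:Pic-SS}). This kernel vanishes only when $\sO^\times(Z)=k^\times$; for a curve with two singular points, or one singular point whose conductor scheme is non-reduced or has a nontrivial residue extension, it is nonzero, so $\Pic(Y)\not\cong\Pic(Y_n|D)$. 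Dually, $\Br^{\lw}(Y)$ is in general a \emph{proper} subgroup of $\Br(Y_n|mD)$, and one only gets the inclusion $\Br^{\lw}(Y)\subseteq\Br(Y_n|mD)$ for $m\gg 0$ rather than $m=1$. The defining condition of $\Br^{\lw}(Y)$ is that the local character at each branch $y$ over a singular point $x$ annihilates $U^0_{x,y}$, the image of $(\wh{\sO_{Y,x}})^\times$; this is a condition tied to the singular local ring (in particular it links the branches over $x$) and is not equivalent to any congruence condition $1+\wh{\fm}_y^{\,n}$ from Kato's filtration. So the ``local analysis relating the conductor to the depth of Kato's filtration'' that you defer cannot succeed: no choice of depth makes your Brauer-side identification true.

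The argument that does work is structurally different. One first proves an idelic presentation of $\CH^{\lw}_0(Y)$ as the cokernel of $K^\times\to \sZ_0(Y_\reg)\oplus\prod_{x\in Y_\sing}\wh{K}^\times_{x,\infty}/U^0_{x,\infty}$ (the singular analogue of the presentation used for $\CH_0(X|D)$); this exhibits $\CH^{\lw}_0(Y)$ as an explicit quotient of $\CH_0(Y_n|mD)$, hence $\CH^{\lw}_0(Y)^\star$ as the subgroup of $\CH_0(Y_n|mD)^\star$ consisting of characters killing the images of the $U^0_{x,\infty}$. One then checks $\Br^{\lw}(Y)\subseteq\Br(Y_n|mD)$ for $m\gg 0$ and observes that, under the perfect pairing of Theorem \ref{thm:Main-1}, the subgroup $\Br^{\lw}(Y)$ corresponds exactly to that subgroup of characters---this is immediate from Definition \ref{defn:LWB-0} once the presentation of $\CH^{\lw}_0(Y)$ is in place, and it is where the real content lies. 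Continuity is likewise not automatic from representability of $\Picc(Y)$ as you suggest; it is deduced from continuity of the modulus pairing together with the fact that $\Pic(Y_n|mD)\surj\Pic(Y)$ is a topological quotient.
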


\subsection{A brief outline of proofs}\label{sec:Outline}    
The main ingredients for proving \thmref{thm:Main-0} are
Theorems~\ref{thm:Main-5} and ~\ref{thm:Main-1}. A key step in the
proof of Theorem~\ref{thm:Main-5} is a duality theorem of Zhao \cite{Zhao}.
In \S~\ref{sec:Prelim}, we recall this duality theorem. In \S~\ref{sec:Duality},
we equip the logarithmic Hodge-Witt cohomology of a smooth projective curve $X$ over
a local field $k$ of positive characteristic with a suitable topology
using some deductions. We construct
a trace homomorphism for the top logarithmic Hodge-Witt cohomology of $X$ in
\S~\ref{sec:HWD} and complete the proof of \thmref{thm:Main-5}.
A trace map in this case was also constructed by Kato-Saito
\cite[\S~3, Prop.~4]{Kato-Saito-Ann}. But our trace homomorphism is, a priori, different.
It has an advantage that one can easily show its compatibility with the
trace homomorphisms for closed points under the Gysin homomorphisms.
This compatibility is crucial in our proofs.
In the next three sections, we equip the relative Picard group
of modulus pairs $(X,D)$ with the adic topology. We also prove some key properties of the
Albanese map for $X$ and the Pontryagin dual of the relative Picard group.

We introduce the Brauer group of modulus pairs in \S~\ref{sec:BM} and establish the
Brauer-Manin pairing between the Brauer and Chow groups of a modulus pair in
\S~\ref{sec:BM-mod**}. One of the key steps in the proof of  ~\thmref{thm:Main-1}
is a duality theorem for the cohomology of $\G_m$ on $X$ and its compatibility with
Kato's duality for 2-dimensional local fields obtained by completing the function
field of $X$ at closed points. This is done in \S~\ref{sec:LGD}.
In \S~\ref{sec:CBMP}, we complete another step for proving \thmref{thm:Main-1},
namely, we show that the Brauer-Manin paring for a modulus pair $(X,D)$ is
continuous. We complete the proof of ~\thmref{thm:Main-1} in \S~\ref{sec:PBMP}, and
then apply it to complete the proofs of Theorems~\ref{thm:Main-0} and
\thmref{thm:Main-10}.
In \S~\ref{sec:BMS}, we introduce a refined version of the Brauer group of singular
varieties and prove \thmref{thm:Main-3} as an application of \thmref{thm:Main-1}.

\subsection{Notations}\label{sec:Notn}
We shall work over a field $k$ of characteristic $p \ge 0$ throughout this
paper. We shall let $k_s$ (resp. $\ov{k}$) denote a fixed separable (resp. algebraic)
closure of $k$. We let $\Sch_k$ denote the category of 
separated Noetherian $k$-schemes and $\Sm_k$ the category of smooth (in particular,
finite type) $k$-schemes. A $k$-scheme will mean an object of $\Sch_k$ and a finite type
$k$-scheme will mean an object of $\Sch^{\ft}_k$.
The product $X \times_{\Spec(k)} Y$ in $\Sch_k$ will be written as
$X \times Y$. We let $X^{(q)}$ (resp. $X_{(q)}$) denote the set of points on $X$ having
codimension (resp. dimension) $q$. We shall let $X_\sing$ (resp. $X_\reg$) denote the
singular locus (resp. regular locus) of $X$ with the reduced closed subscheme
structure. We let $\sZ_0(X)$ denote the free abelian group of 0-cycles on $X$.

We let $\Sch_{k/\zar}$ (resp. $\Sch_{k/\nis}$, resp. $\Sch_{k/ \etl})$
denote the Zariski (resp. Nisnevich, resp. {\'e}tale) site of $\Sch_{k}$.
Unless we mention the topology specifically,
all cohomology groups in this paper will be considered with respect to the
{\'e}tale topology.
We shall let $cd_p(X)$ denote the {\'e}tale $p$-cohomological dimension of $X \in \Sch_k$
if $\Char(k) = p > 0$. We shall let $cd(X)$ denote the {\'e}tale cohomological
dimension of torsion sheaves on $X$. We shall let $G_k$ denote the abelian
absolute Galois group of $k$.

For an abelian group $A$, we shall write $\Tor^1_{\Z}(A, {\Z}/n)$ as
$_nA$ and $A/{nA}$ as $A/n$. We shall let $A\{p'\}$ denote the
subgroup of elements of $A$ which are torsion of order prime to $p$.
We let $A\{p\}$ denote the subgroup of elements of $A$ which are torsion of 
order some power of $p$. The tensor product $A \otimes_{\Z} B$ will be written as
$A \otimes B$. We let $\Ab$ denote the category of abelian groups and
$\Tab$ denote the category of topological abelian groups with continuous homomorphisms.
For $A, B \in \Tab$, we shall let $\Hom_\cf(A,B) = (\Hom_{\Tab}(A,B))_\tor$.
Unless a specific topology is mentioned, we shall assume all finite abelian groups to
be endowed with the discrete topology. In this paper, we shall use the general
notation $\{A_n\}$ for pro-abelian or ind-abelian group indexed by
$\N = \{1, 2, \ldots\}$. However, we shall also use specific notations
$``{{\underset{n}\varinjlim}}" A_n$ for an ind-abelian group and
$``{{\underset{n}\varprojlim}}" A_n$ for a pro-abelian
group if we need to make a distinction between them.

\section{Recollection of Zhao's duality}
\label{sec:Prelim}
In this section, we recall Brauer group, Milnor $K$-theory, Hodge-Witt sheaves and
the duality theorem of Zhao \cite{Zhao} for the
logarithmic Hodge-Witt cohomology. This will be the key ingredient in the proof of
\thmref{thm:Main-5}. We also recall the
Pontryagin duality for a special class of locally compact Hausdorff topological abelian
groups.

\subsection{Pontryagin duality for torsion-by-profinite groups}
\label{sec:PDuality}
Let $G$ be a locally compact Hausdorff topological abelian group.
We shall say that $G$ is `torsion-by-profinite' if there is an exact sequence
\begin{equation}\label{eqn:Prof-tor}
  0 \to G_\pf \to G \to G_\dt \to 0,
\end{equation}
where $G_\pf$  is an open and profinite subgroup of $G$ and $G_\dt$
is a torsion group with the quotient topology (necessarily discrete).
We shall call ~\eqref{eqn:Prof-tor} a torsion-by-profinite presentation of $G$.
Note that such a presentation of $G$ is not unique.

Recall that $G$ admits its Pontryagin dual $G^\star := \Hom_{\Tab}(G, \T)$,
  where $\T$ is the circle group such that the evaluation map $G \to (G^*)^*$ is
  an isomorphism of locally compact topological abelian groups
  (e.g., see \cite[\S~2.9]{Pro-fin} or \cite[Thm.~4.32]{Folland})
  if $G^\star$ is endowed with the compact-open topology.
One knows that $G^\star \cong \Hom_{\Tab}(G, {\Q}/{\Z})$ if $G$ is torsion-by-profinite
  group (e.g., see \cite[Lem.~2.9.2]{Pro-fin}), where
  ${\Q}/{\Z} = \T_{\tor}$ is endowed with the discrete topology.
 Furthermore, one has an isomorphism of topological
  abelian groups $\Hom_{\Tab}(G^\star, {\Q}/{\Z}) \xrightarrow{\cong} 
  \Hom_{\Tab}(G^\star, \T)$. In other words, letting $\pfd$ denote the 
  category of torsion-by-profinite topological abelian groups with continuous
  homomorphisms, we have the following.

 \begin{lem}\label{lem:Topological-0}
   The category $\pfd$ is closed under taking Pontryagin dual such that
   $G \in \pfd$ is profinite if and only if $G^\star$ is a discrete torsion group.
   Moreover, the canonical map $\Hom_{\Tab}(G, {\Q}/{\Z}) \to G^\star$ is an
   isomorphism of topological abelian groups for every $G \in \pfd$.
   
  \end{lem}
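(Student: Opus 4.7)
My plan is to reduce everything to classical Pontryagin duality for profinite and discrete torsion abelian groups, applied to the defining torsion-by-profinite presentation. The key inputs are the standard facts that for a profinite group $H = \varprojlim H_i$ with $H_i$ finite, the dual $H^\star$ is the discrete torsion group $\varinjlim H_i^\star$, and conversely the dual of a discrete torsion group is profinite; both are well-known via the exactness of $\Hom_\Tab(-, \T)$ on short exact sequences of locally compact Hausdorff abelian groups. In particular, for such $H$ and for a discrete torsion $A$ the canonical maps $\Hom_\Tab(H, \Q/\Z) \to H^\star$ and $\Hom_\Tab(A, \Q/\Z) \to A^\star$ are already known to be isomorphisms in $\Tab$.

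Given $G \in \pfd$ with presentation $0 \to G_\pf \to G \to G_\dt \to 0$, Pontryagin duality yields a short exact sequence
\[
0 \to (G_\dt)^\star \to G^\star \to (G_\pf)^\star \to 0
\]
in $\Tab$. By the above, $(G_\dt)^\star$ is profinite and $(G_\pf)^\star$ is discrete torsion; since the quotient $(G_\pf)^\star$ is discrete, the subgroup $(G_\dt)^\star$ is open in $G^\star$. This is the desired torsion-by-profinite presentation of $G^\star$. The equivalence "$G$ profinite $\iff$ $G^\star$ discrete torsion" is then immediate from biduality: if $G$ is profinite then $G_\dt = 0$, so $G^\star = (G_\pf)^\star$ is discrete torsion; the converse follows by applying this implication to $G^\star$ and using $G \cong (G^\star)^\star$.

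For the last claim, I would first establish the bijection at the level of abstract groups. For any $\phi \in G^\star$, the restriction $\phi|_{G_\pf}$ has compact and totally disconnected image in $\T$, which is necessarily a finite cyclic subgroup and hence contained in $\Q/\Z$. Since $G_\dt$ is torsion, every $g \in G$ has a positive multiple $ng \in G_\pf$, whence $n\phi(g) \in \Q/\Z$ and therefore $\phi(g) \in \Q/\Z$. Moreover $\phi^{-1}(0)$ is open in $G$: it contains $\phi^{-1}(0) \cap G_\pf$, which has finite index in $G_\pf$ (since $\phi(G_\pf)$ is finite) and is hence open in the open subgroup $G_\pf$. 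Thus $\phi$ factors through a continuous homomorphism $G \to \Q/\Z_{\rm disc}$, and conversely any such map composes with the continuous inclusion $\Q/\Z_{\rm disc} \hookrightarrow \T$ to yield an element of $G^\star$.

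The main obstacle, and the heart of the argument, is checking that this algebraic bijection is a homeomorphism for the two compact-open topologies. For this I would use the short exact sequences above and compare compact-open neighborhood bases on both sides. On the open profinite subgroup $(G_\dt)^\star \subset G^\star$ the compact-open topology agrees with the inverse-limit topology from finite quotients of $G_\dt$, which manifestly matches the compact-open topology on $\Hom_\Tab(G_\dt, \Q/\Z_{\rm disc})$; the discrete quotient $(G_\pf)^\star$ matches $\Hom_\Tab(G_\pf, \Q/\Z_{\rm disc})$ as a discrete group because $G_\pf$ is compact so any continuous map to $\Q/\Z_{\rm disc}$ has finite image. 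A five-lemma argument in $\Tab$ then promotes the algebraic bijection to the claimed topological isomorphism.
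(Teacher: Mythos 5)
Your proof is correct and follows exactly the route the paper (implicitly) takes: the paper states this lemma as a consequence of classical Pontryagin duality for profinite and discrete torsion groups applied to the presentation $0 \to G_\pf \to G \to G_\dt \to 0$, citing \cite[Lem.~2.9.2]{Pro-fin} and \cite[Cor.~4.42]{Folland} rather than writing out the details. Your write-up correctly supplies those details (dualizing the presentation, the openness of $(G_\dt)^\star$ in $G^\star$, the finiteness of $\phi(G_\pf)$ in $\T$, and the comparison of compact-open topologies); the only cosmetic point is that "$G$ profinite $\Rightarrow G_\dt = 0$" should read "one may choose a presentation with $G_\dt = 0$," since presentations are not unique.
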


In this paper, we shall always consider ${\Z}$ and ${\Q}/{\Z}$ as topological
  abelian groups with discrete topologies. For any topological
  abelian group $G$, we shall denote $\Hom_{\Tab}(G, {\Q}/{\Z})$ by $G^\star$.
 For $G \in \Ab$, we shall let $G^\vee = \Hom_\Ab(G, {\Q}/{\Z})$.

\begin{lem}\label{lem:Closed-subgroup}
  If $G \in \pfd$ and $G' \subseteq G$ is a closed subgroup, then $G' \in \pfd$.
\end{lem}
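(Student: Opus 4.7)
The plan is to build a torsion-by-profinite presentation of $G'$ by intersecting a given presentation of $G$ with $G'$. Fix an exact sequence
\[
0 \to G_\pf \to G \to G_\dt \to 0
\]
realising $G \in \pfd$, with $G_\pf$ open and profinite and $G_\dt$ discrete torsion. Define $G'_\pf := G' \cap G_\pf$.

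First I would verify that $G'_\pf$ has the right properties. Since $G_\pf$ is compact and $G$ is Hausdorff, $G_\pf$ is closed in $G$; intersected with the closed subgroup $G'$ this gives a closed subgroup of $G_\pf$, hence a closed subgroup of a profinite group, which is again profinite. Because $G_\pf$ is open in $G$, its trace $G'_\pf = G' \cap G_\pf$ is open in $G'$. Thus $G'_\pf$ is an open profinite subgroup of $G'$.

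Next I would analyse the quotient. The inclusion $G' \hookrightarrow G$ induces an injection $G'/G'_\pf \hookrightarrow G/G_\pf = G_\dt$, so $G'/G'_\pf$ is a subgroup of a torsion group and is therefore torsion; it is discrete because $G'_\pf$ is open in $G'$. Finally, $G'$ itself is locally compact and Hausdorff as a closed subgroup of the locally compact Hausdorff group $G$. Assembling these facts gives a torsion-by-profinite presentation
\[
0 \to G'_\pf \to G' \to G'/G'_\pf \to 0,
\]
showing $G' \in \pfd$.

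There is essentially no obstacle here: the only point that requires a moment's care is that $G_\pf$ is both open and closed (the latter coming from compactness plus Hausdorffness), so that the intersection $G' \cap G_\pf$ inherits both properties. Everything else is a formal consequence.
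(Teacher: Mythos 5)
Your proof is correct and is precisely the elementary argument the paper has in mind (the paper omits the proof entirely, remarking only that it is elementary): intersect $G'$ with the open profinite subgroup $G_\pf$, note that this intersection is open in $G'$ and, being a closed subgroup of a profinite group, is itself profinite, and embed $G'/(G'\cap G_\pf)$ into the discrete torsion quotient $G_\dt$. Nothing is missing.
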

\begin{proof}
  This is elementary and we skip the proof.
  \end{proof}

\begin{lem}\label{lem:Quotient-group}
  Let $\beta \colon G \surj G''$ be a continuous surjective homomorphism in $\pfd$ such
  that $G''_{\pf} \subseteq \beta(G_\pf)$ with respect to some torsion-by-profinite
  presentations of $G$ and $G''$ as in ~\eqref{eqn:Prof-tor}. Then
  the induced map $\beta' \colon {G}/{\Ker(\beta)} \to G''$ is an isomorphism of
  topological abelian groups if $ {G}/{\Ker(\beta)}$ is endowed with the quotient
  topology.
\end{lem}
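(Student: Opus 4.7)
\medskip

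\noindent\textbf{Plan of proof.} The map $\beta'$ is automatically a continuous group isomorphism, so the only content of the lemma is that $\beta'$ is open, equivalently that $\beta$ itself is an open map. I would reduce this to an open mapping statement on the profinite parts, where compactness makes everything easy.

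\medskip

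\noindent\textbf{Step 1: reduce openness to a neighborhood of the identity.} Since $\beta$ is a group homomorphism, it suffices to prove that $\beta(V)$ is a neighborhood of $0 \in G''$ for every $V$ in some neighborhood basis of $0 \in G$. Because $G_\pf$ is an open subgroup of $G$ and is profinite, the collection of open subgroups $V \subseteq G_\pf$ is a neighborhood basis of $0$ in $G$. So I only need to check that $\beta(V)$ is open in $G''$ for each such $V$.

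\medskip

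\noindent\textbf{Step 2: $\beta(G_\pf)$ is open in $G''$.} The image $\beta(G_\pf)$ is the continuous image of a compact Hausdorff group, hence is itself compact, hence closed in $G''$. The hypothesis $G''_\pf \subseteq \beta(G_\pf)$ together with the fact that $G''_\pf$ is open in $G''$ forces $\beta(G_\pf)$ to contain an open neighborhood of $0$, so $\beta(G_\pf)$ is an open subgroup of $G''$.

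\medskip

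\noindent\textbf{Step 3: $\beta$ restricted to $G_\pf$ is open onto $\beta(G_\pf)$.} Set $H = G_\pf$ and $K = \Ker(\beta) \cap H$. The restriction $\beta|_H \colon H \to \beta(H)$ factors as $H \surj H/K \to \beta(H)$, where the first map is a topological quotient and the second is a continuous bijection of compact Hausdorff groups, hence a homeomorphism. Thus $\beta|_H$ is an open map onto $\beta(H)$. This is the only real step in the argument, and there is no obstacle because compactness handles it for free.

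\medskip

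\noindent\textbf{Step 4: conclude.} For an open subgroup $V \subseteq G_\pf$, Step 3 gives that $\beta(V)$ is open in $\beta(G_\pf)$, and Step 2 gives that $\beta(G_\pf)$ is open in $G''$, so $\beta(V)$ is open in $G''$. By Step 1 this means $\beta$ is open, so the induced continuous bijection $\beta' \colon G/\Ker(\beta) \to G''$ is a homeomorphism, proving the lemma. I do not foresee any serious obstacle; the only point to be careful about is invoking the hypothesis $G''_\pf \subseteq \beta(G_\pf)$ at exactly the right moment, since without it $\beta(G_\pf)$ need not be open and the argument would collapse.
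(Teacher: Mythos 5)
Your proof is correct and follows essentially the same route as the paper's: both arguments reduce openness of $\beta$ to compactness of the profinite part (a continuous bijection from a compact Hausdorff group onto a Hausdorff group is a homeomorphism) and both invoke the hypothesis $G''_{\pf} \subseteq \beta(G_\pf)$ at exactly the point where the relevant image subgroup must be shown to be open. The only cosmetic difference is that you restrict $\beta$ to $G_\pf$ and study its image, whereas the paper pulls back $G''_{\pf}$ to $H = \beta^{-1}(G''_{\pf})$ and analyzes $\beta \colon H \surj G''_{\pf}$; the content is identical.
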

\begin{proof}
  If $G''$ is profinite, the claim is an easy application of the fact that a continuous
  bijective homomorphism from a compact Hausdorff topological abelian group to a
  Hausdorff topological abelian group is a topological isomorphism. For the general
  case, we let
  $H = \beta^{-1}(G''_\pf)$. Then $H \subset G$ is an open subgroup and $G/H$ is discrete.
  Furthermore, there is a surjective continuous homomorphism
  $\beta \colon H \surj G''_\pf$ such that $(G_\pf \cap H) \surj G''_\pf$ by our
  assumption. \lemref{lem:Closed-subgroup} implies that $\beta \colon H \to
  G''_\pf$ is a continuous surjective homomorphism from a torsion-by-profinite group to a
  profinite group. In particular, this is a topological quotient. Since
  $G''_\pf$ is open in $G''$, one deduces that $\beta$ is an open map on $G$.
\end{proof}

\begin{lem}\label{lem:Dual-ex}
  Let
  \[
    0 \to G' \xrightarrow{\alpha} G \xrightarrow{\beta} G'' \to 0
\]
be a short exact sequence in $\pfd$. Assume that with respect to some
torsion-by-profinite presentations of $G$ and $G''$ as in ~\eqref{eqn:Prof-tor},
one has ${G''}_{\pf} \subseteq \beta(G_\pf)$. Then the sequence
\[
  0 \to (G'')^\star \xrightarrow{\beta^\star} G^\star \xrightarrow{\alpha^\star} (G')^\star
  \to 0
\]
is exact.
\end{lem}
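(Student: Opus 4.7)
The plan is to reduce the statement to classical Pontryagin duality for locally compact Hausdorff abelian groups. The hypothesis $G''_\pf \subseteq \beta(G_\pf)$ is inserted precisely so that \lemref{lem:Quotient-group} applies, upgrading the algebraic short exact sequence to a \emph{strict} short exact sequence of LCA groups: the map $\alpha$ is a topological embedding onto a closed subgroup (automatic, since $G' = \Ker(\beta)$ inherits the subspace topology from $G$), and $\beta$ is a topological quotient (the content of \lemref{lem:Quotient-group}). Together with \lemref{lem:Closed-subgroup} (which gives $G' \in \pfd$) and \lemref{lem:Topological-0} (which identifies $(-)^\star$ with $\Hom_{\Tab}(-, \T)$ on $\pfd$), this reduces everything to the standard duality.

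First I would dispatch the two easy spots. Injectivity of $\beta^\star$ is immediate from surjectivity of $\beta$. For exactness at $G^\star$, given $\chi \in G^\star$ that kills $\alpha(G') = \Ker(\beta)$, one descends $\chi$ to a character $\bar\chi$ on $G/\Ker(\beta)$ and transports it continuously to $G''$ along the topological isomorphism $G/\Ker(\beta) \xrightarrow{\cong} G''$ supplied by \lemref{lem:Quotient-group}; then $\beta^\star(\bar\chi) = \chi$.

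The main step is surjectivity of $\alpha^\star$: extending an arbitrary continuous character $\chi' \colon G' \to \Q/\Z$ to a continuous character of $G$. For this I would invoke the classical fact that Pontryagin duality sends strict short exact sequences of LCA groups to strict short exact sequences --- the closed-subgroup/annihilator formalism of \cite[Thm.~4.32]{Folland} already used in \S~\ref{sec:PDuality}. Applied to our strictly exact sequence, this produces a continuous extension $\chi \in \Hom_{\Tab}(G, \T)$ of $\chi'$, which by \lemref{lem:Topological-0} automatically takes values in $\Q/\Z$ since $G \in \pfd$, and hence lies in $G^\star$.

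I do not foresee any substantial obstacle. The only point requiring genuine care is the verification of strict exactness of the original sequence in the LCA category, which is exactly what the hypothesis on the profinite parts, together with \lemref{lem:Quotient-group}, delivers; the remainder is a mechanical assembly of the three preceding lemmas with classical Pontryagin duality.
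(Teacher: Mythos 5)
Your proof is correct and follows essentially the same route as the paper's: exactness at $(G'')^\star$ and $G^\star$ via \lemref{lem:Quotient-group}, and surjectivity of $\alpha^\star$ via the character-extension theorem for closed subgroups of locally compact abelian groups \cite[Cor.~4.42]{Folland} combined with \lemref{lem:Topological-0} to land in ${\Q}/{\Z}$. Your explicit remark that $\alpha$ must be a closed topological embedding (so that the sequence is strict) is a point the paper leaves implicit, but it matches the intended reading of ``short exact sequence in $\pfd$'' and is used identically in both arguments.
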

\begin{proof}
The sequence
  \[
    0 \to (G'')^\star \xrightarrow{\beta^\star} G^\star \xrightarrow{\alpha^\star}
    (G')^\star
  \]
  is exact by \lemref{lem:Quotient-group} and $\alpha^\star$ is surjective by
  \lemref{lem:Topological-0} and \cite[Cor.~4.42]{Folland}.
  \end{proof}

Recall from \S~\ref{sec:Intro} that for a topological abelian group $G$, the
profinite completion of $G$ (with respect to its topology) is the profinite topological
abelian group $G^\pf = {\varprojlim}_U {G}/U$ with the inverse limit topology, where
the limit is taken over all open subgroups of finite index in $G$.
The following result is elementary whose proof is left to the reader.

\begin{lem}\label{lem:Prof-dual}
  Let $G$ be a topological abelian group. Then the
  canonical map $(G^\pf)^\star \to G^\star$ induces an isomorphism
  $(G^\pf)^\star \xrightarrow{\cong} (G^\star)_\tor$.
\end{lem}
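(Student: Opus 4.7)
The plan is to unpack the canonical map explicitly. By construction, the homomorphism $\iota\colon G \to G^{\pf}$ sending $g$ to its coherent family $(g\bmod U)_U$ (with $U$ ranging over open, finite-index subgroups) has dense image, since each projection $G^{\pf}\to G/U$ is surjective and the basic opens of $G^{\pf}$ are the preimages of singletons in finite quotients. The map $(G^{\pf})^\star \to G^\star$ is precomposition with $\iota$.

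First I would check the image lands in $(G^\star)_{\tor}$. Any continuous $\psi\colon G^{\pf}\to \Q/\Z$ must have open kernel (since $\Q/\Z$ is discrete), so by compactness of $G^{\pf}$ the image is finite and $\psi$ is torsion in $(G^{\pf})^\star$; pulling back to $G$ preserves this, so $\iota^\star\psi\in (G^\star)_{\tor}$.

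Next, injectivity: if $\iota^\star\psi=0$, then $\psi$ vanishes on the dense subgroup $\iota(G)\subseteq G^{\pf}$, and since $\Q/\Z$ is Hausdorff and $\psi$ is continuous, $\psi=0$.

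For surjectivity, take $\phi\in(G^\star)_{\tor}$, so $n\phi=0$ for some $n\ge 1$. Then $\phi$ takes values in the finite subgroup $\tfrac{1}{n}\Z/\Z\subset\Q/\Z$, and $U:=\Ker(\phi)$ is the preimage of a point in a discrete group, hence open, and of finite index. Thus $U$ is one of the subgroups defining the profinite completion, and $\phi$ factors as
\[
G \twoheadrightarrow G/U \hookrightarrow \Q/\Z.
\]
The quotient $G/U$ is a continuous quotient of $G^{\pf}$, so composing with the projection $G^{\pf}\twoheadrightarrow G/U$ yields $\widetilde{\phi}\in(G^{\pf})^\star$ with $\iota^\star\widetilde{\phi}=\phi$. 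Since both sides carry the discrete topology (the left by \lemref{lem:Topological-0}, the right as a subgroup of the discretely-given $G^\star$), the bijection $(G^{\pf})^\star\xrightarrow{\cong}(G^\star)_{\tor}$ is automatically a topological isomorphism. The only mild subtlety is the density of $\iota(G)$ in $G^{\pf}$, which is the one input needed beyond pure formalism.
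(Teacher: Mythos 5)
The paper gives no proof of this lemma (it is explicitly "left to the reader"), so there is nothing to compare against; your argument is the standard one and it is correct: density of $\iota(G)$ in $G^{\pf}$, the open-kernel-plus-compactness argument showing every character of $G^{\pf}$ is torsion, vanishing on a dense subgroup for injectivity, and factoring a torsion character of $G$ through the open finite-index subgroup $\Ker(\phi)$ for surjectivity are exactly the needed steps, and each is carried out correctly. The one imprecision is your closing sentence: the paper does not endow $G^\star$ with the discrete topology for an arbitrary topological abelian group $G$ (only $(G^{\pf})^\star$ is known to be discrete, via \lemref{lem:Topological-0}, because $G^{\pf}$ is profinite), so "automatically a topological isomorphism" is unsupported as phrased; since the lemma is only invoked as an isomorphism of abstract groups, this is harmless.
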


Suppose that $\{G_i\}_{i \in I}$ is a pro-object in $\Tab$ such that $I$ is cofiltered.
We let $G = {\varprojlim}_i G_i$ and endow it with the inverse limit
topology. Let $\pi_i \colon G \to G_i$ be the projection map.
If the topology of each $G_i$ is generated by open subgroups, then the same holds for
$G$ as well. More precisely, $G$ admits a fundamental system of neighborhoods of
the identity of the form $\pi^{-1}_i(U_i)$, where $U_i \subset G_i$ is an open subgroup.
This happens, for instance, when each $G_i$ is a Hausdorff, totally disconnected
and locally compact topological abelian group. If $k$ is a local field and
$X$ is a commutative group scheme over $k$, then $X(k)$ is endowed with
a topology of this kind (cf. \S~\ref{sec:Pic-loc}). We shall use the following
result in this paper.

\begin{lem}\label{lem:dual-surj}
  Let $\{G_i\}_{i \in I}$ be above such that the topology of each $G_i$ is generated by
  open subgroups. Then the canonical map ${\varinjlim}_i G^\star_i \to G^\star$ is
  surjective. This map is bijective if $\pi_i \colon G \to G_i$ is surjective for each
  $i \in I$.
\end{lem}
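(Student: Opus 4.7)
The plan is to exploit the fact that $\Q/\Z$ is discrete (so continuous characters have open kernels) together with its injectivity as an abelian group (so characters defined on a subgroup extend to the ambient group).

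\textbf{Step 1: Locating an open subgroup.} Given $\varphi \in G^\star$, I would first use that $\Q/\Z$ carries the discrete topology to conclude that $\ker \varphi \subseteq G$ is an open subgroup. Since $I$ is cofiltered, a basic neighborhood of $0$ in the inverse limit topology on $G$ has the form $\pi_i^{-1}(W_i)$ with $W_i$ open in $G_i$ for some single index $i \in I$. The hypothesis that the topology of $G_i$ is generated by open subgroups then lets me shrink $W_i$ to an open subgroup $U_i \subseteq G_i$. Thus I obtain $i \in I$ and an open subgroup $U_i \subseteq G_i$ with $\pi_i^{-1}(U_i) \subseteq \ker \varphi$.

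\textbf{Step 2: Factoring through $G_i$.} Since $\pi_i^{-1}(U_i)\subseteq \ker\varphi$, the map $\varphi$ descends to a homomorphism $\bar\varphi \colon G/\pi_i^{-1}(U_i) \to \Q/\Z$, and $\pi_i$ induces an injection $\bar\pi_i\colon G/\pi_i^{-1}(U_i) \hookrightarrow G_i/U_i$. Because $\Q/\Z$ is divisible, hence injective in $\Ab$, there is an extension $\tilde\psi_i\colon G_i/U_i \to \Q/\Z$ of $\bar\varphi$ along $\bar\pi_i$. Composing with the quotient $G_i \twoheadrightarrow G_i/U_i$ produces a homomorphism $\psi_i\colon G_i \to \Q/\Z$ whose kernel contains the open subgroup $U_i$, hence is continuous, and which satisfies $\psi_i \circ \pi_i = \varphi$ by construction. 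This exhibits $\varphi$ as the image of the class $[\psi_i] \in \varinjlim_i G_i^\star$, proving surjectivity.

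\textbf{Step 3: Injectivity under the surjective hypothesis.} Assume now each $\pi_i$ is surjective. A class in $\varinjlim_i G_i^\star$ is represented by some $\psi_i \in G_i^\star$; it is sent to $0$ in $G^\star$ precisely when $\psi_i \circ \pi_i = 0$. Surjectivity of $\pi_i$ forces $\psi_i = 0$, so the class is already zero in $G_i^\star$ and a fortiori in the colimit. Combined with the surjectivity proved in Step 2 we obtain the bijectivity claim.

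\textbf{Main obstacle.} The one subtle point is Step 2: without injectivity of $\Q/\Z$, one would only obtain a character on the subgroup $\bar\pi_i(G/\pi_i^{-1}(U_i)) \subseteq G_i/U_i$, not on $G_i$ itself. The hypothesis that the $G_i$ have topologies generated by open subgroups is what allows Step 1 to produce a \emph{subgroup} $U_i$ (so that $G_i/U_i$ makes sense and the extended map is automatically continuous). When $\pi_i$ is surjective, $\bar\pi_i$ is an isomorphism and no extension is needed, which is what makes the stronger bijectivity statement available in that case.
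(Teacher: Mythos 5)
Your proof is correct and follows essentially the same route as the paper's: locate an index $i$ and an open subgroup $U_i \subseteq G_i$ with $\pi_i^{-1}(U_i) \subseteq \ker\varphi$, factor through the discrete quotients $G/\pi_i^{-1}(U_i) \hookrightarrow G_i/U_i$, extend using the divisibility (injectivity) of $\Q/\Z$, and observe that the resulting character of $G_i$ is continuous because its kernel contains $U_i$. Your Step 2 is in fact slightly more explicit than the paper about where injectivity of $\Q/\Z$ is used, and your Step 3 spells out the injectivity claim that the paper dismisses as clear.
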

\begin{proof}
  Let $\chi \colon G \to {\Q}/{\Z}$ be a continuous character and let $H = \Ker(\chi)$.
  Then our hypothesis implies that there exists $i \in I$ and an open subgroup
  $U_i \subset G_i$ such that $V_i:= \pi^{-1}_i(U_i) \subset H$. It follows that
  $\chi$ factors through $\chi' \colon G/{V_i} \to {\Q}/{\Z}$.
  We thus get a diagram
  \begin{equation}\label{eqn:dual-surj-0}
    \xymatrix@C1pc{
      G_i \ar@{->>}[r]^-{\alpha_i} & {G_i}/{U_i} \ar@{.>}[dr] & \\
      G \ar@{->>}[r] \ar[u]^-{\pi_i} & G/{V_i} \ar@{^{(}->}[u] \ar[r]^-{\chi'} &
      {\Q}/{\Z},}
   \end{equation}
   whose left square is commutative. Since $G/{V_i}$ and ${G_i}/{U_i}$ are discrete,
   we get a continuous character $\phi' \colon {G_i}/{U_i} \to {\Q}/{\Z}$ such that
   the left triangle commutes. Letting $\phi = \phi' \circ \alpha_i$, we see that
   $\phi$ is continuous and $\phi \circ \pi_i = \chi$. The second part of the lemma
   is clear.
\end{proof}

\subsection{Milnor $K$-theory and logarithmic
  Hodge-Witt sheaves}\label{sec:MKHW}
For a commutative ring $A$, we let $K_*(A)$ denote the Quillen $K$-theory of
finitely generated projective $A$-modules. We let $\wh{K}^M_*(A)$ denote the graded
commutative ring defined as the quotient of the tensor algebra
$T_*(A^\times)$ by the two-sided ideal generated by the homogeneous elements
$a_1 \otimes a_2$ such that $a_1 + a_2 = 1$. We let ${K}^M_*(A)$ denote the improved
Milnor $K$-theory of $A$ {\`a} la Gabber-Kerz \cite{Kerz-JAG}.
There are natural multiplicative homomorphisms $\wh{K}^M_r(A) \xrightarrow{\alpha_A}
{K}^M_r(A) \xrightarrow{\beta_A} K_r(A)$ such that $\alpha_A$ is surjective (resp. an
isomorphism) if $A$ is a local ring (resp. a field). Furthermore, $\alpha_A$ is an
isomorphism if $A$ is local with infinite residue field and $\beta_A$ is an isomorphism
for $r \le 2$ if $A$ is local (see \cite[Prop.~10]{Kerz-JAG}).

For a scheme $X$, we let
$\sK^M_{r,X}$ (resp. $\wh{\sK}^M_{r,X}$, resp. $\sK_{r,X}$) denote the sheaf
(in Zariski, Nisnevich or {\'e}tale topology) on $X$ associated to the presheaf
$U \mapsto K^M_r(\sO(U))$ (resp. $\wh{K}^M_r(\sO(U))$, resp.
$K_r(\sO(U))$). For an ideal $I \subset A$ in a local ring, we let $\wh{K}^M_r(A,I) =
\Ker(\wh{K}^M_r(A) \surj \wh{K}^M_r(A/I))$. Given a closed immersion $\iota \colon
Y \inj X$, we let $\wh{\sK}^M_{r,(X,Y)} = \Ker(\wh{\sK}^M_{r,X} \surj
\iota_*(\wh{\sK}^M_{r, Y}))$. The group ${K}^M_r(A,I)$ and the sheaf ${\sK}^M_{r,(X,Y)}$
are defined in a similar manner.

Let $k$ be a field of exponential characteristic $p > 1$.
Recall from \cite{Illusie} that for a $k$-scheme $X$, $\{W_m\Omega^\bullet_X\}_{m \ge 1}$
denotes the pro-complex of de Rham-Witt (Nisnevich) sheaves on $X$.
This is a pro-complex of sheaves of differential
graded algebras with the structure map $R$ and the
differential $d$. Let $[-]_m \colon \sO_X \to W_m\sO_X$ be the multiplicative
Teichm{\"u}ller homomorphism. Recall that the pro-complex 
$\{W_m\Omega^\bullet_X\}_{m \ge 1}$ is equipped with the
Frobenius homomorphism of graded algebras $F \colon W_m\Omega^r_X \to 
W_{m-1}\Omega^r_X$ and the additive Verschiebung homomorphism 
$V \colon W_m\Omega^r_X \to W_{m+1}\Omega^r_X$.
We let $Z W_m\Omega^r_X = \Ker(d \colon W_m\Omega^r_X \to
W_m\Omega^{r+1}_X)$ and $B W_m\Omega^r_X = 
{\rm Image}(d \colon W_m\Omega^{r-1}_X \to
W_m\Omega^{r}_X)$. We write $\sH^r(W_m\Omega^\bullet_X) = 
{Z W_m\Omega^r_X}/{B W_m\Omega^r_X}$.

Recall from \cite{Illusie} that $W_m\Omega^r_{X, \log}$ is the (Zariski, Nisnevich or
{\'e}tale) subsheaf of $W_m\Omega^r_X$ which is the image of the
map $\dlog \colon {\sK^M_{r,X}}/{p^m} \to W_m\Omega^r_X$, locally given by
$\dlog(\{a_1, \ldots , a_r\}) = \dlog[a_1]_m \wedge \cdots \wedge \dlog[a_r]_m$.
It is easily seen that this map exists (e.g., see \cite[Rem.~1.6]{Luders-Morrow}).
Moreover, it is an isomorphism in any of the above topologies if $X$ is regular
(e.g., see \cite[Lem.~2.3]{Gupta-Krishna-Duality}). Equivalently, there is a short exact
  sequence of (Zariski, Nisnevich or {\'e}tale) sheaves
\begin{equation}\label{eqn:GL*}
  0 \to \sK^M_{r,X} \xrightarrow{p^n} \sK^M_{r,X} \to
  W_n\Omega^r_{X, \log} \to 0.
\end{equation}
The multiplicative structure of ${\sK^M_{*,X}}/{p^m}$ and $W_m\Omega^\bullet_X$
together with the multplicativity of $\dlog$ induces a graded commutative ring structure
on $W_m\Omega^\bullet_{X, \log}$.

We end this subsection by recalling the Brauer group. For any Noetherian scheme $X$,
recall that $\Br(X)$ denotes the cohomological Brauer group $H^2(X, \sO^\times_X)$.
A result of Gabber (e.g., see \cite[Thm.~4.2.1]{CTS}) says that $\Br(X)$ coincides with
the Azumaya Brauer group $\Br'(X)$ if $X$ is quasi-projective over an affine scheme.
Since all schemes in this paper will satisfy this condition, we shall make no
distinction between the Azumaya and cohomological Brauer groups. Brauer group of
curves over local fields will be one the main objects of study in this paper.

\subsection{The set-up}\label{sec:setup}
The set-up for our duality theorem will be the following.
We fix a equicharacteristic complete discrete valuation ring (cdvr) $R$ with finite
residue field $\ff$. We let $\fm$ denote the maximal ideal of $R$ and $k$
denote the quotient field of $R$. It is well known that $R$ is canonically isomorphic
to the formal power series ring $\F_q[[\pi]]$, and $k = \F_q((\pi))$, where $q = p^n$ for
some prime number $p \ge 2$ and some positive integer $n \ge 1$. We shall fix this
isomorphism throughout our discussion. We let $S = \Spec(R)$ and let $\eta$ (resp.
$s$) denote the generic (resp. closed) point of $S$.

We let $\sX$ be a connected Noetherian regular scheme with a flat and projective
morphism $\F \colon \sX \to S$ of relative dimension $d \le 1$.
We let $X$ denote the generic
fiber and $\sX_s$ the (scheme-theoretic) closed fiber of $\F$. We shall assume that
$Y := (\sX_s)_\red$ is a simple normal crossing divisor on $\sX$. A morphism $\F$
satisfying these properties will be called a semi-stable model for the $k$-scheme $X$.
We let $u \colon X \inj \sX$ and $\iota \colon Y \inj \sX$ be the inclusions.
We let $f \colon X \to \Spec(k)$ and $g \colon \sX_s \to \Spec(\ff)$ denote the
structure maps. We shall assume that $X$ is geometrically connected and
generically smooth over $k$. Recall that the generic smoothness of a morphism of schemes
$f \colon Z \to W$ means that $f$ is smooth in a neighborhood of each generic point of
$Z$. All results in the remainder of \S~\ref{sec:Prelim} (and also in the next two
sections) will be proven under this set-up. We let $K$ denote the function field of
$\sX$.

The following elementary observation will be useful in this paper. We skip its
proof.

\begin{lem}\label{lem:Geom-int}
 $\sX_s$ (equivalently, $Y$) is geometrically connected over $\ff$.
\end{lem}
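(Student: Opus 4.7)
The plan is to compute $\F_*\sO_\sX = R$ and then conclude via Stein factorization. Once $\F_*\sO_\sX = R$ is established, the Stein factorization of the proper morphism $\F$ presents it as $\sX \to \Spec(\F_*\sO_\sX) = S$ with geometrically connected fibers, so $\sX_s$ is geometrically connected over $\ff$. Since $Y = (\sX_s)_{\red}$ agrees topologically with $\sX_s$ and this persists under the base change $\ov{\ff}/\ff$, the geometric connectedness of $Y$ follows automatically from that of $\sX_s$.

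To prepare for the computation of $M := \F_*\sO_\sX$, I would first verify $H^0(X, \sO_X) = k$. The generic fiber $X$ is an open subscheme of the regular scheme $\sX$, hence regular, and (being geometrically connected, so a fortiori connected) integral. Since $X$ is generically smooth over $k$, the function field $k(X)$ is separable over $k$, so $X$ is geometrically reduced. Combined with geometric connectedness and properness over $k$, this yields $H^0(X, \sO_X) = k$.

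Next I would argue that $M$ is free of rank one over $R$. It is finitely generated over $R$ because $\F$ is proper; it is $R$-torsion-free because $\sX$ is flat over the domain $R$, so multiplication by any nonzero $r \in R$ is injective on $\sO_\sX$ and hence on $M = H^0(\sX, \sO_\sX)$. Thus $M$ is free over the DVR $R$, of rank $\dim_k(M \otimes_R k) = \dim_k H^0(X, \sO_X) = 1$ by flat base change along $\Spec(k) \to \Spec(R)$. Upgrading to $R$-algebras: the injection $M \inj M \otimes_R k = k$ presents $M$ as a rank-$1$ $R$-submodule of $k$ containing $R$ via the structure map. Any such submodule has the form $\pi^{-n} R$ for some $n \ge 0$, but closure under multiplication forces $n = 0$, so $M = R$.

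The only substantive step is the identification $\F_*\sO_\sX = R$; once this is in hand, Stein factorization finishes the argument and there is no serious obstacle. The mild subtlety worth recording is the verification that $X$ is geometrically reduced (not just reduced), which is exactly where the hypothesis of generic smoothness enters.
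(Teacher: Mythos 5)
Your proof is correct. The paper explicitly skips the proof of this lemma ("This is elementary and we skip the proof"), so there is nothing to compare against, but your argument is the standard one the authors surely have in mind: generic smoothness gives that $K=k(X)$ is separable over $k$, whence $X$ is geometrically reduced and $H^0(X,\sO_X)=k$; flat base change and torsion-freeness then give $\F_*\sO_\sX=R$ (here one can shortcut your rank computation by noting that $\F_*\sO_\sX$ is a finite $R$-subalgebra of $\mathrm{Frac}(R)=k$ and $R$ is normal), and Zariski's connectedness theorem via Stein factorization yields geometric connectedness of every fiber. The passage from $\sX_s$ to $Y=(\sX_s)_\red$ is indeed immediate since the reduction is a universal homeomorphism.
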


\subsection{Zhao's duality theorem}\label{sec:ZD}
We now recall the $d =1$ case of the duality theorem of Zhao
(see \cite[Cor.~1.4.10, Thm.~3.1.1]{Zhao}) which will be
a key ingredient in the proof of \thmref{thm:Main-5}.
We fix an integer $n \ge 1$.

\begin{thm}\label{thm:Zhao*}
  \begin{enumerate}
  \item
  There is a canonical isomorphism of ${\Z}/{p^n}$-modules
  \[
   {\Tr}_{\sX} \colon H^{3}_Y(\sX, W_n\Omega^{2}_{\sX, \log}) \xrightarrow{\cong}
    {\Z}/{p^n}.\]
\item
  The graded commutative ring structure of $H^*(\sX, W_n\Omega^\bullet_{\sX, \log})$
  gives rise to a pairing
  \[
    H^i(\sX, W_n\Omega^j_{\sX, \log}) \times
    H^{3-i}_{Y}(\sX, W_n \Omega^{2-j}_{\sX, \log}) \to {\Z}/{p^n}
  \]
  such that the induced map
  \[
   {\Phi}^{ij}_\sX \colon H^i(\sX, W_n\Omega^j_{\sX, \log}) \to
    \Hom_{\Ab}(H^{3-i}_{Y}(\sX, W_n \Omega^{2-j}_{\sX, \log}), {\Z}/{p^n})
  \]
  is an isomorphism.
  \item
The map
  \[
    {\Psi}^{ij}_{\sX} \colon H^{i}_{Y}(\sX, W_n \Omega^{j}_{\sX, \log}) \to
 \Hom_{\Tab}(H^{3-i}(\sX, W_n \Omega^{2-j}_{\sX, \log}), {\Z}/{p^n})   
\]
is a topological isomorphism if $H^{i}_{Y}(\sX, W_n \Omega^{j}_{\sX, \log})$ is endowed
with the discrete topology and $H^i(\sX, W_n\Omega^j_{\sX, \log})$ is endowed with the
profinite topology by virtue of $\Phi^{ij}_{\sX}$ and the Pontryagin duality.
\end{enumerate}
\end{thm}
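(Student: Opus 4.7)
The plan is to assemble both the trace map and the duality maps from well-understood local pieces, and then upgrade the resulting algebraic duality to a topological one via the Pontryagin formalism of \S~\ref{sec:PDuality}. For (1), I would construct $\Tr_\sX$ by means of the coniveau/Gysin spectral sequence for the inclusion $Y \inj \sX$, which expresses $H^3_Y(\sX, W_n\Omega^2_{\sX,\log})$ as the cokernel of a residue map
\[
\bigoplus_{\eta \in Y_{(1)}} H^2_{\eta}(\sX, W_n\Omega^2_{\sX,\log}) \longrightarrow \bigoplus_{y \in Y_{(0)}} H^3_{y}(\sX, W_n\Omega^2_{\sX,\log}).
\]
At each closed point $y \in Y_{(0)}$, the henselization of $\sX$ at $y$ is a $2$-dimensional regular local ring with finite residue field, and Kato's higher local residue supplies a canonical isomorphism $H^3_{y}(\sX, W_n\Omega^2_{\sX,\log}) \cong \Z/p^n$. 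The trace $\Tr_\sX$ is defined as the sum of these local residues; its well-definedness (vanishing on the image of the differential above) is exactly Kato's reciprocity law for the $2$-dimensional complete local fields attached to the generic points $\eta \in Y_{(1)}$. That the resulting global trace is a single isomorphism onto $\Z/p^n$ uses the geometric connectedness of $Y$ provided by \lemref{lem:Geom-int}.

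For (2), the strategy is a two-step devissage. Using the standard short exact sequences relating $W_n\Omega^j_{\sX,\log}$ for different values of $n$ (induced by multiplication by $p$ and restriction on the pro-complex $W_\bullet\Omega^\bullet_{\log}$), one inducts on $n$ and reduces to the case $n=1$. In this base case, the snc hypothesis on $Y$ lets one stratify by components $Y_i$ and their pairwise intersections, and the cup-product pairing is compatible with the localization triangles attached to this stratification. This reduces the global duality to two local inputs: Milne's duality on the smooth closed strata over $\ff$, and the Kato--Sato local duality for the excellent henselian $2$-dimensional local rings $\sO^h_{\sX,y}$ at closed points of $Y$. The compatibility of these local dualities with the sum-of-residues definition of $\Tr_\sX$ then forces $\Phi^{ij}_\sX$ to be an algebraic isomorphism.

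Finally, (3) is a formal consequence of (2) combined with the machinery of \S~\ref{sec:PDuality}. Part (2) exhibits $H^i(\sX, W_n\Omega^j_{\sX,\log})$ as the algebraic $\Z/p^n$-dual of the discrete $p^n$-torsion group $H^{3-i}_Y(\sX, W_n\Omega^{2-j}_{\sX,\log})$, so endowing the former with the profinite topology transported across $\Phi^{ij}_\sX$ places it in $\pfd$, and \lemref{lem:Topological-0} then identifies its Pontryagin dual with exactly the latter; this is the content of $\Psi^{ij}_\sX$ being a topological isomorphism. The hardest step in this programme is the sum-of-residues construction of the trace together with the Kato reciprocity law that underwrites it; the equicharacteristic assumption of \S~\ref{sec:setup} makes $W_n\Omega^\bullet_{\log}$ on the henselian local rings explicitly computable, while the snc hypothesis on $Y$ ensures that the closed strata are smooth over $\ff$, so that Milne's finite-field duality can be invoked at the bottom of the stratification.
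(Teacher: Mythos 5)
Your part (3) is fine and agrees with the paper: it is the formal Pontryagin duality between discrete $p^n$-torsion groups and their profinite algebraic duals, given (2). Your route to part (1) is genuinely different from the one the paper outlines (the paper follows Zhao: a purity isomorphism reduces $H^3_Y(\sX, W_n\Omega^2_{\sX,\log})$ to Sato's trace isomorphism for the proper normal crossing curve $Y$ over $\ff$), and the residue-sum construction is workable, but you understate the decisive input. Once $H^3_Y$ is written as the cokernel of the residue map, Kato's reciprocity only shows that the sum of local invariants descends to the cokernel, and geometric connectedness of $Y$ only gives surjectivity. The injectivity of the resulting degree map is precisely the $p$-part of unramified class field theory (equivalently, Sato's duality) for the proper, possibly singular, normal crossing curve $Y$ over the finite field. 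Your construction does not avoid that input; it repackages it, and it must be cited explicitly.

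The genuine gap is in part (2). The pairing there is between $H^i(\sX, W_n\Omega^j_{\sX,\log}) \cong H^i(Y, \iota^* W_n\Omega^j_{\sX,\log})$ and $H^{3-i}(Y, R\iota^{!} W_n\Omega^{2-j}_{\sX,\log})$, and the sheaf $\iota^* W_n\Omega^j_{\sX,\log}$ is not intrinsic to $Y$: it remembers the two-dimensional structure of $\sX$ (already for $n=1$ the restriction $\iota^*\Omega^1_{\sX}$ has rank two on the curve $Y$). Milne's duality for the smooth strata of $Y$ over $\ff$ concerns the logarithmic sheaves of those strata and says nothing directly about these restricted sheaves, so the proposed devissage ``Milne on strata plus local duality at closed points'' does not assemble into the required statement. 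Moreover, even granting suitable local dualities, the claim that ``compatibility with the sum-of-residues trace forces $\Phi^{ij}_\sX$ to be an isomorphism'' is not an argument: to deduce perfectness of a global pairing from local ones you must dualize the relevant localization or Mayer--Vietoris sequences and verify that they remain exact, which for the infinite groups occurring here is exactly the delicate point (compare \lemref{lem:Exact-Dual}, which is needed for the analogous step on $X$). What actually makes part (2) work at $n=1$ in Zhao's proof --- and what your outline omits --- is coherent Grothendieck--Serre duality for the projective morphism $\sX \to S$ applied to the Artin--Schreier-type resolutions of $\Omega^j_{\sX,\log}$ as in ~\eqref{eqn:dlog-2-*}, together with the compatibility of the coherent trace with Sato's trace. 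That two-dimensional coherent duality is the engine of the perfectness and cannot be replaced by dualities internal to the special fiber.
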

\begin{proof}
  This result was proven by Zhao in \cite{Zhao} except that he additionally assumed
  that $\sX_s$ is reduced and $X \in \Sm_k$. But these
  assumptions are unnecessary as we shall explain by outlining Zhao's proof.

Recall that $\iota \colon Y \inj \sX$ is the inclusion of the reduced special fiber,
  which is assumed to be a simple normal crossing divisor in $\sX$. 
  The proof of (1) goes as follows. Using a duality theorem of Sato for normal
  crossing projective schemes over $\ff$, the
  proof of (1) is reduced to showing a purity isomorphism (i.e., Corollary~1.4.9 of op.
  cit.). The proof of this corollary has the following ingredients.
  The first is Proposition~1.2.4 of op.
  cit. for $Y$, which is a result of Sato that holds for any normal crossing scheme. 
  The second is Corollary~1.3.14 of op. cit., which is a result of Shiho that holds
  for any closed immersion of regular $\F_p$-schemes. The third is Proposition~1.4.6
  of op. cit., which is a result of Moser that holds for any finite type $\ff$-scheme.
  The final step is Theorem~1.4.4 of op. cit., whose proof also requires
  only the above three ingredients.

To prove (2), we let $\iota' \colon \sX_s \inj \sX$ and $\iota'' \colon Y \inj \sX_s$
  be the inclusions. We then have a cup product pairing (see \S~3.1 of op. cit.)
  \[
    H^i(Y, \iota^*(W_n\Omega^j_{\sX, \log})) \times H^{3-i}(Y, R\iota^{!}
    (W_n\Omega^{2-j}_{\sX, \log})) \to H^{3}(Y, R\iota^{!}
    (W_n\Omega^{2}_{\sX, \log})).
  \]
On the other hand, there are canonical maps
  \begin{equation}\label{eqn:PBC}
    \begin{array}{lll}
 H^i(\sX, W_n\Omega^j_{\sX, \log}) & \xrightarrow{\iota'^*} &
      H^i(\sX_s, \iota'^*(W_n\Omega^j_{\sX, \log})) \\
      & \xrightarrow{\iota''^*} &
    H^i(\sX_s, \iota''_* \circ \iota''^* \circ \iota'^*(W_n\Omega^j_{\sX, \log}))
      \\
      & \cong & H^i(\sX, \iota_* \circ \iota^*(W_n\Omega^j_{\sX, \log})) \\
& \cong & H^i(Y, \iota^*(W_n\Omega^j_{\sX, \log})).
          \end{array}
        \end{equation}

The first arrow is an isomorphism by the proper base change theorem for the
{\'e}tale cohomology of torsion sheaves (e.g., see \cite[Cor.~VI.2.7]{Milne-etale}).
The second arrow is the canonical counit of adjunction map which is
an isomorphism since the closed immersion $\iota''$ has empty complement and
$\iota''_*$ is exact. Since $H^{i}(Y, R\iota^{!} (W_n\Omega^{j}_{\sX, \log}))
\cong H^{i}_{Y}(\sX, W_n\Omega^{j}_{\sX, \log})$, the above pairing can be written as
\begin{equation}\label{eqn:Zhao*-1}
  H^i(\sX, W_n\Omega^j_{\sX, \log}) \times 
H^{3-i}_{Y}(\sX, W_n\Omega^{2-j}_{\sX, \log})
  \to H^{3}_{Y}(\sX, W_n\Omega^{2}_{\sX, \log}) 
\xrightarrow{{\Tr}_{\sX}} {\Z}/{p^n}.
\end{equation}

The proof of the perfectness of ~\eqref{eqn:Zhao*-1} 
is easily reduced to the case when $n = 1$ using the exact sequence
  (see \cite[Prop.~2.8, 2.12]{Shiho})
  \begin{equation}\label{eqn:Zhao*-0}
    0 \to W_{n-m}\Omega^j_{\sX, \log} \xrightarrow{\un{p}^m} W_{n}\Omega^j_{\sX, \log}
    \xrightarrow{R^{n-m}} W_{m}\Omega^j_{\sX, \log} \to 0
  \end{equation}
  and compatibility of ~\eqref{eqn:Zhao*-1} with respect to maps
  $H^i(\sX, W_{n}\Omega^j_{\sX, \log}) \xrightarrow{\un{p}}
  H^i(\sX, W_{n+1}\Omega^j_{\sX, \log})$ and $H^{3-i}_{Y}(\sX, W_{n+1}\Omega^{2-j}_{\sX, \log})
  \xrightarrow{R} H^{3-i}_{Y}(\sX, W_n\Omega^{2-j}_{\sX, \log})$. Here,
$\un{p}^m$ is induced by the multiplication map ${\sK^M_{j,\sX}}/{p^{n-m}}
  \xrightarrow{p^m} {\sK^M_{j,\sX}}/{p^{n}}$ via the isomorphism $\dlog$.

The proof of the case $n=1$ is a direct consequence of
  Cor.~2.5.2 (which is the coherent duality) and Prop.~3.1.2 (which proves the
  compatibility between the trace maps of Sato and that of the coherent duality)
  of op. cit..
  The latter result is a statement only about $S$. The proof of the former result
  does not depend on the special fiber at all, but it requires us to know that
  the locally free sheaf $\Omega^1_\sX$ has rank two. However, this can be checked
  by restricting to the open subscheme of $\sX$ where $X$ is smooth over $k$.
  The generic smoothness of $X$ over $k$ suffices for this purpose.
  \end{proof}

\section{The topology of logarithmic Hodge-Witt cohomology}
\label{sec:Duality}
As we mentioned in \S~\ref{sec:Intro}, we need to endow the logarithmic Hodge-Witt
cohomology of complete curves over local fields with suitable topology in order to  
extend Milne's duality for logarithmic Hodge-Witt
cohomology over finite fields to local fields. This is the goal of the present
section. We shall continue to work under the set-up of \S~\ref{sec:setup}.

\subsection{Some finiteness results}\label{sec:Vanishing}
In this subsection, we prove some technical lemmas regarding certain
logarithmic Hodge-Witt cohomology groups as preparation for proving \thmref{thm:Main-0}.
We let $n \ge 1$ be any integer. We begin with the following vanishing statement.
\begin{lem}\label{lem:Higher-diff}
 We have $W_n\Omega^j_{\sX, \log} = 0$ for $n \ge 1$ and $j \ge 3$.
  Furthermore, $H^i(\sX, W_n\Omega^j_{\sX, \log}) = 0$ for $i \ge 3$ and
  $j \ge 0$. 
\end{lem}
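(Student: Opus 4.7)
My plan for the proof would proceed in two parts, addressing the two assertions separately.

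For the first vanishing, I would work with étale stalks using the identification $W_n\Omega^j_{\sX, \log} \cong \sK^M_{j, \sX}/p^n$ supplied by the exact sequence \eqref{eqn:GL*} on the regular scheme $\sX$. At any geometric point $\bar x \in \sX$, the strict henselization $A = \sO^{sh}_{\sX, \bar x}$ is a strictly henselian regular local $\F_q$-algebra of Krull dimension at most $2$. The Bloch-Kato-Gabber isomorphism in its local version (due to Kerz) identifies $K^M_j(A)/p^n$ with $H^j_\et(\Spec A, \Z/p^n(j))$, and Gabber's bound on the étale $p$-cohomological dimension of a strictly henselian $\F_p$-scheme of Krull dimension $d$ yields $H^j_\et(\Spec A, \Z/p^n(j)) = 0$ for $j > d$, hence for $j \ge 3$. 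This gives the vanishing of $W_n\Omega^j_{\sX, \log}$ for $j \ge 3$.

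For the second vanishing, I would apply Zhao's duality \thmref{thm:Zhao*}(2), which yields the isomorphism
\[
H^i(\sX, W_n\Omega^j_{\sX, \log}) \xrightarrow{\cong} \Hom_{\Ab}(H^{3-i}_Y(\sX, W_n\Omega^{2-j}_{\sX, \log}), \Z/p^n).
\]
For $i \ge 4$, the index $3-i$ is negative and local cohomology in negative degrees vanishes, so the right-hand side is zero and therefore so is the source. For $i = 3$, one needs $H^0_Y(\sX, W_n\Omega^{2-j}_{\sX, \log}) = 0$ for every $j \ge 0$: when $j \ge 3$ this reduces to the first part, while for $j \in \{0, 1, 2\}$ the group $H^0_Y(\sX, \sF)$ is the kernel of the restriction $\Gamma(\sX, \sF) \to \Gamma(X, \sF)$, which vanishes because the Gersten resolution of $\sF = W_n\Omega^\ell_{\sX, \log}$ on the regular integral scheme $\sX$ forces global sections to inject into the stalk at the generic point, making it impossible for a non-zero section to be supported in the proper closed subscheme $Y$.

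The main obstacle I anticipate lies in the first paragraph's reduction to Milnor $K$-theory of strict henselizations, which relies on the local form of Bloch-Kato-Gabber combined with Gabber's cohomological-dimension bound. These are standard but non-trivial inputs, and once granted the calculation reduces to an elementary vanishing. The second part is essentially formal once Zhao's duality and the Gersten injectivity of log Hodge-Witt sheaves on regular schemes are at hand.
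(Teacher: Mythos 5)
Your treatment of the second assertion is correct and is genuinely different from the paper's: the paper uses the proper base change isomorphism $H^i(\sX, W_n\Omega^j_{\sX,\log}) \cong H^i(Y, \iota^*(W_n\Omega^j_{\sX,\log}))$ together with $cd_p(Y) \le 2$, whereas you invoke Zhao's duality to convert the claim into the vanishing of $H^{3-i}_Y$ in negative degrees (for $i \ge 4$) and of $H^0_Y$ via Gersten injectivity (for $i = 3$). That route works, granting the first assertion for the case $j \ge 3$. The problem is the first assertion.

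The step ``Gabber's bound on the \'etale $p$-cohomological dimension of a strictly henselian $\F_p$-scheme of Krull dimension $d$ yields $H^j_{\et}(\Spec(A), \Z/{p^n}(j)) = 0$ for $j > d$'' is not correct, and no cohomological-dimension bound can do this job. With the characteristic-$p$ convention used in this paper, $\Z/{p^n}(j) = W_n\Omega^j_{\log}[-j]$, so $H^j_{\et}(\Spec(A), \Z/{p^n}(j)) = H^0(\Spec(A), W_n\Omega^j_{A,\log})$ is the stalk itself; a strictly henselian local scheme has $cd = 0$, so a cohomological-dimension bound is vacuous in positive degrees and says nothing about $H^0$. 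As a concrete counterexample to the bound as you state it, take $\bar x$ to be the generic point of $\sX$: then $A$ is a separable closure of $K$, of Krull dimension $0$, yet $K^M_1(A)/p = A^\times/p \neq 0$. The true reason for the vanishing of $W_n\Omega^j_{\log}$ for $j \ge 3$ is not dimension-theoretic but differential: one needs $\Omega^j_{\sX} = 0$ for $j \ge 3$, equivalently that $\Omega^1_{\sX}$ is locally free of rank $2$, equivalently that $[F : F^p] = p^2$ for the relevant fraction fields. This is exactly the point where the paper has to work: it reduces to $n = 1$ via \eqref{eqn:Zhao*-0}, passes to $\Omega^j_{\sX}$ via \eqref{eqn:dlog-2-*}, uses local freeness and connectedness to restrict to the smooth locus of $X$ over $k$, and then computes with the cotangent sequence \eqref{eqn:Higher-diff-0} and the fact that $\Omega^1_R \cong R$ for $R = \F_q[[t]]$, so $\Omega^j_k = 0$ for $j \ge 2$. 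Note that this is also where the generic smoothness hypothesis on $X$ enters; your argument, if valid, would never use it, which is itself a warning sign. Your proof can be repaired by replacing the cohomological-dimension appeal with the $p$-rank computation $[K : K^p] = p^2$ (giving $K^M_j(K)/p^n = 0$ for $j \ge 3$ by Bloch--Kato--Gabber and induction on $n$) followed by Gersten injectivity of $W_n\Omega^j_{\sX,\log}$ into the pushforward from the generic point, but as written the key step fails.
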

\begin{proof}        
To prove the first claim, we can assume, using ~\eqref{eqn:Zhao*-0}, that $n = 1$.
By the exact sequence (where $\ov{\id}$ is the canonical quotient map and $\ov{F}$ is
induced by $F$, see \cite[\S~5.2]{Gupta-Krishna-Duality})
\begin{equation}\label{eqn:dlog-2-*}
0 \to W_{n}\Omega^j_{\sX, \log} \to
W_{n}\Omega^j_\sX \xrightarrow{\ov{\id} - \ov{F}}  
\frac{W_{n}\Omega^j_\sX}{dV^{n-1}\Omega^{j-1}_\sX} \to 0,
\end{equation}
it suffices to show that $\Omega^j_{\sX} = 0$ for $j \ge 3$.
Since $\sX$ is connected and $\Omega^j_\sX$ is locally free
(e.g., see \cite[Prop.~2.1.2]{Zhao}), it suffices to show that $\Omega^j_\sX$ is
generically zero. We can thus replace $\sX$ by the largest affine open subscheme 
$X_\sm \subset X$ such that $X_\sm$ is smooth over $k$. Since $X_\sm \neq
\emptyset$ by our assumption,  the claim is therefore reduced to showing that
$\Omega^j_U = 0$ for $j \ge 3$ if $U$ is any smooth curve over $k$.

Now, we look at the exact sequence (e.g., see \cite[Thm.~25.1]{Matsumura})
\begin{equation}\label{eqn:Higher-diff-0}
    0 \to \Omega^1_k \otimes_k \sO_U \xrightarrow{d} \Omega^1_U \to \Omega^1_{U/k} \to 0.
  \end{equation}
 Taking the higher exterior powers, we get $\Omega^j_{U} \cong \Omega^{j-1}_k \otimes_k
\Omega^1_{U/k}$ for $j \ge 2$. Hence, it suffices to show that $\Omega^j_{k} = 0$ for
$j \ge 2$. But this is well known because $R = \F_q[[t]]$ and a direct computation shows
that $R \xrightarrow{\cong} \Omega^1_R$, which implies that $\Omega^j_{R} = 0$ for
$j \ge 2$. Note that this also shows that $\Omega^2_X \cong \Omega^1_{X/k}$.

To prove the second claim, it suffices to show using
 ~\eqref{eqn:PBC} that $H^i(Y, \iota^*(W_n\Omega^j_{\sX, \log})) = 0$ for
 $i \ge 3$. But this is clear since $cd_p(Y) \le 2$
 (e.g., see \cite[Chap.~VI, Rem.~1.5(b)]{Milne-etale}).
\end{proof}

\begin{lem}\label{lem:Trace}
  The boundary map of the localization sequence
  \[
    \partial \colon H^2(X, W_n\Omega^2_{X, \log}) \to
    H^3_{Y}(\sX, W_n\Omega^2_{\sX, \log})
  \]
  is an isomorphism.
\end{lem}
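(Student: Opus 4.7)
The plan is to read off the isomorphism from the localisation sequence attached to the open-closed decomposition $X \xhookrightarrow{u} \sX \xhookleftarrow{\iota} Y$. Since $u^{*}(W_{n}\Omega^{j}_{\sX,\log}) = W_{n}\Omega^{j}_{X,\log}$, this sequence reads
\[
\cdots \to H^{2}(\sX, W_{n}\Omega^{2}_{\sX,\log}) \xrightarrow{u^{*}} H^{2}(X, W_{n}\Omega^{2}_{X,\log}) \xrightarrow{\partial} H^{3}_{Y}(\sX, W_{n}\Omega^{2}_{\sX,\log}) \to H^{3}(\sX, W_{n}\Omega^{2}_{\sX,\log}) \to \cdots
\]
The term on the right vanishes by \lemref{lem:Higher-diff}, so $\partial$ is automatically surjective. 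It therefore suffices to verify that $u^{*}$ is the zero map, for which I would simply show
\[
H^{2}(\sX, W_{n}\Omega^{2}_{\sX,\log}) = 0.
\]

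To do this, I would invoke Zhao's duality (\thmref{thm:Zhao*}(2)) in the complementary bidegree $(i,j) = (2,2)$, which provides an isomorphism
\[
\Phi^{22}_{\sX} \colon H^{2}(\sX, W_{n}\Omega^{2}_{\sX,\log}) \xrightarrow{\cong} \Hom_{\Ab}\bigl(H^{1}_{Y}(\sX, W_{n}\Omega^{0}_{\sX,\log}),\, \Z/p^{n}\bigr).
\]
The Artin--Schreier--Witt sequence (i.e., (\ref{eqn:dlog-2-*}) with $j=0$) identifies $W_{n}\Omega^{0}_{\sX,\log}$ with the constant sheaf $\Z/p^{n}$, so the task reduces to proving
\[
H^{1}_{Y}(\sX, \Z/p^{n}) = 0.
\]

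For the latter vanishing I would use the localisation sequence for the constant sheaf $\Z/p^{n}$:
\[
H^{0}(\sX, \Z/p^{n}) \to H^{0}(X, \Z/p^{n}) \to H^{1}_{Y}(\sX, \Z/p^{n}) \to H^{1}(\sX, \Z/p^{n}) \xrightarrow{u^{*}} H^{1}(X, \Z/p^{n}).
\]
Both $\sX$ and $X$ are connected (cf.\ \lemref{lem:Geom-int} together with the geometric connectedness of $X$), so the first arrow is the identity on $\Z/p^{n}$, hence surjective. For the last arrow, the key input is that $\sX$ is regular (hence normal) and $X \subset \sX$ is a dense open, which forces the induced map $\pi_{1}^{\et}(X) \twoheadrightarrow \pi_{1}^{\et}(\sX)$ to be surjective; dualising shows $u^{*}\colon H^{1}(\sX, \Z/p^{n}) \to H^{1}(X, \Z/p^{n})$ is injective. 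Sandwiched between a surjection on the left and an injection on the right, $H^{1}_{Y}(\sX, \Z/p^{n})$ must vanish. Tracing back through $\Phi^{22}_{\sX}$ gives $H^{2}(\sX, W_{n}\Omega^{2}_{\sX,\log}) = 0$, and the localisation sequence then forces $\partial$ to be an isomorphism. The only nontrivial step is invoking Zhao's duality in the right bidegree; everything else is a routine chase, and I expect no serious obstacle.
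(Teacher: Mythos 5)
Your argument is correct and is essentially the paper's own proof: both reduce the claim to the vanishing of $H^i(\sX, W_n\Omega^2_{\sX,\log})$ for $i\ge 2$, handle $i\ge 3$ by the cohomological-dimension bound (which you cite via \lemref{lem:Higher-diff} and the paper re-derives via ~\eqref{eqn:PBC}), and for $i=2$ invoke Zhao's duality to reduce to $H^1_Y(\sX,\Z/p^n)=0$, which is then killed by the same five-term localisation sequence using connectedness and the normality of $\sX$. No substantive differences.
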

\begin{proof}
The localization sequence in question is the exact sequence
  \begin{equation}\label{eqn:Trace-0}
    H^2(\sX, W_n\Omega^2_{\sX, \log}) \to  H^2(X, W_n\Omega^2_{X, \log}) \to
    H^3_{Y}(\sX, W_n\Omega^2_{\sX, \log}) \to H^3(\sX, W_n\Omega^2_{\sX, \log}).
  \end{equation}
  It suffices therefore to show that $H^i(\sX, W_n\Omega^2_{\sX, \log}) = 0$ for
  $i \ge 2$. By ~\eqref{eqn:PBC}, we have
  $H^i(\sX, W_n\Omega^2_{\sX, \log}) \cong H^i(Y, \iota^*(W_n\Omega^2_{\sX, \log}))$.
  But the latter group is zero for $i \ge 3$ since $cd_p(Y) = 2$, as we observed before. 

To prove the vanishing of $H^2(\sX, W_n\Omega^2_{\sX, \log})$, it suffices to show,
using \thmref{thm:Zhao*}, that $H^1_{Y}(\sX, {\Z}/{p^n}) = 0$.
To that end, we look at the exact sequence
  \begin{equation}\label{eqn:Trace-1}
    0 \to H^0(\sX, {\Z}/{p^n}) \xrightarrow{u^*} H^0(X, {\Z}/{p^n}) \to
    H^1_{Y}(\sX, {\Z}/{p^n}) \to H^1(\sX, {\Z}/{p^n}) \xrightarrow{u^*}
    H^1(X, {\Z}/{p^n}).
    \end{equation}
Since $\sX$ is integral and normal, the map $u^*$ on the left is an isomorphism
and the map $u^*$ on the right is injective. It follows that the middle group is zero.
\end{proof}

\begin{lem}\label{lem:H1-fin}   
  The group $H^i_{Y}(\sX, W_n\Omega^j_{\sX, \log})$ is finite unless
  $i \in \{2,3\}$ and $j \in \{0,1\}$.
\end{lem}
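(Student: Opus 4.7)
The plan is to argue by case analysis on $(i, j)$, combining Zhao's duality (Theorem~\ref{thm:Zhao*}), the localization sequence for the open-closed pair $(X, Y)$, and the Artin--Schreier--Witt exact sequence ~\eqref{eqn:dlog-2-*}. For $i = 0$ and any $j$, both $H^0(\sX, W_n\Omega^j_{\sX,\log})$ and $H^0(X, W_n\Omega^j_{X,\log})$ embed into the generic stalk $W_n\Omega^j_{K,\log}$ by the Gersten-type injectivity for the regular integral scheme $\sX$, so the restriction map is injective and the localization sequence yields $H^0_Y = 0$. For $(i, j) = (1, 0)$, the vanishing $H^1_Y(\sX, \Z/p^n) = 0$ was already established in the proof of \lemref{lem:Trace}.

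For the remaining non-exceptional cases, Theorem~\ref{thm:Zhao*}(3) identifies $H^i_Y(\sX, W_n\Omega^j_{\sX,\log})$ with the Pontryagin dual of the profinite group $H^{3-i}(\sX, W_n\Omega^{2-j}_{\sX,\log})$, so finiteness is equivalent to that of the latter. When $j = 2$, proper base change ~\eqref{eqn:PBC} gives $H^{3-i}(\sX, \Z/p^n) \cong H^{3-i}(Y, \Z/p^n)$, which is finite since $\Z/p^n$ is a constructible sheaf on the projective $\ff$-scheme $Y$. The vanishing $H^2(\sX, W_n\Omega^2_{\sX,\log}) = 0$ was noted in the proof of \lemref{lem:Trace}.

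The crux is the finiteness of $H^2(\sX, W_n\Omega^1_{\sX,\log})$. Using ~\eqref{eqn:dlog-2-*} with $j = 1$, the key inputs are the vanishings $H^2(\sX, W_n\Omega^1_\sX) = H^2(\sX, W_n\Omega^1_\sX/dV^{n-1}\sO_\sX) = 0$. These follow from the Leray spectral sequence for the projective morphism $\F \colon \sX \to S$ of relative dimension one: $R^2 \F_*$ vanishes on quasi-coherent sheaves since fibers have dimension one, and higher Zariski cohomology of quasi-coherent sheaves vanishes on the affine base $S$. Therefore, $H^2(\sX, W_n\Omega^1_{\sX,\log})$ is the cokernel of a map $\ov{\id} - \ov{F}$ between the finitely generated $W_n(R)$-modules $H^1(\sX, W_n\Omega^1_\sX)$ and $H^1(\sX, W_n\Omega^1_\sX/dV^{n-1}\sO_\sX)$; invoking the surjectivity of $\ov{\id}$ on $H^1$ (implied by the vanishing $H^2(\sX, dV^{n-1}\sO_\sX) = 0$), this cokernel is a quotient of $\coker(1 - F)$ on the finitely generated $W_n(R)$-module $H^1(\sX, W_n\Omega^1_\sX)$ for an appropriate Frobenius-semilinear endomorphism $F$.

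Since $R$ is a complete discrete valuation ring with finite residue field $\ff$, the Frobenius-semilinear operator $F$ is topologically contracting in the $\fm$-adic topology. A geometric-series argument then shows that $1 - F$ is surjective on $\fm H^1(\sX, W_n\Omega^1_\sX)$, so $\coker(1 - F)$ is identified with $(H^1/\fm H^1)/(1 - F)$, a cokernel of a Frobenius-semilinear map on a finite-dimensional $\ff$-vector space; classical Artin--Schreier--Witt finiteness for $\ff$ then forces this cokernel to be finite. The main obstacle is this last step, which requires carefully combining completeness of $R$, the vanishing of $R^2 \F_*$ on quasi-coherent sheaves, and Artin--Schreier--Witt theory over the finite residue field to reduce an a priori infinite fin-gen $W_n(R)$-module quotient to a finite one.
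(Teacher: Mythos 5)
Your proposal is correct in outline but handles the crucial case by a genuinely different route than the paper. You agree with the paper on the easy cases: $H^0_Y=0$ by Gersten injectivity, $H^1_Y(\sX,\Z/p^n)=0$ from the proof of \lemref{lem:Trace}, and finiteness for $j=2$ via \thmref{thm:Zhao*} and the finiteness of $H^*(Y,\Z/p^n)$ (you should also record the trivial edge cases $j\ge 3$ and $i\ge 4$, which vanish by \lemref{lem:Higher-diff} and degree reasons). For the remaining case $(i,j)=(1,1)$, the paper stays on the ``open'' side of the localization sequence and proves finiteness of $\coker(u_0^*)$ and $\Ker(u_1^*)$ by comparing units, Picard groups and Brauer groups of $\sX$ and $X$ via the Kummer sequence ~\eqref{eqn:GL*}, using ~\eqref{eqn:H1-fin-2}, ~\eqref{eqn:H1-fin-3} and the vanishing $\Br(\sX)=0$. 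You instead dualize via \thmref{thm:Zhao*} to reduce to finiteness of the profinite group $H^2(\sX, W_n\Omega^1_{\sX,\log})$ and compute it from ~\eqref{eqn:dlog-2-*}, using that $H^2$ of coherent sheaves on $\sX$ vanishes (projective of relative dimension one over an affine base) together with $\fm$-adic completeness and the Frobenius contraction over the finite residue field. Both arguments work; the paper's ties the finiteness to concrete arithmetic invariants, while yours is more self-contained cohomologically but leans harder on Zhao's duality and on de Rham--Witt module structure.

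Two points in your last step need tightening. First, $\ov{F}$ does not lift through the quotient $\ov{\id}\colon H^1(\sX,W_n\Omega^1_\sX)\to H^1(\sX,W_n\Omega^1_\sX/dV^{n-1}\sO_\sX)$, so the cokernel of $\ov{\id}-\ov{F}$ is not literally a quotient of $\coker(1-F)$ on the source module; instead, run the successive-approximation directly on the map $q-\ov{F}\colon M\to N$, using that $q$ is $R$-linear surjective (so $q(\fm^jM)=\fm^jN$) and that $\ov{F}(\fm^jM)\subseteq\fm^{pj}N$, to get $(q-\ov{F})(M)\supseteq\fm N$ and hence a finite cokernel as a quotient of $N/\fm N$. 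Second, the semilinearity and contraction estimates are clean only for $n=1$; for general $n$ the Witt-vector bookkeeping is delicate, and it is simpler to reduce to $n=1$ by induction using ~\eqref{eqn:Zhao*-0}, since $H^2(\sX,W_n\Omega^1_{\sX,\log})$ is squeezed between the corresponding groups for $W_{n-m}$ and $W_m$.
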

  \begin{proof}
In view of \lemref{lem:Higher-diff}, we can assume $j \le 2$. We have seen in the
    proof of \lemref{lem:Trace} that $H^i_{Y}(\sX, W_n\Omega^0_{\sX, \log}) = 0$ for
    $i \le 1$. We have $H^0_{Y}(\sX, W_n\Omega^j_{\sX, \log}) = 0$ for every $j \ge 0$
by the Gersten injectivity $W_n\Omega^j_{\sX, \log} \inj u_*(W_n\Omega^j_{X, \log})$
(see \cite[Thm.~4.1]{Shiho}). The finitude claim for
$H^i_{Y}(\sX, W_n\Omega^2_{\sX, \log})$ follows from \thmref{thm:Zhao*} since
$H^i(\sX, {\Z}/{p^n}) \cong H^i(Y, {\Z}/{p^n})$ (see ~\eqref{eqn:PBC})
and the latter group is finite by \cite[Thm.~1.2.2]{Sato}.
We are left with showing the finitude of $H^1_{Y}(\sX, W_n\Omega^1_{\sX, \log})$.

We look at the exact (localization) sequence
\begin{equation}\label{eqn:H1-fin-0}
  H^0(\sX, W_n\Omega^1_{\sX, \log}) \xrightarrow{u^*_0}
  H^0(X, W_n\Omega^1_{X, \log}) \to H^1_{Y}(\sX, W_n\Omega^1_{\sX, \log}) \hspace*{3cm}
\end{equation}
\[
  \hspace*{7cm} \to
  H^1(\sX, W_n\Omega^1_{\sX, \log}) \xrightarrow{u^*_1}
    H^1(X, W_n\Omega^1_{X, \log}).
 \]
It suffices to show that $\coker(u^*_0)$ and $\Ker(u^*_1)$ are finite.
To prove the finiteness of $\coker(u^*_0)$, note that ~\eqref{eqn:GL*} yields
the commutative diagram of exact sequences
\begin{equation}\label{eqn:H1-fin-1}
  \xymatrix@C.8pc{
  0 \ar[r] & {H^0(\sX, \sO^\times_{\sX})}/{p^n} \ar[r] \ar[d]_-{u^*} &
  H^0(\sX, W_n\Omega^1_{\sX, \log}) \ar[r] \ar[d]^-{u^*_0} & _{p^n} \Pic(\sX)
  \ar[r] \ar[d]^-{u^*} & 0 \\
 0 \ar[r] & {H^0(X, \sO^\times_{X})}/{p^n} \ar[r] &
 H^0(X, W_n\Omega^1_{X, \log}) \ar[r] & _{p^n} \Pic(X) \ar[r] & 0.}
\end{equation}

We next note that there are exact sequences
\begin{equation}\label{eqn:H1-fin-2}
  0 \to H^0(\sX, \sO^\times_{\sX}) \xrightarrow{u^*} H^0(X, \sO^\times_{X})
\xrightarrow{v_k} \Z \to 0
\end{equation}
and
\begin{equation}\label{eqn:H1-fin-3}
  0 \to \Z \xrightarrow{\iota'_*} {\underset{Z \in \irr(Y)}\bigoplus} \Z
  \xrightarrow{\cyc} \Pic(\sX) \xrightarrow{u^*} \Pic(X) \to 0,
\end{equation}
where the first sequence follows from the geometric integrality condition
(cf. \lemref{lem:Geom-int}). The second sequence is well known (e.g., see
\cite{Shafarevich}) if we let $\iota'_*(1) = [\sX_s]$, the class of the Weil divisor
associated to the special fiber $\sX_s$ and $\cyc$ denotes the cycle class map.
It easily follows from these exact sequences that the kernels and cokernels
of the left and right vertical arrows in ~\eqref{eqn:H1-fin-1} are finite. Hence, so are
the kernel and cokernel of the middle vertical arrow (the assertion about the kernel
will be used only in the next lemma).

To prove the finitude of $\Ker(u^*_1)$, we look at the commutative diagram
with exact rows (following ~\eqref{eqn:GL*}) 
\begin{equation}\label{eqn:H1-fin-4}
  \xymatrix@C.8pc{
0 \ar[r] & {\Pic(\sX)}/{p^n} \ar[r] \ar@{->>}[d]_-{u^*} & H^1(\sX, W_n\Omega^1_{\sX, \log}) 
\ar[r] \ar[d]^-{u^*_1} & _{p^n} \Br(\sX) \ar[r] \ar[d]^-{u^*} & 0 \\
0 \ar[r] & {\Pic(X)}/{p^n} \ar[r] & H^1(X, W_n\Omega^1_{X, \log}) 
\ar[r] & _{p^n} \Br(X) \ar[r] & 0.}
\end{equation}
It follows from \cite[Thm.~5.6.1(v), 10.3.1(ii)]{CTS} that
\begin{equation}\label{eqn:H1-fin-5}
  \Br(\sX) = \Br(\sX_s) = \Br(Y) = 0.
  \end{equation}
 On the other hand, it follows from ~\eqref{eqn:H1-fin-3} that the kernel of the
  left vertical arrow in ~\eqref{eqn:H1-fin-4} is finite. We conclude that
  $\Ker(u^*_1)$ is finite. We have thus shown that $H^1_{Y}(\sX, W_n\Omega^1_{\sX, \log})$
  is finite. This concludes the proof.
\end{proof}

\begin{remk}\label{remk:H1-fin-8}
  It can be shown that each of the groups $H^i_{Y}(\sX, W_n\Omega^j_{\sX, \log})$ is
  infinite in general if $i \in \{2,3\}$ and $j \in \{0,1\}$.
\end{remk}

\begin{lem}\label{lem:Fin-ker}
For all $i,j \ge 0$, the kernel of $u^* \colon
    H^i(\sX, W_n\Omega^j_{\sX, \log}) \to H^i(X, W_n\Omega^j_{X, \log})$ is finite.
  \end{lem}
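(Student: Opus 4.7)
The plan is to exploit the localization sequence together with the finiteness results just established. Writing
\[
  H^{i}_{Y}(\sX, W_n\Omega^{j}_{\sX,\log}) \xrightarrow{\rho} H^i(\sX, W_n\Omega^j_{\sX, \log}) \xrightarrow{u^*} H^i(X, W_n\Omega^j_{X, \log}),
\]
we have $\Ker(u^*) = \mathrm{Im}(\rho)$, so it suffices to show that either the source or the target of $\rho$ is finite in every case. I would split into cases on $(i,j)$.

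First, for $i \ge 3$ and any $j \ge 0$, \lemref{lem:Higher-diff} gives $H^i(\sX, W_n\Omega^j_{\sX,\log}) = 0$, so $\Ker(u^*) = 0$ trivially. Next, when $(i,j)$ is such that $i \notin \{2,3\}$ or $j \notin \{0,1\}$, \lemref{lem:H1-fin} tells us that $H^{i}_{Y}(\sX, W_n\Omega^{j}_{\sX,\log})$ is already finite, so its image $\Ker(u^*)$ is finite. This leaves the genuinely interesting cases $(i,j) = (2,0)$ and $(i,j) = (2,1)$, where both source and target of $\rho$ can be infinite.

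For these two cases, the idea is to bound the target $H^2(\sX, W_n\Omega^j_{\sX,\log})$ itself, using Zhao's duality (\thmref{thm:Zhao*}(2)). It yields isomorphisms
\[
  H^{2}(\sX, W_n\Omega^{j}_{\sX,\log}) \;\cong\; \Hom_{\Ab}\!\bigl(H^{1}_{Y}(\sX, W_n\Omega^{2-j}_{\sX,\log}),\, {\Z}/{p^n}\bigr)
\]
for $j=0,1$. In each case the group $H^{1}_{Y}(\sX, W_n\Omega^{2-j}_{\sX,\log})$ has index $(1,2)$ or $(1,1)$, which lies outside the exceptional range of \lemref{lem:H1-fin}, and is therefore finite. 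Hence $H^{2}(\sX, W_n\Omega^{j}_{\sX,\log})$ is finite, and \emph{a fortiori} so is $\Ker(u^*)$.

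The hard work has all been done in the preceding lemmas; the only step requiring some care is invoking Zhao's duality correctly with the right indexing, noting that the dualizing pair for $W_n\Omega^j$ is $W_n\Omega^{2-j}$ (as $\dim \sX = 2$) and that the Hom is against the discrete finite group ${\Z}/{p^n}$. No further input about the model $\sX$ or the boundary $Y$ beyond what has already been established is needed.
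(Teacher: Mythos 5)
Your proof is correct and is essentially the paper's argument: the paper's proof of this lemma is literally ``Combine \thmref{thm:Zhao*} and \lemref{lem:H1-fin},'' and your case analysis (vanishing for $i\ge 3$ via \lemref{lem:Higher-diff}, finiteness of $H^i_Y$ outside the exceptional range, and Zhao's duality identifying $H^2(\sX,W_n\Omega^j_{\sX,\log})$ with the dual of the finite group $H^1_Y(\sX,W_n\Omega^{2-j}_{\sX,\log})$ for $j\in\{0,1\}$) is exactly how that combination is meant to be carried out. The indexing in the duality is applied correctly, so nothing is missing.
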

  \begin{proof}
    Combine \thmref{thm:Zhao*} and \lemref{lem:H1-fin}.
    \end{proof}
    
    \subsection{Logarithmic Hodge-Witt cohomology as topological
      groups}\label{sec:Top-X}
    We shall now endow the logarithmic Hodge-Witt cohomology of $X$ with
    torsion-by-profinite topology. 
For $i, j \ge 0$, we consider the localization sequence
\begin{equation}\label{eqn:Main-ex}
  \cdots \to H^i_{Y}(\sX, W_n\Omega^j_{\sX, \log}) \xrightarrow{\iota_*}
  H^i(\sX, W_n\Omega^j_{\sX, \log}) \xrightarrow{u^*}
  H^i(X, W_n\Omega^j_{X, \log}) \hspace*{1cm}
\end{equation}
\[
  \hspace*{9cm}
  \xrightarrow{\partial} H^{i+1}_{Y}(\sX, W_n\Omega^j_{\sX, \log}) \to \cdots .
\]
By \lemref{lem:Fin-ker}, it breaks into exact sequences
\begin{equation}\label{eqn:Main-ex-0}
H^i_{Y}(\sX, W_n\Omega^j_{\sX, \log}) \xrightarrow{\iota_*}
H^i(\sX, W_n\Omega^j_{\sX, \log}) \xrightarrow{u^*} F_{ij} \to 0;
\end{equation}
\begin{equation}\label{eqn:Main-ex-1}
  0 \to F_{ij} \to H^i(X, W_n\Omega^j_{X, \log}) \xrightarrow{\partial}
  H^{i+1}_{Y}(\sX, W_n\Omega^j_{\sX, \log})
  \end{equation}
  such that $\Ker(u^*)$ is finite. 

Recall from \thmref{thm:Zhao*} that each $H^i_{Y}(\sX, W_n\Omega^j_{\sX, \log})$ is
a discrete torsion group of exponent $p^n$ and $H^i(\sX, W_n\Omega^j_{\sX, \log})$
is a profinite topological abelian group. It follows that ${\rm Image}(\iota_*)$ is
a finite closed subgroup of $H^i(\sX, W_n\Omega^j_{\sX, \log})$. In particular, its 
subspace topology induced from $H^i(\sX, W_n\Omega^j_{\sX, \log})$ as well as
its quotient topology induced from $H^i_{Y}(\sX, W_n\Omega^j_{\sX, \log})$ is discrete.

For $i, j \ge 0$, we endow $F_{ij}$ with the quotient topology via
~\eqref{eqn:Main-ex-0}. Then $F_{ij}$ becomes a quotient of the profinite group
$H^i(\sX, W_n\Omega^j_{\sX, \log})$ by a closed subgroup.
It follows that $F_{ij}$ is a profinite abelian group and ~\eqref{eqn:Main-ex-0}
is an exact sequence of topological abelian groups.

We endow $H^i(X, W_n\Omega^j_{X, \log})$ with the unique topology for which
$F_{ij}$ is an open subgroup of $H^i(X, W_n\Omega^j_{X, \log})$ and
${H^i(X, W_n\Omega^j_{X, \log})}/{F_{ij}} = \Ker(\iota_*)$ is a discrete
quotient of $H^i(X, W_n\Omega^j_{X, \log})$.
This endows $H^i(X, W_n\Omega^j_{X, \log})$ with the structure of a topological
abelian group such that ~\eqref{eqn:Main-ex-1} is an exact sequence of topological
abelian groups. If we combine this with everything we have shown in
\S~\ref{sec:ZD} and \S~\ref{sec:Vanishing}, we get the following.

\begin{prop}\label{prop:Top-Coh-X}
  For each $i, j \ge 0$, the group $H^i(X, W_n\Omega^j_{X, \log})$ is
  equipped with the structure of a torsion-by-profinite topological abelian group
  satisfying the following.
  \begin{enumerate}
  \item
   ~\eqref{eqn:Main-ex} is an exact sequence in $\pfd$ for
   every $j \ge 0$.
   \item
    $H^i(X, W_n\Omega^j_{X, \log}) = 0$ unless $0 \le i, j \le 2$.
  \item
    $H^i(X, W_n\Omega^j_{X, \log})$ is profinite unless $i \in \{1,2\}$ and
  $j \in \{0,1\}$.
\item
  $H^2(X, W_n\Omega^0_{X, \log})$ is a discrete torsion group.
  \item
    $H^1(X, W_n\Omega^0_{X, \log})$ is an infinite discrete torsion group.
  \item
    $H^1(X, W_n\Omega^1_{X, \log})$ is neither a profinite nor a discrete torsion group.
  \end{enumerate}
  \end{prop}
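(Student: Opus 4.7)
Most of the substantive work has already been done in the paragraphs preceding the statement: the topology on each $H^i(X, W_n\Omega^j_{X,\log})$ is pinned down there, and what remains is to verify the six listed properties. For (1), continuity of the maps in \eqref{eqn:Main-ex} is immediate from the construction---$\iota_*$ has discrete source; $u^*$ factors as $H^i(\sX, W_n\Omega^j_{\sX,\log}) \surj F_{ij} \inj H^i(X, W_n\Omega^j_{X,\log})$; and $\partial$ vanishes on the open subgroup $F_{ij}$, so factors through a discrete quotient. The strengthening to exactness in $\pfd$ follows from \lemref{lem:Quotient-group}, for which \lemref{lem:Fin-ker} supplies the finiteness of $\Ker(u^*)$. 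For (2), the vanishing $W_n\Omega^j_{X,\log} = 0$ for $j \geq 3$ is \lemref{lem:Higher-diff}, and vanishing for $i \geq 3$ follows from the bound $cd_p(X) \leq 2$, which in turn comes from $\dim X = 1$ and $cd_p(k) = 1$ for the equicharacteristic local field $k$. For (3), the discrete quotient $H^i(X, W_n\Omega^j_{X,\log})/F_{ij}$ embeds into $H^{i+1}_Y(\sX, W_n\Omega^j_{\sX,\log})$, which by \lemref{lem:H1-fin} is finite outside the indicated bad range; in the good range $H^i(X, W_n\Omega^j_{X,\log})$ is then an extension of a finite discrete group by the profinite $F_{ij}$, and hence itself profinite.

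For (4) and (5), note $W_n\Omega^0_{X,\log} = \Z/p^n$. Proper base change identifies $H^i(\sX, \Z/p^n) \cong H^i(Y, \Z/p^n)$, which is finite because $Y$ is proper over the finite field $\ff$. Hence $F_{i0}$ is finite, and $H^i(X, \Z/p^n)$ is the extension of a discrete group by a finite group, so discrete; torsion of exponent $p^n$ is automatic. For the infinitude in (5), I would apply the long exact sequence associated with the Artin--Schreier--Witt sequence $0 \to \Z/p^n \to W_n\sO_X \xrightarrow{F-1} W_n\sO_X \to 0$: since $X$ is geometrically connected one has $H^0(X, W_n\sO_X) = W_n(k)$, yielding a canonical injection $H^1(k, \Z/p^n) = W_n(k)/(F-1)W_n(k) \inj H^1(X, \Z/p^n)$, and $H^1(k, \Z/p^n)$ is infinite for the characteristic $p$ local field $k = \F_q((\pi))$ by local class field theory.

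The main obstacle is (6), where both non-profiniteness and non-discreteness of $H^1(X, W_n\Omega^1_{X,\log})$ trace back to identifying $F_{11}$ as an infinite profinite group. Using \eqref{eqn:H1-fin-5} that $\Br(\sX) = 0$, the sequence \eqref{eqn:H1-fin-4} applied to $\sX$ gives $H^1(\sX, W_n\Omega^1_{\sX,\log}) \cong \Pic(\sX)/p^n$. Through \eqref{eqn:H1-fin-3}, this is infinite provided $\Pic(X)/p^n$ is so; and $\Pic(X)/p^n$ is indeed infinite because the formal group $\wh{J}$ of the Jacobian $J$ over $k = \F_q((\pi))$ has $\wh{J}(\fm)/p$ of infinite $\ff$-rank---a direct computation exploiting that in characteristic $p$ the isogeny $[p]$ is purely inseparable and acts on formal parameters by $T \mapsto T^{p^h}$ modulo higher-order terms. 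Consequently $F_{11}$, being the quotient of the infinite profinite group $H^1(\sX, W_n\Omega^1_{\sX,\log})$ by a finite subgroup (from \lemref{lem:H1-fin}), is infinite profinite, which rules out discreteness. For non-profiniteness, the discrete quotient $H^1(X, W_n\Omega^1_{X,\log})/F_{11}$ equals the kernel of $H^2_Y(\sX, W_n\Omega^1_{\sX,\log}) \to H^2(\sX, W_n\Omega^1_{\sX,\log})$; the target is finite because its Pontryagin dual $H^1_Y(\sX, W_n\Omega^1_{\sX,\log})$ is finite by \lemref{lem:H1-fin}, while \thmref{thm:Zhao*} dualizes $H^2_Y$ to the infinite group $\Pic(\sX)/p^n$, so this kernel is an infinite discrete quotient of $H^1(X, W_n\Omega^1_{X,\log})$---incompatible with the compactness that profiniteness would demand.
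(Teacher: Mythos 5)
Your handling of (1)--(4) is essentially the paper's: the authors treat (1)--(3) and the discreteness in (4)--(5) as consequences of the construction of the topology in \S\ref{sec:Top-X} together with Lemmas~\ref{lem:H1-fin} and \ref{lem:Fin-ker}, and for (4) they simply cite \cite[Thm.~1.2.2]{Sato} for the finiteness of $H^i(Y,{\Z}/{p^n})$ where you argue it directly. The genuine divergence is in (5) and (6). The paper offers no argument for the infinitude in (5), and for (6) it invokes as a black box the perfectness of the Brauer--Manin pairing for $X$ from \cite[\S 9]{Saito-Invent}, which forces $_{p^n}\Br(X)\cong(\Pic(X)/{p^n})^\star$ to be infinite discrete and $\Pic(X)/{p^n}$ to be infinite profinite. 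Your route is more self-contained: you get (5) from the connecting map of the Artin--Schreier--Witt sequence and the infinitude of $k/\wp(k)$, and for (6) you reduce everything to the single external input that $J(k)/p$ is infinite, then propagate it through \eqref{eqn:H1-fin-3}, \eqref{eqn:H1-fin-4}, the vanishing \eqref{eqn:H1-fin-5} and Zhao's duality (Theorem~\ref{thm:Zhao*}) to conclude that $F_{11}$ is infinite profinite while the discrete quotient $\Ker(\iota_*)\subset H^2_{Y}(\sX,W_n\Omega^1_{\sX,\log})$ is infinite. That keeps the proposition independent of Saito's theorem, at the cost of having to supply the formal-group input yourself; both index checks against $\Phi^{ij}_\sX$ and $\Psi^{ij}_\sX$ are correct.

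Two caveats. First, your justification that $J(k)/p$ is infinite is loose: $[p]$ is not purely inseparable on an ordinary abelian variety, and $T\mapsto T^{p^h}$ describes $[p]$ only on a one-dimensional formal group after a coordinate change. The correct statement is that $d[p]=0$ forces $[p]|_{\wh{J}}$ to factor through the relative Frobenius, whose image in $\wh{J}(\fm)\cong\fm^{\oplus g}$ has infinite index over $k=\F_q((\pi))$; this is the standard fact underlying \cite[III.7]{Milne-Duality} and should be cited or proved cleanly rather than gestured at. Second, both your argument and the paper's tacitly require $\dim J\ge 1$: for $X=\P^1_k$ one has $\Pic(X)/{p^n}\cong{\Z}/{p^n}$ and $_{p^n}\Br(X)\cong~_{p^n}\Br(k)\cong{\Z}/{p^n}$, so $H^1(X,W_n\Omega^1_{X,\log})$ is finite and (6) fails as stated. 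This is a defect of the proposition itself (compare the hedge ``in general'' in Remark~\ref{remk:H1-fin-8}) rather than of your proof, but you should state the genus hypothesis explicitly where you use it.
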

  \begin{proof}
    We only need to explain (4) and (6). But the former follows from
    \cite[Thm.~1.2.2]{Sato} and the isomorphism $H^2(\sX, {\Z}/{p^n}) \cong
    H^2(Y, {\Z}/{p^n})$ while the latter follows from the perfectness of
    the Brauer-Manin pairing for $X$ (e.g., see \cite[\S~9]{Saito-Invent}).
\end{proof}

Using \lemref{lem:Topological-0}, \propref{prop:Top-Coh-X} and considering the
  Pontryagin duals associated to
  ~\eqref{eqn:Main-ex}, we get a chain complex in $\pfd$:
  \begin{equation}\label{eqn:Exact-Dual-0}
  \cdots \to H^{i+1}_{Y}(\sX, W_n\Omega^j_{\sX, \log})^\star \xrightarrow{\partial^\star}
  H^i(X, W_n\Omega^j_{X, \log})^\star \xrightarrow{(u^*)^\star}
  H^i(\sX, W_n\Omega^j_{\sX, \log})^\star \hspace*{1cm}
\end{equation}
\[
  \hspace*{9cm}
  \xrightarrow{(\iota_*)^\star} H^{i}_{Y}(\sX, W_n\Omega^j_{\sX, \log})^\star \to
  \cdots .
\]

\begin{lem}\label{lem:Exact-Dual}
  The sequence ~\eqref{eqn:Exact-Dual-0} is exact.
\end{lem}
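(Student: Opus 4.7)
The plan is to break the long exact localization sequence \eqref{eqn:Main-ex} into three short exact sequences in $\pfd$, dualize each of them using \lemref{lem:Dual-ex}, and splice the resulting dual short exact sequences together. Concretely, set $B_{ij} := {\rm Image}(\iota_*) = \Ker(u^*)$ and $A_{ij} := \Ker(\iota_*) = {\rm Image}(\partial \colon H^{i-1}(X, W_n\Omega^j_{X,\log}) \to H^i_Y(\sX, W_n\Omega^j_{\sX,\log}))$. Then \eqref{eqn:Main-ex} factors through the three short exact sequences
\begin{equation*}
{\rm (E1)} \quad 0 \to A_{ij} \to H^i_Y(\sX, W_n\Omega^j_{\sX,\log}) \to B_{ij} \to 0,
\end{equation*}
\begin{equation*}
{\rm (E2)} \quad 0 \to B_{ij} \to H^i(\sX, W_n\Omega^j_{\sX,\log}) \to F_{ij} \to 0,
\end{equation*}
\begin{equation*}
{\rm (E3)} \quad 0 \to F_{ij} \to H^i(X, W_n\Omega^j_{X,\log}) \to A_{i+1,j} \to 0.
\end{equation*}

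By \propref{prop:Top-Coh-X} and the discussion in \secref{sec:Top-X}, the groups $A_{ij}$ and $B_{ij}$ are discrete torsion (as subgroups of the discrete torsion group $H^i_Y(\sX, W_n\Omega^j_{\sX,\log})$), the group $B_{ij}$ is moreover finite, and consequently $F_{ij}$ is profinite (as the quotient of a profinite group by a finite closed subgroup). In (E3), the subgroup $F_{ij}$ is open in $H^i(X, W_n\Omega^j_{X,\log})$ by the very construction of the topology, and the quotient $A_{i+1,j}$ is discrete torsion; this is exactly the torsion-by-profinite presentation of $H^i(X, W_n\Omega^j_{X,\log})$ recorded in \secref{sec:Top-X}. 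So each of (E1), (E2), (E3) is a short exact sequence in $\pfd$.

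To apply \lemref{lem:Dual-ex}, I verify its hypothesis $G''_{\pf} \subseteq \beta(G_{\pf})$ for each (E$i$). For (E1) both source and quotient are discrete torsion, so one may take $G_\pf = G''_\pf = 0$ and the hypothesis is vacuous. For (E2) both source and quotient are profinite and $\beta$ is surjective, giving $G''_\pf = G'' = \beta(G_\pf)$. For (E3) the quotient $A_{i+1,j}$ is discrete torsion, so $G''_\pf$ may be taken to be $0$. Hence \lemref{lem:Dual-ex} yields three dual short exact sequences in $\pfd$, which I denote (E1$^\star$), (E2$^\star$), (E3$^\star$).

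To conclude, splice these dualized short exact sequences. Since $u^*$ factors as $H^i(\sX, W_n\Omega^j_{\sX,\log}) \surj F_{ij} \inj H^i(X, W_n\Omega^j_{X,\log})$, its Pontryagin dual $(u^*)^\star$ coincides with the composition of the surjection from (E3$^\star$) with the injection from (E2$^\star$); likewise $(\iota_*)^\star$ factors through $B_{ij}^\star$ via (E2$^\star$) and (E1$^\star$), and $\partial^\star$ factors through $A_{i+1,j}^\star$ via (E3$^\star$) at the index $i$ together with (E1$^\star$) at the index $i+1$. Exactness of \eqref{eqn:Exact-Dual-0} at each node now follows by a routine diagram chase from the exactness of the three (E$i^\star$). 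The only non-formal step is the verification of the hypothesis of \lemref{lem:Dual-ex} for (E3), and this is precisely what the topology introduced on $H^i(X, W_n\Omega^j_{X,\log})$ in \secref{sec:Top-X} was engineered to guarantee.
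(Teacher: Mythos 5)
Your proof is correct and follows essentially the same route as the paper: both break \eqref{eqn:Main-ex} into the three short exact sequences with $\Ker(\iota_*)$, $\Ker(u^*)$ and $F_{ij}$ as the intermediate terms, check the hypothesis of \lemref{lem:Dual-ex} for each using the topologies fixed in \S~\ref{sec:Top-X}, and splice the dualized sequences. Your explicit verification of the condition $G''_{\pf}\subseteq\beta(G_{\pf})$ case by case is a useful expansion of a step the paper leaves implicit, but it is not a different argument.
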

\begin{proof}
 We look at the exact sequences
  \begin{equation}\label{eqn:Exact-Dual-1}
0 \to \Ker(\iota_*) \to H^{i}_{Y}(\sX, W_n\Omega^j_{\sX, \log})
\to \Ker(u^*) \to 0;
\end{equation}
\begin{equation}\label{eqn:Exact-Dual-2}
  0 \to \Ker(u^*) \to  H^{i}(\sX, W_n\Omega^j_{\sX, \log}) \to F_{ij} \to 0;
\end{equation}
\begin{equation}\label{eqn:Exact-Dual-3}
0 \to F_{ij} \to H^{i}(X, W_n\Omega^j_{X, \log}) \to \Ker(\iota_*) \to 0.
\end{equation}

It follows from \propref{prop:Top-Coh-X} and the definition of the topologies of the
various groups in ~\eqref{eqn:Main-ex} that the above three are exact
sequences in $\pfd$ and they satisfy the hypothesis of \lemref{lem:Dual-ex}.
We can therefore apply this lemma to get short exact sequences
\begin{equation}\label{eqn:Exact-Dual-4}
0 \to  \Ker(u^*)^\star \to H^{i}_{Y}(\sX, W_n\Omega^j_{\sX, \log})^\star \to  
\Ker(\iota_*)^\star \to 0;
\end{equation}
\begin{equation}\label{eqn:Exact-Dual-5}
  0 \to \Ker(\iota_*)^\star \to H^{i}(X, W_n\Omega^j_{X, \log})^\star \to
  (F_{ij})^\star \to 0;
\end{equation}
\begin{equation}\label{eqn:Exact-Dual-6}
  0 \to (F_{ij})^\star \to H^{i}(\sX, W_n\Omega^j_{\sX, \log})^\star \to 
  \Ker(u^*)^\star \to 0.
  \end{equation}
The desired result follows by piecing these exact sequences together.
\end{proof}

\section{Duality for logarithmic Hodge-Witt
  cohomology}\label{sec:HWD}
We continue to work under the set-up of \S~\ref{sec:setup}.
In this section, we shall prove the duality theorem for the logarithmic Hodge-Witt
cohomology of the proper curve $X$ over $k$. We begin by constructing the trace map.
For a scheme $Z$, we let $\sD_\et(Z)$ denote the derived category of {\'e}tale
sheaves of abelian groups on $Z$. We let $\sD_\et(Z, {\Z}/m)$ denote the
derived category of {\'e}tale sheaves of ${\Z}/m$-modules on $Z$ with
$m \in \sO^\times(Z)$.
For a local ring $A$, we shall let $A^h$ (resp.
$A^{sh}$, resp. $\wh{A}$) denote the Henselization (resp. strict Henselization,
resp. completion) of $A$ with respect to its maximal ideal. For a closed point
$x \in X$, we shall let $K_x$ (resp. $K^{sh}_x$, resp. $\wh{K}_x$) denote the
quotient field of the Henselization $\sO^h_{X,x}$ (resp. strict Henselization
$\sO^{sh}_{X,x}$, resp. completion $\wh{\sO_{X,x}}$) of $\sO_{X,x}$.
We fix an integer $n \ge 1$.

\subsection{The trace maps}\label{sec:TRACE}
We let $j \colon \Spec(K) \inj X$ be the inclusion of the generic point of $X$ and let
$\iota_x \colon \Spec(k(x)) \inj X$ be the inclusion of a closed point.
By the Gersten resolution of the logarithmic Hodge-Witt sheaves, one has an exact
sequence of {\'e}tale sheaves
\begin{equation}\label{eqn:Gersten-0}
  0 \to W_n\Omega^2_{X, \log} \to j_*(W_n\Omega^2_{K,\log}) \xrightarrow{\res}
  {\underset{x \in X_{(0)}}
    \bigoplus} (\iota_x)_*(W_n\Omega^1_{k(x),\log}) \to 0,
\end{equation}
where $\res$ is the sum of residue maps in Kato's complex
(see \cite[Thm.~5.2]{Shiho}).
It follows that there is an exact triangle
\begin{equation}\label{eqn:Gersten-1}
  0 \to \left({\underset{x \in X_{(0)}} \bigoplus}
    (\iota_x)_*(W_n\Omega^1_{k(x),\log})\right)
  [-1] \to W_n\Omega^2_{X, \log} \to  j_*(W_n\Omega^2_{K,\log}) \to 0
\end{equation}
in $\sD_\et(X)$. We let  $\nu^2_{n,X}$ denote the complex
$\left(j_*(W_n\Omega^2_{K,\log}) \xrightarrow{\res} {\underset{x \in X_{(0)}} \bigoplus}
  (\iota_x)_*(W_n\Omega^1_{k(x),\log})\right)$.

For every $x \in X_{(0)}$, we have the canonical trace map
$\epsilon_x \colon (f_x)_*(W_n\Omega^1_{k(x),\log}) \to W_n\Omega^1_{k,\log}$, induced by the
norm $N_x \colon (f_x)_*(\sO^\times_{k(x)}) \to \sO^\times_k$ via the dlog map,
where $f_x \colon \Spec(k(x)) \to \Spec(k)$ is the structure map.
We let $\epsilon_X := \sum_x \epsilon_x$. By the reciprocity law for Milnor
$K$-theory (e.g., see  \cite[Lem.~4]{Kato-83}), it is clear that the composite map
\[
 g_*(W_n\Omega^2_{K,\log}) \xrightarrow{\res}   
{\underset{x \in X_{(0)}} \bigoplus}
(f_x)_*(W_n\Omega^1_{k(x),\log}) \xrightarrow{\epsilon_X} W_n\Omega^1_{k,\log}
\]
is zero if we let  $g = f \circ j$, where recall that $f \colon X \to \Spec(k)$ is the
structure map. In other words, we have a morphism of complexes 
\begin{equation}\label{eqn:Gersten-2}
  f_* \colon  f_*(\nu^2_{n,X}) \to  W_n\Omega^1_{k,\log}[-1]
  \end{equation}
  whose composition with $(f_x)_*(W_n\Omega^1_{k(x),\log})[-1] \to f_*(\nu^2_{n,X})$
  is $\epsilon_x[-1]$ for every $x \in X_{(0)}$.
We now prove some key lemmas.

\begin{lem}\label{lem:Gersten-3}
  One has $R^qj_*(W_n\Omega^2_{K, \log}) = 0$ for $q \ge 1$.
\end{lem}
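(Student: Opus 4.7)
The plan is to deduce the vanishing from the localization triangle on $X$ combined with the Gysin (purity) isomorphism for logarithmic Hodge-Witt sheaves at closed points.

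Since $j \colon \Spec(K) \inj X$ is the pro-open complement of $\bigsqcup_{x \in X_{(0)}} \Spec(k(x))$, one has the associated localization triangle in $\sD_\et(X)$:
\[
\bigoplus_{x \in X_{(0)}} (\iota_x)_* \iota_x^! W_n\Omega^2_{X,\log} \to W_n\Omega^2_{X,\log} \to Rj_* W_n\Omega^2_{K,\log} \xrightarrow{+1}.
\]
Because $X$ is regular and each $\iota_x$ is a regular closed immersion of codimension one, Shiho's Gysin (purity) isomorphism --- the same input underlying the Gersten resolution cited in ~\eqref{eqn:Gersten-0} --- provides $\iota_x^! W_n\Omega^2_{X,\log} \simeq W_n\Omega^1_{k(x),\log}[-1]$. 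Substituting, the triangle takes the form
\[
\bigoplus_{x \in X_{(0)}} (\iota_x)_* W_n\Omega^1_{k(x),\log}[-1] \to W_n\Omega^2_{X,\log} \to Rj_* W_n\Omega^2_{K,\log} \xrightarrow{+1},
\]
in which the first term is concentrated in cohomological degree one and the second in degree zero. The long exact sequence of cohomology sheaves on $X_\et$ then collapses to (i) an exact sequence in degree zero that recovers the Gersten short exact sequence ~\eqref{eqn:Gersten-0}, and (ii) the vanishing $R^q j_* W_n\Omega^2_{K,\log} = 0$ for every $q \ge 1$, which is the claim.

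If one prefers to avoid invoking purity globally, the same argument can be localized at each closed point $x \in X$: the stalk at $\ov x$ equals $H^q\bigl(\Spec K^{sh}_x, W_n\Omega^2_{K^{sh}_x,\log}\bigr)$, and applying $R\Gamma(\sO^{sh}_{X,x},-)$ to the analogous localization triangle on $\Spec \sO^{sh}_{X,x}$ collapses both outer terms to complexes concentrated in a single degree, via the two facts that $\sO^{sh}_{X,x}$ is strictly henselian and $k(x)^s$ is separably closed. The resulting long exact sequence forces the stalk to vanish for $q \ge 1$.

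The only nontrivial ingredient, and hence the main obstacle, is the purity isomorphism $\iota_x^! W_n\Omega^2_{X,\log} \simeq W_n\Omega^1_{k(x),\log}[-1]$ under the paper's working hypothesis that $X$ is regular but only generically smooth over $k$. However, this is no stronger than the Gersten exactness already invoked in ~\eqref{eqn:Gersten-0}, since in codimension one the two statements are essentially equivalent once one has the Kato residue maps, so no additional hypothesis on $X$ is imposed.
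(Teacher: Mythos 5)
Your route is genuinely different from the paper's. The paper computes the stalks $H^q(K^{sh}_x, W_n\Omega^2_{K^{sh}_x,\log})$ one closed point at a time: it first disposes of $q\ge 2$ using $cd_p(K^{sh}_x)\le 1$, passes from the Henselian to the complete local ring via \cite[Lem.~21]{Kato-Invitation}, and then combines the localization sequence for the closed point of $\Spec(A_x)$ with proper base change and Shiho's purity for $A_x\cong k(\ov{x})[[T]]$, where the $p$-rank hypotheses $[k(\ov{x}):k(\ov{x})^p]=p$ and $[F_x:F_x^p]=p^2$ are checked by hand. Your global triangle, once established, yields all $q\ge 1$ at once and recovers the Gersten sequence \eqref{eqn:Gersten-0} as a by-product, which is tidier. (Two points you pass over: $j$ is only a pro-open immersion, so your triangle is the filtered colimit of the triangles for dense opens $U\subset X$ --- harmless, since filtered colimits are exact on cohomology sheaves --- and the stalk at the generic point needs a word, though it is immediate.)

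The genuine gap is in your closing paragraph. The purity isomorphism $R\iota_x^{!}W_n\Omega^2_{X,\log}\simeq W_n\Omega^1_{k(x),\log}[-1]$ is \emph{not} ``no stronger than'' the Gersten exactness of \eqref{eqn:Gersten-0}, and the two are not equivalent in codimension one. Gersten exactness is a statement about cohomology sheaves in degree zero; fed into the degree-zero part of the localization long exact sequence it only identifies $R^0\iota_x^{!}$ and $R^1\iota_x^{!}$, and says nothing about $R^q\iota_x^{!}$ for $q\ge 2$. But it is exactly the vanishing of those higher terms that produces your conclusion: via the localization triangle, $R^q\iota_x^{!}W_n\Omega^2_{X,\log}=0$ for $q\ge 2$ is \emph{equivalent} to $R^qj_*W_n\Omega^2_{K,\log}=0$ for $q\ge 1$, i.e.\ to the lemma itself. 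So, as justified, your argument is circular. The same defect appears in your localized variant: the collapse of $R\Gamma_{\ov{x}}(\sO^{sh}_{X,x},-)$ to a single degree \emph{is} purity, and does not follow from $k(\ov{x})$ being separably closed (separable closedness only kills the higher cohomology of the residue field \emph{after} purity has been applied). The repair is to cite Shiho's purity theorem \cite[Thm.~3.2]{Shiho} (equivalently \cite[Cor.~1.3.14]{Zhao}) as an independent input, after verifying the $p$-rank conditions $[K:K^p]=p^2$ and $[k(x):k(x)^p]=p$ that place $W_n\Omega^2$ and $W_n\Omega^1$ in top degree; this is precisely the nontrivial content the paper supplies, at the level of completed strictly Henselian local rings where the verification is cleanest.
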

\begin{proof}
The stalk of $R^qj_*(W_n\Omega^2_{K, \log})$ at a closed point $x \in X$ is
  $H^q(K^{sh}_x, W_n\Omega^2_{K^{sh}_x, \log})$. To show that this cohomology is zero,
  we can assume $q = 1$ because $cd_p(K^{sh}_x) \le 1$.
  By \cite[Lem.~21]{Kato-Invitation}, we can pass to the completion $\wh{K^{sh}_x}$
  of $K^{sh}_x$.
We let $A_x$ denote the completion of $\sO^{sh}_{X,x},  \ F_x = \wh{K^{sh}_x}$ and
  consider the exact sequence
\begin{equation}\label{eqn:Gersten-3-0}
  H^1(A_x, W_n\Omega^2_{A_x, \log}) \to
  H^1(F_x, W_n\Omega^2_{F_x, \log}) \to
  H^{2}_{\ov{x}}(A_x, W_n\Omega^2_{A_x, \log}),
\end{equation}
where $\ov{x}$ is the closed point of $\Spec(A_x)$. By the proper base change theorem,
we have $H^1(A_x, W_n\Omega^2_{A_x, \log}) 
\cong  H^1(k(\ov{x}), \iota^*(W_n\Omega^2_{A_x, \log}))$, where
$\iota \colon \Spec(k(\ov{x})) \inj \Spec(A_x)$ is the inclusion of the closed
point. But the latter group vanishes since $k(\ov{x})$ is
separably closed (e.g., see \cite[Thm.~VI.1.1]{Milne-etale}). It remains to show that
$H^{2}_{\ov{x}}(A_x, W_n\Omega^2_{A_x, \log}) = 0$. 

Since $k(\ov{x})$ is separably closed, we have the well known equality
$[k(\ov{x}): k(\ov{x})^p] = p$ (e.g., see \cite[\S~3]{Kato-Saito-Ann}).
Since $F_x \cong k(\ov{x})((T))$, it is an elementary exercise 
that $[F_x: (F_x)^p] = p^2$ (e.g., see the proof of \cite[Lem.~1.4.5]{Zhao}).
We therefore conclude from \cite[Thm.~3.2]{Shiho}
that there is a purity isomorphism $\iota_* \colon
H^{1}(k(\ov{x}), W_n\Omega^1_{k(\ov{x}), \log}) \xrightarrow{\cong}
H^{2}_{\ov{x}}(A_x, W_n\Omega^2_{A_x, \log})$. This proves our claim because
$H^{1}(k(\ov{x}), W_n\Omega^1_{k(\ov{x}), \log})$ is clearly zero.
To finish the proof of the lemma, we note that the stalk of $R^qj_*(W_n\Omega^2_{K, \log})$
at the generic point of $X$ is $H^q(K_s, W_n\Omega^2_{K_s, \log})$, which is 
zero if $q \ge 1$.
\end{proof}

\begin{lem}\label{lem:Gersten-4}
   The canonical map $f_*(\nu^2_{n,X}) \to Rf_*(\nu^2_{n,X})$ is an isomorphism in
   $\sD_\et(k)$.
  \end{lem}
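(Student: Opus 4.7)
The approach is to prove that $\nu^2_{n,X}$ is $f_*$-acyclic, i.e., each of its two terms has vanishing higher direct images under $f$. Since $\nu^2_{n,X}$ is a bounded complex, term-wise acyclicity will yield the isomorphism $f_*(\nu^2_{n,X}) \xrightarrow{\cong} Rf_*(\nu^2_{n,X})$ via the hypercohomology spectral sequence.

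The degree-$1$ term $\bigoplus_{x \in X_{(0)}} (\iota_x)_*(W_n\Omega^1_{k(x),\log})$ is handled easily: each closed immersion $\iota_x$ has $(\iota_x)_*$ exact, and the composite $f \circ \iota_x = f_x$ is a finite morphism, so $R^q(f_x)_* = 0$ for $q \geq 1$. The direct sum inherits this vanishing since étale cohomology commutes with filtered colimits. For the degree-$0$ term $j_*(W_n\Omega^2_{K,\log})$, I would first invoke \lemref{lem:Gersten-3} to obtain $Rj_*(W_n\Omega^2_{K,\log}) \simeq j_*(W_n\Omega^2_{K,\log})$, and then apply the Leray spectral sequence for $g := f \circ j \colon \Spec(K) \to \Spec(k)$ to reduce to showing $R^q g_*(W_n\Omega^2_{K,\log}) = 0$ for $q \geq 1$. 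By \lemref{lem:Geom-int}, $X$ is geometrically integral, so $L := K \otimes_k k_s$ is a field (the function field of the integral scheme $X_{k_s}$); computing stalks at the unique geometric point of $\Spec(k)$ then identifies the required vanishing with $H^q_\et(L, W_n\Omega^2_{L,\log}) = 0$ for $q \geq 1$.

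The main obstacle is this last vanishing. Since $X$ is generically smooth over $k$, the field $L$ is separably generated of transcendence degree one over $k_s$, and since $[k_s:k_s^p] = p$, we get $[L:L^p] = p^2$, so $W_n\Omega^2_{L,\log}$ is the top-degree logarithmic Hodge-Witt sheaf on $L$. The case $q \geq 2$ is immediate from Serre's bound $cd_p(L) \leq 1$. For the delicate case $q = 1$, I would appeal to the Artin-Schreier-Witt sequence
\[
0 \to W_n\Omega^2_{L,\log} \to W_n\Omega^2_L \xrightarrow{\ov{\id} - \ov{F}} W_n\Omega^2_L/dV^{n-1}\Omega^1_L \to 0,
\]
together with a Hilbert 90-type vanishing of $H^1_\et(L, W_n\Omega^2_L)$ and surjectivity of $\ov{\id} - \ov{F}$ on the relevant global sections. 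Alternatively, one may use the Geisser-Levine quasi-isomorphism identifying the étale motivic complex in weight $2$ with $W_n\Omega^2_{L,\log}[-2]$, combined with the vanishing of the top-weight étale motivic cohomology for the function field $L$ of a smooth curve over the separably closed base $k_s$ in characteristic $p$.
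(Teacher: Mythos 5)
Your reduction is exactly the paper's: term-wise $f_*$-acyclicity of $\nu^2_{n,X}$, with the degree-one term handled by exactness of $(\iota_x)_*$ and finiteness of $f_x$, and the degree-zero term reduced via \lemref{lem:Gersten-3} and a stalk computation to the vanishing of $H^q(L, W_n\Omega^2_{L,\log})$ for $q \ge 1$, where $L = K\otimes_k k_s$ (a field, as you note); the case $q \ge 2$ follows from $cd_p(L)\le 1$ as you say. The gap is the case $q=1$, which is where the entire difficulty of the lemma sits. Your Artin--Schreier--Witt route correctly reduces it, via $H^1_\et(L, W_n\Omega^2_L)=0$ (fine, since $W_n\Omega^2_L$ is quasi-coherent over $W_n\sO$ and $\Spec(L)$ is affine), to the surjectivity of $\ov{\id}-\ov{F}\colon W_n\Omega^2_L \to W_n\Omega^2_L/dV^{n-1}\Omega^1_L$ on sections over $L$ itself --- but you assert this surjectivity rather than prove it, and it is not formal: one degree down, the cokernel of the analogous map on $W_n\Omega^1$ over a local field computes $_{p^n}\Br$, which is nonzero, so no general principle about top-degree forms over fields with $[L:L^p]=p^2$ can do the job. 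This surjectivity for this particular $L$ (the function field of a curve over the separable closure of a local field) is precisely the content of Lemma~1(2) of Kato--Saito \cite{Kato-Saito-Ann}, which is what the paper cites at this point, together with the observation that their argument only needs $X$ to be generically smooth over $k$. Your alternative via Geisser--Levine fares no better: ``vanishing of the top-weight {\'e}tale motivic cohomology of $L$'' is the statement $H^1(L, W_n\Omega^2_{L,\log})=0$ written in different notation, i.e.\ a restatement rather than a proof. To close the argument you must either reproduce the Kato--Saito computation of the cokernel of $\ov{\id}-\ov{F}$ or cite it.
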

  \begin{proof}
In view of \lemref{lem:Gersten-3},
    it suffices to show that $R^qf_*(j_*(W_n \Omega^2_{K, \log})) = 0$
and $R^qf_*((\iota_x)_*(W_n \Omega^1_{k(x), \log})) = 0$ for all $q \ge 1$ and
$x \in X_{(0)}$. Using the exactness of $(\iota_x)_*$ for $x \in X_{(0)}$ and
\lemref{lem:Gersten-3} for the generic point of $X$, it suffices to show that
$R^qg_*(W_n \Omega^2_{K, \log}) = 0$ and $R^q(f_x)_*(W_n \Omega^1_{k(x), \log}) = 0$
for all $q \ge 1$ and $x \in X_{(0)}$.
This latter claim for $x \in X_{(0)}$ is obvious since
$f_x$ is a finite morphism.

On the other hand, we have
$R^qg_*(W_n \Omega^2_{K, \log}) \cong H^q(K', W_n\Omega^2_{K', \log})$,
where $K'$ is the product of the function fields of the
connected components of $X_s := X \times_{\Spec(k)} \Spec(k_s)$. We are now done because
$H^q(K', W_n\Omega^2_{K', \log})$ is clearly zero if $q \ge 2$, and its
vanishing for $q = 1$ was shown by Kato-Saito \cite{Kato-Saito-Ann}
(see the proof of their Lemma~1(2) on p.~252). We remark that even though the
cited lemma of Kato-Saito assumes that $X$ is smooth over $k$, the proof of
the vanishing of $H^1(K', W_n\Omega^2_{K', \log})$ only requires $X$ to be generically
smooth over $k$. This concludes the proof. 
\end{proof}

\begin{lem}\label{lem:Gersten-5}
There exists a canonical morphism
  ${\tr}_X \colon  Rf_*(W_n\Omega^2_{X, \log}) \to
    W_n\Omega^1_{k, \log}[-1]$ in $\sD_\et(k)$ whose composition with the canonical
    morphisms
    \[
      (f_x)_*(W_n\Omega^1_{k(x), \log}[-1]) \to f_*(W_n\Omega^2_{X, \log}) \to
      Rf_*(W_n\Omega^2_{X, \log})
    \]
    is $\epsilon_x[-1]$ for every $x \in X_{(0)}$.
  \end{lem}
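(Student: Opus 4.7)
The plan is to assemble $\tr_X$ by combining the Gersten resolution of $W_n\Omega^2_{X,\log}$ with the morphism $f_* \colon f_*(\nu^2_{n,X}) \to W_n\Omega^1_{k,\log}[-1]$ built in \eqref{eqn:Gersten-2} and the identification from \lemref{lem:Gersten-4}. All the pieces are already in place; the task is to patch them together in $\sD_\et(k)$ and verify that the restriction to each closed point recovers $\epsilon_x[-1]$.

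More precisely, the exact sequence \eqref{eqn:Gersten-0} means that, viewing $\nu^2_{n,X}$ as a complex with $j_*(W_n\Omega^2_{K,\log})$ placed in degree $0$ and $\bigoplus_{x \in X_{(0)}} (\iota_x)_*(W_n\Omega^1_{k(x),\log})$ in degree $1$, the inclusion of the kernel of the residue map produces a quasi-isomorphism $W_n\Omega^2_{X,\log} \xrightarrow{\sim} \nu^2_{n,X}$ in $\sD_\et(X)$. Applying $Rf_*$, I obtain an isomorphism $Rf_*(W_n\Omega^2_{X,\log}) \xrightarrow{\sim} Rf_*(\nu^2_{n,X})$. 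By \lemref{lem:Gersten-4}, the natural map $f_*(\nu^2_{n,X}) \to Rf_*(\nu^2_{n,X})$ is itself an isomorphism in $\sD_\et(k)$. I then define
\[
  \tr_X \colon Rf_*(W_n\Omega^2_{X,\log}) \xrightarrow{\sim} Rf_*(\nu^2_{n,X}) \xleftarrow{\sim} f_*(\nu^2_{n,X}) \xrightarrow{f_*} W_n\Omega^1_{k,\log}[-1],
\]
where the last arrow is the morphism of complexes constructed in \eqref{eqn:Gersten-2}.

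For the compatibility with $\epsilon_x[-1]$, I observe that the exact triangle \eqref{eqn:Gersten-1}, together with the quasi-isomorphism above, identifies the canonical map $(\iota_x)_*(W_n\Omega^1_{k(x),\log})[-1] \to W_n\Omega^2_{X,\log}$ in $\sD_\et(X)$ with the inclusion of the degree-$1$ summand of $\nu^2_{n,X}$ shifted by $-1$. Pushing forward via $f$ and using that $f_x$ is finite (so $R(f_x)_* = (f_x)_*$), the composition
\[
  (f_x)_*(W_n\Omega^1_{k(x),\log})[-1] \to f_*(W_n\Omega^2_{X,\log}) \to Rf_*(W_n\Omega^2_{X,\log}) \xrightarrow{\tr_X} W_n\Omega^1_{k,\log}[-1]
\]
agrees by construction with the composition of the inclusion of the $x$-th summand into $f_*(\nu^2_{n,X})$ followed by $f_*$. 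The latter was defined as $\epsilon_X = \sum_y \epsilon_y$, hence restricts to $\epsilon_x[-1]$ on the $x$-th factor, as required.

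The only mild obstacle is bookkeeping: one must be careful that the canonical map from the shifted summand of $\nu^2_{n,X}$ into $\nu^2_{n,X}$ corresponds, under the quasi-isomorphism with $W_n\Omega^2_{X,\log}$, to the connecting map in \eqref{eqn:Gersten-1}. This is routine from the construction of the latter triangle as the cone of the inclusion of the kernel, but it is the one place where a sign or shift convention could trip up the verification. No new analytic input is needed beyond the previously established finiteness of $Rf_*$ on $\nu^2_{n,X}$.
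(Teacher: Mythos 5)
Your construction is exactly the paper's: identify $W_n\Omega^2_{X,\log}$ with the Gersten complex $\nu^2_{n,X}$, use \lemref{lem:Gersten-4} to replace $Rf_*(\nu^2_{n,X})$ by $f_*(\nu^2_{n,X})$, and compose with the morphism $f_*$ of ~\eqref{eqn:Gersten-2}, whose restriction to the $x$-th summand is $\epsilon_x[-1]$ by construction. The proposal is correct and matches the paper's (one-line) proof, with the bookkeeping spelled out.
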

  \begin{proof}
    Combine the isomorphism $W_n\Omega^2_{X, \log} \xrightarrow{\cong} \nu^2_{n,X}$
    in $\sD_\et(X)$ with \lemref{lem:Gersten-4} and the morphism $f_*$ in
    ~\eqref{eqn:Gersten-2}.
\end{proof}

Recall that there is a canonical isomorphism $\inv_k \colon \Br(k) \xrightarrow{\cong}
{\Q}/{\Z}$. If ${k'}/k$ is a finite field extension of degree $m$ and
$v \colon \Spec(k') \to \Spec(k)$ is the projection, then the norm map
$N_{{k'}/k} \colon v_*(\sO^\times_{k'}) \to \sO^\times_k$ between the {\'e}tale sheaves
induces the pull-back and push-forward maps
$v^* \colon \Br(k) \to \Br(k')$ and $v_* \colon \Br(k') \to \Br(k)$
such that $\inv_{k'} \circ v^* = m (\inv_{k})$ on ${\Q}/{\Z}$ and
  $\inv_k \circ v_* = \inv_{k'}$.

Identifying $_{p^n} \Br(k')$ with $H^1(k', W_n\Omega^1_{k', \log})$,
we get a commutative diagram
\begin{equation}\label{eqn:Gersten-6}
  \xymatrix@C1pc{
    H^1(k', W_n\Omega^1_{k', \log}) \ar[d]_-{v_*} \ar[dr]^-{\inv_{k'}} & \\
    H^1(k, W_n\Omega^1_{k, \log}) \ar[r]_-{\inv_k} & {\Z}/{p^n}}
\end{equation}
in which the horizontal and the diagonal arrows are isomorphisms.
It follows that $v_*$ is an isomorphism such that $\inv_k \circ v_* =
\inv_{k'}$. We shall therefore identify $v_*$ with the identity map of
${\Z}/{p^n}$. Similarly, we shall identify $v_* \colon \Br(k') \to \Br(k)$ with
the identity map of ${\Q}/{\Z}$ throughout this paper.
In the sequel, we shall write $v_* = \epsilon_x$ if
$k' = k(x)$ for some closed point $x$ on a curve over $k$.

\begin{prop}\label{prop:Gersten-9}
There exists a unique homomorphism $\Tr_X \colon H^2(X, W_n\Omega^2_{X, \log}) \to
  {\Z}/{p^n}$ such that $\Tr_X \circ (\iota_x)_*$ is the identity map for every
  $x \in X_{(0)}$. Furthermore, $\Tr_X$ is bijective.
  \end{prop}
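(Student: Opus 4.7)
My plan is to define $\Tr_X$ as the map induced on cohomology by the derived-category trace ${\tr}_X$ of \lemref{lem:Gersten-5}, read off the compatibility with closed points directly from that lemma, and then deduce bijectivity by cardinality comparison with the isomorphism coming from \lemref{lem:Trace} and Zhao's trace in \thmref{thm:Zhao*}(1).

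More precisely, applying $H^2(\Spec k, -)$ to the morphism ${\tr}_X \colon Rf_*(W_n\Omega^2_{X, \log}) \to W_n\Omega^1_{k, \log}[-1]$ gives a homomorphism $H^2(X, W_n\Omega^2_{X, \log}) \to H^1(k, W_n\Omega^1_{k, \log})$, which I would compose with the invariant isomorphism $H^1(k, W_n\Omega^1_{k, \log}) \cong {}_{p^n}\Br(k) \xrightarrow{\inv_k} {\Z}/{p^n}$ recalled in \S\ref{sec:TRACE}. This is the candidate $\Tr_X$. The compatibility $\Tr_X \circ (\iota_x)_* = \id$ for $x \in X_{(0)}$ is then essentially tautological from \lemref{lem:Gersten-5}: the derived-category composition of ${\tr}_X$ with the Gysin map out of $x$ equals $\epsilon_x[-1]$, and applying $H^2$ followed by $\inv_k$ converts this into $\inv_{k(x)}$ via diagram \eqref{eqn:Gersten-6}, which is the identity of ${\Z}/{p^n}$ by construction.

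For bijectivity, \lemref{lem:Trace} identifies $H^2(X, W_n\Omega^2_{X, \log})$ with $H^3_{Y}(\sX, W_n\Omega^2_{\sX, \log})$, which is in turn identified with ${\Z}/{p^n}$ by Zhao's trace $\Tr_{\sX}$ of \thmref{thm:Zhao*}(1). Hence the source of $\Tr_X$ is a cyclic group of order $p^n$. Since $X$ is a projective curve over $k$ it has closed points, and for any such $x$ the relation $\Tr_X((\iota_x)_*(1)) = 1$ makes $\Tr_X$ a surjection between two cyclic groups of order $p^n$, which is therefore bijective. Uniqueness follows at once: the same relation shows that $(\iota_x)_*(1)$ is a generator of $H^2(X, W_n\Omega^2_{X, \log}) \cong {\Z}/{p^n}$, so any competing homomorphism $\Tr_X'$ satisfying $\Tr_X' \circ (\iota_x)_* = \id$ must coincide with $\Tr_X$ on this generator, and hence everywhere.

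The main point I expect to require care is not the argument for this proposition, which once \lemref{lem:Gersten-5}, \lemref{lem:Trace} and \thmref{thm:Zhao*} are available amounts to bookkeeping, but rather the fact that $\Tr_X$ is defined through \lemref{lem:Gersten-5} rather than through $\Tr_{\sX} \circ \partial$. The latter composition would be tempting since it manifestly lands in ${\Z}/{p^n}$, but its compatibility with Gysin pushforwards from closed points is not transparent and would require a nontrivial diagram chase on the two-dimensional model $\sX$ involving Sato's purity and the coherent trace. This is precisely the reason flagged in \S\ref{sec:Outline}, where the closed-point compatibility of $\Tr_X$ is described as crucial for the arguments of subsequent sections.
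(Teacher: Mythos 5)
Your proof is correct and follows the paper's argument essentially verbatim: $\Tr_X$ is constructed from the derived-category trace of \lemref{lem:Gersten-5} composed with $\inv_k$, the compatibility $\Tr_X\circ(\iota_x)_*=\id$ is read off from \eqref{eqn:Gersten-6}, and bijectivity follows by comparing cardinalities via \lemref{lem:Trace} and \thmref{thm:Zhao*}. The only cosmetic difference is uniqueness, which the paper gets from the Gersten surjection \eqref{eqn:Gersten-9-0} (the classes $(\iota_x)_*(1)$ generate $H^2(X, W_n\Omega^2_{X,\log})$), while you deduce a posteriori from the bijectivity of $\Tr_X$ that a single $(\iota_x)_*(1)$ already generates; both are valid.
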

  \begin{proof}
By ~\eqref{eqn:Gersten-0} and \lemref{lem:Gersten-3}, we have an exact sequence
 \begin{equation}\label{eqn:Gersten-9-0}
  H^1(K, W_n\Omega^2_{K, \log}) \xrightarrow{\res}
      {\underset{x \in X_{(0)}}\bigoplus} H^1(k(x), W_n\Omega^1_{k(x), \log})
      \to H^2(X, W_n\Omega^2_{X, \log}) \to 0.
    \end{equation}
This implies that $H^2(X, W_n\Omega^2_{X, \log})$ is generated by
the images of $(\iota_x)_*(1)$ as $x$ varies in $X_{(0)}$. The uniqueness assertion
follows immediately from this. The existence follows directly from \lemref{lem:Gersten-5}
    and ~\eqref{eqn:Gersten-6}. The claim that $\Tr_X$ is bijective follows from
    the fact that its source and target are finite of equal
    cardinality by \thmref{thm:Zhao*} and \lemref{lem:Trace} while
    $\Tr_X \circ (\iota_x)_*$ is bijective.
  \end{proof}

\begin{remk}\label{remk:Gersten-9-1}
  The assertion that $H^2_\et(X, W_n\Omega^2_{X, \log})$ is isomorphic to ${\Z}/{p^n}$
  is not new (at least when $X$ is smooth over $k$)
  as it was already shown by Kato-Saito in \cite[Prop.~4]{Kato-Saito-Ann}.
  It also follows from \thmref{thm:Zhao*} and \lemref{lem:Trace}.
  However, the above proposition provides an explicit isomorphism which has the
  advantage that it commutes with the trace maps for all closed points of $X$.
This is unclear in the construction of \cite{Kato-Saito-Ann}. We shall
  need this explicit nature of $\Tr_X$ later in the paper in a critical way.
  Another advantage is that it does not require $X$ to be smooth
  everywhere. Although this flexibility is not important in this paper,
  it will be vital in the study of duality and class field theory for open smooth
  curves over $k$ which do not admit smooth compactifications.
  We were unable to prove if $\Tr_X$ coincides with the isomorphism of
  Kato-Saito, whose construction, we believe, is quite intricate.
\end{remk}

Given $x \in X_{(0)}$, we now look at the canonical maps of {\'e}tale sheaves
\[
  (\iota_x)_*(W_n\Omega^1_{k(x), \log})
  \to (\iota_x)_*R(\iota_x)^{!}(W_n\Omega^2_{X, \log}) \to W_n\Omega^2_{X, \log},
\]
on $X$, where $[t_x]$ is the Teichm{\"u}ller image of the chosen uniformizer
$t_x \in \sO_{X,x}$ and the first arrow is obtained by taking cup product with
$\dlog([t_x])$. This arrow is an isomorphism
in $\sD_\et(X)$ by \cite[Cor.~1.3.14]{Zhao}. The same is also true if we replace $X$ by
$\Spec(\sO^h_{X,x})$.

Furthermore, the diagram of the induced maps on cohomology 
\begin{equation}\label{eqn:Gersten-7}
  \xymatrix@C1pc{
    H^1(k(x), W_n\Omega^1_{k(x), \log}) \ar@/^2pc/[rr]^-{(\iota_x)_*}
    \ar[r]^-{\cup [t_x]}_-{\cong}
    \ar[d]_-{\cup [t_x]} \ar[dr] &
    H^2_{x}(X, W_n\Omega^2_{X, \log}) \ar[d]^-{\cong} \ar[r] &
    H^2(X, W_n\Omega^2_{X, \log}) \ar[d] \\
    H^1(K_x, W_n\Omega^2_{K_x, \log}) \ar[r]^-{\partial_x}   &
    H^2_{x}(\sO^h_{X,x}, W_n\Omega^2_{\sO^h_{X,x}, \log}) \ar[r] & 
    H^2(\sO^h_{X,x}, W_n\Omega^2_{\sO^h_{X,x}, \log})}
\end{equation}
is commutative, where the left vertical arrow $\cup [t_x]$ is the isomorphism of
\cite[\S~3.2, Lem.~3]{Kato-80}. It follows from the proper base change theorem that
the group on the bottom right corner is zero. Since the
other two groups in the bottom row are isomorphic to ${\Z}/{p^n}$
(e.g., see \cite[Prop.~3.1]{Kato-80}), it follows that $\partial_x$ is
bijective.

Using \propref{prop:Gersten-9} and ~\eqref{eqn:Gersten-7}, we get a commutative
diagram
\begin{equation}\label{eqn:Gersten-8}
  \xymatrix@C1pc{
    & H^1(k(x), W_n\Omega^1_{k(x), \log}) \ar[r]^-{(\iota_x)_*}
    \ar[dl]_-{h_x}^-{\cong} \ar[d]^-{\epsilon_x}_-{\cong} &  
    H^2(X, W_n\Omega^2_{X, \log}) \ar[dl]_-{\tr_X} \ar[d]^-{\Tr_X} \\
    H^1(K_x, W_n\Omega^2_{K_x, \log})  \ar@{.>}[r] &
    H^1(k, W_n\Omega^1_{k, \log}) \ar[r]^-{\inv_k} & {\Z}/{p^n},}
\end{equation}
where $h_x := \cup [t_x]$ is an isomorphism.

Letting $\Tr_x := \inv_k \circ \epsilon_x \circ (h_x)^{-1}$, we get the
following.

\begin{cor}\label{cor:Gersten-10}
 For every $x \in X_{(0)}$, there is a commutative diagram
  \begin{equation}\label{eqn:Gersten-10-0}
  \xymatrix@C1pc{
H^1(k(x), W_n\Omega^1_{k(x), \log}) \ar[r]^-{(\iota_x)_*}
    \ar[d]_-{h_x} & H^2(X, W_n\Omega^2_{X, \log}) \ar[d]^-{\Tr_X} \\  
    H^1(K_x, W_n\Omega^2_{K_x, \log}) \ar[r]^-{\Tr_x} & {\Z}/{p^n}}
\end{equation}
in which all arrows are isomorphisms.
\end{cor}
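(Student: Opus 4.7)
The plan is to recognize that the square in the corollary is precisely the outer rectangle of diagram~\eqref{eqn:Gersten-8} above, so that the proof should amount to chasing what has already been established. I would organize the argument in two steps: commutativity of the square, and bijectivity of each of its four arrows.

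For commutativity, by the definition immediately preceding the corollary, $\Tr_x = \inv_k \circ \epsilon_x \circ (h_x)^{-1}$, which gives $\Tr_x \circ h_x = \inv_k \circ \epsilon_x$ tautologically; this is the left triangle of~\eqref{eqn:Gersten-8}. On the other hand, Proposition~\ref{prop:Gersten-9} asserts that $\Tr_X \circ (\iota_x)_*$ is the identity of $\Z/p^n$, which, under the identifications fixed by~\eqref{eqn:Gersten-6} (namely $\inv_k \circ v_* = \inv_{k(x)}$ together with $\epsilon_x = v_*$), unravels to $\Tr_X \circ (\iota_x)_* = \inv_k \circ \epsilon_x$. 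Pasting the two equalities along their common right-hand side yields $\Tr_X \circ (\iota_x)_* = \Tr_x \circ h_x$, which is the claimed commutativity.

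For bijectivity of each arrow: the left vertical map $h_x = \cup [t_x]$ is an isomorphism by \cite[Cor.~1.3.14]{Zhao}, a fact already used in constructing~\eqref{eqn:Gersten-8}. The maps $\epsilon_x$ and $\inv_k$ are isomorphisms by~\eqref{eqn:Gersten-6}, so $\Tr_x$, being their composition with $(h_x)^{-1}$, is also an isomorphism. The right vertical map $\Tr_X$ is an isomorphism by Proposition~\ref{prop:Gersten-9}. Finally, both the source and the target of $(\iota_x)_*$ are isomorphic to $\Z/p^n$ (via $\inv_{k(x)}$ on the source and $\Tr_X$ on the target), so the commutativity of the square, combined with three of its sides already being isomorphisms, forces $(\iota_x)_*$ itself to be an isomorphism.

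I do not anticipate any genuine obstacle here: the real content has been packaged into the construction of~\eqref{eqn:Gersten-8}, where Zhao's purity isomorphism produces $h_x$, and into Proposition~\ref{prop:Gersten-9}, where the explicit trace $\Tr_X$ is built out of the Gersten resolution and the Milnor-$K$-theoretic reciprocity law. The corollary is then a clean diagrammatic repackaging of these two inputs.
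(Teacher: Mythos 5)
Your proof is correct and follows essentially the same route as the paper: the corollary is just the outer commutative square of~\eqref{eqn:Gersten-8} together with the already-established bijectivity of $h_x$, $\Tr_X$ and $\inv_k\circ\epsilon_x$. The only nitpick is that the isomorphy of the particular map $h_x\colon H^1(k(x), W_n\Omega^1_{k(x),\log})\to H^1(K_x, W_n\Omega^2_{K_x,\log})$ is the one from \cite[\S~3.2, Lem.~3]{Kato-80} (the left vertical arrow of~\eqref{eqn:Gersten-7}) rather than Zhao's purity statement, but this does not affect the argument.
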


\subsection{Duality theorem for $H^*(X, W_n\Omega^\bullet_{X,\log})$}
\label{sec:Duality-X}
Recall that a pairing between locally compact Hausdorff topological abelian groups
$\lambda \colon A \times B \to {\Q}/{\Z}$ is called continuous if $\lambda$ is
continuous with respect to the product topology of $A \times B$.
Equivalently, either (and hence both) of the maps $A \to B^\vee$ and
$B \to A^\vee$ is continuous and factors through the
continuous dual (e.g., see \cite[Prop.~A.14]{Hatcher}).
$\lambda$ is called non degenerate on the left (resp. right) if the
induced map $A \to B^\vee$ (resp. $B \to A^\vee$) is injective.
One says that $\lambda$ is perfect (in particular, continuous) if it induces isomorphisms
$A \xrightarrow{\cong} B^\star$ and $B \xrightarrow{\cong} A^\star$ of topological
abelian groups.

We now return to the set-up of \S~\ref{sec:setup}.
Recall from \propref{prop:Gersten-9} that there is a canonical isomorphism
  of ${\Z}/{p^n}$-modules
  \begin{equation}\label{eqn:KS-iso}
    \Tr_X \colon H^{2}(X, W_n\Omega^2_{X, \log}) \xrightarrow{\cong}
    {\Z}/{p^n}.
    \end{equation}
    Using \lemref{lem:Trace}, we get a  unique isomorphism 
$\Tr_{\sX} \colon H^{3}_{Y}(\sX, W_n\Omega^2_{\sX, \log}) \xrightarrow{\cong}  {\Z}/{p^n}$
such that $\Tr_{\sX} \circ \partial = \Tr_X$ (cf. \thmref{thm:Zhao*}). 
The cup product pairing
\begin{equation}\label{eqn:KS-iso-0}
H^i(\sX, W_n\Omega^j_{\sX, \log}) \times H^{3-i}_{Y}(\sX, W_n\Omega^{2-j}_{\sX, \log}) 
\to H^{3}_{Y}(\sX, W_n\Omega^{2}_{\sX, \log}) \xrightarrow{\Tr_\sX} {\Z}/{p^n}
\end{equation}
gives rise to maps 
\[
H^i(\sX, W_n\Omega^j_{\sX, \log}) \xrightarrow{\Phi^{ij}_\sX}
H^{3-i}_{Y}(\sX, W_n\Omega^{2-j}_{\sX, \log})^\star; \ 
H^i_{Y}(\sX, W_n\Omega^j_{\sX, \log}) \xrightarrow{\Psi^{ij}_\sX}
H^{3-i}(\sX, W_n\Omega^{2-j}_{\sX, \log})^\star
\]
in $\pfd$. Furthermore, it follows from \thmref{thm:Zhao*} that these maps are
topological isomorphisms. In other words, ~\eqref{eqn:KS-iso-0} is a perfect pairing
in $\pfd$.

We shall now prove our duality theorem for the logarithmic Hodge-Witt cohomology
of $X$. This will prove \thmref{thm:Main-5}. This duality recovers as well as
generalizes \cite[Prop.~4]{Kato-Saito-Ann}. Let the notations and hypotheses be as in
\S~\ref{sec:setup}. In particular, $X$ is a geometrically connected, regular, and
generically smooth curve over the local field $k$ of exponential characteristic
$p > 1$. $\sX$ is a projective and flat regular semi-stable model of $X$ over
$\Spec(\sO_k)$.

\begin{thm}\label{thm:Duality-Main}
  For every pair of integers $i,j \ge 0$, the cup product on
  logarithmic Hodge-Witt cohomology induces a perfect pairing of topological
  abelian groups
  \begin{equation}\label{eqn:Duality-Main-0}
H^{i}(X, W_n\Omega^j_{X, \log}) \times  H^{2-i}(X, W_n\Omega^{2-j}_{X, \log}) 
\xrightarrow{\cup} H^{2}(X, W_n\Omega^2_{X, \log}) \xrightarrow{\Tr_X}
{\Z}/{p^n}.
\end{equation}
\end{thm}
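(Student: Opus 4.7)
The plan is to deduce the duality for $X$ from Zhao's duality for $\sX$ (\thmref{thm:Zhao*}) by a five-lemma argument applied to the localization long exact sequence, using the crucial compatibility $\Tr_\sX \circ \partial = \Tr_X$ established in \S~\ref{sec:TRACE}.

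First, I would set up a ladder whose top row is the five-term piece
\[
H^i_Y(\sX, W_n\Omega^j_{\sX,\log}) \xrightarrow{\iota_*} H^i(\sX, W_n\Omega^j_{\sX,\log}) \xrightarrow{u^*} H^i(X, W_n\Omega^j_{X,\log}) \xrightarrow{\partial} H^{i+1}_Y(\sX, W_n\Omega^j_{\sX,\log}) \xrightarrow{\iota_*} H^{i+1}(\sX, W_n\Omega^j_{\sX,\log})
\]
of the localization sequence \eqref{eqn:Main-ex}, which is exact in $\pfd$ by \propref{prop:Top-Coh-X}(1). The bottom row is the Pontryagin dual of the analogous five-term piece for the indices $(2-i, 2-j)$, namely
\[
H^{3-i}(\sX, W_n\Omega^{2-j}_{\sX,\log})^\star \to H^{3-i}_Y(\sX, W_n\Omega^{2-j}_{\sX,\log})^\star \to H^{2-i}(X, W_n\Omega^{2-j}_{X,\log})^\star \to H^{2-i}(\sX, W_n\Omega^{2-j}_{\sX,\log})^\star \to H^{2-i}_Y(\sX, W_n\Omega^{2-j}_{\sX,\log})^\star,
\]
which is exact by \lemref{lem:Exact-Dual}. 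The four outer vertical maps are the isomorphisms $\Psi^{ij}_\sX$, $\Phi^{ij}_\sX$, $\Psi^{i+1,j}_\sX$, $\Phi^{i+1,j}_\sX$ provided by \thmref{thm:Zhao*}, while the middle vertical map is the map $\Phi^{ij}_X$ induced by the cup product pairing of the theorem, which lies in $\pfd$ because the pairing factors through $\Tr_X \colon H^2(X, W_n\Omega^2_{X,\log}) \xrightarrow{\cong} \Z/p^n$ of \propref{prop:Gersten-9}.

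Second, I would verify the commutativity of each of the four squares. This rests on two ingredients: the standard Leibniz-type compatibility of the cup product with the boundary map and the pullback of the localization triangle (i.e., $\partial(\alpha \cup u^*\beta) = \partial\alpha \cup \beta$ in $H^{*}_Y(\sX, W_n\Omega^{*}_{\sX,\log})$, together with the analogous identity for $\iota_*$), and the defining property $\Tr_X = \Tr_\sX \circ \partial$ of the trace map on $X$. Combined, these say that the two ways of evaluating the pairing on $X$---either directly via $\Tr_X$, or via $\partial$ and Zhao's pairing on $\sX$---agree, which is exactly what commutativity of the ladder encodes.

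Third, the five-lemma applied in the category of abelian groups yields that $\Phi^{ij}_X$ is a bijection. To upgrade this to a topological isomorphism, I would observe that the entire argument applies symmetrically to the map $\Psi^{ij}_X \colon H^{2-i}(X, W_n\Omega^{2-j}_{X,\log}) \to H^i(X, W_n\Omega^j_{X,\log})^\star$ obtained by swapping the roles of the two factors in the pairing. This gives a second bijection in $\pfd$. By \lemref{lem:Topological-0} and the Pontryagin duality for torsion-by-profinite groups, the two bijections $\Phi^{ij}_X$ and $\Psi^{ij}_X$ are mutually dual, and hence each is a topological isomorphism. The edge cases $i \in \{0,2\}$ require no modification since $H^q(\sX, W_n\Omega^j_{\sX,\log}) = 0$ and $H^q_Y(\sX, W_n\Omega^j_{\sX,\log}) = 0$ outside the appropriate ranges by \lemref{lem:Higher-diff} and the proof of \lemref{lem:H1-fin}, so the five-lemma extends trivially at the boundary.

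The main obstacle is the commutativity of the squares involving $\partial$. While the boundary/cup product compatibility is formal, one has to track it through the detour that defines Zhao's pairing (the chain of maps in \eqref{eqn:PBC}) and match the resulting trace on $H^3_Y(\sX, W_n\Omega^2_{\sX,\log})$ with $\Tr_X$ on $H^2(X, W_n\Omega^2_{X,\log})$. This is precisely why \propref{prop:Gersten-9} was constructed with the explicit compatibility $\Tr_X \circ (\iota_x)_* = \id$ for every closed point $x$: by \lemref{lem:Trace} and the construction of $\Tr_\sX$ in \thmref{thm:Zhao*}, this characterization forces the identity $\Tr_\sX \circ \partial = \Tr_X$, without which the ladder would not commute.
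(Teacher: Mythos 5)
Your algebraic skeleton---the five-term localization ladder, Zhao's isomorphisms on the outer columns, the cup-product/boundary compatibility, and the identity $\Tr_\sX \circ \partial = \Tr_X$ feeding a five-lemma---is exactly the paper's argument, and that half is sound (modulo the sign $(-1)^i$ in the boundary square, which the paper absorbs into the middle vertical arrow). The gaps are in the topological half of the statement, and there are two.

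First, you assert that the middle vertical map $\Phi^{ij}_X$ ``lies in $\pfd$ because the pairing factors through $\Tr_X$.'' Factoring through a finite group only gives a bilinear pairing, hence a map into the abstract dual $H^{2-i}(X, W_n\Omega^{2-j}_{X,\log})^\vee$; it says nothing about whether each character $\Phi^{ij}_X(\alpha)$ is continuous for the topology on the second factor, which is genuinely non-discrete in general (\propref{prop:Top-Coh-X}(6)). Without this, your ladder with the $\star$-dual bottom row does not typecheck, while with the $\vee$-dual bottom row the outer vertical arrows are no longer surjective and the five-lemma yields only injectivity. The paper closes this by running a three-row diagram (the $\star$-duals sitting inside the $\vee$-duals), checking commutativity on the lower floor, and observing that the relevant square is Cartesian, which is what forces the image of the middle composite into the subgroup of continuous characters.

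Second, the inference ``$\Phi^{ij}_X$ and $\Psi^{ij}_X$ are mutually dual bijections, hence each is a topological isomorphism'' is not valid for locally compact, or even torsion-by-profinite, groups: a continuous bijective homomorphism in $\pfd$ need not be open (the identity from $\bigl(\prod_{\N}\Z/p\bigr)$ with the discrete topology to the same group with the product topology is a continuous bijection in $\pfd$ that is not open), and knowing that the Pontryagin dual map is also a continuous bijection does not repair this without a strictness or $\sigma$-compactness input that you neither state nor verify. The paper instead proves continuity and openness of $\Phi^{ij}_X$ directly: continuity because its restriction to the open subgroup $F_{ij}$ factors through the topological isomorphism $\Phi^{ij}_\sX$ followed by a Pontryagin-dual (hence continuous) map, and openness via the factorization $H^{i+1}_Y(\sX,\cdot)^\star \surj \Ker(\iota_*)^\star \inj H^{i}(X,\cdot)^\star$, using that the first arrow is a quotient map in $\pfd$ (\lemref{lem:Quotient-group}) and that $\Ker(\iota_*)^\star$ is open because $(F_{ij})^\star$ is discrete. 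Some version of this explicit argument is needed to finish your proof.
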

\begin{proof}
 The existence of the bilinear pairing is clear. We shall prove its perfectness in
  several steps. In order to save space, we
  shall write all cohomology groups by suppressing the underlying schemes. We shall
  also use the short hand $\sF^j_{\sX}$ for $W_n\Omega^j_{\sX, \log}$ and $\sF^j_{X}$ for
  $W_n\Omega^j_{X, \log}$.

We now  fix $i, j \ge 0$ and we look at the diagram
 \begin{equation}\label{eqn:Duality-Main-2}  
   \xymatrix@C.8pc{
     H^{i}_{Y}(\sF^j_{\sX}) \ar[r] \ar[d]_-{\Psi^{ij}_{\sX}} &
     H^{i}(\sF^j_{\sX}) \ar[r] \ar[d]^-{\Phi^{ij}_{\sX}} \ar@{}[dr] | {\reftwo} & 
H^{i}(\sF^j_{X}) \ar[r] \ar@{.>}[d]  & 
H^{i+1}_{Y}(\sF^j_{\sX}) \ar[r] \ar[d]^-{\Psi^{i+1 j}_{\sX}} &
H^{i+1}(\sF^j_{\sX}) \ar[d]^-{\Phi^{i+1 j}_{\sX}} \\
H^{3-i}(\sF^{2-j}_{\sX})^\star \ar[r] \ar[d] & 
H^{3-i}_{Y}(\sF^{2-j}_{\sX})^\star \ar[r] \ar[d] & 
H^{2-i}(\sF^{2-j}_{X})^\star \ar[r] \ar[d] \ar@{}[dr] | {\refone} & 
H^{2-i}(\sF^{2-j}_{\sX})^\star \ar[r] \ar[d] & 
H^{2-i}_{Y}(\sF^{2-j}_{\sX})^\star \ar[d] \\
H^{3-i}(\sF^{2-j}_{\sX})^\vee \ar[r] & 
H^{3-i}_{Y}(\sF^{2-j}_{\sX})^\vee \ar[r] & 
H^{2-i}(\sF^{2-j}_{X})^\vee \ar[r]  & 
H^{2-i}(\sF^{2-j}_{\sX})^\vee \ar[r] & 
H^{2-i}_{Y}(\sF^{2-j}_{\sX})^\vee,}
\end{equation}
where the vertical arrows on the lower floor are the canonical inclusions. 
The top and the bottom rows are clearly exact while the exactness of the middle row is
shown in \lemref{lem:Exact-Dual}. All squares on the lower floor clearly commute. We
let the composite vertical arrow in the middle be the  map $(-1)^i \wt{\Phi}^{ij}_X$,
where $\wt{\Phi}^{ij}_X \colon H^{i}(\sF^{j}_{X}) \to H^{2-i}(\sF^{2-j}_{X})^\vee$
is induced by ~\eqref{eqn:Duality-Main-0}.
We now show that the composition of each of the top and the following
bottom squares in ~\eqref{eqn:Duality-Main-2} commutes. We now show that the
composition of each of the squares on the upper floor with the
one below it on the lower floor in ~\eqref{eqn:Duality-Main-2} commutes.

The composite squares on the left and right corners commute because they are induced by
the canonical commutative diagram
\begin{equation}\label{eqn:Duality-Main-3}
  \xymatrix@C1pc{
    \iota_* R\iota^{!} \sF^j_{\sX} \hspace*{.5cm} \times \hspace*{.3cm}  \sF^{j'}_{\sX}
    \ar[r]^-{\cup}
\ar@<-7ex>[d]_{\iota_*} & \iota_* R\iota^{!} \sF^{j+j'}_{\sX} \ar@{=}[d] \\
\hspace*{.6cm} \sF^j_{\sX} \hspace*{.5cm} \times \hspace*{.5cm}
\iota_* R\iota^{!}\sF^{j'}_{\sX}  
\ar[r]^-{\cup} \ar@<-7ex>[u]_-{\iota_*} &  \iota_* R\iota^{!} \sF^{j+j'}_{\sX}.}
\end{equation}

For the two middle composite squares, note that if we apply the
classical formula relating the cup product and the boundary maps in
sheaf cohomology (e.g., see \cite[Thm.~7.1]{Bredon} or \cite[Lem.~3.2]{Swan})
to the cohomology sequences associated to the (derived) tensor product
of $\sF^j_{\sX}$ with the exact triangle
\[
  \iota_* R\iota^{!} \sF^{2-j}_{\sX} \to \sF^{2-j}_{\sX} \to Ru_* \sF^{2-j}_X
\]
for $0 \le j \le 2$, we get that the diagram
\begin{equation}\label{eqn:Duality-Main-4}
  \xymatrix@C1pc{
H^{i}(\sF^j_{\sX}) \times H^{3-i}_{Y}(\sF^{2-j}_{\sX}) \ar[r]^-{\cup}
\ar@<-7ex>[d]_{u^*} &  H^{3}_{Y}(\sF^{2}_{\sX}) \\
H^{i}(\sF^j_{X}) \times H^{2-i}(\sF^{2-j}_{X}) \ar@<-7ex>[u]_-{\partial}
\ar[r]^-{\cup} &  H^{2}(\sF^{2}_{X}) \ar[u]_-{\partial}}
\end{equation}
is commutative up to multiplication by $(-1)^i$. Since $\Tr_{\sX} \circ \partial = \Tr_X$,
it follows that the two middle composite squares commute.

Since all vertical arrows on the lower floor of ~\eqref{eqn:Duality-Main-2} are injective
and all lower squares are commutative, it follows that the squares on the left and
the right corners on the upper floor are commutative. Next, an easy diagram chase shows
that the square labeled (1) is Cartesian. This implies that the image of the
composite middle vertical arrow lies in the subgroup $H^{2-i}(\sF^{2-j}_{X})^\star$.
Furthermore, if we let $\Phi^{ij}_X \colon H^{i}(\sF^j_{X}) \to
H^{2-i}(\sF^{2-j}_{X})^\star$ denote the induced map, then
all squares on the upper floor commute. Since all vertical arrows on this floor,
except possibly the middle one, are isomorphisms, it follows that $\Phi^{ij}_X$ is also an
isomorphism.

To finish the proof of the theorem, it remains to show that $\Phi^{ij}_X$ is a
continuous and open homomorphism. To show this, recall that $F_{ij}$
is a quotient of $H^{i}(\sF^j_{\sX})$ and is an open subgroup of $H^{i}(\sF^j_{X})$ such
that ${H^{i}(\sF^j_{X})}/{F_{ij}}$ is discrete. Hence, it suffices to show in
the square labeled (2) in ~\eqref{eqn:Duality-Main-2} that the composite map
$H^{i}(\sF^j_{\sX}) \to H^{3-i}_{Y}(\sF^{2-j}_{\sX})^\star \to H^{2-i}(\sF^{2-j}_{X})^\star$
is continuous and open. Let us call this composite map $\theta$.
Now, we have already seen above that $\Phi^{ij}_{\sX}$ is a
topological isomorphism. On the other hand,
$H^{3-i}_{Y}(\sF^{2-j}_{\sX})^\star \to H^{2-i}(\sF^{2-j}_{X})^\star$ is clearly continuous
because the Pontryagin dual is an auto-functor in the category of locally compact
Hausdorff topological abelian groups. This proves that $\theta$ is continuous.

To show that $\theta$ is open, it suffices to show that the map
$H^{i+1}_{Y}(\sF^{j}_{\sX})^\star \to H^{i}(\sF^{j}_{X})^\star$ is open for any $i, j \ge 0$.
From ~\eqref{eqn:Exact-Dual-4} and ~\eqref{eqn:Exact-Dual-5}, we see that this
map has a factorization
\begin{equation}\label{eqn:Duality-Main-5}
  H^{i+1}_{Y}(\sF^{j}_{\sX})^\star \surj {\rm Im}(\partial^\star) = \Ker(\iota_*)^\star \inj
  H^{i}(\sF^{j}_{X})^\star.
  \end{equation}
  The first arrow in this factorization is clearly open because it is a quotient map
  in $\pfd$ by \lemref{lem:Quotient-group}. On the other hand, $(F_{ij})^\star$ is
  discrete and $H^{i}(\sF^{j}_{X})^\star \surj (F_{ij})^\star$ is a continuous surjective
  homomorphism. It follows that its kernel ${\rm Im}(\partial^\star) =
  \Ker(\iota_*)^\star$ is open in
$H^{i}(\sF^{j}_{X})^\star$. We have thus shown that the composite map in
~\eqref{eqn:Duality-Main-5} is open. This concludes the proof.
\end{proof}

For integers $m, n \ge 1$, we have a diagram of bilinear parings
\begin{equation}\label{eqn:Limit-duality}
  \xymatrix@C.8pc{
{\sK^M_{i, X}}/{p^m} \times  {\sK^M_{j, X}}/{p^m} \ar[r]^-{\cup}
\ar@<-7ex>[d]_{p^n} &  {\sK^M_{i+j, X}}/{p^m} \ar[d]^-{p^n} \\
{\sK^M_{i, X}}/{p^{m+n}} \times  {\sK^M_{j, X}}/{p^{m+n}} \ar@<-7ex>[u]_-{\can}
\ar[r]^-{\cup} &  {\sK^M_{i+j, X}}/{p^{m+n}},}
\end{equation}   
where `$\can$' indicates the canonical surjection. It easily follows from the
bilinearity of the product in Milnor $K$-theory that this diagram is commutative.
Passing to the cohomology and using ~\eqref{eqn:GL*}, we get a bilinear pairing
\begin{equation}\label{eqn:Limit-duality-0}
``{{\underset{n}\varinjlim}}" H^{i}(X, W_n\Omega^j_{X, \log}) \times
``{{\underset{n}\varprojlim}}" H^{2-i}(X, W_n\Omega^{2-j}_{X, \log}) 
\to ``{{\underset{n}\varinjlim}}" {\Z}/{p^n}
\end{equation}
between ind-abelian and pro-abelian groups.

We endow ${\varinjlim}_n \ H^{i}(X, W_n\Omega^j_{X, \log})$
with the direct limit (i.e., the weak) topology and
${\varprojlim}_n \ H^{i}(X, W_n\Omega^j_{X, \log})$ the inverse limit
topology. Taking the limits in ~\eqref{eqn:Limit-duality-0} and using
\lemref{lem:dual-surj}, we get the following.

\begin{cor}\label{cor:Limit-duality-1}
  There is a pairing of topological abelian groups
  \[
    {\varinjlim}_n \ H^{i}(X, W_n\Omega^j_{X, \log}) \times
    {\varprojlim}_n \ H^{2-i}(X, W_n\Omega^{2-j}_{X, \log}) 
    \to {\Q_p}/{\Z_p}
  \]
  which induces an isomorphism of abelian groups
  \[
    {\varprojlim}_n \ H^{i}(X, W_n\Omega^{j}_{X, \log}) \to
    \left({\varinjlim}_n \ H^{2-i}(X, W_n\Omega^{2-j}_{X, \log})\right)^\star,
  \]
a continuous epimorphism of topological abelian groups
  \[
    {\varinjlim}_n \ H^{i}(X, W_n\Omega^j_{X, \log}) \to
   \left({\varprojlim}_n \
      H^{2-i}(X, W_n\Omega^{2-j}_{X, \log})\right)^\star.
  \]
  \end{cor}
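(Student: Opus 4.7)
The plan is to derive this corollary directly from \thmref{thm:Duality-Main} by passing to the limit in $n$, using the compatibility encoded in diagram~\eqref{eqn:Limit-duality}. At each level $n$, \thmref{thm:Duality-Main} produces a topological isomorphism $\Phi^{ij}_n \colon H^i(X, W_n\Omega^j_{X,\log}) \xrightarrow{\cong} H^{2-i}(X, W_n\Omega^{2-j}_{X,\log})^\star$ in $\pfd$, and the bilinearity of the Milnor $K$-theoretic product together with~\eqref{eqn:Limit-duality} ensures that these $\Phi^{ij}_n$ are compatible with the transition maps $\un{p}$ on the direct system $\{H^i_n\}$, the reduction maps $R$ on the inverse system $\{H^{2-i}_n\}$, and the inclusions $\Z/p^n \inj \Z/p^{n+1}$. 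This compatibility immediately produces the desired bilinear pairing with values in ${\varinjlim}_n \Z/p^n = \Q_p/\Z_p$.

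For the first claim, I would use the categorical identity $\Hom_{\Tab}({\varinjlim}_n A_n, \Q/\Z) \cong {\varprojlim}_n \Hom_{\Tab}(A_n, \Q/\Z)$, which is a direct consequence of the universal property of the direct limit, since ${\varinjlim}_n H^{2-i}_n$ is endowed with the weak topology. Composing with ${\varprojlim}_n \Phi^{ij}_n$ gives the abelian group isomorphism ${\varprojlim}_n H^i_n \xrightarrow{\cong} {\varprojlim}_n (H^{2-i}_n)^\star \cong \left({\varinjlim}_n H^{2-i}_n\right)^\star$.

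For the second claim, applying ${\varinjlim}_n$ to the individual isomorphisms $\Phi^{ij}_n$ yields a topological isomorphism ${\varinjlim}_n H^i_n \cong {\varinjlim}_n (H^{2-i}_n)^\star$ in $\Tab$. I would then invoke \lemref{lem:dual-surj} applied to the pro-system $\{H^{2-i}_n\}$: each $H^{2-i}_n$ lies in $\pfd$ and therefore has its topology generated by open subgroups, so the canonical map ${\varinjlim}_n (H^{2-i}_n)^\star \to \left({\varprojlim}_n H^{2-i}_n\right)^\star$ is a continuous surjection. Composing the two gives the desired continuous epimorphism.

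The main obstacle is the verification of the topological compatibilities: one must check that the transition maps of both limits are morphisms in $\pfd$, that ${\varinjlim}_n H^i_n$ with its weak topology is the right object both for the adjunction and for interfacing with \lemref{lem:dual-surj}, and one must explain why the second map is only an epimorphism rather than an isomorphism. This last point is not a gap but a genuine phenomenon: the Mittag-Leffler property may fail for $\{H^{2-i}_n\}$ (for instance when the groups are large torsion-by-profinite rather than finite), so the projections from ${\varprojlim}_n H^{2-i}_n$ onto individual terms are not guaranteed to be surjective, and exactly this surjectivity is what \lemref{lem:dual-surj} requires in order to upgrade surjectivity of the dual map to bijectivity.
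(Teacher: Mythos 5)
Your proposal is correct and follows essentially the same route as the paper: the paper obtains the pairing by passing to the limit in the compatibility diagram~\eqref{eqn:Limit-duality} (via~\eqref{eqn:GL*} and \thmref{thm:Duality-Main}) and then deduces the two maps exactly by taking $\varprojlim$/$\varinjlim$ of the level-$n$ isomorphisms and invoking \lemref{lem:dual-surj} for the surjectivity of the second one. Your additional remarks on the $\Hom$--limit adjunction for the weak topology and on why the second map need not be injective (failure of surjectivity of the projections from $\varprojlim_n H^{2-i}(X,W_n\Omega^{2-j}_{X,\log})$) are accurate elaborations of what the paper leaves implicit.
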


\section{The relative Picard scheme}\label{sec:Defn}
After the duality theorem for the logarithmic Hodge-Witt cohomology, the next key
ingredients in the proof of \thmref{thm:Main-0} and the Brauer-Manin pairing for
modulus pairs are the representability and certain topological properties of the relative
Picard group. These results are of independent interest and we shall establish
them in the next two sections using \cite{Kleiman} and \cite{Serre-AGCF}.

We fix an arbitrary field $k$ of exponential characteristic $p \ge 1$. We let
$X$ be a smooth projective geometrically connected curve over $k$ and let $D \subset X$
be an effective Cartier divisor whose support is a non-empty
finite closed subset $S = \{x_1, \ldots , x_r\}$ of $X$. We shall use $\fm$ as another
notation for $D$ and call it the modulus divisor, following the terminology of
\cite{Serre-AGCF}. We let $\iota \colon D \inj X$ be the inclusion.
We shall call $(X,D)$ a modulus pair (of dimension one).
We write $D = \stackrel{r}{\underset{i = 1}
  \sum} n_i [x_i]$ as a Weil divisor and let $\deg(D) = \stackrel{r}{\underset{i = 1}
  \sum} n_i [k(x_i): k]$ denote the degree of $D$. Note that $\deg(D) =1$ if and
only if $D = \Spec(k(x))$, where $x \in X(k)$.
We let $X^o = X \setminus S$. 
We let $A = \sO_{X,S}$ and $I \subset A$ the ideal defining $D$ in $\Spec(A)$.
We write $A_D = A/I$. We let $K$ denote the function field of $X$.
For any $k$-scheme $Z$, we let $\sZ_0(Z)$ denote the free abelian group on $Z_{(0)}$.
We shall let $\sO(Z)$ (resp.
$\sO^\times(Z)$) denote the group $H^0(Z, \sO_Z)$ (resp. $H^0(Z, \sO^\times_Z)$).

\subsection{Relative Picard and 0-cycles with modulus}
\label{sec:RPM}
Recall that for an integral quasi-projective scheme $Y$ over $k$ and an
effective Cartier divisor $E \subset Y$, the Chow group of 0-cycles with modulus
$\CH_0(Y|E)$ is defined to be the quotient of $\sZ_0(Y \setminus E)$ by the subgroup
$\sR_0(Y|E)$ generated by the 0-cycles $\nu_*(\divf(f))$, where $\nu \colon C_n \to Y$
is the canonical morphism from the normalization of an integral curve $C \subset Y$
not contained in $E$ 
and $f \in \Ker(\sO^\times_{C_n, \nu^{-1}(E)} \surj \sO^\times(\nu^*(E)))$.
The relative Picard group $\Pic(Y|E)$ (usually written as
$\Pic(Y,E)$ in the literature) is the set of isomorphism classes
of pairs $(\sL, u \colon \sO_E \xrightarrow{\cong} \sL|_E)$, where
$\sL$ is an invertible sheaf on $Y$ and $\sL|_E := \sL \otimes_{\sO_Y} \sO_E$.
One says that $(\sL, u) \cong (\sL', u')$ if there is an isomorphism
$w \colon \sL \xrightarrow{\cong} \sL'$ such that $u' = w|_E \circ u$.
$\Pic(Y|E)$ is an abelian group under the tensor product of invertible sheaves and
their trivializations along $E$ whose identity element is $(\sO_Y, \id)$.
There is a canonical homomorphism $\Pic(Y|E) \to \Pic(Y)$ which is surjective if
$\dim(Y) = 1$.

We now specialize to the one-dimensional modulus pair $(X,D)$ over $k$ that we fixed
above. Let $\Pic^0(X|D)$ denote the subgroup of pairs $(\sL, u)$ such that
$\deg(\sL) = 0$. We let $\CH_0(X|D)_0$ be the kernel of the
composite map $\deg \colon \CH_0(X|D) \surj \CH_0(X) \xrightarrow{\deg} \Z$.
There is a degree preserving cycle class map $\cyc_{X|D} \colon \CH_0(X|D) \to \Pic(X|D)$
(e.g., see \cite[Lem.~3.1]{Krishna-ANT}) such that $\cyc_{X|D}([Q]) =
(\sO_X(Q), u(1) = f^{-1})$, where $Q \in X^o_{(0)}$ and $(f) \in  \sO_{X, S \cup \{Q\}}$
is the ideal of $Q$.

\begin{lem}\label{lem:CCM-iso}
    The homomorphism $\cyc_{X|D}$ is bijective.
  \end{lem}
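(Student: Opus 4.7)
My approach is to establish bijectivity of $\cyc_{X|D}$ by giving a uniform presentation of both $\CH_0(X|D)$ and $\Pic(X|D)$ as quotients of the same abelian group, so that $\cyc_{X|D}$ becomes essentially tautological. The central claim is the following presentation:
\[
  \Pic(X|D) \;\cong\; \frac{\sZ_0(X^o) \oplus A_D^\times}{R},
\]
where the group law on the $A_D^\times$ factor is multiplication and $R$ is the subgroup generated by the pairs $(\divf(h), h\vert_D)$ for $h \in A^\times$. Under this identification, $\cyc_{X|D}$ sends $[E] \in \CH_0(X|D)$ to the class of $(E, 1)$.

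To establish the presentation, I would proceed in three steps. First, every invertible sheaf $\sL$ on the smooth projective curve $X$ has the form $\sO_X(E_0)$ for some $E_0 \in \sZ_0(X)$, and weak approximation on the semi-local principal ideal domain $A$ produces $h_0 \in K^\times$ realizing arbitrary prescribed orders at the points of $S$, so that after replacing $E_0$ by $E_0 + \divf(h_0)$ one may assume $\supp(E_0) \subset X^o$. Second, for such an $E$, the inclusion $\sO_X \subset \sO_X(E)$ is an equality on a neighborhood of $D$, yielding a canonical trivialization $\sO_X(E)\vert_D \xrightarrow{\cong} \sO_D$; once an isomorphism $\sL \cong \sO_X(E)$ is fixed, any trivialization $u$ of $\sL\vert_D$ corresponds via this canonical identification to multiplication by a unique $\alpha \in A_D^\times$. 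Third, the resulting assignment $(\sL, u) \mapsto (E, \alpha)$ depends on the choice of $E$ (modifiable by $\divf(h)$ for $h \in A^\times$, with the induced iso $\sO_X(E) \cong \sO_X(E+\divf(h))$ given by multiplication by $h^{-1}$) and on the choice of $\sL \cong \sO_X(E)$ (differing by a global automorphism $c \in H^0(X, \sO^\times_X) = k^\times$); both ambiguities are precisely captured by elements of $R$, yielding the claimed isomorphism.

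Granting the presentation, both directions follow by elementary manipulations. For surjectivity of $\cyc_{X|D}$: given a class $[(E, \alpha)]$, the surjectivity of $A^\times \surj A_D^\times$ (valid since $A$ is semi-local) provides $h \in A^\times$ with $h\vert_D = \alpha^{-1}$; adding the relation $(\divf(h), h\vert_D) \in R$ to $(E, \alpha)$ produces the representative $(E+\divf(h), 1)$, which is $\cyc_{X|D}([E+\divf(h)])$. For injectivity: if the class of $(E, 1)$ is trivial in $\Pic(X|D)$, then $(E, 1) \in R$, so some $h \in A^\times$ satisfies $E = \divf(h)$ and $h\vert_D = 1$; the latter condition is exactly $h \in 1 + I$, so $E \in \sR_0(X|D)$ by definition, hence $[E] = 0$ in $\CH_0(X|D)$.

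The main obstacle is the careful verification that the assignment $(\sL, u) \mapsto (E, \alpha)$ descends to a well-defined group isomorphism onto the quotient $(\sZ_0(X^o) \oplus A_D^\times)/R$, tracking correctly how trivializations along $D$ transform under changes of defining divisor and of isomorphism. Once this bookkeeping is done, the bijectivity of $\cyc_{X|D}$ is immediate.
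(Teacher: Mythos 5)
Your proof is correct and is in substance the argument the paper relies on: its one-line proof cites Suslin--Voevodsky (Lemmas 2.1, 2.3 and 2.4 of \cite{SV-Invent}) for exactly the divisor-theoretic presentation $\Pic(X|D)\cong\bigl(\sZ_0(X^o)\oplus A_D^\times\bigr)/R$ that you establish by hand, after which bijectivity of $\cyc_{X|D}$ is read off as you do, using that $A^\times\surj A_D^\times$ (units lift along the semilocal ring $A$ since $I$ lies in its Jacobson radical) and that $(E,1)\in R$ forces $E=\divf(h)$ with $h\in 1+I$. Nothing is missing.
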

  \begin{proof}
    This is a combination of \cite[Lemmas~2.1, 2.3, 2.4]{SV-Invent}. 
\end{proof}

From the above definitions, it is clear that there is an exact sequence
\begin{equation}\label{eqn:CCM-0}
      0 \to \sO^\times(X) \xrightarrow{\iota^*} \sO^\times(D) 
\xrightarrow{\partial_{X|D}} \Pic(X|D) \xrightarrow{\vartheta_{X|D}} \Pic(X) \to 0,
\end{equation}
where $\iota^*$ is the canonical inclusion and
$\partial_{X|D}(u) = (\sO_X, u \colon \sO_D \xrightarrow{\cong} \sO_X|_D)$.
Note that this exact sequence holds even if $X$ is only integral.
In \S~\ref{sec:Dual-mod}, we shall study the exactness of the sequence obtained by
taking the Pontryagin dual of ~\eqref{eqn:CCM-0}.
Here, we note the following consequence.

\begin{lem}\label{lem:deg-1}
The canonical map $\Pic(X|D) \to \Pic(X)$ is an isomorphism if $\deg(D) = 1$,
\end{lem}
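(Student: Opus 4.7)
The plan is to deduce the claim directly from the exact sequence \eqref{eqn:CCM-0}. Since $\vartheta_{X|D}$ is already surjective in that sequence, it suffices to prove injectivity, which is equivalent to showing that the restriction map $\iota^{*} \colon \sO^{\times}(X) \to \sO^{\times}(D)$ is surjective (so that $\partial_{X|D} = 0$).

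First I would use the hypothesis $\deg(D) = 1$ together with the observation recorded just before the lemma: $\deg(D) = 1$ forces $D = \Spec(k(x))$ with $x \in X(k)$, so $\sO^{\times}(D) = k(x)^{\times} = k^{\times}$. Next, since $X$ is smooth, projective, and geometrically connected over $k$, one has $H^{0}(X,\sO_X) = k$, hence $\sO^{\times}(X) = k^{\times}$. Under these identifications, $\iota^{*}$ is simply the identity on $k^{\times}$ (the restriction of a constant function to the point $x$ is that same constant), so it is surjective.

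Combining this with the exact sequence \eqref{eqn:CCM-0}, we obtain $\partial_{X|D} = 0$, and therefore $\vartheta_{X|D} \colon \Pic(X|D) \to \Pic(X)$ is injective as well as surjective.

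There is essentially no obstacle: the proof is a direct, two-line consequence of \eqref{eqn:CCM-0} and the geometric connectedness of $X$. The only point to be careful about is the identification $\sO^{\times}(D) \cong k^{\times}$, which uses the explicit fact that $\deg(D)=1$ forces $D$ to be a single $k$-rational point (with reduced structure), as noted in the setup.
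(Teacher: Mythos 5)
Your proof is correct and is exactly the intended argument: the paper states the lemma without proof, leaving it as an immediate consequence of the exact sequence \eqref{eqn:CCM-0} together with the observation that $\deg(D)=1$ forces $D=\Spec(k(x))$ with $x\in X(k)$, so that $\iota^*\colon \sO^\times(X)=k^\times \to \sO^\times(D)=k^\times$ is the identity. Nothing to add.
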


\subsection{The singular curve attached to $(X,D)$}\label{sec:Sing-curve}
Assume now that $\deg(D) \ge 2$. In order to prove
the representability of $\Pic(X|D)$, the strategy is to replace it with the ordinary
Picard group of a singular curve (without modulus). 
The following result from \cite[Chap.~IV, \S~1, no. 3, 4]{Serre-AGCF} helps us in
achieving this.

\begin{prop}\label{prop:Sing}
There exists a unique geometrically integral $k$-scheme $X_\fm$
together with a finite morphism $\psi_\fm \colon X \to X_\fm$ such that the
following hold.
\begin{enumerate}
  \item
    $\psi_\fm(S)$ is the unique singular point $P_0 \in X_\fm$ which is $k$-rational.
\item
$\psi_\fm \colon X^o \xrightarrow{\cong} X_\fm \setminus \{P_0\}$. 
\item
  If we let $I_\fm \subset \sO_{X_\fm, P_0}$ denote the maximal ideal, then
the pull-back map $\psi^*_\fm \colon \sO_{X_\fm, P_0} \to \sO_{X,S}$
  induces a bijection $I_\fm \xrightarrow{\cong} I$.
  \end{enumerate}
\end{prop}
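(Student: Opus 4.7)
The plan is to follow the construction in \cite[Chap.~IV, \S~1, no.~3--4]{Serre-AGCF}: $X_\fm$ is obtained from $X$ by contracting the finite closed set $S$ to a single $k$-rational point $P_0$ whose local ring records the modulus $D$. The construction is carried out locally at $S$ and then glued to the complement $X^o$.

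For the local piece, observe that $A = \sO_{X,S}$ is a $k$-algebra, so $k$ embeds in $A$ and $k \cap I = 0$. The subset
\[
  A' := k + I \subset A
\]
is therefore a $k$-subalgebra satisfying $A' \cap I = I$ and $A'/I \xrightarrow{\cong} k$; in particular $A'$ is a local domain with maximal ideal $I$ and residue field $k$. Since $A/I$ has $k$-dimension $\deg(D)$, we get $\dim_k(A/A') = \deg(D) - 1 < \infty$, so a lift to $A$ of any $k$-basis of $A/A'$ exhibits $A$ as a finite $A'$-module; Eakin's theorem then makes $A'$ Noetherian.

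Globally, I would choose an affine open $V = \Spec(B) \subset X$ containing $S$ (available by quasi-projectivity of $X$), let $J \subset B$ be the ideal cutting out $V \cap D$, and set
\[
  B' := \{b \in B : \bar b \in k \cdot 1 \subset B/J\}.
\]
The same reasoning as in the local step gives a finite extension of Noetherian domains $B' \hookrightarrow B$ with $(B')_{\fm'} \cong A'$ at the unique maximal ideal $\fm'$ of $B'$ lying over $S$, and such that $V \to V_\fm := \Spec(B')$ restricts to an isomorphism outside $S$. Gluing $V_\fm$ to $X^o$ along the common open $V \setminus S \cong V_\fm \setminus \{P_0\}$ produces a Noetherian $k$-scheme $X_\fm$ together with a finite morphism $\psi_\fm \colon X \to X_\fm$. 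Conditions (1), (2) and (3) are then immediate, and uniqueness is forced because any subring of $A$ containing $I$ with residue field $k$ must equal $k + I = A'$, which determines $\sO_{X_\fm, P_0}$, while (2) determines $\sO_{X_\fm}$ away from $P_0$.

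Integrality of $X_\fm$ is clear: it is irreducible (it contains $X^o$ as a dense irreducible open) and reduced (all stalks are domains). The main technical point, and the one I would spend the most care on, is geometric integrality. One checks that the formation of $A'$ commutes with extension of scalars, namely $A' \otimes_k \bar k = \bar k + I_{\bar k}$ inside $A_{\bar k}$, so that $(X_\fm)_{\bar k}$ is the analogous contraction of the geometrically integral curve $X_{\bar k}$ at the geometric support $S_{\bar k}$, and is integral by the same argument. The subtlety arises precisely when $\mathrm{Supp}(D)$ contains non-$k$-rational points, since then $A/I$ is a product of local $k$-algebras with various residue fields $k(x_i)$ potentially larger than $k$; one must verify that the ``diagonal'' copy of $k$ that we single out inside $A/I$ is stable under $\otimes_k \bar k$ and still meets $I_{\bar k}$ trivially, which is an elementary check via the explicit decomposition $A/I \cong \prod_i \sO_{X,x_i}/\fm_{x_i}^{n_i}$.
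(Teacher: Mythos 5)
Your proposal is correct and follows essentially the same route as the paper: the local ring at $P_0$ is $k+I\subset\sO_{X,S}$, finiteness of the extension comes from $\dim_k(A/I)<\infty$, Noetherianity from Eakin--Nagata, and geometric integrality from the fact that the construction embeds into the (geometrically integral) coordinate rings of $X$ after any base change. The only differences are organizational --- you globalize by gluing $\Spec(B')$ to $X^o$ rather than by intersecting $k+I$ with the localizations of $R$ away from $S$, and you spell out the uniqueness, which the paper leaves implicit --- and both are fine.
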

\begin{proof}
Since we do not assume any condition on $k$ while the construction of $X_\fm$ given
in \cite{Serre-AGCF} assumes $k$ to be algebraically closed, we sketch the proof of
this proposition to explain that this additional hypothesis is not necessary.
  
Since $S$ has an affine neighborhood in $X$, one easily observes that it suffices
to show the existence of $X_\fm$ when $X = \Spec(R)$ is affine. We let
$B = k + I = \{a + b| a \in k, b \in I\} \subset A$. Then $B$ is a $k$-subalgebra
of $A$ and there is a commutative diagram of short sequences of $B$-modules:
\begin{equation}\label{eqn:Sing-0}
  \xymatrix@C.8pc{
    & & 0 \ar[d] & 0 \ar[d] & \\
    0 \ar[r] & I \ar[r] \ar@{=}[d] & B \ar[r] \ar[d] & k \ar[r] \ar[d] & 0 \\
    0 \ar[r] & I \ar[r] & A \ar[r] \ar[d] & A_D \ar[r] \ar[d] & 0 \\
    & & A/B \ar[r]^-{\cong} \ar[d] & {A_D}/k \ar[d] & \\
  & & 0 & 0. &}
\end{equation}

Since $A_D$ is finite over $k$, it follows from the above diagram that
  there is a finite-dimensional $k$-vector space $V \subset A$ such that
  $A = V + I$. It follows that there is a surjective map of
  $B$-modules $(k \oplus V) \otimes_k B \surj A$. In particular, $A$ is
  a finite $B$-module. An application of the Eakin-Nagata theorem
  (e.g., see \cite[Thm.~3.7]{Matsumura}) shows that $B$ is Noetherian.
  Using \cite[Thm.~5.10]{AM}, we conclude furthermore that $B$ is a 1-dimensional
  local ring with maximal ideal $I$.

  We now let $L$ be the fraction field of $B$ and
  take the tensor product of the middle row in ~\eqref{eqn:Sing-0}
  with $L$ over $B$ to get a commutative diagram of short exact sequences
\begin{equation}\label{eqn:Sing-1}
  \xymatrix@C.8pc{
   0 \ar[r] & B \ar[r] \ar[d] & A \ar[r] \ar[d] & A/B \ar[d] \ar[r] & 0 \\
    0 \ar[r] & L \ar[r] & A \otimes_B L \ar[r] & ({A_D}/k) \otimes_B L \ar[r] & 0.}
  \end{equation}

Since ${A_D}/k$ is a torsion $B$-module, we get $({A_D}/k) \otimes_B L =0$.
  Since $A \otimes_B L$ is a localization of $A$, the map $A \to A \otimes_B L$
  is injective. It follows that $A \subset L$. This implies that
  the inclusion $L \inj K$ is a bijection. Putting everything together, we conclude
  that the conditions (1) to (3) of the proposition hold if we replace $X$
  by $\Spec(A)$. We now let $B'$ be the intersection (in $K$)
  of the localizations of $R$
at all maximal ideals except those supported on $S$ and let 
$R' = B \cap B'$. It is then easy to check that $X_\fm := \Spec(R')$ is the
desired singular curve with the conductor ideal $I_\fm := I$.
Since $R'_{\ov{k}} \inj R_{\ov{k}}$ and the latter ring is an integral domain,
it follows that $X_\fm$ is geometrically integral. Note also that $P$ has to be
a singular point of $X_\fm$ since $\deg(k(P)) = 1$ while $\deg(D) \ge 2$, by our
assumption.
\end{proof}

Let $X_\fm$ be the singular curve with the unique singular point $P_0$
associated to the modulus pair $(X,D)$ as obtained in \propref{prop:Sing}.
We shall call this `the $\fm$-contraction of $X$' in the sequel.
We shall identify $X_\fm \setminus \{P_0\}$ with $X^o$ via $\psi_\fm$ throughout our
discussion.

\begin{lem}\label{lem:Base-extn}
  Let ${k'}/k$ be any field extension and let $\fm' := D_{k'} \subset X_{k'}$. Then
  the $\fm'$-contraction of $X_{k'}$ is canonically isomorphic to $(X_\fm)_{k'}$.
\end{lem}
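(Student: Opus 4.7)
The strategy is to invoke the uniqueness part of Proposition~\ref{prop:Sing}: it is enough to check that the base-changed finite morphism $(\psi_\fm)_{k'} \colon X_{k'} \to (X_\fm)_{k'}$ satisfies conditions (1)--(3) with respect to the modulus divisor $\fm' = D_{k'}$. The canonical isomorphism with the $\fm'$-contraction then falls out of the uniqueness assertion itself.

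Most of the structure transfers formally. Finiteness and geometric integrality are stable under arbitrary base change, so $(\psi_\fm)_{k'}$ is finite and $(X_\fm)_{k'}$ is geometrically integral over $k'$. Since $P_0$ is $k$-rational, its base change $(P_0)_{k'}$ is a single $k'$-rational closed point of $(X_\fm)_{k'}$, and (2) base-changes to an isomorphism $X^o_{k'} \xrightarrow{\cong} (X_\fm)_{k'} \setminus \{(P_0)_{k'}\}$; in particular $(X_\fm)_{k'}$ is regular away from $(P_0)_{k'}$.

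For condition (3), flatness of $k \to k'$ together with the finiteness of $S$ and $\{P_0\}$ over $k$ yields canonical identifications
\[
  \sO_{(X_\fm)_{k'},(P_0)_{k'}} \;\cong\; \sO_{X_\fm,P_0} \otimes_k k', \qquad
  \sO_{X_{k'}, S_{k'}} \;\cong\; \sO_{X,S} \otimes_k k',
\]
under which $I_\fm \otimes_k k'$ is the maximal ideal on the left ($P_0$ being $k$-rational) and $I \otimes_k k'$ is the defining ideal of $\fm'$ on the right. Tensoring the bijection $I_\fm \xrightarrow{\cong} I$ of (3) with $k'$ then furnishes the analogous bijection for $\fm'$. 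To finish (1) I would observe that $(P_0)_{k'}$ is genuinely singular: the minimal number of generators of the maximal ideal of $\sO_{X_\fm, P_0}$ strictly exceeds $\dim = 1$ whenever $\deg(D) \geq 2$, and this number is preserved by base change to $k'$ because the residue field is $k'$ on both sides (Nakayama), so the embedding dimension at $(P_0)_{k'}$ likewise exceeds its dimension.

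The main obstacle, such as it is, is bookkeeping: one must be comfortable with the interplay between local rings at $k$-rational closed points and their base changes, and verify that the defining ideal of $D$ base-changes to the defining ideal of $D_{k'}$. Once these identifications are in place, everything reduces to the flatness of $k \to k'$.
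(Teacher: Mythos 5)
Your overall strategy --- verify conditions (1)--(3) of Proposition~\ref{prop:Sing} for $(\psi_\fm)_{k'}\colon X_{k'}\to (X_\fm)_{k'}$ with respect to $\fm'=D_{k'}$ and invoke the uniqueness assertion --- is sound and is essentially the content of the paper's one-line proof ("a direct consequence of the construction, using $P_0\in X_\fm(k)$"); your observation that $k$-rationality of $P_0$ is what makes $(P_0)_{k'}$ a single $k'$-rational point is exactly the key point, and your embedding-dimension argument for (1) is fine.

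The one step that is not correct as written is the pair of "canonical identifications" $\sO_{(X_\fm)_{k'},(P_0)_{k'}}\cong \sO_{X_\fm,P_0}\otimes_k k'$ and $\sO_{X_{k'},S_{k'}}\cong \sO_{X,S}\otimes_k k'$. These hold when $k'/k$ is algebraic, but the lemma allows an arbitrary field extension, and for a transcendental extension they fail: writing $A=\sO_{X,S}$, the ring $A\otimes_k k'$ acquires extra maximal ideals lying over the generic point of $X$ (already for $A=k[x]_{(x)}$ and $k'=k(t)$ the ideal $(x-t)$ survives), so the semilocal ring of $X_{k'}$ at $S_{k'}$ is a strictly further localization of $A\otimes_k k'$; the same happens at $P_0$. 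Consequently "tensor the bijection $I_\fm\xrightarrow{\cong} I$ with $k'$" does not immediately give condition (3) for the pair of local rings that actually occur. The gap is repairable: one checks that every maximal ideal of $A\otimes_k k'$ over $S_{k'}$ contracts to the maximal ideal $I_\fm\otimes_k k'$ of $B\otimes_k k'$ (where $B=\sO_{X_\fm,P_0}$), so the two further localizations are compatible and the ideal isomorphism persists. A cleaner route, closer to the paper's, is to use that the construction exhibits $B$ as the fiber product $k\times_{A/I}A$ (equivalently $\sO(X_\fm)=k\times_{\sO(D)}\sO(X)$ locally), which commutes with the flat base change $-\otimes_k k'$ because tensoring preserves the kernel of $A\oplus k\surj A/I$; one then localizes at the distinguished point at the end. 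Either way, you should say explicitly how the transcendental case is handled before claiming condition (3).
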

\begin{proof}
  This is a direct consequence of the construction of $X_\fm$ using its property
  that $P_0 \in X_\fm(k)$.
  \end{proof}

 Recall that the Levine-Weibel Chow group of 0-cycles $\CH^{\lw}_0(Y)$ of an integral
  curve $Y$ over $k$ is the quotient of $\sZ_0(Y_\reg)$ by the subgroup generated by the
  divisors of those rational functions on $Y$ which lie in $\sO^\times_{Y, Y_\sing}$.
It is well known that assigning each closed point of $Y_\reg$ its
class in $\Pic(Y)$ induces a canonical cycle class isomorphism
$\cyc_{Y} \colon \CH^{\lw}_0(Y) \xrightarrow{\cong} \Pic(Y)$
(e.g., see \cite[Lem.~3.12]{Binda-Krishna-Comp}).
The following is a key lemma.

\begin{lem}\label{lem:Pic-PB}
  Let ${k'}/k$ be any field extension.  
  Then the identity map of $\sZ_0(X^o)$ induces via $\psi^*_\fm$,
  a degree preserving isomorphism
  \[
    \psi^*_\fm \colon \Pic((X_\fm)_{k'}) \xrightarrow{\cong} \Pic(X_{k'}|D_{k'})
  \]
  whose composition with $\Pic(X_{k'}|D_{k'}) \surj \Pic(X_{k'})$ is the usual
  pull-back between the Picard groups of $(X_\fm)_{k'}$ and $X_{k'}$.
\end{lem}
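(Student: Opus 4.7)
The plan is to pass to the Chow-group side on both ends and observe that the two rational-equivalence subgroups inside $\sZ_0(X^o)$ coincide. First, by \lemref{lem:Base-extn}, the formation of $X_\fm$ is compatible with base change, so it suffices to treat the case $k' = k$; I silently replace $(X, D, X_\fm)$ by its base change throughout. I then apply two cycle-class isomorphisms: \lemref{lem:CCM-iso} identifies $\CH_0(X|D)$ with $\Pic(X|D)$, while the Levine--Weibel cycle map $\cyc_{X_\fm} \colon \CH^{\lw}_0(X_\fm) \xrightarrow{\cong} \Pic(X_\fm)$ is an isomorphism by \cite[Lem.~3.12]{Binda-Krishna-Comp}, since $X_\fm$ is an integral projective curve. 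Both Chow groups are quotients of $\sZ_0(X^o)$: directly on the $(X,D)$ side, and via the identification $(X_\fm)_\reg \cong X^o$ coming from \propref{prop:Sing}(1)--(2) on the other.

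The heart of the proof is the matching of the two subgroups of relations. On the one side, $\sR_0(X|D)$ is generated by $\divf(f)$ with $f \in K^\times$ lying in $1 + I$ (the only integral curve in $X$ is $X$ itself, so only rational functions on $X$ contribute). On the other, the Levine--Weibel relations on $X_\fm$ are generated by $\divf(g)$ with $g \in \sO^\times_{X_\fm, P_0}$. The content of \propref{prop:Sing} exhibits $\sO_{X_\fm, P_0}$ as the subring $k + I \subset A = \sO_{X,S}$, with $P_0$ a $k$-rational point. Given such a unit $g$, I write $g = c(1 + c^{-1}h)$ with $c = g(P_0) \in k^\times$ and $h \in I$, and use $\divf(c) = 0$ to conclude $\divf(g) \in \sR_0(X|D)$. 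Conversely, any $f = 1 + h$ with $h \in I$ already lies in $k + I = \sO_{X_\fm, P_0}$ and is a unit there because $f(P_0) = 1$. Hence the two subgroups of $\sZ_0(X^o)$ coincide, yielding the desired isomorphism $\psi_\fm^* \colon \Pic(X_\fm) \xrightarrow{\cong} \Pic(X|D)$.

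The remaining assertions are formal. Degree preservation follows because $\psi_\fm$ restricts to an isomorphism on $X^o$ preserving residue fields at closed points. The factorization through $\Pic(X)$ comes from unraveling on cycles: both the canonical surjection $\Pic(X|D) \to \Pic(X)$ of \eqref{eqn:CCM-0} and the ordinary pull-back $\psi_\fm^* \colon \Pic(X_\fm) \to \Pic(X)$ are, through their respective cycle-class isomorphisms, induced by the inclusion $\sZ_0(X^o) \inj \sZ_0(X)$ followed by passage to rational equivalence on the smooth curve $X$. The only real subtlety in the whole argument is the use of $k$-rationality of $P_0$ to extract the nonzero scalar value of a unit at $P_0$; this is ensured by \propref{prop:Sing}(1), and is the point where the construction of $X_\fm$ is genuinely used.
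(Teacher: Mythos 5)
Your proof is correct, and its first half coincides with the paper's: both reduce to $k'=k$ via \lemref{lem:Base-extn}, pass to $\CH_0(X|D)$ and $\CH^{\lw}_0(X_\fm)$ through the cycle class isomorphisms, and show that a unit $g \in \sO^\times_{X_\fm,P_0}$ has $\divf(g) \in \sR_0(X|D)$ by dividing out the scalar $g(P_0) \in k^\times$ — the step where the $k$-rationality of $P_0$ from \propref{prop:Sing} is genuinely used, exactly as you note. Where you diverge is in establishing that $\psi^*_\fm$ is an isomorphism rather than merely a well-defined surjection: the paper finishes by comparing the two short exact sequences $0 \to {A_D^\times}/{k^\times} \to \Pic(X_\fm) \to \Pic(X) \to 0$ and $0 \to {A^\times_D}/{k^\times} \to \Pic(X|D) \to \Pic(X) \to 0$ and invoking the five lemma, whereas you prove the reverse containment of relation subgroups directly, observing that $1+I \subset k+I = B \subset \sO_{X_\fm,P_0}$ consists of units (value $1$ at $P_0$) whose divisors on $X_\fm$ and on $X$ agree because $I$ lies in the Jacobson radical of $A = \sO_{X,S}$. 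Your route is slightly more self-contained — it avoids the Picard-group exact sequence of the singular curve relative to its normalization — at the cost of having to check carefully that both relation subgroups are computed inside the same $\sZ_0(X^o)$ with compatible local rings; the paper's route gets the injectivity for free from a diagram it needs anyway for \thmref{thm:Pic-rep}. Both are valid.
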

\begin{proof}
Since the base field $k$ is arbitrary, we can assume $k' = k$ by
  virtue of \lemref{lem:Base-extn}.
  To construct $\psi^*_\fm$, we can replace $\Pic(X|D)$ and $\Pic(X_\fm)$ with
  $\CH_0(X|D)$ and $\CH^{\lw}_0(X_\fm)$, respectively, via the cycle class isomorphisms.
  We first show that $\psi^*_\fm(\divf(f)) = 0$ if $f \in \sO^\times_{X_\fm, P_0}$.
  To that end,  we let $g$ denote the residue class of $f$ in
  $k(P_0)^\times = k^\times$ and let $\tilde{f} = fg^{-1}$. Then we see that
  $\tilde{f} \in \sO^\times_{X_\fm, P_0}$ is
  such that $\divf(f) = \divf(\tilde{f})$ in $\sZ_0(X^o)$.
  Since $\tilde{f} \in K_1(A, I)$, we also see that
  $\psi^*_\fm(\divf(\tilde{f})) = 0$ in $\CH_0(X|D)$. This proves the desired
  factorization of the pull-back map between the Picard groups. It is clear that
  $\psi^*_\fm$ is degree preserving (e.g., see \cite[Tag~0AYU, Lem.~33.44.4]{SP}).

To complete the proof of the lemma, we look at the commutative diagram of
  short exact sequences
  \begin{equation}\label{eqn:Sing-2}
  \xymatrix@C1pc{
    0 \ar[r] & {A_D^\times}/{k^\times} \ar[r] \ar@{=}[d] & \Pic(X_\fm)
    \ar[r]^-{\psi^*_\fm} \ar[d]^-{\psi^*_\fm} & \Pic(X) \ar[r] \ar@{=}[d] & 0 \\
0 \ar[r] & {A^\times_D}/{k^\times} \ar[r] & \Pic(X, D)
\ar[r] & \Pic(X) \ar[r] & 0.}
\end{equation}
This shows that the middle vertical arrow is an isomorphism.
\end{proof}

It follows from \lemref{lem:Pic-PB} that there is commutative diagram
\begin{equation}\label{eqn:Sing-3}
  \xymatrix@C1pc{
    \CH^{\lw}_0(X_\fm) \ar[r]^-{\cyc_{X_\fm}} \ar[d]_-{\psi^*_\fm} &
      \Pic(X_\fm)   \ar[d]^-{\psi^*_\fm} \\
      \CH_0(X|D) \ar[r]^-{\cyc_{X|D}} & \Pic(X|D),}
  \end{equation}
  in which all arrows are isomorphisms and degree preserving. We can now prove the
  existence of the relative Picard variety of the modulus pair $(X,D)$.

\begin{thm}\label{thm:Pic-rep}
There exists a group scheme $\Picc(X|D)$
    over $k$ such that $\Picc(X|D)(k') \cong \Pic(X_{k'}| D_{k'})$ for every field
    extension ${k'}/{k}$. The identity component $\Picc^0(X| D)$ of $\Picc(X|D)$ 
    is a smooth and irreducible quasi-projective group scheme over $k$ of dimension
    equal to $\dim_k H^1(X_\fm, \sO_{X_\fm})$ such that
  $\Picc^0(X| D)(k') \cong \Pic^0(X_{k'}| D_{k'})$. If $X^o(k) \neq \emptyset$, then
  the connected components of
  $\Picc(X|D)$ are parameterized by $\Z$ and each component is isomorphic
  to $\Picc^0(X| D)$. We have $\Picc(X|D)_{k'} \cong \Picc(X_{k'}| D_{k'})$ for every
  field extension ${k'}/{k}$. If $f \colon (X',D') \to (X,D)$ is a strict morphism of
  modulus pairs (i.e., $f^*(D) = D'$), then there is a canonical homomorphism of group
  schemes $f^* \colon \Picc(X|D) \to \Picc(X'|D')$.
\end{thm}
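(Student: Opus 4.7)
The plan is to reduce the representability and structural claims to the classical theory of the Picard scheme of a projective geometrically integral curve, applied to the $\fm$-contraction $\psi_\fm \colon X \to X_\fm$ furnished by \propref{prop:Sing}. We first dispatch the edge case $\deg(D) = 1$ using \lemref{lem:deg-1}, which identifies $\Pic(X|D)$ with $\Pic(X)$; then taking $\Picc(X|D) := \Picc(X)$ the smooth projective curve case of the Picard scheme yields every assertion.

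Assume therefore that $\deg(D) \geq 2$ and set $\Picc(X|D) := \Picc(X_\fm)$. By the classical representability result for Picard schemes of proper geometrically integral curves over a field (e.g.\ Kleiman or Bosch--L\"utkebohmert--Raynaud), $\Picc(X_\fm)$ is a group scheme over $k$, and since $X_\fm$ carries the $k$-rational point $P_0$, its value at any field extension $k'/k$ is $\Pic((X_\fm)_{k'})$. Combining \lemref{lem:Base-extn}, which identifies $(X_\fm)_{k'}$ with the $\fm'$-contraction of $X_{k'}$ for $\fm' = D_{k'}$, with \lemref{lem:Pic-PB}, which identifies $\Pic((X_{k'})_{\fm'})$ with $\Pic(X_{k'}|D_{k'})$, gives
\[
\Picc(X|D)(k') \;\cong\; \Pic(X_{k'}|D_{k'}),
\]
as well as the base change identity $\Picc(X|D)_{k'} \cong \Picc(X_{k'}|D_{k'})$ (which is the standard base change of the Picard functor of a curve).

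For the identity component, the deformation theory of the Picard scheme gives $\mathrm{Lie}\,\Picc(X_\fm) = H^1(X_\fm, \sO_{X_\fm})$, and the obstruction space $H^2(X_\fm, \sO_{X_\fm})$ vanishes because $X_\fm$ is one-dimensional. Hence $\Picc^0(X_\fm)$ is smooth over $k$ of dimension $\dim_k H^1(X_\fm, \sO_{X_\fm})$; its geometric irreducibility follows from that of $X_\fm$. Quasi-projectivity is Chevalley--Rosenlicht: $\Picc^0(X_\fm)$ sits in an extension of an abelian variety by a connected linear group, both quasi-projective. When $X^o(k) \neq \emptyset$, any $k$-rational point in $X^o \subset X_\fm$ provides a divisor class of degree $1$, so the degree map $\Pic(X_\fm) \surj \Z$ (identified with the degree on $\Pic(X)$ via \lemref{lem:Pic-PB}) is surjective and translation by this class yields a $k$-isomorphism $\Picc(X|D) \cong \Picc^0(X|D) \times \Z$ of schemes indexed by the degree.

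Finally, a strict morphism $f \colon (X',D') \to (X,D)$ induces a morphism $f_\fm \colon X'_{\fm'} \to X_\fm$ by the pushout characterisation of the contraction (realising $X_\fm$ as the geometric pushout of $X \leftarrow D \to \Spec(k)$ and using $f^*D = D'$); pulling back line bundles gives the desired group scheme homomorphism $f^* := f_\fm^* \colon \Picc(X|D) \to \Picc(X'|D')$. The main technical point is the smoothness and dimension count for $\Picc^0(X_\fm)$, but since $X_\fm$ is a curve this is delivered for free by the vanishing of $H^2(\sO_{X_\fm})$; I expect the actual subtlety, if any, to be checking the functor-of-points identification over non-perfect field extensions, which however is handled uniformly by the rational point $P_0$.
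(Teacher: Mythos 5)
Your proposal follows the same route as the paper: dispose of $\deg(D)=1$ via \lemref{lem:deg-1}, pass to the $\fm$-contraction $X_\fm$ of \propref{prop:Sing}, set $\Picc(X|D):=\Picc(X_\fm)$, and import representability, smoothness, quasi-projectivity and the description of the connected components from the classical theory of the Picard scheme of a projective geometrically integral curve with a rational point, using \lemref{lem:Base-extn} and \lemref{lem:Pic-PB} to translate back to $\Pic(X_{k'}|D_{k'})$. The one justification I would adjust is quasi-projectivity: your $H^2(X_\fm,\sO_{X_\fm})=0$ obstruction argument for smoothness and the dimension count is exactly what underlies the Oort/Kleiman references the paper cites, but the appeal to Chevalley--Rosenlicht is delicate over the local field $k$ (which is imperfect in positive characteristic, where Chevalley's structure theorem needs care), and quasi-projectivity of $\Picc^0$ of a projective scheme is in any case already part of the representability theorem you invoke (Kleiman, Lem.~9.5.1), so it is cleaner to cite that directly. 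Your pushout description of $X_\fm$ for the functoriality claim is, if anything, more explicit than the paper's one-line appeal to contravariance of the Picard scheme, since it actually produces the needed morphism $X'_{\fm'}\to X_\fm$.
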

\begin{proof}
If $\deg(D) = 1$, then $D = \Spec(k(x))$ for some $x \in X(k)$.
  In this case, we have $\Pic(X_{k'}|D_{k'}) \xrightarrow{\cong} \Pic(X_{k'})$ for
  every field extension ${k'}/{k}$ by \lemref{lem:deg-1}. The theorem therefore
  follows directly from its known case $D = \emptyset$. We now assume $\deg(D) \ge 2$
  and let $\psi_\fm \colon X \to X_\fm$ be the $\fm$-contraction of $X$ (cf.
  \propref{prop:Sing}).
Since $X_\fm$ is geometrically integral, it follows from the flat base change property
of the higher direct images of coherent sheaves that for every $k$-scheme $T$,
the map $\sO_T \to (f_T)_*(\sO_{X_\fm \times T})$ is an isomorphism of sheaves on $T$,
where $f \colon X_\fm \to \Spec(k)$ is the structure map and $f_T \colon 
X_\fm \times T \to T$ is the base change of $f$ to $T$. Since $P_0 \in X_\fm(k)$,
the map $f$ has a section.

Using the last two properties of $X_\fm$, we conclude
from \cite[Thm.~9.2.5, 9.4.8]{Kleiman} that there exists a group scheme $\Picc(X_\fm)$
over $k$ such that $\Picc(X_\fm)(T) \cong {\Pic(X_\fm \times T)}/{\Pic(T)}$ for 
every $k$-scheme $T$. In particular, we have
$\Picc(X_\fm)(k') \cong \Pic((X_\fm)_{k'})$ for every field
 extension ${k'}/{k}$. Furthermore, the identity component $\Picc^0(X_\fm)$ of
 $\Picc(X_\fm)$ is a geometrically irreducible quasi-projective group scheme over $k$
 and $\Picc^0(X_\fm)(k') \cong \Pic^0((X_\fm)_{k'})$ by \cite[Lem.~9.5.1]{Kleiman}.
It follows from \cite[Cor.~4.2]{Oort} (see also \cite[Prop.~9.5.19]{Kleiman})
 and the fppf descent property of smoothness that
  $\Picc^0(X_\fm)$ is a smooth group scheme over $k$, i.e., it is an
  algebraic group over $k$. In particular, it is geometrically integral.
  The assertion about $\dim(\Picc^0(X|D))$ follows from its smoothness and
  \cite[Cor.~9.5.13]{Kleiman}.

  If $P \in X^o(k)$, then the connected components of $\Picc(X_\fm)$ are
parameterized by $\Z$, and each connected component is isomorphic to $\Picc^0(X_\fm)$
via the transformation $\sL \mapsto \sL \otimes \sO_{X_\fm}(-m P)$ on the scheme points,
where $\deg(\sL) = m \in \Z$.  The theorem now follows
from Lemmas~\ref{lem:Base-extn} and ~\ref{lem:Pic-PB} if we let 
$\Picc(X|D) := \Picc(X_\fm)$. The last claim is clear because the Picard scheme
is a contravariant functor.
\end{proof}

\subsection{Picard scheme of singular curves}\label{sec:Pic-SS}
  Suppose that $Y$ is a geometrically integral singular curve with the normalization
  $f \colon X \to Y$. Let $Z \subset Y$ be a conductor subscheme for $f$ with
  support $Y_\sing$ and let $D = Z \times_Y X$. Assume that $X$ is smooth over $k$.
  Then $(X, D)$ is a 1-dimensional modulus pair, and it is easy to see from the
  construction in \propref{prop:Sing} that the $\fm$-contraction of $X$ has a unique
  factorization
  \begin{equation}\label{eqn:Pic-Sing**-0}
    \xymatrix@C1pc{
      X \ar[r]^-{f} \ar[dr]_-{\psi_\fm} & Y \ar[d]^-{\wt{\psi}_\fm} \\
      & X_\fm.}
    \end{equation}
    We conclude from \thmref{thm:Pic-rep} that there are morphisms of group schemes
    \begin{equation}\label{eqn:Pic-Sing**-1}
    \Picc(X|D) \xrightarrow{\wt{\psi}^*_\fm} \Picc(Y)
      \xrightarrow{f^*} \Picc(X)
    \end{equation}
    whose composition is the canonical morphism $\Picc(X|D) \surj \Picc(X)$.

We now recall the exact sequence
\begin{equation}\label{eqn:Pic-Sing**-2}
  0 \to k^\times \to \sO^\times(Z) \to H^1_\nis(Y, \sK^M_{1, (Y,Z)})  \to
  H^1_\nis(Y, \sK^M_{1,Y}) \to 0.
\end{equation}
Using the isomorphisms  $H^1_\nis(Y, \sK^M_{1, (Y,Z)}) \xrightarrow{\cong}
H^1_\nis(X, \sK^M_{1, (X,D)}) \cong \Pic(X|D)$ (see \cite[Lem.~3.1]{Krishna-ANT} for the
second isomorphism), we get an exact sequence
\begin{equation}\label{eqn:Pic-Sing**-3}
  0 \to {\sO^\times(Z)}/{k^\times} \to \Pic(X|D) \to \Pic(Y) \to 0.
\end{equation}
Since this remains true over every algebraic extension of $k$, it follows that
the map $\wt{\psi}^*_\fm$ in ~\eqref{eqn:Pic-Sing**-1} is surjective.

\section{Relative Picard group over local fields}\label{Alb-map}
  The goal of this section is to prove some topological properties of the relative
  Picard group and the albanese map with respect to the adic topology that the
  schemes acquire when the base field is a local field. Before we do this, we need to
  study the smoothness properties of the albanese
  morphism to the relative Picard scheme and of the morphism from the 
  relative to the ordinary Picard scheme.
  We continue with the notations and assumptions of \S~\ref{sec:Defn}
  (in particular, $k$ is still arbitrary).

 \subsection{Generic smoothness of albanese map}\label{sec:Alb-sm}
  Until we reach \thmref{thm:ALB-rel}, we shall assume in this subsection that
  $\deg(D) \ge 2$ and $X^o(k) \neq \emptyset$. We fix a point $P \in X^o(k)$.
  For any integer $n \ge 1$, let $\mathbf{Div}^n(X_\fm)$ be the quasi-projective
$k$-scheme which parameterizes effective Cartier divisors $E \subset X_\fm$ of
degree $n$. By \cite[Defn.~9.4.6]{Kleiman}, there is a canonical morphism
of $k$-schemes ${\aj}^n_{X_\fm} \colon \mathbf{Div}^n(X_\fm) \to \Picc^0(X_\fm)$ which
sends $E$ to $\sO_{X_\fm}(E-nP)$ if the support of $E$ lies in $X(k)$.
Since $\mathbf{Div}^1(X_\fm) \cong X^o$ by
\cite[Exc.~9.3.8]{Kleiman}, we get a $k$-morphism $\alb_{X|D} \colon X^o \to
\Picc^0(X_\fm)$ (called the albanese morphism)
which has the property that $\alb_{X|D}(x) = [x] -[P]$ for
any $x \in X^o(k)$. Furthermore, it induces
\begin{equation}\label{eqn:Pic-universal}
  {\aj}_{X|D} \colon \CH^{\lw}_0(X_\fm)_0 \cong \Pic^0(X_\fm) \xrightarrow{\cong}
  \Picc^0(X_\fm)(k).
\end{equation}
For $n \ge 1$, the $n$-th power of the albanese map descends to a morphism
$\alb^n_{X|D} \colon \Sym^n(X^o) \to \Picc^0(X_\fm)$, where $\Sym^n(X^o)$ is the
$n$-th symmetric power of $X^o$. We let
  \begin{equation}\label{eqn:ALB}
    \phi^n_{X|D} \colon
    (X^o)^n \xrightarrow{\pi_n} \Sym^n(X^o) \xrightarrow{\alb^n_{X|D}}
    \Picc^0(X_\fm).
  \end{equation}
  denote the composite morphism.
Let $p_a(X_\fm) = \dim_k(H^1(X_\fm, \sO_{X_\fm}))$ denote the arithmetic genus of
$X_\fm$.

Let us now assume that $E \subset X_\fm$ is an effective Cartier divisor disjoint
    from the unique singular point $P_0$ of $X_\fm$ and let $\sL = \sO_{X_\fm}(E)$.
    Equivalently, $E$ is an effective Cartier divisor on $X$ disjoint from $S$ and
    $\sL':= \sO_X(E) \cong \psi^*_\fm(\sL)$.
Since  $E$ is effective, there is a canonical inclusion $\sO_{X} \inj \sL'$.
   This yields a canonical inclusion
  $k \cong H^0(X, \sO_X) \inj  H^0(X, \sL')$ such that the composite $k \inj
  H^0(X, \sL') \to A/I$ is the canonical inclusion of rings $k \inj A/I$. 
Using this and the tensor product of the exact sequence 
\begin{equation}\label{eqn:fiber-dim-0}
  0 \to \sO_{X_\fm} \to (\psi_\fm)_*(\sO_X) \to {\sO_D}/{\sO_{P_0}}
  \to 0
\end{equation}
with $\sL$, we get a commutative diagram with exact rows
  \begin{equation}\label{eqn:Sym-1}
    \xymatrix@C.8pc{
      & & k \ar[d] \ar[dr] & \\
      0 \ar[r] & H^0(X, \sL'(-D)) \ar[r] \ar[d] & H^0(X, \sL') \ar[r] \ar@{=}[d] &
      A_D \ar@{->>}[d] \\
      0 \ar[r] & H^0(X_\fm, \sL) \ar[r] & H^0(X, \sL') \ar[r] & {A_D}/k.}
  \end{equation}
  Hence, we get an exact sequence
 \begin{equation}\label{eqn:Sym-2}
   0 \to H^0(X, \sL'(-D)) \to H^0(X_\fm, \sL) \to k \to 0.
   \end{equation}

 For a line bundle $\sL$ on $X_\fm$, let $|\sL|$ denote the complete
   linear system  $\Proj_k(H^0(X_\fm, \sL))$. 
   If $\sL = \sO_{X_\fm}(E)$ for an effective divisor $E$ on $X^o$, we let
   $U_E = |\sL| \setminus |\sL'(-D)| \subset |\sL|$.

\begin{lem}\label{lem:Linear-system}
    For any Cartier divisor $E$ on $X_\fm$, the linear system $|\sO_{X_\fm}(E)|$
    represents effective Cartier divisors $F$ on $X_\fm$ such that
    $\sO_{X_\fm}(F) \cong \sO_{X_\fm}(E)$ as line bundles on $X_\fm$.
  \end{lem}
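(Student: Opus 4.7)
The plan is to apply the classical correspondence between nonzero global sections of a line bundle on a proper integral scheme (taken up to nonzero scalars) and the effective Cartier divisors realizing that line bundle. Concretely, set $\sL = \sO_{X_\fm}(E)$. By definition $|\sL| = \Proj_k H^0(X_\fm,\sL)$, so the $k$-points of $|\sL|$ are in natural bijection with nonzero global sections of $\sL$ modulo $k^\times$. I will first invoke the fact (cf.\ \lemref{lem:Geom-int} and the proof of \thmref{thm:Pic-rep}) that $X_\fm$ is a geometrically integral proper $k$-scheme with $H^0(X_\fm,\sO_{X_\fm}) = k$; this is the only global input needed.

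Next I will construct the map from sections to divisors. Given a nonzero $s \in H^0(X_\fm,\sL)$, pick an affine cover $\{U_i\}$ with trivializations $\varphi_i \colon \sL|_{U_i} \xrightarrow{\cong} \sO_{U_i}$, and let $s_i := \varphi_i(s|_{U_i}) \in \sO(U_i)$. Since $X_\fm$ is integral, each stalk $\sO_{X_\fm,x}$ is a domain, and so the $s_i$ are non-zero-divisors. Consequently the ideals $(s_i)\subset \sO_{U_i}$ glue to an effective Cartier divisor $(s)_0$ on $X_\fm$, and the standard identification $\sO_{X_\fm}((s)_0) \cong \sL$ (obtained by gluing the isomorphisms $\sO_{U_i} \xrightarrow{\cdot s_i^{-1}} \sO_{U_i}(-(s)_0|_{U_i})^{-1}$) is well-defined.

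For the inverse, given any effective Cartier divisor $F$ on $X_\fm$ together with an isomorphism $\sO_{X_\fm}(F)\cong \sL$, I will use the canonical injection $\sO_{X_\fm} \hookrightarrow \sO_{X_\fm}(F) \xrightarrow{\cong} \sL$ to produce a distinguished nonzero section $s_F$ whose zero locus recovers $F$. Finally I will verify that two nonzero sections $s,s'$ of $\sL$ define the same effective Cartier divisor if and only if $s/s'$ extends to an element of $H^0(X_\fm,\sO^\times_{X_\fm}) = k^\times$, using integrality to make sense of the ratio on a dense open and properness plus geometric integrality to conclude it is a constant. Combined, these steps yield the claimed bijection between closed $k$-points of $|\sL|$ and effective Cartier divisors $F$ with $\sO_{X_\fm}(F) \cong \sL$.

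The only place the singularity at $P_0$ could cause trouble is in verifying that the zero locus of a section is locally principal (equivalently, a Cartier divisor) and that non-vanishing global functions are constants; both reduce to $X_\fm$ being integral and $H^0(X_\fm,\sO_{X_\fm})=k$, so no additional work is needed beyond what has already been recorded. Thus the main (and essentially only) obstacle is formal—checking that the classical argument goes through without the smoothness hypothesis—and the proof will be short.
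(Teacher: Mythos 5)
Your proposal is correct and follows the same route as the paper, which simply observes that Hartshorne's Prop.~II.7.7 rests on two inputs — integrality of the scheme (so sections are non-zero-divisors and ratios of sections with equal zero divisors are global units) and $H^0(X_\fm,\sO_{X_\fm})=k$ — both of which hold for the geometrically integral proper curve $X_\fm$. You have merely spelled out the details that the paper leaves to the reader.
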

  \begin{proof}
    One checks that the proof of a similar statement for
    smooth schemes given in \cite[Prop.~II.7.7]{Hartshorne-AG} (see also
    \cite[\S~13.13, p.~395]{GW}) are based on two claims
    both of which are met in the present case.
    \end{proof}

\begin{lem}\label{lem:alb-fiber}
  Assume that $k$ is algebraically closed and let $E \subset X^o$ be an
  effective Cartier divisor of degree $n \ge 1$.
  Then $E$ is canonically an element of $\Sym^n(X^o)(k)$ and there is a canonical
  isomorphism of $k$-schemes $(\alb^n_{X|D})^{-1}(\alb^n_{X|D}(E)) \cong U_E$.
\end{lem}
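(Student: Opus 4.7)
The plan is first to observe that the statement about $E \in \Sym^n(X^o)(k)$ is elementary: since $k$ is algebraically closed and $X^o$ is a smooth curve, any effective Cartier divisor $E$ of degree $n$ on $X^o$ admits a unique expression $E = \sum m_i [x_i]$ with $x_i \in X^o(k)$ and $\sum m_i = n$, so determines a canonical $k$-point of $\Sym^n(X^o) = (X^o)^n / \Sigma_n$. The bulk of the proof concerns the fiber.

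For the set-theoretic bijection, I would first identify the $k$-points of the fiber. A $k$-point of $\Sym^n(X^o)$ is (as above) an effective divisor $F \subset X^o$ of degree $n$, and $\alb^n_{X|D}$ sends it to $[\sO_{X_\fm}(F - nP)]$. Hence a point lies in $(\alb^n_{X|D})^{-1}(\alb^n_{X|D}(E))$ if and only if $\sO_{X_\fm}(F) \cong \sO_{X_\fm}(E) = \sL$ as line bundles on $X_\fm$. By \lemref{lem:Linear-system}, the effective Cartier divisors $F$ on $X_\fm$ with $\sO_{X_\fm}(F) \cong \sL$ are parametrized exactly by the $k$-points of $|\sL|$. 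The further requirement that $F$ be supported in $X^o$, i.e.\ $P_0 \notin \supp(F)$, translates via the trivialization of $\sL$ near $P_0$ (since $E \cap P_0 = \emptyset$) to the condition that the defining section $s \in H^0(X_\fm, \sL)$ satisfies $s(P_0) \neq 0$. Using \eqref{eqn:Sym-1}, the evaluation $s \mapsto s(P_0)$ coincides with the quotient map $H^0(X_\fm, \sL) \twoheadrightarrow k$ of \eqref{eqn:Sym-2}, whose kernel is $H^0(X, \sL'(-D))$. Hence the allowed sections are precisely those with $[s] \in |\sL| \setminus |\sL'(-D)| = U_E$, giving a bijection on $k$-points.

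To upgrade this to a scheme isomorphism, I would construct morphisms in both directions using universal families. In one direction, the restriction to $U_E$ of the universal family of divisors on $|\sL| \times X_\fm$ is a flat family of degree $n$ effective divisors, entirely supported in $X^o \times U_E$ by the definition of $U_E$. The universal property of $\Sym^n(X^o)$ (as the Hilbert scheme of $n$ points on the smooth curve $X^o$) produces a morphism $U_E \to \Sym^n(X^o)$; its composition with $\alb^n_{X|D}$ has constant image $\alb^n_{X|D}(E)$ (since the line bundle class is constant along the family), so factors through the fiber. In the other direction, let $T := (\alb^n_{X|D})^{-1}(\alb^n_{X|D}(E))$ with its universal divisor $\mathcal{D}_T \subset X^o \times T \subset X_\fm \times T$. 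Using the rigidification at $P \in X^o(k)$ (available since $X_\fm$ has the rational point $P$), the definition of $T$ gives an isomorphism $\sO_{X_\fm \times T}(\mathcal{D}_T) \cong p_{X_\fm}^*\sL \otimes p_T^* M$ for some $M \in \Pic(T)$. Pushing the tautological section of the left side forward by $p_T$ gives a line-subbundle $M^{-1} \hookrightarrow H^0(X_\fm, \sL) \otimes_k \sO_T$ (using cohomology and base change, valid since $R^1p_{T*}$ of the relevant sheaf vanishes), and hence a morphism $T \to |\sL|$ by the universal property of the linear system.

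The key obstacle I anticipate is verifying that this second morphism lands inside the open subscheme $U_E \subset |\sL|$ and that the two constructions are mutually inverse. The first point follows from the observation that $\mathcal{D}_T$ is disjoint from $\{P_0\} \times T$ by construction, so the universal section of $\sL$ is nowhere vanishing at $P_0$ along $T$; equivalently, its image in the quotient $k \otimes \sO_T$ of \eqref{eqn:Sym-2} is a nowhere-vanishing section, placing the morphism in $U_E$. For mutual inverseness, I would check that both compositions act as the identity on the corresponding universal families (which determine the morphisms uniquely by the two universal properties), reducing everything to the bijection on $k$-points already established together with the flatness and reducedness of both $U_E$ and $T$ (the latter being an open subscheme of the smooth variety $\Sym^n(X^o)$, as follows from \thmref{thm:Pic-rep} and fiber-dimension considerations).
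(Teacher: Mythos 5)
Your proof is correct and follows essentially the same route as the paper: the fiber of the Abel--Jacobi map over $[\sO_{X_\fm}(E)]$ is the linear system $|\sL|$, and the requirement that the divisor avoid $P_0$ cuts out exactly the complement of $|\sL'(-D)|$ via the sequence ~\eqref{eqn:Sym-2}. The paper obtains the scheme-theoretic identification at once from the factorization $\Sym^n(X^o)\hookrightarrow \mathbf{Div}^n(X_\fm)\xrightarrow{{\aj}^n_{X|D}}\Picc^0(X_\fm)$ together with \lemref{lem:Linear-system}, so your explicit universal-family constructions are not needed (and your appeal to the vanishing of $R^1p_{T*}$, which is only guaranteed for $n\gg 0$, can be avoided anyway: the pushforward of the \emph{constant} family $p_{X_\fm}^*\sL$ is $H^0(X_\fm,\sL)\otimes_k\sO_T$ by flat base change alone).
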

\begin{proof}
It is clear that $\alb^n_{X|D}$ has factorization $\Sym^n(X^o) \inj
\mathbf{Div}^n(X_\fm) \xrightarrow{{\aj}^n_{X|D}} \Picc^0(X_\fm)$. As $k$ is
algebraically closed, ${\aj}^n_{X|D}$ is given by $E \mapsto [E] - [nP] \in
\Picc^0(X_\fm)$. Using \lemref{lem:Linear-system}, it is clear from this that
    $(\alb^n_{X|D})^{-1}(\alb^n_{X|D}(E))  = |\sL|  \cap \Sym^n(X^o)$.
It follows from ~\eqref{eqn:Sym-1}  that, up to multiplication by elements of
    $k^\times$, the latter set consists of those sections of $\sL$ which
  do not die in $k$ under the map $H^0(X_\fm, \sL) \to k$
  in the exact sequence ~\eqref{eqn:Sym-2}. In other words,
  $|\sL| \cap \Sym^n(X^o) = U_E$. This finishes the proof.  
\end{proof}

\begin{lem}\label{lem:fiber-dim}
  Assume that $k$ is algebraically closed. Then for all integers $n \gg 0$, all
  non-empty fibers of the morphism $\alb^n_{X|D} \colon \Sym^n(X^o) \to \Picc^0(X_\fm)$
  are affine spaces over the corresponding residue fields of $\Picc^0(X_\fm)$,
  and have equal and positive dimensions.
\end{lem}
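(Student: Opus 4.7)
The approach is to identify each non-empty fiber of $\alb^n_{X|D}$ with an affine space via \lemref{lem:alb-fiber} and the exact sequence~\eqref{eqn:Sym-2}, and to compute its dimension via Riemann-Roch on the projective Cohen-Macaulay curve $X_\fm$.

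Let $\alpha \in \Picc^0(X_\fm)$ be a point with non-empty fiber, and write $F := \kappa(\alpha)$. I base change to $F$ using \lemref{lem:Base-extn}, so $\alpha$ becomes an $F$-point of $\Picc^0((X_\fm)_F)$, and a representative $E \in \Sym^n(X^o_F)(F)$ exists by non-emptiness. The argument of \lemref{lem:alb-fiber} applies verbatim over $F$ --- its only non-algebraic ingredient is that the Abel-Jacobi map sends a relative effective Cartier divisor $E$ over $\Spec F$ to the class of $\sO_{(X_\fm)_F}(E - nP)$, which holds over any base --- and it identifies the fiber with
\[
U_E \;=\; |\sL| \setminus |\sL'(-D_F)|, \qquad \sL := \sO_{(X_\fm)_F}(E),\ \sL' := \psi_\fm^*\sL.
\]
The exact sequence~\eqref{eqn:Sym-2} is likewise valid over $F$ since it is deduced algebraically from~\eqref{eqn:Sym-1}; it exhibits $H^0((X_\fm)_F, \sL) \twoheadrightarrow F$ as surjective with kernel $H^0(X_F, \sL'(-D_F))$. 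Hence $|\sL'(-D_F)|$ is a hyperplane in the projective space $|\sL|$, and $U_E$ is an affine space over $F$ of dimension $h^0((X_\fm)_F, \sL) - 1$.

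It remains to show that for $n \gg 0$, $h^0((X_\fm)_F, \sL) = n - p_a + 1$ uniformly in $\sL$, where $p_a := \dim_k H^1(X_\fm, \sO_{X_\fm})$. Riemann-Roch on the $1$-dimensional (hence Cohen-Macaulay) projective $k$-scheme $X_\fm$ gives $\chi((X_\fm)_F, \sL) = n - p_a + 1$, and this is preserved under the flat base change to $F$. For the uniform vanishing $H^1((X_\fm)_F, \sL) = 0$ when $n > 2 p_a - 2$, I invoke Serre duality
\[
H^1((X_\fm)_F, \sL) \;\cong\; \Hom_{(X_\fm)_F}(\sL, \omega_{(X_\fm)_F})^\vee,
\]
noting that any nonzero $\sO$-linear map from the line bundle $\sL$ to the torsion-free rank-one dualizing sheaf $\omega_{(X_\fm)_F}$ is automatically injective with torsion cokernel, forcing $\deg \sL \le \deg \omega_{X_\fm} = 2 p_a - 2$; the equality itself follows by applying Serre duality to $\sO_{X_\fm}$ to obtain $\chi(\omega_{X_\fm}) = -\chi(\sO_{X_\fm}) = p_a - 1$. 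Consequently $U_E$ is an affine space of dimension $n - p_a$, positive for $n > p_a$ and independent of $\alpha$.

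The principal technical subtlety is the uniform degree-based vanishing of $H^1$ across all fibers on the possibly non-Gorenstein curve $X_\fm$; I address this by invoking Serre duality for projective Cohen-Macaulay schemes and exploiting only the torsion-free rank-one property of the dualizing sheaf rather than its invertibility.
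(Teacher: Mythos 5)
Your proof is correct, and the first half (identifying a non-empty fiber with $U_E = |\sL|\setminus|\sL'(-D)|$ via \lemref{lem:alb-fiber} and exhibiting it as an affine space because ~\eqref{eqn:Sym-2} makes $|\sL'(-D)|$ a hyperplane in $|\sL|$) coincides with the paper's argument. Where you genuinely diverge is in the uniform vanishing of $H^1(X_\fm,\sL)$ for $\deg\sL = n \gg 0$. The paper never invokes duality on the singular curve: it pulls $\sL$ back to the smooth normalization $X$, kills $H^1(X,\sL'(-D))$ and $H^1(X,\sL')$ by classical Serre duality and Riemann--Roch on $X$ once $n \ge \deg(D)+p_a(X_\fm)+2g-1$, and then pushes the vanishing down to $H^1(X_\fm,\sL)$ through the conductor sequence ~\eqref{eqn:fiber-dim-0} tensored with $\sL$, using the surjectivity of $H^0(X,\sL')\to H^0(D,\sO_D)$. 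You instead work directly on $X_\fm$, using Serre duality for projective Cohen--Macaulay curves, the fact that the dualizing sheaf of an integral curve is torsion-free of rank one, and the resulting degree bound $\deg\sL \le 2p_a-2$ for $\Hom(\sL,\omega_{X_\fm})\neq 0$. Both routes are valid; yours gives a slightly cleaner and sharper threshold ($n>2p_a-2$ versus the paper's composite bound) and avoids the conductor-sequence bookkeeping, at the cost of invoking duality theory on a possibly non-Gorenstein curve, whereas the paper only ever needs duality on the smooth curve $X$ plus the elementary exact sequence ~\eqref{eqn:fiber-dim-0}. Your handling of non-closed points by base change to the residue field via \lemref{lem:Base-extn} is, if anything, more careful than the paper's; the only soft spot is the assertion that a non-empty fiber over $\alpha$ admits an $F$-rational representative $E$, which is not automatic for an arbitrary non-empty $F$-scheme, but this is harmless here (once $H^1$ vanishes uniformly the fiber is a genuine linear-system computation, and the paper elides the same point).
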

\begin{proof}
For any effective Cartier divisor $E \subset X^o$, we let
  $\sL = \sO_{X_\fm}(E)$ and $\sL' = \psi^*_\fm(\sL)$.
  Since $|\sL'(-D)|$ is a hyperplane of the projective space $|\sL|$,
  it follows from \lemref{lem:alb-fiber} that all fibers
  of the morphism $\alb^n_{X|D} \colon \Sym^n(X^o) \to \Picc^0(X_\fm)$ are affine
  spaces over the corresponding residue fields of $\Picc^0(X_\fm)$. It remains to prove
  the independence of these fiber dimensions if $n \gg 0$.

Suppose that $E$ is such that $\deg(E) = n \ge \deg(D) + p_a(X_\fm) + 2g -1$, where
$g = p_a(X)$ denotes the
genus of $X$. Then $\deg(\sL'(-D)) = \deg(\sL') - \deg(D) = \deg(E) - \deg(D) \ge
2g-1$. It follows from the Serre duality and the Riemann-Roch theorem for $X$ that 
\begin{equation}\label{eqn:fiber-dim-2}
  H^1(X, \sL'(-D)) = 0 = H^1(X, \sL').
  \end{equation}
Using the cohomology exact sequence associated to the sheaf exact sequence
\begin{equation}\label{eqn:fiber-dim-3}
  0 \to \sL'(-D) \to \sL' \to \sO_D \to 0, 
\end{equation}
we get an exact sequence
\begin{equation}\label{eqn:fiber-dim-4}
  0 \to H^0(X, \sL'(-D)) \to H^0(X, \sL') \to H^0(D, \sO_D) \to 0.
\end{equation}
In particular, the map $ H^0(X, \sL') \to {H^0(D, \sO_D)}/k$ is surjective.
On the other hand, by tensoring ~\eqref{eqn:fiber-dim-0} with  $\sL$ and passing to
cohomology, we get an exact sequence
\begin{equation}\label{eqn:fiber-dim-1}
0 \to H^0(X_\fm, \sL) \to H^0(X, \sL') \to {H^0(D, \sO_D)}/k \to 
H^1(X_\fm, \sL) \to H^1(X, \sL') \to 0.
\end{equation}
We conclude that $H^1(X_\fm, \sL) = 0$.

Now, we use the Riemann-Roch theorem for $X_\fm$ (e.g., see
\cite[Exc.~IV.1.9]{Hartshorne-AG}) to get the identity
$\dim_k(H^0(X_\fm, \sL)) = n + 1 - p_a(X_\fm)$.
  In other words, $\dim_k(U_E) = \dim_k(|\sL|) = n - p_a(X_\fm) \ge
  \deg(D) +2g -1 \ge 1$. Since $\deg(D) +2g -1$ depends only on $(X,D)$, we
  conclude that all fibers of $\alb^n_{X|D}$ are of a fixed positive dimension.
\end{proof}

\vskip .2cm

 We now let $k$ be any field.

\begin{prop}\label{prop:Alb-main-1}
    For all integers $n \gg 0$, the morphism
    $\alb^n_{X|D} \colon \Sym^n(X^o) \to \Picc^0(X_\fm)$ is smooth and surjective.
  \end{prop}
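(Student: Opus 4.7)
The plan is to reduce to the case $k = \ov{k}$ and then deduce the result from \lemref{lem:fiber-dim} using the smoothness of the source and target together with the miracle flatness criterion. I would proceed in three steps.

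First, both smoothness and surjectivity of a morphism of finite-type $k$-schemes can be checked after the faithfully flat base change $\Spec(\ov{k}) \to \Spec(k)$. By \lemref{lem:Base-extn}, $(X_\fm)_{\ov{k}}$ is canonically the $D_{\ov{k}}$-contraction of $X_{\ov{k}}$, and the formation of $\alb^n_{X|D}$ commutes with base change. Consequently it suffices to prove the statement under the assumption that $k$ is algebraically closed.

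Second, I would establish surjectivity. Fix $\alpha \in \Picc^0(X_\fm)(k)$ and set $\sL = \alpha \otimes \sO_{X_\fm}(nP)$, a line bundle of degree $n$ on $X_\fm$. For $n$ in the range of \lemref{lem:fiber-dim}, the exact sequence ~\eqref{eqn:Sym-2} shows that the evaluation map $H^0(X_\fm, \sL) \to k$ at $P_0$ is surjective, so there exists a global section of $\sL$ that does not vanish at $P_0$. Its zero locus is an effective Cartier divisor of degree $n$ on $X_\fm$ supported in $X^o$, hence defines a point of $\Sym^n(X^o)(k)$ whose image under $\alb^n_{X|D}$ is, by the construction recorded in ~\eqref{eqn:Pic-universal}, exactly the class $\alpha$. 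Since $k$ is algebraically closed, surjectivity on $k$-points implies surjectivity of the morphism.

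Finally, for smoothness I would use that $\Sym^n(X^o)$ is smooth of dimension $n$ (a classical fact, since symmetric powers of smooth curves are smooth) and that $\Picc^0(X_\fm)$ is smooth of dimension $p_a(X_\fm)$ by \thmref{thm:Pic-rep}. By \lemref{lem:fiber-dim} together with the surjectivity just established, every fiber of $\alb^n_{X|D}$ over a closed point is an affine space of dimension $n - p_a(X_\fm)$, which equals $\dim \Sym^n(X^o) - \dim \Picc^0(X_\fm)$. Since $\Sym^n(X^o)$ is Cohen-Macaulay and $\Picc^0(X_\fm)$ is regular, miracle flatness (EGA IV, 15.4.2) shows that $\alb^n_{X|D}$ is flat, and then the fact that the geometric fibers are affine spaces, hence smooth, forces $\alb^n_{X|D}$ to be smooth. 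The main subtlety is in the surjectivity step: one must identify the quotient map $H^0(X_\fm, \sL) \to k$ in ~\eqref{eqn:Sym-2} with evaluation at $P_0$, so that a non-zero image corresponds to an effective divisor avoiding the singular point; once this identification is in hand, the smoothness follows as a formal consequence of the miracle flatness criterion together with the affine-space structure of the fibers.
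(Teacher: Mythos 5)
Your overall architecture coincides with the paper's: reduce to $k = \ov{k}$ by fpqc descent together with \lemref{lem:Base-extn}, and obtain smoothness from \lemref{lem:fiber-dim} via miracle flatness plus the fact that the (equidimensional, nonempty) fibers are affine spaces — the paper cites exactly this pair of facts from Hartshorne for the last step. The one point where you genuinely diverge is surjectivity: the paper quotes the Esnault--Srinivas--Viehweg theorem on the universal regular quotient to see that every degree-zero class on $X_\fm$ is of the form $[E]-n[P]$ with $E$ effective and supported in $X^o$, whereas you argue directly by producing a section of $\sL = \alpha\otimes\sO_{X_\fm}(nP)$ not vanishing at $P_0$. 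Your route is more self-contained, but as written it is circular at the key step.

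The issue is that ~\eqref{eqn:Sym-2} is derived in the paper only under the standing hypothesis that $\sL = \sO_{X_\fm}(E)$ for an effective Cartier divisor $E$ disjoint from $P_0$: the surjectivity of $H^0(X_\fm,\sL)\to k$ there is witnessed by the canonical section $1\in H^0(X,\sL')$, which exists precisely because $E$ is effective and avoids $P_0$. For an arbitrary $\alpha\in\Picc^0(X_\fm)(k)$ you do not yet know that $\sL=\alpha\otimes\sO_{X_\fm}(nP)$ has this form — that is exactly what surjectivity of $\alb^n_{X|D}$ asserts — so you cannot invoke ~\eqref{eqn:Sym-2} for it. The step is repairable with the cohomological bound already proved in \lemref{lem:fiber-dim}: by \propref{prop:Sing}(3) the maximal ideal $\fm_{P_0}$, viewed as an ideal sheaf on $X_\fm$, is $(\psi_\fm)_*(\sI_D)$, so the projection formula for the finite map $\psi_\fm$ gives $\fm_{P_0}\sL\cong(\psi_\fm)_*(\sL'(-D))$ and hence $H^1(X_\fm,\fm_{P_0}\sL)\cong H^1(X,\sL'(-D))=0$ for $n\ge \deg(D)+p_a(X_\fm)+2g-1$ by ~\eqref{eqn:fiber-dim-2}. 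The long exact sequence of $0\to\fm_{P_0}\sL\to\sL\to\sL\otimes k(P_0)\to 0$ then shows that $H^0(X_\fm,\sL)\to\sL\otimes k(P_0)\cong k$ is surjective for every degree-$n$ line bundle $\sL$, which is the statement you need; the rest of your argument (the zero scheme of a section nonvanishing at $P_0$ is an effective Cartier divisor of degree $n$ in $X^o$ representing $\sL$) is correct. With this substitution your proof goes through.
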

  \begin{proof}
    Via the standard descent properties of the smooth and surjective morphisms with
    respect to fpqc morphisms to the base scheme
    (e.g., see \cite[Prop.~14.48, \S~16.C]{GW}), and the commutativity of
    ${\fm}$-contraction and $\Picc^0(X_\fm)$ with the field extensions, we easily
    reduce the proof to the case where we can assume $k$ to be algebraically closed. 

Since $k = \ov{k}$ and $\CH^{\lw}_0(X_\fm)_0 \xrightarrow{\cong} \Pic^0(X_\fm)
    \xrightarrow{\cong} \Picc^0(X_\fm)(k)$, it already follows from
    \cite[Defn.~7.1, Thm.~7.2]{ESV} that $\alb^n_{X|D}$ is surjective for all
    $n \gg 0$. Using the fpqc descent, the smoothness claim now follows from
    \lemref{lem:fiber-dim} and \cite[Exc.~III.10.9, Thm.~III.10.2]{Hartshorne-AG}.
\end{proof}

For $n \gg 0$ and $1 \le i < j \le n$, we let
$\Delta^n_{ij}(X^o) \subset (X^o)^n$ be the closed subscheme (via a permutation)
$\Delta_{ij} \times (X^o)^{n-2} \subset (X^o)^n$,
where $\Delta_{ij} \subset X^o \times X^o$ is the diagonal of the product of
the $i$-th and $j$-th factors of $(X^o)^n$. We let $U_n =
(X^o)^n \setminus ({\underset{1 \le i < j \le n}\bigcup} \Delta^n_{ij}(X^o))$.
Then $U_n$ is invariant under the action of the symmetric group $S_n$ and there is a
Cartesian square
\begin{equation}\label{eqn:Quotient}
  \xymatrix@C1pc{
    U_n \ar[r]^-{j} \ar[d]_-{\pi_n} & (X^o)^n \ar[d]^-{\pi_n} \\
    {U_n}/{S_n} \ar[r]^-{j}  & \Sym^n(X^o).}
  \end{equation}
 Furthermore, $\pi_n \colon U_n \to {U_n}/{S_n}$ is an {\'e}tale $S_n$-torsor
 (e.g., see \cite[Prop.~0.9]{Fogarty-Mumford}).
 In particular,  it is finite and {\'e}tale. We let $U_n(X^o) = {U_n}/{S_n}$. 

\begin{cor}\label{cor:Alb-main-2}
For all $n \gg 0$, the morphism $\phi^n_{X|D} \colon U_n(X^o) \to 
\Picc^0(X_\fm)$ is smooth and has dense image.
\end{cor}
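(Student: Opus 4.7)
The plan is to deduce \corref{cor:Alb-main-2} as an essentially formal consequence of \propref{prop:Alb-main-1} together with two standard facts: the restriction of a smooth morphism to an open subscheme is smooth, and smooth morphisms of finite type over a Noetherian base are open.

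First, I would reinterpret the map. By the definition in~\eqref{eqn:ALB} and the Cartesian square~\eqref{eqn:Quotient}, the composition $\phi^n_{X|D} = \alb^n_{X|D} \circ \pi_n$ on $(X^o)^n$ descends on $U_n$ to the restriction of $\alb^n_{X|D}$ along the open immersion $U_n(X^o) \hookrightarrow \Sym^n(X^o)$, and this is what is denoted $\phi^n_{X|D}$ in the corollary. For $n \gg 0$, \propref{prop:Alb-main-1} says $\alb^n_{X|D}$ is smooth. Since smoothness is preserved under restriction to an open subscheme, $\phi^n_{X|D} \colon U_n(X^o) \to \Picc^0(X_\fm)$ is smooth for all $n \gg 0$.

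For density of the image, I would use that a smooth morphism of finite type between Noetherian $k$-schemes is flat and of finite presentation, hence an open map. Therefore the image of $U_n(X^o)$ under $\phi^n_{X|D}$ is open in $\Picc^0(X_\fm)$. By \thmref{thm:Pic-rep}, $\Picc^0(X_\fm)$ is geometrically irreducible, so any non-empty open subset is dense. It remains to observe that $U_n(X^o)$ is non-empty: since $X^o$ is a non-empty irreducible $k$-curve, the product $(X^o)^n$ has dimension $n$ while each diagonal $\Delta^n_{ij}(X^o)$ has dimension $n-1$, whence $U_n \subset (X^o)^n$ is a non-empty open subscheme, and the same holds for its $S_n$-quotient $U_n(X^o)$.

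There is no real obstacle here; the entire content of the corollary lies in \propref{prop:Alb-main-1}, and the passage from $\Sym^n(X^o)$ to the open subscheme $U_n(X^o)$ is cosmetic, since open immersions preserve smoothness and smooth images are open in an irreducible target. The only point that might deserve a sentence of justification is the non-emptiness of $U_n(X^o)$, but this is immediate from the dimension count above.
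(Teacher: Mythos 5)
Your proof is correct and follows essentially the same route as the paper: both deduce the corollary from Proposition~\ref{prop:Alb-main-1} by viewing $\phi^n_{X|D}$ as the restriction of $\alb^n_{X|D}$ along the dense open immersion $U_n(X^o) \hookrightarrow \Sym^n(X^o)$. The only cosmetic difference is that the paper obtains density of the image from the surjectivity of $\alb^n_{X|D}$ together with the density of $U_n(X^o)$ in $\Sym^n(X^o)$, whereas you use openness of smooth morphisms plus irreducibility of $\Picc^0(X_\fm)$; both are valid, and your explicit check that $U_n(X^o)\neq\emptyset$ is a sensible addition.
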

\begin{proof}
    Since $\phi^n_{X|D} = \alb^n_{X|D} \circ \pi_n$ (see ~\eqref{eqn:ALB}) and
    $U_n(X^o) \xrightarrow{\pi_n} \Sym^n(X^o)$ is {\'e}tale with dense image,
    the corollary follows from \propref{prop:Alb-main-1}.
\end{proof}

We can now prove the following generic smoothness of the albanese map for a
modulus pair. We let $k$ be any field and $(X,D)$ a 1-dimensional modulus pair over $k$
such that $X$ is connected and smooth over $k$ and
$X^o(k) \neq \emptyset$. We fix a point $P \in X^o(k)$
so that $\alb_{X|D}$ is defined with respect to the base point $P$.

\begin{thm}\label{thm:ALB-rel}
  For all $n \gg 0$, the albanese map
  $\phi^n_{X|D} \colon U_n(X^o) \to \Picc^0(X|D)$ is smooth and has dense image.
\end{thm}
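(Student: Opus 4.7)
The plan is to reduce the statement to the two separate cases $\deg(D) \ge 2$ and $\deg(D) = 1$, since essentially all the hard work has been carried out in Section~6 only under the assumption $\deg(D) \ge 2$ (where $X_{\fm}$ is genuinely singular). So the theorem is obtained by combining \corref{cor:Alb-main-2} with a treatment of the degenerate case.

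When $\deg(D) \ge 2$, the conclusion is precisely \corref{cor:Alb-main-2}, since in that range all the constructions of \S\ref{sec:Alb-sm} apply verbatim and the relative Picard scheme $\Picc^0(X|D)$ coincides with $\Picc^0(X_{\fm})$.

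When $\deg(D) = 1$, the divisor $D$ is of the form $\Spec(k(y))$ for some $y \in X(k)$, and by \lemref{lem:deg-1} the canonical homomorphism $\vartheta_{X|D} \colon \Picc(X|D) \to \Picc(X)$ is a bijection on $k'$-points for every field extension $k'/k$; since both sides are quasi-projective group schemes over $k$ representing the same functor on field extensions (and $\Picc(X|D)$ was constructed via the same universal property), $\vartheta_{X|D}$ is in fact an isomorphism of group schemes. Under this identification, $\phi^n_{X|D}$ becomes the classical composite $(X^o)^n \xrightarrow{\pi_n} \Sym^n(X^o) \xrightarrow{\alb^n_X} \Picc^0(X)$, where $\alb^n_X$ sends $E$ to $\sO_X(E - nP)$. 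The first factor $\pi_n$ restricts to an \'etale $S_n$-torsor on $U_n$ (as in \S\ref{sec:Alb-sm}), so it suffices to show that $\alb^n_X$ restricted to the image of $U_n$ in $\Sym^n(X^o)$ is smooth with dense image.

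For this last point I would argue exactly as in the proofs of \lemref{lem:fiber-dim} and \propref{prop:Alb-main-1}, but for $X$ itself (no contraction needed): for $n \ge 2g-1$ with $g = p_a(X)$, Riemann–Roch gives $H^1(X,\sL) = 0$ for every line bundle $\sL$ of degree $n$, so every non-empty fibre of $\alb^n_X \colon \Sym^n(X) \to \Picc^0(X)$ is the projective space $|\sL|$ of fixed dimension $n-g$; by fpqc descent one may pass to $k = \bar k$, and surjectivity then follows from the classical Abel–Jacobi theorem, while smoothness follows from the equidimensionality of fibres together with \cite[Exc.~III.10.9, Thm.~III.10.2]{Hartshorne-AG}. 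Restricting to the open subscheme $U_n(X^o) \subset \Sym^n(X)$ preserves smoothness, and density of the image is automatic since $U_n(X^o)$ is dense in $\Sym^n(X)$ and $\alb^n_X$ is surjective. The only mild obstacle is the bookkeeping to see that the two definitions of the albanese map (via $\Picc(X|D)$ and via $\Picc(X)$) do match under $\vartheta_{X|D}$, but this is immediate from the fact that both send an effective divisor $E$ disjoint from $D$ to the class of $\sO(E - nP)$.
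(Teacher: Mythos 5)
Your proposal is correct and follows essentially the same route as the paper: split on $\deg(D)$, invoke \corref{cor:Alb-main-2} together with \thmref{thm:Pic-rep} when $\deg(D)\ge 2$, and reduce to the classical albanese map for $\Picc^0(X)$ via \lemref{lem:deg-1} when $\deg(D)=1$. The only difference is that you spell out the Riemann--Roch/Abel--Jacobi argument for the classical case, which the paper simply cites as known.
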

\begin{proof}
  Our assumption implies that $X$ is geometrically integral over $k$.
  If $\deg(D) = 1$, then the canonical map $\Picc^0(X|D) \to \Picc^0(X)$ is an
  isomorphism of algebraic groups over $k$ (see \lemref{lem:deg-1})
  and one knows classically that
  the albanese map $U_n(X^o) \to \Picc^0(X)$ is smooth and has dense image.
  If $\deg(D) \ge 2$, the theorem follows from \thmref{thm:Pic-rep} and
  \corref{cor:Alb-main-2}.
\end{proof}

\subsection{The map from relative to ordinary Picard scheme}
\label{sec:Comp-RO}
In order to compare the relative and ordinary Picard schemes, we need to identify
$\Picc^0(X|D)$ with the generalized Jacobian variety of $(X,D)$ {\`a} la Rosenlicht-Serre
\cite{Serre-AGCF} under some additional assumptions.
We fix a field $k$ and a 1-dimensional modulus pair $(X,D)$ over $k$ such that $X$ is
connected and smooth over $k$. We write
$D = \stackrel{r}{\underset{i =1} \sum} n_i[x_i]$.
If $\deg(D) \ge 2$, we let $\psi_\fm \colon X \to X_\fm$ be the $\fm$-contraction
of $X$.  The following is the main result of Rosenlicht-Serre
for which we refer to \cite[Chap.~V, \S~4.22, 4.23]{Serre-AGCF}.

\begin{thm}\label{thm:Serre-Jac}
  Assume that ${\rm Supp}(D) \subset X(k)$ and $P \in X^o(k)$.
 Then there exists a quasi-projective algebraic group $J_{X|D}$ over $k$ satisfying
  the following properties.
  \begin{enumerate}
\item
There is a canonical isomorphism of group schemes $J_{X_{k'}|D_{k'}} \cong
  (J_{X|D})_{k'}$ over $k'$ for every field extension ${k'}/k$.
\item
  The assignment $Q \mapsto [Q] - \deg(Q) [P]$ on $X^o_{(0)}$
  defines a morphism of $k$-schemes
  (called the albanese morphism) ${\lambda}_{X|D} \colon X^o \to J_{X|D}$.
\item
  The albanese morphism induces a homomorphism of abelian groups
  \[
    {\aj}_{X|D} \colon \CH_0(X|D)_0 \to J_{X|D}({k}).
    \]
 \item
 Given any commutative group scheme $G$ over $k$ and a $k$-morphism 
  $f \colon X^o \to G$ which induces a homomorphism
  $f_* \colon \CH_0(X|D)_0 \to G(k)$, there is a unique homomorphism of
  group schemes $\theta \colon J_{X|D} \to G$ over $k$
  such that $f(x) = f(P) + (\theta \circ \lambda_{X|D})(x)$ for every $x \in X^o$. 
\end{enumerate}
\end{thm}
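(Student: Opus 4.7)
The plan is to take $J_{X|D} := \Picc^0(X|D)$, the identity component of the relative Picard scheme constructed in \thmref{thm:Pic-rep} (equivalently, $\Picc^0(X_\fm)$ when $\deg(D) \ge 2$). The theorem then becomes a package of compatibilities of this object already established in \S~\ref{sec:Defn} and \S~\ref{Alb-map}, together with the classical universal property of Rosenlicht-Serre. Property (1), the compatibility with field extension, is immediate from the last assertion of \thmref{thm:Pic-rep} combined with the fact that formation of the identity component of a finite-type group scheme commutes with field extensions.

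For (2) and (3), the albanese morphism $\lambda_{X|D}$ is precisely the morphism $\alb_{X|D} \colon X^o \to \Picc^0(X_\fm)$ constructed in \S~\ref{sec:Alb-sm} (note that $X^o(k) \neq \emptyset$ by hypothesis). The characterization $Q \mapsto [Q]-\deg(Q)[P]$ on closed points reads off directly from the description of $\alb_{X|D}$ via the Abel-Jacobi morphism $\mathbf{Div}^1(X_\fm) \cong X^o \to \Picc^0(X_\fm)$ in \eqref{eqn:Pic-universal}. For the induced map on the Chow group in (3), I would use the chain of isomorphisms
\[
\CH_0(X|D)_0 \xrightarrow{\cyc_{X|D}} \Pic^0(X|D) \xleftarrow{\psi^*_\fm} \Pic^0(X_\fm) \xrightarrow{\cong} \Picc^0(X_\fm)(k) = J_{X|D}(k),
\]
combining \lemref{lem:CCM-iso}, \lemref{lem:Pic-PB} and \thmref{thm:Pic-rep}.

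The substantive content is the universal property (4), and this is where I expect the main work. The plan is to reduce to the case $k = \ov{k}$, where it is precisely Serre's classical theorem \cite[Chap.~V, \S~2, Thm.~1]{Serre-AGCF}. The hypotheses $\supp(D) \subset X(k)$ and $P \in X^o(k)$ ensure that the pair $(X,D)$ and the base point $P$ are canonically defined over $k$, so the base change of any $k$-morphism $f \colon X^o \to G$ to $\ov{k}$ yields a $\Gal(k_s/k)$-equivariant morphism $f_{\ov{k}} \colon X^o_{\ov{k}} \to G_{\ov{k}}$ inducing a homomorphism on $\CH_0(X_{\ov{k}}|D_{\ov{k}})_0$. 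By Serre's theorem applied over $\ov{k}$, one obtains a unique $\theta_{\ov{k}} \colon J_{X_{\ov{k}}|D_{\ov{k}}} \to G_{\ov{k}}$. The Galois equivariance of $\theta_{\ov{k}}$ then follows from the uniqueness clause of Serre's theorem applied to $\sigma^*(\theta_{\ov{k}})$ for $\sigma \in \Gal(k_s/k)$, and descent for homomorphisms of group schemes yields $\theta \colon J_{X|D} \to G$ over $k$. The main obstacle I anticipate is verifying that Serre's condition ``$f$ respects the modulus $D$'' coincides with our hypothesis that $f_*$ factors through $\CH_0(X|D)_0$; this should follow from the definition of $\sR_0(X|D)$ via rational functions in $\Ker(\sO^\times_{X, S} \surj \sO^\times(D))$ together with Serre's criterion \cite[Chap.~III, \S~1]{Serre-AGCF} characterizing morphisms from curves to commutative algebraic groups in terms of local symbols.
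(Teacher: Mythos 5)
The paper gives no proof of this theorem: it is quoted from Rosenlicht--Serre, with a pointer to \cite[Chap.~V, \S~4.22, 4.23]{Serre-AGCF}, where Serre himself carries out both the construction of the generalized Jacobian and its descent to a non-algebraically-closed base field. Your proposal therefore cannot be matched against an argument in the paper; what you are really doing is inverting the paper's logic. The paper takes Serre's $J_{X|D}$ as given and then proves $J_{X|D} \cong \Picc^0(X|D)$ as a \emph{consequence} of the universal property (\corref{cor:Pic-Jac-iso-1}); you propose to \emph{define} $J_{X|D} := \Picc^0(X|D)$ and verify the universal property directly. Note that this does not let you bypass Serre's construction: over $\ov{k}$ the universal property of $\Picc^0(X_\fm)$ is obtained precisely by identifying it with Serre's generalized Jacobian, and the later \propref{prop:Pic-sequence} still needs Serre's explicit description of the affine part, so the citation is not eliminated, only relocated.

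The descent step in your proof of (4) has two concrete gaps. First, you establish only $\Gal(k_s/k)$-equivariance of $\theta_{\ov{k}}$, but $k$ is allowed to be (and in the paper's main case, a local field of characteristic $p$, always is) imperfect, so $\ov{k}/k$ is not Galois and Galois equivariance does not suffice to descend a morphism from $\ov{k}$ to $k$. One must instead verify that the two pullbacks of $\theta_{\ov{k}}$ to $\ov{k}\otimes_k\ov{k}$ agree (fpqc descent of morphisms); this can be rescued using the uniqueness clause together with the fact that $\lambda_{X|D}$ is defined over $k$ and its image generates $\Picc^0(X|D)$ (cf. \propref{prop:Alb-main-1}), but that argument is absent from your sketch. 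Second, your hypothesis is that $f_*$ kills the $k$-rational relations $\sR_0(X|D)$, whereas applying Serre's theorem over $\ov{k}$ requires $D_{\ov{k}}$ to be a modulus for $f_{\ov{k}}$, i.e., that the strictly larger group of $\ov{k}$-rational relations is killed. This is true, but it is not the tautological comparison you flag as the ``main obstacle''; it requires Serre's local-symbol characterization of moduli and its compatibility with base field extension. Both points are exactly what \cite[Chap.~V, \S~4.20--4.23]{Serre-AGCF} is written to handle, which is why the paper simply cites it.
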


Assume that $\deg(D) \ge 2$. Using the albanese map $\alb_{X|D} \colon X^o \to
\Picc^0(X_\fm)$ and ~\eqref{eqn:Pic-universal}, it follows from the property (4) of
$J_{X|D}$ that there exists a unique homomorphism (recall here that
$\alb_{X|D}(P) = 0$) of group schemes $\theta_{X|D} \colon J_{X|D} \to \Picc^0(X_\fm)
\cong \Picc^0(X|D)$ over $k$ such that $\alb_{X|D} = \theta_{X|D} \circ \lambda_{X|D}$.

\begin{cor}\label{cor:Pic-Jac-iso-1}
  Under the assumptions of \thmref{thm:Serre-Jac}, the albanese map
  $\lambda_{X|D} \colon X^o \to J_{X|D}$ induces an isomorphism of
  algebraic groups
  $\theta_{X|D} \colon J_{X|D} \xrightarrow{\cong} \Picc^0(X|D)$.
\end{cor}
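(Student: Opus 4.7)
The plan is to show $\theta_{X|D}$ is a smooth surjective group scheme homomorphism between smooth connected algebraic groups whose kernel is trivial. The case $\deg(D) \le 1$ is immediate, since then the canonical map $\Picc^0(X|D) \to \Picc^0(X)$ is an isomorphism by Lemma~\ref{lem:deg-1} (or trivially when $D = \emptyset$), and $J_{X|D}$ is classically identified with $\Picc^0(X)$; I therefore assume $\deg(D) \ge 2$. Since both $J_{X|D}$ and $\Picc^0(X|D)$ commute with arbitrary field extensions (Theorem~\ref{thm:Serre-Jac}(1) and Theorem~\ref{thm:Pic-rep}) and $\theta_{X|D}$ is functorial in $k$ by the uniqueness clause of the Rosenlicht-Serre universal property, it suffices to prove the statement after base change to $\ov{k}$, so I assume $k = \ov{k}$.

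For smoothness of $\theta_{X|D}$, I would exploit the tower
\[
\Sym^n(X^o) \xrightarrow{\lambda^n_{X|D}} J_{X|D} \xrightarrow{\theta_{X|D}} \Picc^0(X|D)
\]
for $n \gg 0$, whose composite is the symmetric-power albanese map $\alb^n_{X|D}$. The map $\alb^n_{X|D}$ is smooth and surjective by Proposition~\ref{prop:Alb-main-1}, and the classical Rosenlicht-Serre construction (see \cite[Chap.~V]{Serre-AGCF}) shows that $\lambda^n_{X|D}$ is also smooth and surjective for large $n$, with fibers non-empty affine spaces of constant dimension. Since smoothness descends along smooth surjective morphisms, $\theta_{X|D}$ itself is smooth and surjective. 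In particular, $\Ker(\theta_{X|D})$ is a smooth (hence reduced) closed subgroup scheme of $J_{X|D}$.

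Finally, I would verify bijectivity of $\theta_{X|D}$ on $k$-points. By Theorem~\ref{thm:Serre-Jac}(3) combined with the classical surjectivity of $\aj_{X|D}$ over an algebraically closed base, the albanese map yields a group isomorphism $\CH_0(X|D)_0 \xrightarrow{\cong} J_{X|D}(k)$; by Lemma~\ref{lem:CCM-iso}, Lemma~\ref{lem:Pic-PB}, and \eqref{eqn:Pic-universal}, the albanese map likewise yields $\CH_0(X|D)_0 \xrightarrow{\cong} \Picc^0(X|D)(k)$. The identity $\alb_{X|D} = \theta_{X|D} \circ \lambda_{X|D}$ shows that $\theta_{X|D}(k)$ intertwines these two identifications and is therefore bijective. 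Consequently $\Ker(\theta_{X|D})$ is a reduced group scheme over $k = \ov{k}$ whose only $k$-point is the identity, hence is trivial; combined with surjectivity this gives the desired isomorphism. The main obstacle is ensuring that the Rosenlicht-Serre smoothness of $\lambda^n_{X|D}$ is available at the required level of generality, which is why the preliminary reduction to $k = \ov{k}$ is essential before that classical input can be invoked cleanly.
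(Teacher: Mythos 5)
Your proposal is correct and follows essentially the same route as the paper: the same case split on $\deg(D)$, the same replacement of $\Picc^0(X|D)$ by $\Picc^0(X_\fm)$, and the same fpqc descent to $k = \ov{k}$, after which the paper simply cites the classical identification of the generalized Jacobian with $\Picc^0(X_\fm)$. The only difference is that you unwind that classical citation into an explicit argument (smoothness of $\theta_{X|D}$ via the tower through $\Sym^n(X^o)$ and \propref{prop:Alb-main-1}, then triviality of the kernel from bijectivity on $\ov{k}$-points), which is a valid elaboration of the same proof.
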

\begin{proof}
  If $\deg(D) = 1$, it easily follows from \cite[Chap.~V, \S~3.13]{Serre-AGCF}
  that the canonical map $J_{X|D} \to J_X$ is an isomorphism. We have seen
  above that this holds also for $\Picc^0(X|D)$.
  If $\deg(D) \ge 2$, we can replace $\Picc^0(X|D)$ with $\Picc^0(X_\fm)$.
  In the latter case,  we can argue as in the proof of \propref{prop:Alb-main-1} to
  reduce the proof to the case where we can assume that $k$ is algebraically closed.
  But this case is classically known.
\end{proof}

We can now compare the relative and ordinary Picard schemes of a modulus pair.
For any $n \ge 1$, let $\W_n$ be the {\'e}tale sheaf on $\Sch_k$ given
by $\W_n(Z) = \W_n(\sO(Z))$, where $\W_n(R)$ is the ring of big Witt-vectors
over the ring $R$ of length $n$ (e.g., see \cite[App.~A]{Rulling}).
One knows that $\W_n$ is representable by a unipotent linear algebraic group over $k$
(e.g., see \cite[Chap.~V, \S~3.13, Lem.~20]{Serre-AGCF}).

\begin{prop}\label{prop:Pic-sequence}
  Under the assumptions of \thmref{thm:Serre-Jac}, there is an exact sequence of
  algebraic groups
 \begin{equation}\label{eqn:Pic-sequence-0}
   0 \to \Picc^0(X|D)_{\rm aff} \to \Picc^0(X|D) \to \Picc^0(X) \to 0
 \end{equation}
 over $k$, where $\Picc^0(X|D)_{\rm aff}$ is a linear algebraic group 
 canonically isomorphic to $(\G_m)^{r-1} \times (\stackrel{r}{\underset{i =1}
     \prod} \W_{n_i-1})$.
   \end{prop}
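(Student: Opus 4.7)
The plan is to transport the Rosenlicht-Serre structure theorem for the generalized Jacobian $J_{X|D}$ through the canonical isomorphism $\theta_{X|D} \colon J_{X|D} \xrightarrow{\cong} \Picc^0(X|D)$ provided by \corref{cor:Pic-Jac-iso-1}. I would first invoke Serre's structure theorem \cite[Chap.~V, \S~3]{Serre-AGCF}, which produces a canonical short exact sequence of algebraic groups over $k$
$$0 \to L_\fm \to J_{X|D} \to J_X \to 0,$$
where $J_X = \Picc^0(X)$ is the ordinary Jacobian of $X$ and $L_\fm$ is the local component of $J_{X|D}$. Explicitly, $L_\fm = R_\fm/\G_m$, where $R_\fm = \prod_{i=1}^{r} R_i$, with $R_i$ the $k$-group scheme representing the functor $T \mapsto H^0(D_i \times T, \sO^\times)$ (with $D_i = n_i[x_i]$), and the $\G_m$ is the subgroup of units diagonally embedded via pullback from the residue fields.

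Second, under the hypothesis $\mathrm{Supp}(D) \subset X(k)$, each $x_i$ is $k$-rational, and a choice of uniformizer $\pi_i \in \sO_{X,x_i}$ identifies $\sO(D_i) \cong k[t]/(t^{n_i})$ as $k$-algebras. Splitting a unit into its residue-field part and its principal-unit part yields a canonical decomposition of $k$-group schemes $R_i \cong \G_m \times U_i$, where $U_i$ represents the functor of principal units $T \mapsto 1 + t(k[t]/(t^{n_i}) \otimes_k \sO(T))$. The standard identification of the group of truncated power series $1 + a_1 t + \cdots + a_{n_i - 1} t^{n_i - 1}$ under multiplication with the big Witt vectors of length $n_i - 1$ (built into the definition of $\W_n$ recalled in \S~\ref{sec:Comp-RO}) gives $U_i \cong \W_{n_i - 1}$ as algebraic groups over $k$. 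Taking the product and noting that the diagonal $\G_m \hookrightarrow R_\fm$ lies entirely inside the $(\G_m)^r$ factor yields
$$L_\fm \cong (\G_m)^{r-1} \times \prod_{i=1}^{r} \W_{n_i - 1}.$$

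Finally, I would verify that $\theta_{X|D}$ carries the Rosenlicht-Serre surjection $J_{X|D} \surj J_X$ onto the canonical quotient $\Picc^0(X|D) \surj \Picc^0(X)$ furnished by \thmref{thm:Pic-rep}. This is immediate from the uniqueness clause in \thmref{thm:Serre-Jac}(4) applied to the composition $X^o \inj X \to \Picc^0(X)$: both the composite $J_{X|D} \xrightarrow{\theta_{X|D}} \Picc^0(X|D) \surj \Picc^0(X)$ and the Rosenlicht-Serre map $J_{X|D} \surj J_X = \Picc^0(X)$ extend this common morphism from $X^o$ and hence coincide. Setting $\Picc^0(X|D)_{\rm aff} := \theta_{X|D}(L_\fm)$ yields the desired exact sequence. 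The only delicate point is the identification $U_i \cong \W_{n_i - 1}$ as algebraic group schemes (and not merely as groups of $k$-points); this is precisely where the $k$-rationality of the $x_i$ enters in an essential way, since over a general residue field $k(x_i)$ one would instead obtain a Weil restriction from $k(x_i)$ to $k$.
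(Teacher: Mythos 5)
Your proposal is correct and follows essentially the same route as the paper: the paper's proof also reduces to the generalized Jacobian via the isomorphism $\theta_{X|D}$ of \corref{cor:Pic-Jac-iso-1} and then cites the Rosenlicht--Serre structure theorem \cite[Chap.~V, \S~3, no.~13]{Serre-AGCF} for the exact sequence and the description of the affine part. You have merely unpacked that citation — the decomposition $R_i \cong \G_m \times U_i$, the identification $U_i \cong \W_{n_i-1}$ via truncated units, and the compatibility of the two projections onto $\Picc^0(X)$ via the universal property — all of which is accurate.
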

   \begin{proof}
     Using \corref{cor:Pic-Jac-iso-1}, we can work with $J_{X|D}$, in which case
     the proposition is well known (e.g., see \cite[Chap.~V, \S~3, no.~13]{Serre-AGCF}).
\end{proof}

\begin{cor}\label{cor:Pic-sequence-1}
Let $X$ be a smooth projective geometrically integral curve over $k$ and let
$D \le D'$ be two effective Cartier divisors on $X$.
  Then the canonical map $\Picc^0(X|D') \to \Picc^0(X|D)$ is smooth, affine and
  surjective. If $Y$ is a singular curve over $k$ with normalization $X$ and
  $Z \subset Y$ is a conductor subscheme for the normalization such that
  $D = Z \times_Y X$, then the map $\Picc^0(X|D) \to \Picc^0(Y)$
  in ~\eqref{eqn:Pic-Sing**-1} is smooth, affine and surjective.
  \end{cor}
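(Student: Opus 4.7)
The plan is to reduce both assertions to the structure of the linear part of $\Picc^0(X|D)$ provided by \propref{prop:Pic-sequence}, together with \corref{cor:Pic-Jac-iso-1} which lets us apply that proposition directly to $\Picc^0(X|D)$. Since smoothness, affineness and surjectivity of a morphism between finite type $k$-schemes all descend along faithfully flat base change, and since the formation of $\Picc^0(X|D)$ and $\Picc^0(Y)$ commutes with base change in $k$ (\thmref{thm:Pic-rep}), I would first pass to the algebraic closure $\ov{k}$, after which $\supp(D), \supp(D') \subset X(k)$ and $X^o(k) \neq \emptyset$, so that the hypotheses of \thmref{thm:Serre-Jac} and \propref{prop:Pic-sequence} all hold.

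For the first claim, I would compare the sequences~\eqref{eqn:Pic-sequence-0} for $(X,D)$ and $(X,D')$ in a commutative diagram with exact rows whose right-hand column is the identity on $\Picc^0(X)$. A snake-lemma chase then identifies the kernel of $\beta \colon \Picc^0(X|D') \to \Picc^0(X|D)$ with the kernel of the induced map $\alpha \colon \Picc^0(X|D')_{\rm aff} \to \Picc^0(X|D)_{\rm aff}$ on affine parts, and reduces surjectivity of $\beta$ to that of $\alpha$. Using the explicit description of the affine parts as products of $\G_m$'s and big Witt-vector groups $\W_{n_i-1}$, the map $\alpha$ is visibly given by truncations on matching $\W$-factors together with projections forgetting the $\G_m$- and $\W$-factors contributed by $\supp(D')\setminus\supp(D)$; hence $\alpha$ is surjective with kernel itself a product of $\G_m$'s and Witt-vector groups, in particular smooth, connected and affine.

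For the second claim, I would first promote the exact sequence~\eqref{eqn:Pic-Sing**-3} to a short exact sequence of commutative group schemes
\[0 \to N \to \Picc^0(X|D) \to \Picc^0(Y) \to 0,\]
with $N$ the group scheme associated to $\sO^\times(Z)/k^\times$; this uses the functoriality of~\eqref{eqn:Pic-Sing**-3} under base field extension combined with representability (\thmref{thm:Pic-rep}), together with the surjectivity of $\wt{\psi}^*_\fm$ already noted at the end of \S~\ref{sec:Pic-SS} (which passes from $\Picc$ to identity components because $N$ will turn out to be connected). Writing $\sO(Z) = \prod_{j=1}^s A_j$ as the decomposition into Artinian local $k$-factors with residue field $k$, one obtains $\sO^\times(Z) \cong \prod_j (\G_m \times U_j)$ with each $U_j = 1 + \fm_{A_j}$ a connected unipotent algebraic group, so $N \cong (\G_m)^{s-1} \times \prod_j U_j$ is again smooth, connected and affine.

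In both cases the final conclusion is uniform: a surjective homomorphism $\beta \colon G \surj H$ of smooth connected algebraic groups whose kernel $K$ is smooth and affine is faithfully flat and exhibits $G$ as an fppf $K$-torsor over $H$; smoothness of $\beta$ then follows from smoothness of $K$ (since the geometric fibers are translates of $K$), and affineness of $\beta$ follows from affineness of $K$ by fpqc descent of affine morphisms. The step requiring the most care, I expect, is precisely the promotion of the $k$-point exact sequences~\eqref{eqn:Pic-sequence-0} and~\eqref{eqn:Pic-Sing**-3} to exact sequences of commutative group schemes, since the references cited formulate these at the level of functors of points; this is where representability (\thmref{thm:Pic-rep}) and the identification with the Rosenlicht--Serre Jacobian (\corref{cor:Pic-Jac-iso-1}) are indispensable.
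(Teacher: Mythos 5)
Your proposal is correct and follows essentially the same route as the paper: pass to the algebraic closure, compare the exact sequence ~\eqref{eqn:Pic-sequence-0} for $D$ and $D'$ to handle the first claim, and use ~\eqref{eqn:Pic-Sing**-3} for the second, reducing everything to the smoothness, connectedness and affineness of the kernels. The extra details you supply (promoting the point-level sequences to sequences of group schemes via \thmref{thm:Pic-rep} and \corref{cor:Pic-Jac-iso-1}, and the standard descent argument that a surjection of smooth connected algebraic groups with smooth affine kernel is smooth and affine) are exactly the steps the paper leaves implicit.
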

\begin{proof}
  To prove either of the statements, we can pass to the algebraic closure of $k$.
  In this case, the first statement follows directly by comparing
  ~\eqref{eqn:Pic-sequence-0} for $\Picc^0(X|D')$ and $\Picc^0(X|D)$.
  The second statement follows directly from ~\eqref{eqn:Pic-Sing**-3}.
\end{proof}

\subsection{The case of local fields}\label{sec:Pic-loc}
We shall assume in this subsection that $k$ is a local field. Recall that $k$ is a
topological field with respect to its valuation  (also called adic) topology such that
the ring of integers $\sO_k$ is an open subring. One also knows
that for any locally of finite type $k$-scheme $Y$, the set $Y(k)$ has a unique
structure of a totally disconnected locally compact Hausdorff topological space
(see \cite[Prop.~5.4]{Conrad}). This topology is characterized by the property that if
$Y \subset \A^n_k$ as a locally closed subscheme, then it coincides with the subspace
topology on $Y(k)$ induced by the product of the adic topology of $k$ on
$\A^n_{k}(k) \cong k^n$.
This assignment of adic topology defines a functor $\Sch_k \to \Top$, where the latter
is the category of topological spaces with continuous maps.
We shall call this the adic topology of $Y(k)$ and say that $Y(k)$ is an adic space.
For any property of the adic topology of $Y(k)$ that we shall use in this paper, the
reader is referred to \cite{Conrad} and \cite[Thm.~10.5.1]{CTS}.

Every finite-dimensional $k$-vector space $M$
is equipped with the product topology of the adic topology of $k$, called the adic
topology of $M$. If $A$ is a finite-dimensional $k$-algebra, the adic topology of
$A^\times$ is the subspace topology induced from $A$. These topologies make
$A$ and $A^\times$ into topological abelian groups. If ${k'}/k$ is a finite field
extension and $A$ is a finite-dimensional $k'$-vector space, then
the adic topology of $A$ (and $A^\times$ if $A$ is a $k'$-algebra) coincides
with its adic topology when considered as a $k$-vector space
(e.g., see \cite[\S~7.2]{Kato-cft-1}). 
In this paper, all finite-dimensional algebras over a local field
and their unit groups will
be assumed to be endowed with the adic topology unless mentioned otherwise.

We now let $(X,D)$ be as in \S~\ref{sec:Comp-RO}. We assume that $X^o(k) \neq \emptyset$
and fix a point $P \in X^o(k)$ so that $\alb_{X|D}$ is defined with respect to the base
point $P$. Let $Y$ be a singular curve
over $k$ with normalization $X$ and a conductor subscheme $D \subset X$.
Since $\Pic(X|D) = \Picc(X|D)(k)$ and $\Pic(Y) = \Picc(Y)(k)$,
we see that $\Pic(X|D)$ and $\Pic(Y)$ are
locally compact Hausdorff topological abelian groups. Furthermore, it follows from
\cite[Thm.~10.5.1]{CTS} that $\Pic^0(X|D)$ and $\Pic^0(Y)$ are open subgroups
of $\Pic(X|D)$ and $\Pic(Y)$, respectively. In the following result, we shall use
the adic topology on all sets.

\begin{cor}\label{cor:Pic-sequence-2}
  We have the following.
  \begin{enumerate}
  \item
    For all $n \gg 0$, the map $\phi^n_{X|D} \colon U_n(X^o)(k) \to
    \Pic^0(X|D)$ is a continuous open map whose image is dense.
  \item
    $\Pic^0(X)$ is a profinite abelian group.
  \item
    The map $\Pic^0(X|D) \to \Pic^0(X)$ is a topological quotient.
  \item
    If $D' \ge D$, the map $\Pic^0(X|D') \to \Pic^0(X|D)$ is a topological quotient.
  \item
    The map $\Pic^0(X|D) \to \Pic^0(Y)$ is a topological quotient.
  \end{enumerate}
  \end{cor}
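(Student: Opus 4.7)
The plan is to derive each of (1)--(5) from the scheme-level results of \S\ref{sec:Alb-sm} and \S\ref{sec:Comp-RO}, combined with two standard facts over a local field $k$: (i) a smooth morphism of finite-type $k$-schemes induces a continuous open map on $k$-points in the adic topology, and (ii) a smooth surjection of $k$-group schemes with kernel $H$ satisfying $H^1_\et(k,H)=0$ is surjective on $k$-points; see \cite{Conrad} and \cite[Thm.~10.5.1]{CTS}.

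For (1), \thmref{thm:ALB-rel} together with \propref{prop:Alb-main-1} gives, for $n \gg 0$, a smooth surjective morphism $\phi^n_{X|D} \colon U_n(X^o) \to \Picc^0(X|D)$, so fact (i) makes its restriction to $k$-points continuous and open. I will establish density of the image in two steps. First, I show that $\alb^n_{X|D}(\Sym^n(X^o)(k)) = \Pic^0(X|D)$ for $n \gg 0$: Riemann--Roch on $X_\fm$ (as in the proof of \lemref{lem:fiber-dim}) represents any $\alpha \in \Pic^0(X|D)$, after twisting by $\sO(nP)$, as an effective $k$-rational divisor on $X^o$, and by \lemref{lem:alb-fiber} the fiber of $\alb^n_{X|D}$ over this point is a $k$-hyperplane complement in a $k$-projective space, whose $k$-points are adically dense. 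Second, Hensel's lemma at each $P \in X^o(k)$ parametrizes an adic neighborhood of $P$ by $\sO_k$, so unordered $n$-tuples of distinct $k$-points of $X^o$ are adically dense in $\Sym^n(X^o)(k)$; since $\pi_n$ is \'etale (hence open on $k$-points), the composition $\phi^n_{X|D}$ has adically dense image.

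For (2), $\Picc^0(X)$ is an abelian variety over $k$, so the standard structure theorem for $k$-points of an abelian variety over a local field identifies $\Pic^0(X)$ topologically as an extension of a finite group by a free $\sO_k$-module of finite rank, in particular a profinite topological group. For (3), \propref{prop:Pic-sequence} presents $\Picc^0(X|D) \to \Picc^0(X)$ as a smooth surjection of $k$-group schemes with kernel a successive extension of copies of $\G_m$ and $\W_m$; Hilbert~90 (multiplicative) together with the additive normal basis theorem (applicable since each $\W_m$ is an iterated $\G_a$-extension) yields $H^1_\et(k,\Picc^0(X|D)_{\rm aff}) = 0$, so by (i) and (ii) the map on $k$-points is a continuous open surjection, hence a topological quotient. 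Claim (4) is identical, using the smooth affine surjection of \corref{cor:Pic-sequence-1}, whose kernel is again a successive $\G_m$-by-$\W_m$ extension with vanishing $H^1_\et(k,-)$. Claim (5) follows from the exact sequence \eqref{eqn:Pic-Sing**-3}, whose kernel $\sO^\times(Z)/k^\times$ has vanishing $H^1_\et(k,-)$ by Hilbert~90 for the quasi-trivial torus $\prod_i \Res_{k(z_i)/k}\G_m$, together with the smoothness of $\Picc^0(X|D) \to \Picc^0(Y)$ from \corref{cor:Pic-sequence-1}.

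The principal obstacle is the density assertion in (1); the other parts reduce cleanly to the ``smooth plus $H^1$-trivial kernel'' principle above. The density step is delicate because it requires both the Riemann--Roch surjectivity onto $k$-rational symmetric products and a careful Hensel-based approximation inside $\Sym^n(X^o)(k)$, the latter exploiting that every $k$-rational effective divisor on $X^o$ can be perturbed within $\Sym^n(X^o)(k)$ to a nearby sum of distinct $k$-points of $X^o$.
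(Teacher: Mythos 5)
Your overall skeleton --- smoothness of the relevant morphisms of group schemes gives openness on $k$-points, and openness plus surjectivity gives the topological quotient statements --- is the same as the paper's, and your verification that $\alb^n_{X|D}\colon \Sym^n(X^o)(k)\to \Pic^0(X|D)$ is surjective for $n\gg 0$ (Riemann--Roch over $k$ together with the fibers over $k$-rational classes being complements of $k$-rational hyperplanes in $k$-projective spaces) is a worthwhile elaboration of the paper's terse appeal to \thmref{thm:ALB-rel} and \cite[Thm.~10.5.1]{CTS}. There are, however, two genuine gaps.

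The density step in (1) rests on a false claim: the set of sums of $n$ distinct points of $X^o(k)$ is \emph{not} adically dense in $\Sym^n(X^o)(k)$. By Krasner's lemma, a $k$-rational reduced divisor whose support contains a closed point $x$ with $k(x)/k$ a nontrivial separable extension has an adic neighbourhood in $\Sym^n(X^o)(k)$ consisting entirely of divisors with the same non-split factorization type; concretely, in $\Sym^2(\A^1)(k)\cong k^2$ the monic quadratics with two distinct roots in $k$ form an open but non-dense subset, because having square discriminant is an open and closed condition on $k^\times$. So no Hensel-type perturbation can produce what you assert. What is actually needed, and what suffices, is that $U_n(X^o)(k)=(U_n/S_n)(k)$ --- the set of all $k$-rational multiplicity-free divisors, including the non-split ones --- is dense in $\Sym^n(X^o)(k)$. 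This holds because $U_n/S_n$ is a dense open subscheme of the smooth $k$-variety $\Sym^n(X^o)$, and the $k$-points of a dense open subscheme of a smooth $k$-variety are adically dense (\cite[Thm.~10.5.1]{CTS}). Combined with your surjectivity of $\alb^n_{X|D}$ on $k$-points and its continuity, this yields the density of the image of $\phi^n_{X|D}$.

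For (3) and (5), the surjectivity on $k$-points cannot be deduced from $H^1_{\et}(k,\Ker)=0$ as you propose, because that vanishing is false in general. The description of the affine kernel in \propref{prop:Pic-sequence} as $(\G_m)^{r-1}\times \prod_i \W_{n_i-1}$ is only valid under the hypothesis ${\rm Supp}(D)\subset X(k)$; without it, the toric part of the kernel is $T/\G_m$ with $T=\prod_i\Res_{k(x_i)/k}\G_m$, and the long exact sequence gives $H^1(k, T/\G_m)\cong \Ker\bigl(\Br(k)\to \prod_i \Br(k(x_i))\bigr)\cong \Z/d$ with $d=\gcd_i[k(x_i):k]$, which is nonzero whenever $d>1$. (Your argument for (4) is fine, since there the kernel is unipotent and Weil restriction preserves $H^1$-triviality of $\G_a$.) The surjectivity you need is in fact elementary: it follows directly from the exact sequences ~\eqref{eqn:CCM-0} and ~\eqref{eqn:Pic-Sing**-3}, which are exact sequences of honest $k$-points, and this is the route the paper takes before invoking smoothness and openness.
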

  \begin{proof}
    The statement (1) follows directly from \thmref{thm:ALB-rel} and
    \cite[Thm.~10.5.1]{CTS}. To prove (2), we note that $\Picc^0(X)$ is an
    abelian variety over $k$. This implies that $\Pic^0(X)$ is compact as an adic
    space (see op. cit.). Since it is also totally disconnected, $\Pic^0(X)$ must be
    profinite. Since the maps $\Pic^0(X|D') \to \Pic^0(X|D)$ and
    $\Pic^0(X|D) \to \Pic^0(Y)$ are surjective, the remaining statements
    of the corollary follow directly from \corref{cor:Pic-sequence-1} and
    \cite[Thm.~10.5.1]{CTS}.
\end{proof}

\section{Pontryagin dual of the Chow group with modulus}
\label{sec:Dual-mod}
The goal of this section is to strengthen \corref{cor:Pic-sequence-2}.
More precisely, we shall show that all maps in the exact sequence
~\eqref{eqn:CCM-0} are continuous with respect to the adic topology and the
resulting dual complex is partially exact. This will be a key step in the proofs of
the main results. We begin by recalling the Kato topology and its relation with the
adic topology.

\subsection{Kato topology}\label{sec:Kato-T}
Let $R$ be an equicharacteristic excellent Henselian discrete valuation ring with
maximal ideal $\fm$ whose residue field $\ff$ is a local field of exponential
characteristic $p \ge 1$. Let $L$ denote the quotient field of $R$. In this case, one can
find a two dimensional excellent normal local integral domain $A$ whose residue field is
finite such that $R \cong (A_\fp)^h$ for some height one prime ideal $\fp \subset A$.
For any ideal $I \subset A$ not contained in $\fp$ and $n \ge 1$, we let
$\Fil^I_n {K}^M_1(R)$ be the subgroup of $R^\times$ generated by $(1 + \fm^n)$
and $(1 + I) \subset A^\times \subset {K}^M_1(R)$. We let $\Fil^I_0 {K}^M_1(R) =
R^\times$.

When $p > 1$, Kato defined a subgroup topology on $R^\times$
(see \cite[\S~7]{Kato-cft-1} and \cite[\S~2.3]{Saito-Invent})
    for which the fundamental system of open neighborhoods of the identity is given
    by subgroups of the form $\Fil^I_n {K}^M_1(R)$, where $I$ and $n$
    vary as above. It follows from \cite[\S~7, Lem.~1]{Kato-cft-1}
    (see also \cite[\S~2.3]{Saito-Invent}) that this topology of
    $R^\times$ does not depend on the choice of $A$. We shall call this the
    Kato topology on $R^\times$. The Kato topology on $L^\times$ is the unique topology
    which is compatible with its group structure and for which $R^\times$ with its
    Kato topology is an open subgroup.

    It is easy to check that the Kato topology of $R^\times$ coincides
  with the subspace topology induced from the Kato topology of $\wh{R}^\times$ via the
    inclusion $R^\times \subset \wh{R}^\times$.
  In particular, the same also holds for the inclusion $L^\times \subset \wh{L}^\times$. 
When $p = 1$, we shall assume the Kato topology on $R^\times$ and $L^\times$ to be
    discrete. By choosing a uniformizer $\pi \in \fm$, we get a canonical
    isomorphism of $\ff$-algebras $\phi \colon \ff[[T]] \xrightarrow{\cong}
    \wh{R}$ which sends $T$ to $\pi$.
In \S~\ref{sec:Pic-loc}, we recalled the adic topology of $\ff$ and $\sO_\ff$. 
We shall use the following description of the Kato topology in terms of the adic
topology.

 \begin{lem}\label{lem:Kato-1}
      When $p > 1$, the map $\phi \colon (\ff[[T]])^\times \to  (\wh{R})^\times$ is an
      isomorphism of topological abelian groups if we endow $\ff[[T]] \cong \ff \times
      \ff^{\N}$ with the product topology of the adic topology of $\ff$ and
      $(\ff[[T]])^\times$ with the subspace topology. In particular, the quotient
      topology of the Kato topology on $({\wh{R}}/{\fm^n})^\times$ coincides with the
      adic topology of $({\ff[T]}/{(T^n)})^\times$. 
     \end{lem}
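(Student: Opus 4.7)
I would prove this lemma by first establishing the second assertion at the finite level of $(\wh{R}/\fm^n)^\times$ and then deducing the first by passing to the inverse limit in $n$. Throughout, I would use Kato's independence result \cite[\S~7, Lem.~1]{Kato-cft-1} to pick a convenient auxiliary ring $A$, for instance a two-dimensional regular Henselian local ring with a regular system of parameters $(u,\pi)$ such that $\fp = (\pi)$ and $u$ reduces to a uniformizer of $\sO_\ff$ inside $A/\fp \subset \ff$. Under $\phi$, $\pi$ corresponds to $T$, so $\fm^n$ corresponds to $(T^n)$ and there is a canonical ring isomorphism $\wh{R}/\fm^n \cong \ff[T]/(T^n)$.

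For the second assertion, I fix $n \ge 1$. Since $1 + \fm^n$ is open in the Kato topology (it appears in the basis as $\Fil^I_n K^M_1(\wh{R})$ for sufficiently small $I$), the quotient topology on $(\wh{R}/\fm^n)^\times \cong (\ff[T]/(T^n))^\times$ admits as a neighborhood basis of the identity the images of the subgroups $\Fil^I_n$, equivalently the subgroups of $(\ff[T]/(T^n))^\times$ generated by the images of $1 + I$ as $I$ ranges over ideals of $A$ not contained in $\fp$. On the other hand, the adic topology on $(\ff[T]/(T^n))^\times \subset \ff[T]/(T^n) \cong \ff^n$ has as a neighborhood basis of the identity the subgroups $W_m := 1 + u^m \sO_\ff[T]/(T^n)$ for $m \ge 1$, because neighborhoods of $0$ in $\ff$ are generated by the powers $\fm_\ff^m$ of the maximal ideal. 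The core of the proof is to show these two families are mutually cofinal: taking $I = (u^m,\pi^n) \subset A$ (which is not contained in $\fp$), the image of $1+I$ in $(\ff[T]/(T^n))^\times$ lies in and generates $W_m$; conversely, any ideal $I \not\subset \fp$ has non-trivial image in the $1$-dimensional local Noetherian domain $A/\fp$ and so contains a power $u^m$ modulo $\fp^n$, which implies the image of $1 + I$ modulo $\fm^n$ lies in some $W_m$.

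For the first assertion, I would argue that both $\wh{R}^\times$ with the Kato topology and $(\ff[[T]])^\times$ with the subspace topology from the product topology on $\ff[[T]] = \prod_{i \ge 0} \ff$ are canonically inverse limits (as topological groups) of their truncations modulo $T^n$. On the Kato side, this holds because $\{1 + \fm^n\}_{n \ge 1}$ is a cofinal system of open subgroups. On the product topology side, an open neighborhood of $0$ in $\ff[[T]]$ is specified by open neighborhoods of $0$ in only finitely many coordinates, which is equivalent to specifying a neighborhood of $0$ in some $\ff[T]/(T^N)$. The fact that $\phi$ is a topological isomorphism then follows from the second assertion applied at each finite level.

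The main obstacle I anticipate is the cofinality argument in the middle paragraph, specifically the claim that every ideal $I \subset A$ not contained in $\fp$ eventually maps into some $W_m$ in $(\ff[T]/(T^n))^\times$. This requires a careful use of the structural properties of excellent normal two-dimensional local domains, notably that $A/\fp$ is dense in $\sO_\ff$ for the adic topology and that the image of $I$ in $A/\fp$, being a non-zero ideal of a $1$-dimensional local Noetherian domain, must contain a power of the uniformizer after reduction. Once this identification at finite level is secured, the assembly into the topological isomorphism $\phi$ is formal via the inverse limit.
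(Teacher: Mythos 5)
Your proof takes a genuinely different route from the paper's: the paper disposes of this lemma by citing Kato's own computation in \cite[\S~7, Rem.~1, Lem.~3]{Kato-cft-1}, whereas you reconstruct it from the definition of the subgroups $\Fil^I_n K^M_1$, reducing via Kato's independence lemma to the model $A=\F_q[[u,\pi]]$, comparing the two neighbourhood bases at each finite level $(\wh{R}/\fm^n)^\times$, and assembling the result as an inverse limit. The architecture is sound: the images $V_I$ of the $1+I$ (each of which is already the subgroup $1+\bar I$ for the ideal $\bar I=I\cdot\sO_\ff[T]/(T^n)$) form a neighbourhood basis of $1$ for the quotient Kato topology, the $W_m=1+u^m\sO_\ff[T]/(T^n)$ form one for the adic topology, and both $\wh R^\times$ and $(\ff[[T]])^\times$ are the inverse limits of their truncations. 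This is a legitimate, self-contained alternative to the citation.

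Two points need repair. The load-bearing one is in your ``conversely'' step: you conclude that the image of $1+I$ \emph{lies in} some $W_m$, i.e.\ $V_I\subseteq W_m$. That is the wrong inclusion --- your first step already shows every $W_m$ equals some $V_I$ (hence is open for the quotient Kato topology), and what remains is precisely that every $V_I$ \emph{contains} some $W_m$, so that $V_I$ is adic-open. Your premise does deliver this: if the image of $I$ in $A/\fp$ is $(u^{m_0})$, then multiplying a generator $u^{m_0}\epsilon+x_1\pi+\cdots$ of its image in $A/\fp^n$ successively by $\pi^{n-1}$, $u^{m_0}\pi^{n-2}$, \dots\ shows $u^{nm_0}\in I+\fp^n$, hence $u^{nm_0}\in\bar I$ and $W_{nm_0}\subseteq 1+\bar I=V_I$. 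With ``lies in'' replaced by ``contains'', the cofinality argument closes. Separately, $1+\fm^n$ is \emph{not} Kato-open: since $I\not\subset\fp$, every $1+I$ contains elements $1+x$ with $x$ a unit of $\wh R$, so no $\Fil^I_m$ is contained in $1+\fm$. This does not hurt you --- all you actually use is that every $\Fil^I_m$ contains $1+\fm^m$, which suffices both to identify the quotient topology with the one generated by the images of the $\Fil^I_m$ and to realise $\wh R^\times$ as the inverse limit of the truncations --- but the two parenthetical assertions of openness should be deleted.
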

    \begin{proof}
This is a straightforward consequence of \cite[\S~7, Rem.~1, Lem.~3]{Kato-cft-1} .
\end{proof}

Recall (e.g., see \cite[App.~A]{Rulling}) that for any commutative ring $A$
and the set $\{1, \ldots , m\}$ (where $m = \infty$ is allowed),
the ring of big Witt-vectors of length $m$ is the commutative ring $\W_m(A)$ which is
$A^m$ as a set and whose ring structure is uniquely determined by the condition that the
ghost map ${\rm gh_m} \colon \W_m(A) \to A^m$ is a ring homomorphism
with respect to the product ring structure on $A^m$. The ghost map is in fact a natural
transformation of functors between commutative rings. We write $\W_\infty(A)$ as $\W(A)$. 

Let $\Gamma(A) := (1 + TA[[T]])^\times \subset (A[[T]])^\times$ be the subgroup of the
group of units of the power series ring $A[[T]]$. Recall that every element of
$\Gamma(A)$ has a unique presentation of the form $f(T) =
{\underset{n \ge 1}\prod} (1-a_nT^n)$ with $a_n \in A$.
One knows (e.g., see \cite[App.~A]{Rulling})
that as an abelian group with respect to its addition operation, $\W(A)$ and
$\W_m(A)$ have a simple description. That is, the maps
\begin{equation}\label{eqn:Witt-0}
  \nu_A \colon \W(A) \to \Gamma(A); \ \nu_A((a_n)) =
  {\underset{n \ge 1}\prod} (1-a_nT^n), \ \ and
\end{equation}
\begin{equation}\label{eqn:Witt-1}
  \mu_A \colon \Gamma(A) \to \W(A), \ \ \mu_A({\underset{n \ge 1}\prod} (1-a_nT^n))
  = (a_n).
\end{equation}
are group homomorphisms which are inverses to each other. These maps induce isomorphisms
\begin{equation}\label{eqn:Witt-2}
  \nu_A \colon \W_m(A) \xrightarrow{\cong} \Gamma_m(A) :=
  \frac{(1 + TA[[T]])^\times}{(1 + T^{m+1}A[[T]])^\times}; \ \
\mu_A \colon  \Gamma_m(A) \xrightarrow{\cong} \W_m(A).
\end{equation}
It follows from ~\eqref{eqn:Witt-1} that there are polynomials
$p_i \in A[T_1, \ldots , T_m]$ such that 
$\mu_A$ in ~\eqref{eqn:Witt-2} is
of the form $\mu_A(1 + a_1T + \cdots + a_mT^m) = (p_1(a_1, \ldots , a_m),
\ldots , p_m(a_1, \ldots , a_m))$. We shall let $\W_m(A) = 0$ for $m \le 0$.

\vskip .2cm

Recall from \S~\ref{sec:Comp-RO} that for any field $F$, 
there is a unipotent linear algebraic group ${\W}_{m,F}$ over $F$
such that  $\W_{m}(A) = {\W}_{m,F}(A)$ for any $F$-algebra $A$. The map
$\psi_{m,F} \colon {\W}_{m,F} \to \A^m_F$, given by $\psi_{m,F}((a_i)) = (a_i)$
for $(a_i) \in \W_m(A)$ ($A$ any $F$-algebra) is an isomorphism between $F$-schemes
(e.g., see \cite[Chap.~V, \S~3.13]{Serre-AGCF}).
Note however that this is not an isomorphism of group schemes over $F$.
One can in fact write ${\W}_{m,F} = \Spec(A_{m,F})$, where
\begin{equation}\label{eqn:Witt-3}
  A_{m,F} = \frac{F[X_{ij}| 1 \le i, j \le m+1]}{(X_{ij} \ \mbox{for} \ i > j,
    \ X_{ij} -1  \ \mbox{for} \ i = j, \ X_{ij} - X_{i+1 j+1} \ \mbox{for} \ i < j)}.
\end{equation}
The isomorphism $\psi_{m,F}$ is easily deduced from this description.

\vskip .2cm

Suppose now that $F$ is our local field $k$. For any finite field
extension ${k'}/k$, the group scheme $\W_{m,k'}$ then induces the adic topology on the
abelian group $\W_m(k') = \W_{m,k'}(k')$.
Furthermore, the isomorphism $\W_{m,k'} \cong \A^m_{k'}$ as $k'$-schemes implies
that $\W_m(k') \cong k'^m$ as adic spaces, where the latter has the product of the
adic topology of $k'$. Note here that the adic topology
of $\W_m(k')$ does not depend on whether it has been induced by the $k$-scheme
structure or by the $k'$-scheme structure of $\W_{m,k'}$ (e.g., see
\cite[\S~7.2]{Kato-cft-1}). 
We shall henceforth consider $\W_{m,k'}$ as a unipotent algebraic group over $k'$
and $\W_m(k')$ as a topological abelian group with its adic topology.
Using the descriptions of Kato and adic topologies, we get the following.

\begin{lem}\label{lem:Adic-Witt}
Assume $p > 1$. Let $X$ be a regular curve over $k$ and $\pi_x$ a uniformizer of the
  local ring $\sO_{X,x}$ at a closed point $x \in X$, then the map
  \[
    \mu_{k(x)} \circ \phi^{-1} \colon
    \left(\frac{\wh{\sO_{X,x}}}{(\pi^{m+1}_x)}\right)^\times \to k(x)^\times \times
    \W_{m}(k(x))
    \]
    is a continuous bijection between topological abelian groups for every $m \ge 0$,
    where the left hand side is endowed with the quotient of the Kato topology.
  \end{lem}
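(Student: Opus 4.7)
The plan is to decompose the composite $\mu_{k(x)} \circ \phi^{-1}$ into elementary pieces and verify bijectivity and continuity at each stage, using Lemma~\ref{lem:Kato-1} and the polynomial formulas for Witt vector arithmetic recalled in \S\ref{sec:Kato-T}.

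First, I would invoke Lemma~\ref{lem:Kato-1} to identify the quotient of the Kato topology on $(\wh{\sO_{X,x}}/(\pi_x^{m+1}))^\times$ with the adic topology coming from its natural structure as the group of units of the finite-dimensional $k(x)$-algebra $\wh{\sO_{X,x}}/(\pi_x^{m+1})$. The isomorphism $\phi$ of Kato then descends to a bicontinuous isomorphism $\bar\phi \colon (k(x)[T]/(T^{m+1}))^\times \xrightarrow{\cong} (\wh{\sO_{X,x}}/(\pi_x^{m+1}))^\times$, where the source carries its adic topology as an algebraic unit group over $k(x)$.

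Next, I would use the split short exact sequence of abelian groups
\[
1 \to \Gamma_m(k(x)) \to \left(\frac{k(x)[T]}{(T^{m+1})}\right)^\times \xrightarrow{T=0} k(x)^\times \to 1,
\]
split by the inclusion of constants, to obtain a group isomorphism $(k(x)[T]/(T^{m+1}))^\times \xrightarrow{\cong} k(x)^\times \times \Gamma_m(k(x))$. The projection to $k(x)^\times$ is reduction modulo $T$, which is continuous since it is a $k(x)$-algebra map restricted to units. The projection to $\Gamma_m(k(x))$ is the assignment $u \mapsto u \cdot u(0)^{-1}$, given coefficient-wise by polynomials in the $a_i$ divided by powers of $a_0 \in k(x)^\times$, and is therefore continuous for the adic topology.

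Finally, I would apply the componentwise map $(\id, \mu_{k(x)})$ to land in $k(x)^\times \times \W_m(k(x))$. By~\eqref{eqn:Witt-2} and the discussion following it, $\mu_{k(x)}$ sends $1 + a_1 T + \cdots + a_m T^m$ to $(p_1(a_1,\ldots,a_m), \ldots, p_m(a_1,\ldots,a_m))$ for universal polynomials $p_i$. Hence $\mu_{k(x)}$ is a polynomial, and therefore continuous, bijection between two adic $k(x)$-vector spaces of dimension $m$; combined with the preceding two steps, this yields the required continuous bijection. The only delicate point in the argument is the identification of topologies in the first step, which is already taken care of by Lemma~\ref{lem:Kato-1}; the remaining steps are formal consequences of the polynomial nature of the Witt vector operations and of the multiplicative structure on $k(x)[T]/(T^{m+1})$.
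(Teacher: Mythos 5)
Your argument is correct and follows essentially the same route as the paper: both reduce, via Lemma~\ref{lem:Kato-1}, to identifying the quotient of the Kato topology with the adic topology on the units of the truncated polynomial algebra, and then deduce continuity of $\mu_{k(x)}$ on $\Gamma_m(k(x))$ from the universal polynomial formulas for the Witt coordinates in~\eqref{eqn:Witt-1} and~\eqref{eqn:Witt-2}. The only cosmetic difference is that you split off $k(x)^\times$ via the explicit section $u \mapsto (u(0), u\cdot u(0)^{-1})$, whereas the paper obtains the same decomposition as a quotient of the homeomorphism $(k(x)[[T]])^\times \cong k(x)^\times \times k(x)^{\N}$ from Lemma~\ref{lem:Kato-1}.
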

  \begin{proof}
We can assume $m \ge 1$, else the statement is immediate from \lemref{lem:Kato-1}.
 Using \lemref{lem:Kato-1} and ~\eqref{eqn:Witt-1},
    we only need to show that
    $\mu_{k(x)} \colon \Gamma_{m}(k(x)) \to \W_{m}(k(x))$ is continuous
    if $\Gamma_{m}(k(x))$ is endowed with the subspace topology from
    $\left({k(x)[T]}/{(T^{m+1})}\right)^\times$. For this, we look at the diagram
    \begin{equation}\label{eqn:Adic-Witt-0}
      \xymatrix@C1pc{
      \Gamma_{m}(k(x)) \ar[r]^-{\mu_{k(x)}} \ar[d]_-{\theta_{k(x)}} & \W_{m}(k(x))  
      \ar[d]^-{\psi_{k(x)}} \\
       k(x)^m \ar[r]^-{\gamma_{k(x)}} & k(x)^m,}
      \end{equation}
      where $\gamma_{k(x)}((a_i)) = (p_1((a_i)), \ldots , p_m((a_i)))$
      (cf. ~\eqref{eqn:Witt-2}).
The arrow $\theta_{k(x)}$ is the canonical bijection induced by the homeomorphism of
      topological spaces 
      $\alpha_{k(x)} \colon (k(x)[[T]])^\times \xrightarrow{\cong} k(x)^\times \times
      k(x)^{\N}$ as in \lemref{lem:Kato-1}.  It is clear from the definition of various
      maps that this diagram is commutative. Since $\psi_{k(x)}$ is a homeomorphism,
      it is enough to show that $\psi_{k(x)} \circ \mu_{k(x)}$ is continuous.
      Equivalently, we need to show that $\gamma_{k(x)} \circ \theta_{k(x)}$ is
      continuous. 

Since $\alpha_{k(x)}$ is a homeomorphism of topological spaces
      (cf. \lemref{lem:Kato-1}) and $\theta_{k(x)}$ is the induced map
      on the quotients with the quotient topologies (note that the projection
      $k(x)^{\N} \to k(x)^m$ is a quotient map), it is a
      homeomorphism. Since each component of $\gamma_{k(x)}$ is a polynomial map,
      it is clearly continuous. In particular, $\gamma_{k(x)}$ is continuous.
      It follows that $\gamma_{k(x)} \circ \theta_{k(x)}$ is continuous.
\end{proof}

\subsection{Pontryagin duals of Chow groups}
  \label{sec:Pont-mod*}
Let $k$ be a local field of exponential characteristic $p \ge 1$.
Let $(X,D)$ be a modulus pair over $k$, where $X$ is a geometrically
connected smooth projective curve over $k$. We let $D^\dagger = D_\red$.
We shall assume that $\emptyset \neq D^\dagger \subset X(k)$. We let
$X^o = X \setminus D$.
We let $j \colon X^o \inj X$ and $t \colon D \inj X$ be the inclusions. We write
$D = \stackrel{r}{\underset{i =1}\sum}n_i [x_i] \in \Div(X)$.
If $U \subset X$ is any Zariski dense open, then $U(k) \subset X(k)$ is an adically
dense open subset (e.g., see \cite[Thm.~10.5.1]{CTS}). Since $X(k) \neq \emptyset$,
it easily follows that $U(k)$ must be infinite. In particular, $X^o(k)$ is infinite.
We fix a point $P \in X^o(k)$ such that $\alb_{X|D}(P) = 0$.
We have noted in \S~\ref{sec:Pic-loc} that $\CH_0(X|D)$ and $\CH_0(X)$ are adic
topological abelian groups and the degree zero parts are their open subgroups.

Assume that $p > 1$. Using the identification $\sO^\times(D) \cong
{\underset{x \in D^\dagger}\prod} \left(\frac{\wh{\sO_{X,x}}}{(\pi^{n_x}_x)}\right)^\times$
as an adic space, we conclude from \propref{prop:Pic-sequence} and
\lemref{lem:Adic-Witt} that in ~\eqref{eqn:CCM-0},
the map $\partial_{X|D}$ is continuous, for, it is the product of maps
$\mu_{k(x)} \circ \phi^{-1}_{x}$ over $x \in D^\dagger$ followed by the
topological quotient $(k^\times)^r \times (\stackrel{r}{\underset{i =1}\prod}
\W_{n_i-1}(k)) \surj (k^\times)^{r-1} \times (\stackrel{r}{\underset{i =1}\prod}
\W_{n_i-1}(k))$. Since the map $\vartheta_{X|D}$
and  the inclusion $k^\times \inj \sO^\times(D)$ are clearly continuous,
~\eqref{eqn:CCM-0} gives rise to a sequence of homomorphisms
\begin{equation}\label{eqn:Chow-Parshin-4*-0}
  0 \to \CH_0(X)^\star \xrightarrow{(\vartheta_{X|D})^\star} \CH_0(X|D)^\star
  \xrightarrow{(\partial_{X|D})^\star} (\sO^\times(D))^\star \to (k^\times)^\star
  \end{equation}

\begin{lem}\label{lem:Main-exact-00}
    ~\eqref{eqn:Chow-Parshin-4*-0} is a chain complex of abelian groups which is exact
    at $\CH_0(X)^\star$ and $\CH_0(X|D)^\star$.
  \end{lem}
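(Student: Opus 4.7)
The plan is to prove the lemma in four stages: translating \eqref{eqn:CCM-0} to Chow groups, verifying the complex property, handling injectivity at $\CH_0(X)^\star$, and then doing the real work at $\CH_0(X|D)^\star$.

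\textbf{Setup.} Using the cycle-class isomorphism of \lemref{lem:CCM-iso}, the exact sequence \eqref{eqn:CCM-0} reads
\[
  0 \to k^\times \xrightarrow{\iota^*} \sO^\times(D) \xrightarrow{\partial_{X|D}} \CH_0(X|D) \xrightarrow{\vartheta_{X|D}} \CH_0(X) \to 0,
\]
and all four maps are continuous: continuity of $\iota^*$ is direct from \S\ref{sec:Pic-loc}, continuity of $\partial_{X|D}$ was verified in the paragraph preceding the lemma using \propref{prop:Pic-sequence} and \lemref{lem:Adic-Witt}, and $\vartheta_{X|D}$ is continuous because it is induced by a $k$-morphism of $k$-schemes of locally finite type (cf.\ \corref{cor:Pic-sequence-2}). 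Consequently, the functor $\Hom_{\Tab}(-,\Q/\Z)$ can be applied term by term.

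\textbf{Chain complex and exactness at $\CH_0(X)^\star$.} For any two composable continuous maps $f,g$ with $g\circ f = 0$, one has $f^\star\circ g^\star = 0$ in the dual. Applied to the consecutive compositions $\vartheta_{X|D}\circ\partial_{X|D}$ and $\partial_{X|D}\circ\iota^*$, both of which vanish by exactness, this gives the complex property. Injectivity of $(\vartheta_{X|D})^\star$ is immediate from the surjectivity of $\vartheta_{X|D}$: a continuous character of $\CH_0(X)$ that vanishes on a surjective image must vanish.

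\textbf{Exactness at $\CH_0(X|D)^\star$.} Let $\chi\in\CH_0(X|D)^\star$ satisfy $\chi\circ\partial_{X|D}=0$. Since $\ker(\vartheta_{X|D}) = \mathrm{Image}(\partial_{X|D})$ as abstract groups, there is a unique abstract homomorphism $\psi\colon\CH_0(X)\to\Q/\Z$ with $\chi = \psi\circ\vartheta_{X|D}$. The only genuine task is to show $\psi$ is continuous, and for this it suffices to prove that $\vartheta_{X|D}\colon\CH_0(X|D)\twoheadrightarrow\CH_0(X)$ is a topological quotient map. This is the main obstacle of the proof.

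\textbf{Proving $\vartheta_{X|D}$ is a topological quotient.} I will use the chosen $k$-rational point $P\in X^o(k)$ of degree one to split off the degree map. The continuous degree homomorphism $\deg\colon\CH_0(X|D)\to\Z$ admits the continuous section $n\mapsto n[P]$, so $\CH_0(X|D)\cong\Z\oplus\CH_0(X|D)_0$ as topological abelian groups, and similarly $\CH_0(X)\cong\Z\oplus\CH_0(X)_0$. Under these identifications $\vartheta_{X|D}$ becomes $\id_{\Z}\oplus\vartheta^0_{X|D}$, where $\vartheta^0_{X|D}\colon\CH_0(X|D)_0\to\CH_0(X)_0$ is a topological quotient map by \corref{cor:Pic-sequence-2}(3) (recalling that $\CH_0(\cdot)_0\cong\Pic^0(\cdot)$ through \lemref{lem:CCM-iso}). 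A product/direct-sum of topological quotient maps is a topological quotient map, so $\vartheta_{X|D}$ is itself a topological quotient. Given this, the factorization $\chi=\psi\circ\vartheta_{X|D}$ forces $\psi$ to be continuous (the universal property of the quotient topology), completing the proof. The sole nontrivial ingredient beyond formal manipulations is \corref{cor:Pic-sequence-2}(3), which was the point of the topological work in \S\ref{Alb-map}.
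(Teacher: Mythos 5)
Your proof is correct and follows essentially the same route as the paper's: both reduce the only nontrivial point (exactness at $\CH_0(X|D)^\star$) to the continuity of the abstractly factored character, which comes down to the map on degree-zero parts being a topological quotient, i.e.\ \corref{cor:Pic-sequence-2}(3). Your explicit degree-splitting via the rational point $P$ merely spells out what the paper leaves as an ``easy consequence'' of \lemref{lem:CCM-iso} and \corref{cor:Pic-sequence-2}; the one small caveat is that ``a product of topological quotient maps is a quotient map'' should be justified by observing that surjective quotient homomorphisms of topological groups are open, since the statement fails for general topological spaces.
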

  \begin{proof}
The lemma is obvious if $\deg(D) = 1$ by \lemref{lem:deg-1}. We shall therefore
    assume that $\deg(D) \ge 2$.
That ~\eqref{eqn:Chow-Parshin-4*-0} is a complex is a direct consequence of
 ~\eqref{eqn:CCM-0}. The injectivity of
    $(\vartheta_{X|D})^\star$ follows again from ~\eqref{eqn:CCM-0}.
To show the exactness at $\CH_0(X|D)^\star$, we can clearly replace
$\CH_0(X|D)$ and $\CH_0(X)$ by their degree zero subgroups. 
We let $G = (k^\times)^{r-1} \times
(\stackrel{r}{\underset{i =1}\prod} \W_{n_i -1}(k))$. We need to show that the sequence
    \begin{equation}\label{eqn:Main-exact-00-1}
0 \to (\CH_0(X)_0)^\star \xrightarrow{(\vartheta_{X|D})^\star} (\CH_0(X|D)_0)^\star
\to G^\star \to 0
\end{equation}
is exact at the middle term.

To show the above exactness, we note that the sequence
\begin{equation}\label{eqn:Main-exact-00-2}
0 \to (\CH_0(X)_0)^\vee \xrightarrow{(\vartheta_{X|D})^\vee} (\CH_0(X|D)_0)^\vee
\to G^\vee \to 0
\end{equation}
is clearly exact. Hence, given
a continuous character $\chi \in (\CH_0(X|D)_0)^\star$ whose restriction to
$G$ is zero, we get a unique
character $\chi' \in (\CH_0(X)_0)^\vee$ such that $\chi = \chi' \circ \vartheta_{X|D}$.
It remains to show that $\chi'$ is continuous with respect to the adic topology of
$\CH_0(X)_0$. But this is an easy consequence of \lemref{lem:CCM-iso} and
\corref{cor:Pic-sequence-2}.
\end{proof}

\vskip .3cm

We now assume $p = 1$ and prove the analogous result.
We begin with the following.
\begin{lem}\label{lem:Char-0-dual}
  If $p =1$, we have $\CH_0(X|D^\dagger)^\star \cong (\CH_0(X|D)^\star)_\tor \cong
  (\CH_0(X|D)^\vee)_\tor$. We also have
  \begin{equation}\label{eqn:Char-0-dual-1}
    ((\sO^\times(D))^\star)_\tor \cong (\sO^\times(D))^\vee)_\tor \cong
    (\sO^\times(D^\dagger))^\star \cong (\sO^\times(D^\dagger))^\vee)_\tor,
  \end{equation}
  where $\sO^\times(D)$ and $\sO^\times(D^\dagger)$ are endowed with the adic
  topology.
\end{lem}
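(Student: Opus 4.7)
Since $p=1$, the field $k$ has characteristic zero, so every finite-dimensional $k$-vector space is uniquely divisible. Via the ghost map $\W_m(k) \cong k^m$ as additive groups, and the truncated logarithm identifies $(1+(\pi_x))/(\pi_x^{n_x})$ with a $k$-vector space, giving a topological decomposition $(\wh{\sO_{X,x}}/(\pi_x^{n_x}))^\times \cong k^\times \times \W_{n_x-1}(k)$. Combined with \propref{prop:Pic-sequence} and \corref{cor:Pic-sequence-2}(4), this yields short exact sequences of topological abelian groups
\begin{equation*}
0 \to V \to \CH_0(X|D) \to \CH_0(X|D^\dagger) \to 0, \qquad
0 \to W \to \sO^\times(D) \to \sO^\times(D^\dagger) \to 0,
\end{equation*}
where the kernels $V = \prod_i \W_{n_i-1}(k)$ and $W = \prod_x \W_{n_x-1}(k)$ are divisible and each surjection is a topological quotient.

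The first reduction is the same in both settings: a finite-order homomorphism (continuous or not) to $\Q/\Z$ must annihilate any divisible subgroup, and a continuous homomorphism descends uniquely across a topological quotient. Combining these, pullback along the quotient map gives canonical isomorphisms $(\CH_0(X|D^\dagger)^{\bullet})_\tor \cong (\CH_0(X|D)^{\bullet})_\tor$ and $(\sO^\times(D^\dagger)^{\bullet})_\tor \cong (\sO^\times(D)^{\bullet})_\tor$ for $\bullet \in \{\star, \vee\}$.

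It then suffices, for each of $G = \CH_0(X|D^\dagger)$ and $G = \sO^\times(D^\dagger)$, to verify the two equalities $G^\star = (G^\star)_\tor$ and $(G^\vee)_\tor = (G^\star)_\tor$. Each such $G$ admits a finite filtration by closed subgroups whose successive quotients lie in $\{\Z, k^\times, \Pic^0(X)\}$: indeed, $\CH_0(X|D^\dagger)$ has degree quotient $\Z$ with kernel $\Pic^0(X|D^\dagger)$ sitting in $0 \to (k^\times)^{r-1} \to \Pic^0(X|D^\dagger) \to \Pic^0(X) \to 0$, while $\sO^\times(D^\dagger) = (k^\times)^r$. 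The first equality $G^\star = (G^\star)_\tor$ holds because a continuous map from each building-block piece to the discrete group $\Q/\Z$ has finite image: the compact profinite $\Pic^0(X)$ (\corref{cor:Pic-sequence-2}(2)) maps into a discrete set hence has finite image; $\Z$ maps arbitrarily but into the torsion group $\Q/\Z$; and $k^\times \cong \Z \oplus \sO_k^\times$ combines these two cases.

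The main technical obstacle is the second equality $(G^\vee)_\tor = (G^\star)_\tor$: one must show that every finite-index subgroup of $G$ is open. This reduces through the filtration to the classical fact that every subgroup of finite index in $k^\times$ is open when $k$ is a $p$-adic local field (equivalently, $\sO_k^\times$ is topologically finitely generated), together with the observation that, using van Dantzig's theorem to produce a compact open subgroup at each step, the property propagates through extensions by profinite or discrete quotients. Once this is done, composing the isomorphisms from the first reduction with these three equalities assembles both chains in the statement.
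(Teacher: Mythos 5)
Your two reductions are sound and are essentially the ones the paper uses: the kernels of $\CH_0(X|D)\to\CH_0(X|D^\dagger)$ and of $\sO^\times(D)\to\sO^\times(D^\dagger)$ are uniquely divisible (being finite-dimensional $k$-vector spaces in characteristic zero), so torsion characters kill them, and continuity descends because these surjections are topological quotients by \corref{cor:Pic-sequence-2}. Likewise your argument that $G^\star$ is torsion, obtained by pushing a continuous character through the filtration with graded pieces $\Z$, $k^\times$ and the profinite $\Pic^0(X)$, is the same as the paper's diagrams \eqref{eqn:Char-0-dual-0} and \eqref{eqn:Char-0-dual-2}.

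The gap is in the step you yourself single out as the main obstacle, namely $(G^\vee)_\tor=(G^\star)_\tor$. You reduce it to the classical fact that finite-index subgroups of $k^\times$ are open, plus a claim that this property ``propagates through extensions by profinite or discrete quotients.'' That propagation principle is false as stated: an extension whose quotient is an arbitrary profinite group need not inherit the property (already $(\Z/2)^{\N}$ is profinite and admits finite-index subgroups that are not open, i.e.\ discontinuous torsion characters). Consequently the graded piece $\Pic^0(X)=\Picc^0(X)(k)$ --- the $k$-points of an abelian variety --- is not handled by your argument at all, and it is exactly the piece requiring a genuine input. What one needs is that $n\,\CH_0(X|D^\dagger)$ is open for every $n$, so that the kernel of a character of order $n$, which contains it, is open. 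The paper obtains this from the fact that $[n]$ on $\Picc^0(X|D^\dagger)$ is a smooth isogeny in characteristic zero, whence its image on $k$-points is open by \cite[Thm.~10.5.1]{CTS}, together with the finiteness of ${\CH_0(X)}/n$ coming from the Kummer sequence. Alternatively you could cite that $A(k)$ contains an open subgroup isomorphic to $\sO_k^{\dim A}$ and is therefore topologically finitely generated; but some such geometric fact about the abelian variety must be invoked --- it cannot be extracted from the $k^\times$ case by a purely topological-group argument.
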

\begin{proof}
The isomorphisms in ~\eqref{eqn:Char-0-dual-1} are
immediate consequences of \cite[Prop.~II.5.7]{Neukirch} because
$\Ker(\sO^\times(D) \surj \sO^\times(D^\dagger))$ is a divisible group.
To prove the first statement, we first assume $D = \emptyset$.
Since $\CH_0(X)_0 = \Picc^0(X)(k)$ is a profinite abelian group by
\corref{cor:Pic-sequence-2},  it follows that $(\CH_0(X)_0)^\star$ is a torsion group.
Furthermore, the assumption $p =1$ implies using the Kummer sequence that ${\CH_0(X)}/n$
is discrete (and finite) for the quotient topology for every integer $n \ge 1$.
Since $\Picc^0(X) \xrightarrow{n} \Picc^0(X)$
is a smooth isogeny, it follows from \cite[Thm.~10.5.1]{CTS} that
$\CH_0(X) \xrightarrow{n} \CH_0(X)$ is an open map of adic spaces.
In particular, ${\CH_0(X)} \to {\CH_0(X)}/n$ is continuous.

Now, if $\chi \in \CH_0(X)^\star$, then $\chi(\CH_0(X)_0)$ is a finite cyclic group
of the type ${\Z}/n$. This yields a commutative diagram of exact sequences
\begin{equation}\label{eqn:Char-0-dual-0}
  \xymatrix@C1pc{
    0 \ar[r] & \CH_0(X)_0 \ar[r] \ar[d]_-{\chi} & \CH_0(X) \ar[r] \ar[d]_-{\chi} & 
    \Z \ar[r] \ar[d]^-{\wt{\chi}} & 0 \\
    0 \ar[r] & {\Z}/n \ar[r] & {\Q}/{\Z} \ar[r]^-{n} & {\Q}/{\Z} \ar[r] & 0.}
  \end{equation}
  Since the image of $\wt{\chi}$ must be finite, it follows that $\chi$ has a finite
  order. Conversely, if $\chi \in  (\CH_0(X)^\vee)_\tor$, then it factors through
  ${\CH_0(X)}/n$ for some $n \ge 1$ and it follows from 
  the previous paragraph that $\chi$ is continuous on $\CH_0(X)$.

Suppose now that $D$ is nonempty but reduced and let $\chi \in \CH_0(X|D)^\star$.
We have seen above that the restriction of $\chi$ to
  $ (k^\times)^{r-1}$ has finite order. We thus get an integer $n \ge 1$ and
  a commutative diagram of exact sequences (cf. \propref{prop:Pic-sequence})
  \begin{equation}\label{eqn:Char-0-dual-2}
    \xymatrix@C1pc{
0 \ar[r] & (k^\times)^{r-1} \ar[r] \ar[d]_-{\chi} & \CH_0(X|D) \ar[r] \ar[d]_-{\chi} & 
    \CH_0(X) \ar[r] \ar[d]^-{\wt{\chi}} & 0 \\
    0 \ar[r] & {\Z}/n \ar[r] & {\Q}/{\Z} \ar[r]^-{n} & {\Q}/{\Z} \ar[r] & 0.}
\end{equation}
Since $\CH_0(X|D) \surj \CH_0(X)$ is a topological quotient by
\corref{cor:Pic-sequence-2}, we get that $\wt{\chi}$ is continuous, and hence,
its image is finite. We conclude that the image of the middle vertical arrow is finite. 

Conversely, if $\chi \in (\CH_0(X|D)^\vee)_\tor$, then it factors through
${\CH_0(X|D)}/n \xrightarrow{\wt{\chi}} {\Q}/{\Z}$ for some $n \ge 1$.
We have shown above that
${\CH_0(X)}/n$ is finite. It follows from the top row of ~\eqref{eqn:Char-0-dual-2} and
\cite[Prop.~II.5.7]{Neukirch} that ${\CH_0(X|D)}/n$ is finite. On the other hand,
as $\Picc^0(X|D) \xrightarrow{n} \Picc^0(X|D)$ is a smooth isogeny, it follows that
${\CH_0(X|D)}/n$ is a finite and discrete quotient of $\CH_0(X|D)$. We conclude that
$\chi$ is continuous.

If $D$ is not necessarily reduced, we look at the commutative diagram
\begin{equation}\label{eqn:Char-0-dual-3}
    \xymatrix@C1pc{
      \CH_0(X|D^\dagger)^\star \ar[r] \ar[d] &  (\CH_0(X|D^\dagger)^\vee)_\tor \ar[d] \\
      (\CH_0(X|D)^\star)_\tor \ar[r] &  (\CH_0(X|D)^\vee)_\tor,}
  \end{equation}
  in which all arrows are the canonical inclusions. Since any element of
  $(\CH_0(X|D)^\vee)_\tor$ must annihilate the kernel of 
  $\CH_0(X|D) \surj \CH_0(X|D_\red)$, it follows
  that the right vertical arrow in ~\eqref{eqn:Char-0-dual-3} is a bijection.
  We have shown in the previous paragraph that the top horizontal arrow is a
  bijection. A diagram chase shows that all arrows are bijections.
This concludes the proof.
\end{proof}

\begin{lem}\label{lem:Main-exact-003}
  If $p = 1$ and $D$ is reduced, then ~\eqref{eqn:CCM-0} induces a chain complex
  of abelian groups
  \begin{equation}\label{eqn:Main-exact-004}
0 \to \CH_0(X)^\star \xrightarrow{(\vartheta_{X|D})^\star} \CH_0(X|D)^\star
  \xrightarrow{(\partial_{X|D})^\star} (\sO^\times(D))^\star \to (k^\times)^\star
\end{equation}
which is exact at $\CH_0(X)^\star$ and $\CH_0(X|D)^\star$. 
\end{lem}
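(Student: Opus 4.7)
The plan is to mirror the proof of \lemref{lem:Main-exact-00} while using \lemref{lem:Char-0-dual} to handle continuity in the characteristic zero setting. First I will verify that ~\eqref{eqn:Main-exact-004} is a well-defined chain complex. Since $D$ is reduced with $D^\dagger \subset X(k)$, we have $\sO^\times(D) \cong (k^\times)^r$, so $\iota^*$ is the continuous diagonal $k^\times \inj (k^\times)^r$. By \propref{prop:Pic-sequence} with each $n_i = 1$, $\Picc^0(X|D)_{\rm aff} \cong \G_m^{r-1}$, so on $k$-points $\partial_{X|D}$ is, up to the identification provided by the Rosenlicht--Serre construction, the continuous quotient $(k^\times)^r \to (k^\times)^{r-1}$ by the diagonal $k^\times$. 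Continuity of $\vartheta_{X|D}$ is recorded in \corref{cor:Pic-sequence-2}. The complex property is then inherited from the exactness of ~\eqref{eqn:CCM-0}.

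Exactness at $\CH_0(X)^\star$ is immediate: if $\chi' \in \CH_0(X)^\star$ satisfies $\chi' \circ \vartheta_{X|D} = 0$, then the surjectivity of $\vartheta_{X|D}$ from ~\eqref{eqn:CCM-0} forces $\chi' = 0$. The heart of the proof is exactness at $\CH_0(X|D)^\star$. Given $\chi \in \CH_0(X|D)^\star$ with $\chi \circ \partial_{X|D} = 0$, the exactness of ~\eqref{eqn:CCM-0} produces a unique $\chi' \in \CH_0(X)^\vee$ satisfying $\chi = \chi' \circ \vartheta_{X|D}$. The real content is to promote $\chi'$ to a \emph{continuous} character of $\CH_0(X)$ endowed with its adic topology.

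For this promotion I will invoke \lemref{lem:Char-0-dual}: in the $p = 1$ case that lemma identifies $\CH_0(X|D)^\star$ with $(\CH_0(X|D)^\vee)_{\tor}$, so $\chi$ has some finite order $n$. Then $n\chi' \circ \vartheta_{X|D} = n\chi = 0$, and the surjectivity of $\vartheta_{X|D}$ forces $n\chi' = 0$, so $\chi' \in (\CH_0(X)^\vee)_{\tor}$. Applying the $D = \emptyset$ case established in the first paragraph of \lemref{lem:Char-0-dual}, which shows that every torsion character of $\CH_0(X)$ is automatically continuous (using that $\Pic^0(X)$ is profinite and $n \colon \CH_0(X) \to \CH_0(X)$ is an open map), we conclude that $\chi'$ is continuous, i.e., $\chi' \in \CH_0(X)^\star$. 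The only non-routine step is precisely this upgrade of the algebraic factorization to a topological one, and \lemref{lem:Char-0-dual} does all the work; the remainder of the argument is formal.
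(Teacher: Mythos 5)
Your proof is correct and follows essentially the same route as the paper's: the paper simply observes that the sequence of torsion subgroups of the $\vee$-duals of ~\eqref{eqn:CCM-0} is exact and then invokes \lemref{lem:Char-0-dual} to identify these with the $\star$-duals, which is exactly what your argument unpacks (continuous $\Rightarrow$ torsion for $\chi$, torsion passes to $\chi'$ by surjectivity of $\vartheta_{X|D}$, and torsion $\Rightarrow$ continuous for $\chi'$ by the $D=\emptyset$ case of \lemref{lem:Char-0-dual}). Your explicit continuity check for $\partial_{X|D}$ in the first paragraph is not strictly needed, since \lemref{lem:Char-0-dual} already guarantees that the torsion character $\chi\circ\partial_{X|D}$ of $\sO^\times(D)$ is continuous.
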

\begin{proof}
  At any rate, we do have an exact sequence
 \[
 0 \to (\CH_0(X)^\vee)_\tor \xrightarrow{(\vartheta_{X|D})^\vee} (\CH_0(X|D)^\vee)_\tor
  \xrightarrow{(\partial_{X|D})^\vee} ((\sO^\times(D))^\vee)_\tor \to ((k^\times)^\vee)_\tor.
\]
The desired claim now follows by \lemref{lem:Char-0-dual}.
\end{proof}

\section{Brauer group with modulus}\label{sec:BM}
The goal of this section is to define the Brauer group of a modulus pair and
prove some functorial properties. We begin by recalling Kato's ramification
filtration which will play a fundamental role in our exposition.

\subsection{{\'E}tale motivic cohomology}\label{sec:EMC}
Let $k$ be a field of exponential characteristic $p \ge 1$ and let $X$ be a Noetherian
$k$-scheme.
If $n \in k^\times$ is an integer and $r \in \Z$, we let ${\Z}/n(r)$ be the
{\'e}tale sheaf on $X$ defined as the usual Tate twist of the constant sheaf ${\Z}/n$
(e.g., see \cite[p.~163]{Milne-etale}). If $p > 1$ and $n = p^sm$ with $s \ge 0$
and $p \nmid m$, we let ${\Z}/n(r)$ be the object
${\Z}/m(r) \oplus W_s\Omega^r_{X, \log}[-r]$ as an object of $\sD_\et(X)$.
We have the cup product pairing of the (hyper)cohomology of the form
$H^i(X, {\Z}/{n}(j)) \times H^{i'}(X, {\Z}/{n}(j'))
\to H^{i+i'}(X, {\Z}/{n}(j+j'))$.
For $q \in \Z$, we let $H^q_n(X)$ denote the {\'e}tale cohomology group
$H^q(X, {\Z}/{n}(q-1))$. We let $H^q(X) = {\varinjlim}_n \ H^q_n(X)$
with respect to the canonical transition maps ${\Z}/{n}(r) \xrightarrow{m} {\Z}/{mn}(r)$
(see ~\eqref{eqn:Zhao*-0} for their definition in positive characteristic).
If $X = \Spec(A)$ is affine, we write $H^q_n(X)$ (resp. $H^q(X)$) as $H^q_n(A)$
(resp. $H^q(A)$).

For $m, n \in k^\times$ and
$s \ge 1$, we have commutative diagrams of exact sequences
\begin{equation}\label{eqn:Mot-coh-0}
  \xymatrix@C.8pc{
    0 \ar[r] &  {{\Z}/{n}}(1) \ar[r] \ar[d]_-{\can} &
    \sO^\times_X \ar[r]^-{n} \ar[d]^-{\id} &  \sO^\times_X \ar[r] \ar[d]^-{m} & 0 & &
    0 \ar[r] & \sO^\times_X \ar[r]^-{p^s} \ar[d]_-{\id} & \sO^\times_X \ar[r] \ar[d]^-{p}
    & {\sO^\times_X}/{p^s} \ar[r] \ar[d]^-{p} & 0 \\
    0 \ar[r] &  {{\Z}/{mn}}(1) \ar[r] & \sO^\times_X \ar[r]^-{mn} &  \sO^\times_X
    \ar[r] & 0 & &
    0 \ar[r] & \sO^\times_X \ar[r]^-{p^{s+1}}  & \sO^\times_X \ar[r] 
    & {\sO^\times_X}/{p^{s+1}} \ar[r] & 0,}
\end{equation}
\begin{equation}\label{eqn:Mot-coh-1}
\xymatrix@C.8pc{
    0 \ar[r] &  {{\Z}/{mn}}(1) \ar[r] \ar[d]_-{m} &
    \sO^\times_X \ar[r]^-{mn} \ar[d]^-{m} &  \sO^\times_X \ar[r] \ar[d]^-{\id} & 0 & &
    0 \ar[r] & \sO^\times_X \ar[r]^-{p^{s+1}} \ar[d]_-{p} & \sO^\times_X \ar[r] \ar[d]^-{\id}
    & {\sO^\times_X}/{p^{s+1}} \ar[r] \ar[d]^-{\can} & 0 \\
    0 \ar[r] &  {{\Z}/{n}}(1) \ar[r] & \sO^\times_X \ar[r]^-{n} &  \sO^\times_X
    \ar[r] & 0 & &
    0 \ar[r] & \sO^\times_X \ar[r]^-{p^{s}}  & \sO^\times_X \ar[r] 
    & {\sO^\times_X}/{p^{s}} \ar[r] & 0}
\end{equation}
 of {\'e}tale sheaves, where the diagrams on the right make sense when $p > 1$.

The commutative diagrams in ~\eqref{eqn:Mot-coh-0}
give rise to an exact sequence of ind-abelian groups
\begin{equation}\label{eqn:Mot-coh-2}
  0 \to \{{\Br(X)}/n\} \to \{H^3(X, {\Z}/m(1))\} \to \{_mH^3(X, \sO^\times_X)\}
  \to 0,
\end{equation}
which are indexed by $\N$ and whose transition maps
are induced by ${\Z}/m \xrightarrow{n} {\Z}/{mn}$.
Taking the limits and noting that $H^i(X, \sO^\times_X)$ is a torsion group for
$i \ge 2$ (e.g., see \cite[Lem.~3.5.3]{CTS}) when $X$ is regular, we get the following.
\begin{lem}\label{lem:Mot-coh-3}
  If $X$ is regular, then the canonical map
  \[
    H^3(X, {\Q}/{\Z}(1)) \to H^3(X, \sO^\times_X)
  \]
  is an isomorphism.
\end{lem}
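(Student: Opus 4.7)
The plan is to obtain the isomorphism by passing to the filtered colimit in the already-established short exact sequence of ind-abelian groups
\[
0 \to \{\Br(X)/n\} \to \{H^3(X,\Z/n(1))\} \to \{{}_n H^3(X,\sO^\times_X)\} \to 0
\]
displayed just above the lemma. Since filtered colimits in $\Ab$ are exact, this produces a short exact sequence whose middle term is $\colim_n H^3(X,\Z/n(1)) = H^3(X,\Q/\Z(1))$ by the definition of $\Q/\Z(1) = \colim_n \Z/n(1)$ and the fact that {\'e}tale cohomology of a Noetherian scheme commutes with filtered colimits of torsion sheaves.

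Next, I would identify the right-hand colimit. The transition maps ${}_n H^3(X,\sO^\times_X) \to {}_{mn} H^3(X,\sO^\times_X)$ are the canonical inclusions, so their colimit is the torsion subgroup of $H^3(X,\sO^\times_X)$. Under the regularity hypothesis, $H^3(X,\sO^\times_X)$ is itself torsion by \cite[Lem.~3.5.3]{CTS} (as the paper has just noted), so this colimit coincides with $H^3(X,\sO^\times_X)$.

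For the left-hand colimit, I would inspect the left diagram in ~\eqref{eqn:Mot-coh-0}. Its middle column is the identity on $\sO^\times_X$ and its right column is multiplication by $m$, so naturality of the connecting homomorphism forces the transition $\Br(X)/n \to \Br(X)/{mn}$ to be the map $[x] \mapsto [mx]$. Hence $\colim_n \Br(X)/n \cong \Br(X) \otimes_\Z \Q/\Z$. For $X$ regular, the Brauer group $\Br(X) = H^2(X,\sO^\times_X)$ is torsion (by the same citation, or because it injects into $\Br(K)$ for $K$ the function field), whence $\Br(X)\otimes \Q = 0$ and therefore $\Br(X)\otimes \Q/\Z = 0$.

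Piecing these three steps together gives the desired isomorphism $H^3(X,\Q/\Z(1)) \xrightarrow{\cong} H^3(X,\sO^\times_X)$. The only delicate point of the argument is pinning down that the transition maps on $\{\Br(X)/n\}$ are multiplication-by-$m$ (so that the colimit really is the tensor product with $\Q/\Z$); once this is verified from diagram \eqref{eqn:Mot-coh-0} and the naturality of boundary maps, the rest is formal manipulation with filtered colimits.
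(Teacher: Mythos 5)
Your proposal is correct and takes essentially the same route as the paper: the paper likewise passes to the colimit of the exact sequence ~\eqref{eqn:Mot-coh-2}, identifies the right-hand colimit with all of $H^3(X,\sO^\times_X)$ using that this group is torsion for $X$ regular, and kills the left-hand colimit via the fact that a torsion group tensored with a divisible group vanishes. Your explicit verification that the transition maps on $\{\Br(X)/n\}$ are multiplication by $m$ (so the colimit is $\Br(X)\otimes {\Q}/{\Z}$) is the only detail the paper leaves implicit.
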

\begin{proof}
  This is an easy application of the above discussion once we use an elementary fact that
  $A \otimes B = 0$ if $A$ is a torsion abelian group and $B$ is a divisible abelian
  group.
\end{proof}

We also get the following folklore result which we shall use throughout this paper
without giving reference.

\begin{lem}\label{lem:Brauer-H2}
  Let $A$ be a equicharacteristic regular local ring. Then one has a canonical
  isomorphism $H^2(A) \xrightarrow{\cong} \Br(A)$.
\end{lem}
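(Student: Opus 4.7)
The plan is to establish a canonical isomorphism $H^q(A,{\Z}/n(1))\cong{}_n\Br(A)$ for every $n\ge 1$ and then take the colimit as $n\to\infty$, using that $\Br(A)$ is torsion for a regular ring. First I would split the coefficients: writing $n=p^sm$ with $(m,p)=1$ (if $\Char(A)=p>0$; in the equicharacteristic zero case only the prime-to-$p$ part appears), the definition recalled in \S~\ref{sec:EMC} gives ${\Z}/n(1)\cong{\Z}/m(1)\oplus W_s\Omega^1_{A,\log}[-1]$ in $\sD_\et(\Spec A)$, so it suffices to treat each summand separately.

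For the prime-to-$p$ summand I would apply the Kummer sequence
\[
0\to{\Z}/m(1)\to\sO^\times_A\xrightarrow{m}\sO^\times_A\to 0
\]
(first diagram in \eqref{eqn:Mot-coh-0}). The associated long exact sequence reads
\[
H^1(A,\sO^\times_A)\xrightarrow{m}H^1(A,\sO^\times_A)\to H^2(A,{\Z}/m(1))\to{}_m\Br(A)\to 0.
\]
Since $A$ is local, $\Pic(A)=H^1(A,\sO^\times_A)=0$, yielding a canonical isomorphism $H^2(A,{\Z}/m(1))\xrightarrow{\cong}{}_m\Br(A)$.

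For the $p$-primary summand I would use the dlog isomorphism $\sK^M_{1,A}/p^s\cong W_s\Omega^1_{A,\log}$ recalled after \eqref{eqn:GL*}, which holds because $A$ is regular. Since $\sK^M_{1,A}=\sO^\times_A$, this gives a short exact sequence $0\to\sO^\times_A\xrightarrow{p^s}\sO^\times_A\to W_s\Omega^1_{A,\log}\to 0$ on $\Spec A_\et$ (the right-hand diagrams in \eqref{eqn:Mot-coh-0}). By definition $H^2(A,{\Z}/p^s(1))=H^1(A,W_s\Omega^1_{A,\log})$, and the long exact sequence, together with $\Pic(A)=0$, again identifies this group with ${}_{p^s}\Br(A)$.

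Next I would take the colimit of these isomorphisms over $n$ with respect to the transition maps of \eqref{eqn:Mot-coh-0}--\eqref{eqn:Mot-coh-1}; the compatibility of these transition maps with multiplication by $n$ on $\Br(A)$ is immediate from the commutative diagrams themselves, so the colimit identifies $H^2(A,{\Q}/{\Z}(1))$ with $\Br(A)_{\tor}={\bigcup}_n\,{}_n\Br(A)$. To finish, I need that $\Br(A)$ is itself torsion; this follows from the Auslander--Goldman injection $\Br(A)\inj\Br(K)$ (available since $A$ is regular), combined with the fact that $\Br(K)$ is torsion for any field. The step that requires the most care is the $p$-primary one: one must ensure that the dlog-induced identification and the boundary map of the $p^s$-multiplication sequence assemble into the \emph{same} canonical map $H^2(A,{\Z}/p^s(1))\to\Br(A)$ as the one produced by \eqref{eqn:Mot-coh-0}, so that passage to the colimit is unambiguous. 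I do not expect any serious obstacle beyond tracking this bookkeeping.
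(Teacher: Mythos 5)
Your argument is correct and is essentially the one the paper intends: the lemma is stated as a folklore consequence of the discussion around \eqref{eqn:Mot-coh-0}--\eqref{eqn:Mot-coh-1}, namely the Kummer sequence plus $\Pic(A)=0$ in the prime-to-$p$ case, the sequence \eqref{eqn:GL*} for $r=1$ (valid by regularity) in the $p$-primary case, and passage to the colimit using that $\Br(A)$ is torsion for $A$ regular. The compatibility you flag at the end is exactly what the commutative diagrams \eqref{eqn:Mot-coh-0} and \eqref{eqn:Mot-coh-1} are set up to guarantee, so there is no gap.
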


For a local ring $A$ over $k$, we have the
Norm residue map (e.g., see \cite[\S~5.1]{Gupta-Krishna-REC})
\begin{equation}\label{eqn:NR}
  {\rm NR}_A \colon {K^M_{i}(A)}/{n} \to H^i(A, {\Z}/n(i)).
\end{equation}
Composing this with the cup product, we see that there is a canonical
bilinear pairing
\begin{equation}\label{eqn:NR-0}
  \<,\> \colon H^i(A, {\Z}/{n}(j)) \times {K^M_{i'}(A)}/{n} \to
  H^{i+i'}(A, {\Z}/{n}(j+i')).
  \end{equation}
 
  If $A$ is any equicharacteristic local integral domain with  maximal ideal $\fm_A$
  and quotient field $E$, we let $\Fil_0 {K}^M_1(E) = A^\times$ and $\Fil_n {K}^M_1(E) =
(1 + \fm^n_A)$ if $n \ge 1$.
For $i \ge 2$, we let $\Fil_n {K}^M_i(E)$ be the image of the
cup product map $\Fil_n {K}^M_1(E) \otimes {K}^M_{i-1}(E) \to {K}^M_{i}(E)$.
We let $\Fil_n {K}^M_i(E) = {K}^M_i(E)$ for $n < 0$. 
It is clear that $\Fil_\bullet {K}^M_i(E)$ is a decreasing filtration of
${K}^M_i(E)$. We shall call this `the logarithmic filtration' of ${K}^M_i(E)$.

\subsection{Kato's ramification filtration}\label{sec:RFil}
In order to define the Brauer group with modulus, we need to recall Kato's ramification
filtration. We let $L$ be a Henselian discrete valuation field with the ring of
integers $\sO_L$, the maximal ideal $\fm_L = (\pi_L)$ and the residue field $\fl$
such that $\Char(L) = \Char(\fl) = p \ge 0$. Let $\wh{L}$ denote the completion of $L$. 
 Recall the following from \cite[Cor.~2.5, Prop.~6.3]{Kato-89}, where we have shifted
 Kato's filtration one place to the right.
\begin{defn}\label{defn:Kato-filt}
  Let $q \ge 1$ be an integer.
  \begin{enumerate}
\item
  If $p = 0$, we let $\Fil_0 H^q(L) = H^q(\sO_L)$ and $\Fil_n H^q(L) =
  H^q(L)$ if $n \ge 1$.
\item
  If $p > 0$ and $n \ge 0$, we let $\Fil_n H^q(L)$ be the
  subgroup of elements $\chi \in H^q(L)$ such that
  $\<\chi, 1 + \pi^n_L\sO_{L'}\> = 0$
  for all Henselian discrete valuation 
  fields $L'$ such that $\sO_L \subset \sO_{L'}$
  and $\fm_{L'} = \fm_L \sO_{L'}$, where $1 + \pi^0_L\sO_{L'} := \sO^\times_{L'}$.
\item
  We let $\Fil_n H^q(L) = 0$ for $n < 0$.
\end{enumerate}
\end{defn}
It follows from \cite[Lem.~2.2]{Kato-89} that
\begin{equation}\label{eqn:Kato-filt-0}
  H^q(L) = {\underset{n \ge 0}\bigcup} \Fil_n H^q(L).
  \end{equation}
We shall call $\Fil_\bullet H^q(L) := \{\Fil_n H^q(L)\}_{n \in \Z}$,
the ramification filtration of $H^q(L)$.
Recall that for any $\chi \in H^q(L)$, the Swan conductor $\Sw(\chi)$ is the smallest
integer $n$ such that $\chi \in \Fil_n H^q(L)$.
The following result shows that one can characterize $\Fil_\bullet H^q(L)$ purely in terms
of characters of the Milnor $K$-groups of $L$ in cases of our interest.

\begin{lem}\label{lem:Hensel-complete}
  Assume that $p > 0$ and $\fl$ is a local field. Let $1 \le q \le 2$ and
  $n \ge 0$ be integers. Then the following hold.
    \begin{enumerate}
    \item
      There are canonical isomorphisms
      \[
        H^3(L) \xrightarrow{\cong} H^3(\wh{L}) \xrightarrow{\cong} H^2(\fl)
        \xrightarrow{\cong} {\Q}/{\Z}.
      \]
      \item
      The canonical map $H^q(\sO_L) \to H^q(L)$ induced by the inclusion
      $\sO_L \inj L$ fits into a split short exact sequence
      \[
        0 \to H^q(\sO_L) \to \Fil_1 H^q(L) \to H^{q-1}(\fl) \to 0,
      \]
     which is canonical for a given choice of $\pi_L$.
      \item
        $\Fil_0 H^q(L) = H^q(\sO_L)$.
      \item
        For $n \ge 1$, an element
    $\chi \in H^q(L)$ lies in $\Fil_n H^q(L)$ if and only if
    $\<\chi, \Fil_n {K}^M_{3-q}(L)\> = 0$ under
    the pairing
    \[
      H^q(L) \times {K}^M_{3-q}(L) \xrightarrow{\<,\>}  H^3(L) \xrightarrow{\cong}
      {\Q}/{\Z}.
      \]
    \end{enumerate}
  \end{lem}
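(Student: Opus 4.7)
The plan is to deduce each part by reducing everything to Kato's theory of cohomology of two-dimensional local fields, exploiting that $\wh{L}$ becomes such a field once $\fl$ is a local field, and then propagating between $L$ and $\wh{L}$.

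For part (1), the first isomorphism $H^3(L) \xrightarrow{\cong} H^3(\wh{L})$ follows because $L$ is itself a Henselian field and $\wh{L}$ has the same absolute Galois group as $L$, so Galois cohomology with torsion coefficients (which is what $H^3(-)$ is) is preserved; one compares $\Z/m(r)$ for $(m,p) = 1$ by rigidity and $W_s\Omega^r_{-,\log}$ by appealing to Kato's invariance statement \cite[Lem.~21]{Kato-Invitation} which was already used in the proof of \lemref{lem:Gersten-3}. The second isomorphism $H^3(\wh{L}) \xrightarrow{\cong} H^2(\fl)$ is the residue map for the two-dimensional local field $\wh{L}$ and is contained in Kato's original work \cite{Kato-80, Kato-cft-1}. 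The final isomorphism $H^2(\fl) \cong \Q/\Z$ is the classical invariant map for the Brauer group of a local field, used via \lemref{lem:Mot-coh-3}.

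For part (2), I would invoke the localization triangle attached to the closed immersion $\Spec(\fl) \inj \Spec(\sO_L)$, which after taking the colimit of $H^q(-, \Z/n(q-1))$ over $n$ produces the exact sequence
\[
  0 \to H^q(\sO_L) \to H^q(L) \xrightarrow{\partial} H^{q-1}(\fl) \to 0,
\]
where the residue $\partial$ is the Gysin map on the prime-to-$p$ part and the Bloch-Kato-Kurihara residue on the $p$-primary logarithmic Hodge-Witt part. Since any element of the image of $H^q(\sO_L)$ is clearly in $\Fil_0 H^q(L) \subset \Fil_1 H^q(L)$, the exact sequence restricts over $\Fil_1 H^q(L)$ and it remains to exhibit a splitting. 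A choice of uniformizer $\pi_L$ gives a class $[\pi_L] \in H^1(L, \Z/n(1))$, and cup product with $[\pi_L]$ defines a map $H^{q-1}(\fl) \to H^q(L)$; I will check that its image lies in $\Fil_1 H^q(L)$ by verifying that any element of $1 + \fm_L \sO_{L'}^\times$ pairs trivially, which reduces to the vanishing of $\{1 + \fm_L\} \cup \sO_L^\times = 0$ under reduction to $\fl^\times$.

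Parts (3) and (4) are essentially direct from the definitions combined with Kato's work. For (3), an element of $\Fil_0 H^q(L)$ by definition kills $\sO_{L'}^\times$ for all unramified extensions $L'/L$, hence factors through the maximal unramified quotient, giving the equality with $H^q(\sO_L)$ via the residue sequence of part (2). For (4), I would invoke Kato's \cite[Cor.~2.5, Prop.~6.3]{Kato-89}, which characterizes $\Fil_n H^q(L)$ precisely as the annihilator of $\Fil_n K^M_{3-q}(L)$ under the cup-product pairing into $H^3(L)$; combined with the trace identification of part (1) this gives the desired statement. The main obstacle is the reliance on Kato's perfect pairing between $H^q(\wh{L})$ and $K^M_{3-q}(\wh{L})$ respecting the two filtrations, a highly nontrivial duality computation on two-dimensional local fields that is entirely in \cite{Kato-89}, so the role of the present lemma is to assemble rather than reprove it.
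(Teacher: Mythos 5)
Your treatment of part (1) coincides with the paper's (both isomorphisms are quoted from \cite[Lem.~21]{Kato-Invitation} and \cite[\S~3.2, Prop.~1]{Kato-80}), but part (2) contains a genuine error. The localization triangle for $\Spec(\fl) \inj \Spec(\sO_L)$ does \emph{not} produce a short exact sequence
$0 \to H^q(\sO_L) \to H^q(L) \to H^{q-1}(\fl) \to 0$
in characteristic $p$: on the $p$-primary part the relevant coefficient is $W_s\Omega^{q-1}_{\log}$, which is not in top degree for $L$ (here $[L:L^p]=p^2$), and absolute purity fails for it --- this is exactly the obstruction the paper itself flags in \S~\ref{sec:GD-units} ("the failure of purity for $H^2_x(X, W_n\Omega^1_{X,\log})$"). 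Concretely, $H^2(L)\{p\}/H^2(\sO_L)\{p\}$ contains all wildly ramified Brauer classes and is vastly larger than $H^{1}(\fl)$; the cokernel of $H^q(\sO_L)\to H^q(L)$ is only isomorphic to $H^{q-1}(\fl)$ after cutting down to the tame part $\Fil_1 H^q(L)$. Saying afterwards that "the exact sequence restricts over $\Fil_1 H^q(L)$" does not repair this: even granting an injection, you would still have to prove that the restricted boundary map surjects onto $H^{q-1}(\fl)$, and that statement \emph{is} the content of part (2). The paper does not derive it from a localization triangle at all; it quotes \cite[Prop.~6.1]{Kato-89} for $q=2$ and \cite[Thm.~6.3]{Gupta-Krishna-REC} for $q=1$.

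In part (4) you also skip the step that carries the real work. Kato's \cite[Prop.~6.3]{Kato-89} is a statement about the \emph{complete} field $\wh{L}$, whereas the lemma is asserted for the Henselian field $L$, whose filtration $\Fil_n H^q(L)$ is defined by testing against all Henselian extensions $L'$ of $L$. Given $\chi\in H^2(L)$ with $\<\chi,\Fil_n K^M_1(L)\>=0$, one must first observe that $\chi$ factors through $L^\times/\Fil_n L^\times \cong \wh{L}^\times/\Fil_n\wh{L}^\times$ to conclude $\chi\in\Fil_n H^2(\wh{L})$, and then descend this to $\chi\in\Fil_n H^2(L)$; the descent is nontrivial and the paper proves it by showing that the graded comparison maps $\Fil_m H^2(L)/\Fil_{m-1}H^2(L) \to \Fil_m H^2(\wh{L})/\Fil_{m-1}H^2(\wh{L})$ are injective via the refined Swan conductor of \cite[Thm.~0.1]{Kato-89}. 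None of this appears in your proposal, and "assemble rather than reprove" is not sufficient here because the cited result does not apply verbatim to $L$. (Your sketch of (3) is in the spirit of the paper's argument --- a diagram chase against $(L^\times)^\vee$ using local class field theory --- but as written it leans on your incorrect version of (2).)
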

  \begin{proof}
Part (1) of the lemma is a special case of the general
    isomorphism ($\forall \ q$)
    \begin{equation}\label{eqn:Hensel-complete-4}
      H^q(L) \xrightarrow{\cong} H^q(\wh{L})
    \end{equation}
    as shown in \cite[Lem.~21]{Kato-Invitation}, and the well known case of
    complete discrete valuation fields (see \cite[\S~3.2, Prop.~1]{Kato-80}).
    For $q = 1$, the other parts of the lemma follow from
    \cite[Thm.~6.3]{Gupta-Krishna-REC}. For $q = 2$, part (2)
    follows directly from \cite[Prop.~6.1]{Kato-89}.

To prove (3), 
    suppose first that $\chi \in \Fil_0 H^2(L)$. We consider the commutative diagram
    of exact sequences
    \begin{equation}\label{eqn:Hensel-complete-0}
\xymatrix@C.8pc{
  0 \ar[r] & H^2(\sO_L) \ar[r] \ar[d] &  \Fil_1 H^2(L) \ar[r]^-{\gamma} \ar[d]^-{\alpha} &
  H^{1}(\fl) \ar[r] \ar[d] & 0 \\
  0 \ar[r] & (\Z)^\vee \ar[r] & (L^\times)^\vee \ar[r]^-{\beta} &
  (\sO^\times_L)^\vee \ar[r] & 0,}
\end{equation}
where the vertical arrows are induced by ~\eqref{eqn:NR-0}.
Our hypothesis says that $\beta \circ \alpha (\chi) = 0$. Since the right vertical
arrow is injective by the class field theory of local fields, it follows that
$\chi \in H^2(\sO_L)$. Conversely, suppose that $\chi \in H^2(\sO_L)$.
Then a diagram chase of ~\eqref{eqn:Hensel-complete-0} tells us that
$\chi \in  \Fil_1 H^2(L)$ such that $\beta \circ \alpha (\chi) = 0$.
But this is equivalent to saying that $\chi \in \Fil_0 H^2(L)$.
This proves (3).

We now assume $n \ge 1$ and $q = 2$.
In view of \cite[Prop.~6.3]{Kato-89}, we only need to show
      that if $\chi \in H^2(L)$ is a character such that $\<\chi, \Fil_n L^\times\> = 0$,
    then it lies in $\Fil_n H^2(L)$. At any rate, we know that $\chi \in
    \Fil_m H^2(L)$ for some $m \gg 0$. We can assume that $m > n$ (else we are done).
    Our hypothesis implies that as a character of $L^\times$, $\chi$ factors through
    ${L^\times}/{ \Fil_n L^\times}$.

We now note that the ring of integers
    $\sO_{\wh{L}}$ of $\wh{L}$ is the $\fm_L$-adic completion $\wh{\sO_L}$ of $\sO_{L}$.
    Since ${\sO^\times_L}/{\Fil_n L^\times}  \xrightarrow{\cong} 
    {\sO^\times_{\wh{L}}}/{\Fil_n {\wh{L}^\times}}$, it follows from the commutative diagram
    \begin{equation}\label{eqn:Hensel-complete-1}
      \xymatrix@C.8pc{
        0 \ar[r] & \sO^\times_L \ar[r] \ar[d] & L^\times \ar[r]^-{v_L} \ar[d] & \Z
        \ar@{=}[d] \ar[r] & 0 \\
        0 \ar[r] & \sO^\times_{\wh{L}} \ar[r] & {\wh{L}}^\times \ar[r]^-{v_{\wh{L}}} &
        \Z \ar[r] & 0}
      \end{equation}    
of exact sequences that ${L^\times}/{\Fil_n L^\times} \xrightarrow{\cong} 
      {{\wh{L}^\times}}/{\Fil_n {\wh{L}^\times}}$. Hence, $\chi$ factors through
        ${{\wh{L}^\times}}/{\Fil_n {\wh{L}^\times}}$ as a character of $\wh{L}^\times$.
        We conclude from \cite[Prop.~6.3, Rem.~6.6]{Kato-89} that
        $\chi \in \Fil_n H^2(\wh{L})$.

By an iteration of the commutative diagram
          \begin{equation}\label{eqn:Hensel-complete-2}
      \xymatrix@C.8pc{
        0 \ar[r] & \Fil_{m-1}H^2(L) \ar[r] \ar[d] & \Fil_{m}H^2(L) \ar[d] \ar[r] &
        \frac{\Fil_{m}H^2(L)}{\Fil_{m-1}H^2(L)} \ar[d] \ar[r] & 0 \\
         0 \ar[r] & \Fil_{m-1}H^2(\wh{L}) \ar[r] & \Fil_{m}H^2(\wh{L}) \ar[r] &
        \frac{\Fil_{m}H^2(\wh{L})}{\Fil_{m-1}H^2(\wh{L})} \ar[r] & 0,}
    \end{equation}
  it remains to show that the right vertical arrow in this
    diagram is injective.

We now look at the diagram
     \begin{equation}\label{eqn:Hensel-complete-3}
      \xymatrix@C1pc{ 
        \frac{\Fil_{m}H^2(L)}{\Fil_{m-1}H^2(L)} \ar[r]^-{{\rm rsw}_L} \ar[d] &
        \Omega^2_{\fl} \oplus \Omega^1_{\fl} \ar[d]^-{\cong} \\
         \frac{\Fil_{m}H^2(\wh{L})}{\Fil_{m-1}H^2(\wh{L})} \ar[r]^-{{\rm rsw}_{\wh{L}}}
& \Omega^2_{\fl} \oplus \Omega^1_{\fl}.}
\end{equation}
Since $\pi_L$ is also a uniformizer of $\sO_{\wh{L}}$, it follows from
\cite[Thm.~0.1]{Kato-89} that this diagram is commutative and the
horizontal arrows are injective. It follows that the left vertical arrow is injective.
This concludes the proof.
    \end{proof}

We shall use the following result of Kato to prove various functorial properties
of the Brauer and Picard groups of modulus pairs.

\begin{lem}\label{lem:Kato-PB-PF}
  Let ${L'}/L$ be a finite extension of Henselian discrete valuation fields of
  characteristic $p > 0$. Let $e$ denote the ramification index of ${L'}/L$.
  Let $\fl$ (resp. $\fl'$)
  denote the residue field of $L$ (resp. $L'$). Assume that $\fl$ and $\fl'$ are local
  fields. Then we have the following.
  \begin{enumerate}
  \item
 There is a commutative  diagram
  \begin{equation}\label{eqn:Kato-PB-PF-0}
    \xymatrix@C1pc{
 H^{2}(L') \times L'^\times \ar[r]^-{\cup}
 \ar@<-4ex>[d]_{\Cores} &  H^{3}(L') \ar[r]^-{\partial_{L'}} \ar[d]^-{\Cores} &
 H^2(\fl') \ar[d]^-{\Cores} \ar[dr]^-{\inv_{\fl'}} & \\
H^{2}(L) \times L^\times \ar@<-4ex>[u]_-{\Ress}
\ar[r]^-{\cup} &  H^{3}(L) \ar[r]^-{\partial_L} & H^2(\fl) \ar[r]^-{\inv_{\fl}}  &
  {\Q}/{\Z}.}
\end{equation}
The same holds if we interchange $\Cores$ and $\Ress$ on the left side of the cup
product maps.
\item
  $\Cores(\Fil_{en} H^2(L')) \subseteq \Fil_n H^2(L)$ for any $n \ge 0$.
  \item
$\Ress(\Fil_n H^2(L)) \subseteq \Fil_{en} H^2(L')$ for any $n \ge 0$.
\end{enumerate}
\end{lem}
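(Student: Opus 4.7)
The plan is to deduce Part (1) from three classical compatibilities, then combine Part (1) with \lemref{lem:Hensel-complete}(4) to reduce Parts (2) and (3) to standard estimates on the behavior of higher unit groups under restriction and norm.

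First I would decompose the diagram in (1) into three pieces. The cup product square on the left follows from the projection formula in Galois cohomology,
\[
  \Cores_{L'/L}(\chi' \cup \Ress_{L'/L}(y)) = \Cores_{L'/L}(\chi') \cup y,
\]
which holds for $\chi' \in H^*(L')$ and $y \in H^*(L)$, and which immediately specializes to the ``interchanged'' version by exchanging the roles of the two factors. The residue square in the middle reduces to the compatibility
\[
  \Cores_{\fl'/\fl} \circ \partial_{L'} \;=\; \partial_L \circ \Cores_{L'/L},
\]
of Kato's tame-symbol residue map with corestriction; this is the standard functoriality of the connecting homomorphism in the Gersten-type localization triangle attached to the finite morphism $\Spec(\sO_{L'}) \to \Spec(\sO_L)$. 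Finally, the triangle on the right commutes by the local class field theoretic identity $\inv_\fl \circ \Cores_{\fl'/\fl} = \inv_{\fl'}$ on $H^2$, already invoked in ~\eqref{eqn:Gersten-6}.

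For Part (2), let $\chi' \in \Fil_{en} H^2(L')$ and $y \in \Fil_n L^\times$ with $n \ge 1$. Chaining the three compatibilities from Part (1), one obtains
\[
  \<\Cores(\chi'), y\> \;=\; \inv_\fl \, \partial_L \, \Cores_{L'/L}(\chi' \cup \Ress(y)) \;=\; \inv_{\fl'} \, \partial_{L'}(\chi' \cup \Ress(y)) \;=\; \<\chi', \Ress(y)\>.
\]
Since $\Ress$ on $K^M_1$ is induced by the field inclusion and $\fm_L^n \sO_{L'} = \fm_{L'}^{en}$, we have $\Ress(1 + \fm_L^n) \subseteq 1 + \fm_{L'}^{en} = \Fil_{en} K^M_1(L')$, so $\<\chi', \Ress(y)\> = 0$ by \lemref{lem:Hensel-complete}(4). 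Hence $\Cores(\chi') \in \Fil_n H^2(L)$. The case $n = 0$ then follows from \lemref{lem:Hensel-complete}(3) together with the evident inclusion $\Cores(H^2(\sO_{L'})) \subseteq H^2(\sO_L)$ coming from the finite morphism $\Spec(\sO_{L'}) \to \Spec(\sO_L)$.

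For Part (3), the companion projection formula $\Cores(\Ress(\chi) \cup y') = \chi \cup \Cores(y')$ together with the same residue and invariant compatibilities yields, for $\chi \in \Fil_n H^2(L)$ and $y' \in \Fil_{en} L'^\times$,
\[
  \<\Ress(\chi), y'\> \;=\; \<\chi, \Cores(y')\> \;=\; \<\chi, N_{L'/L}(y')\>,
\]
where we have used that $\Cores$ on $K^M_1$ is the field norm. The classical estimate $N_{L'/L}(1 + \fm_{L'}^{en}) \subseteq 1 + \fm_L^n$ (e.g.\ Serre, \emph{Corps Locaux}, Ch.~V, \S~6) then forces this pairing to vanish by \lemref{lem:Hensel-complete}(4); the case $n = 0$ is immediate by functoriality. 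The principal technical point in the whole argument is the residue--corestriction compatibility; once that standard input is isolated, Parts (2) and (3) become formal consequences of the projection formula and the classical behavior of higher unit groups under field norms.
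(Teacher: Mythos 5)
Your proof is correct, and for parts (1) and (2) it follows essentially the same route as the paper, which simply cites \cite[\S~3.2, Lem.~1, Prop.~1(2)]{Kato-80} for the diagram and then deduces (2) from $\Ress(\Fil_n L^\times)\subseteq \Fil_{en}L'^\times$; your version has the merit of making explicit that one must invoke \lemref{lem:Hensel-complete}(4) (rather than Definition~\ref{defn:Kato-filt} directly) to convert the vanishing $\<\Cores(\chi'),\Fil_n L^\times\>=0$ back into membership in $\Fil_n H^2(L)$, and of treating $n=0$ separately, which is needed since \lemref{lem:Hensel-complete}(4) only covers $n\ge 1$. For part (3) you genuinely diverge: the paper quotes \cite[Lem.~6.19]{Raskind}, whereas you reduce to the classical estimate $N_{L'/L}(1+\fm_{L'}^{en})\subseteq 1+\fm_L^n$ on higher unit groups via the interchanged projection formula. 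This is a valid and more self-contained argument (the estimate holds for arbitrary finite extensions in characteristic $p$, including the purely inseparable case where $N(1+x)=(1+x)^p$), and it makes transparent that (2) and (3) are mirror images of each other through the two projection formulas.

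One caution: you describe the residue--corestriction compatibility $\Cores_{\fl'/\fl}\circ\partial_{L'}=\partial_L\circ\Cores_{L'/L}$ as ``standard functoriality of the connecting homomorphism.'' For the prime-to-$p$ part this is indeed formal, but the statement you are proving concerns $H^2(L)=H^2(L,\Q/\Z(1))$ in characteristic $p>0$, where the $p$-primary boundary maps are defined through $W_s\Omega^\bullet_{\log}$ and Milnor $K$-theory symbols; their compatibility with corestriction (i.e.\ with the norm on Milnor $K$-theory) is a genuine theorem of Kato and is precisely what the paper's citation of \cite[\S~3.2]{Kato-80} supplies. Your argument is complete once that input is attributed rather than treated as abstract nonsense.
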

\begin{proof}
(1) follows from \cite[\S~3.2, Lem.~1, Prop.~1(2)]{Kato-80}, 
(2) follows from (1) using that $\Ress(\Fil_n L^\times) \subset \Fil_{en} L'^\times$
by definition, and (3) follows from (1) and \cite[Lem.~6.19]{Raskind}.
\end{proof}

\subsection{The Brauer group with modulus}\label{sec:BGM*}
Let $k$ be a local field with ring of integers $\sO_k$,  maximal ideal
$\fm = (\pi)$ and residue field $\ff = {\sO_k}/{\fm} \cong \F_q$, where
$q = p^n$ for some prime $p \ge 2$ and integer $n \ge 1$.
Let $X$ be a connected regular quasi-projective scheme over $k$ of dimension
$d \ge 1$. Let $D \subset X$ be an effective divisor (possibly empty) and
$D^\dagger = D_\red$.
We write $D = \sum_{x \in X^{(1)}} n_x \ov{\{x\}}$. Clearly, $n_x = 0$ unless $x$ is a
generic point of $D^\dagger$, and $D = 0$ if $D^\dagger$ is empty. We let
$\Div_{D^\dagger}(X)$ be the set of all effective
divisors on $X$ whose support is $D^\dagger$. This is a filtered set under
inclusion. We let $\Div(X)$ denote the filtered set of all effective Cartier
(equivalently Weil) divisors on $X$.
We let $X^o = X \setminus D$. We let $j \colon X^o \inj X$ and $t \colon D \inj X$ be
the inclusions. We let $K$ denote the function field of $X$.

For any $x \in X^{(1)}$, we let $K_x$ denote the quotient field of 
$\sO^h_{X,x}$ and $\wh{K}_x$ the quotient field of $\wh{\sO_{X,x}}$. 
We shall say that an element $\chi \in H^i(K)$ is unramified at a point $x\in X^{(1)}$ if
the image of $\chi$ under the canonical map $H^i(K) \to H^i(K_x)$ lies in the image of
$H^i(\sO^h_{X,x})$. We shall say that $\chi$ is unramified on an open subscheme $U
\subseteq X$ if it lies in the image of the canonical map $H^i(U) \to H^i(K)$.

\begin{lem} \label{lem:Fil-D-U}
  Let $U$ be a connected Noetherian regular scheme with function field $K$.
  Let $\chi \in \Br(K)$ be unramified at all codimension one
  points of $U$. Then $\chi$ is unramified on $U$. 
\end{lem}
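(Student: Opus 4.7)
The plan is to invoke the Gersten (Bloch--Ogus) resolution for the Brauer group on the regular Noetherian scheme $U$, which asserts that one has an exact sequence
\[
0 \to \Br(U) \to \Br(K) \xrightarrow{\partial = \bigoplus_x \partial_x} \bigoplus_{x \in U^{(1)}} H^1(k(x), {\mathbb Q}/{\mathbb Z}),
\]
whose $x$-component $\partial_x$ has kernel (inside $\Br(K)$) equal to the image of $\Br(\sO^h_{U,x}) \to \Br(K)$. I would first split by torsion order: the prime-to-$p$ part is a consequence of Gabber's absolute purity theorem for the Tate twists ${\mathbb Z}/m(1)$ (see e.g.~\cite[\S~3]{CTS}), and the $p$-primary part follows from the Gersten resolution of Gros--Suwa--Shiho for the logarithmic Hodge-Witt sheaves $W_n\Omega^1_{U, \log}$ (the same input that furnishes \eqref{eqn:Gersten-0} in the curve case, see \cite[Thm.~4.1]{Shiho}), combined with \eqref{eqn:GL*} to identify the $p$-primary torsion of $\Br(U)$ modulo $\Pic(U)/p^n$ with $H^1(U, W_n\Omega^1_{U,\log})$. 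Under the identifications of \lemref{lem:Mot-coh-3} and \lemref{lem:Brauer-H2}, this is also a special case of the Bloch--Ogus-type Gersten resolution for the étale motivic cohomology group $H^2(-, {\mathbb Q}/{\mathbb Z}(1))$.

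Granted this, the proof is a direct unwinding of definitions. The hypothesis that $\chi$ is unramified at a codimension one point $x$ means precisely that the image of $\chi$ in $\Br(K_x)$ lies in the image of $\Br(\sO^h_{U,x}) \to \Br(K_x)$. By purity for the Henselian discrete valuation ring $\sO^h_{U,x}$, this image is exactly the kernel of the local residue $\partial_x \colon \Br(K_x) \to H^1(k(x), {\mathbb Q}/{\mathbb Z})$, and the global residue in the Gersten sequence factors through this local one. Therefore the hypothesis of the lemma gives $\partial_x(\chi) = 0$ for every $x \in U^{(1)}$, so $\partial(\chi) = 0$; by exactness, $\chi$ lifts uniquely to a class in $\Br(U)$, i.e., $\chi$ is unramified on $U$ in the sense of the paper.

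The only real subtlety lies in having the Gersten resolution in the $p$-primary characteristic-$p$ setting for a general regular $U$, but this is standard from the inputs above and is valid in the generality stated (in particular, it is unconditional in the paper's applications, where $U$ is quasi-projective of dimension $\le 2$ over a local field).
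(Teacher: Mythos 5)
Your final step---unwinding the definition of ``unramified at $x$'' to conclude that $\chi$ lies in the kernel of every local residue and hence comes from $\Br(U)$---is fine, and the overall shape of your argument (a purity/exactness statement for $\Br$ on a regular scheme, together with the compatibility between $\Br(\sO_{U,x})$ and $\Br(\sO^h_{U,x})$ inside $\Br(K_x)$) is exactly what the paper uses. The paper's entire proof is a citation of the purity theorem for the Brauer group, \cite[Thm.~3.7.7]{CTS}: for a regular integral Noetherian scheme $U$ with function field $K$ one has $\Br(U) = \bigcap_{x \in U^{(1)}} \Br(\sO_{U,x})$ inside $\Br(K)$, valid for all torsion, including the $p$-primary part in characteristic $p$.

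The gap is in how you propose to obtain the key exact sequence. For the prime-to-$p$ part, deriving it from Gabber's absolute purity is standard. But for the $p$-primary part, your appeal to ``the Gersten resolution of Gros--Suwa--Shiho for $W_n\Omega^1_{U,\log}$'' on a general connected Noetherian regular scheme is an overclaim: Gros--Suwa's resolution is for smooth varieties over a perfect field, and Shiho's results for regular schemes (\cite{Shiho}) give Gersten \emph{injectivity} in general but the full resolution only under additional hypotheses; it is not available for an arbitrary regular Noetherian $\F_p$-scheme, which is the generality in which the lemma is stated and applied (e.g.\ to regular non-smooth curves, and to higher-dimensional $X$ in \S~8.3). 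Even granting the resolution, one must still pass from the Zariski-sheaf statement to the exactness of $0 \to \Br(U) \to \Br(K) \to \bigoplus_x H^1(k(x), {\Q}/{\Z})$ in {\'e}tale cohomology, which needs additional Bloch--Ogus-type input. The exactness you are asserting \emph{is} the purity theorem for the Brauer group---whose hard case (dimension-two local rings, $p$-torsion in characteristic $p$) is a theorem of Gabber---so the honest move is to cite it directly rather than re-derive it from a stronger statement that is not established in this generality. With that citation in place, your unwinding closes the proof immediately.
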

\begin{proof}
  This is an easy consequence of the purity theorem for Brauer group, see
  \cite[Thm.~3.7.7]{CTS}.
\end{proof}

\begin{defn}\label{defn:BGM}
  We let $\Br^\divv(X|D)$ denote the subgroup of $\Br(K)$ consisting of elements $\chi$
  such that for every $x \in X^{(1)}$, the image $\chi_{x}$ of
$\chi$ under the canonical map $\Br(K) = H^2(K) \to H^2(K_{x})$ 
lies in $\Fil_{n_{x}} H^2(K_{x})$. 
\end{defn}

\begin{lem}\label{lem:Brauer-reln}
  We have the following relations between the subgroups of $\Br(K)$.
  \begin{enumerate}
    \item
  For every pair of effective Cartier divisors $D \le D'$ on $X$, one has
  \[
    \Br(X) \subseteq \Br^\divv(X|D) \subseteq \Br^\divv(X|D') \subseteq \Br(X^o)
    \subseteq \Br(K).
  \]
\item
  \[
    {\underset{n \ge 1}\varinjlim} \ \Br^\divv(X|nD) \ \xrightarrow{\cong}
    {\underset{D' \in \Div_{D^\dagger}(X)}\varinjlim} \Br^\divv(X|D')
    \xrightarrow{\cong} \Br(X^o).
  \]
\item
  \[
  {\underset{D' \in \Div(X)}\varinjlim} \Br^\divv(X|D')
  \xrightarrow{\cong} \Br(K).
\]
\item
  If $\Char(k) = 0$, then $\Br^\divv(X|D) \xrightarrow{\cong} \Br(X^o)$ for every
  $D \in \Div_{D^\dagger}(X)$.
  \end{enumerate}
\end{lem}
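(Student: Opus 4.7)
The plan is to treat the four statements in sequence, with parts (2)--(4) all relying on (1) and some elementary facts about the Kato filtration.

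For (1), I would decompose the chain into its four individual containments. The containment $\Br(X) \subseteq \Br^{\divv}(X|D)$ follows because any class $\chi \in \Br(X) = H^2(X)$ restricts, for each $x \in X^{(1)}$, through $H^2(\sO^h_{X,x})$, which by \lemref{lem:Hensel-complete}(3) equals $\Fil_0 H^2(K_x) \subseteq \Fil_{n_x} H^2(K_x)$. The containment $\Br^{\divv}(X|D) \subseteq \Br^{\divv}(X|D')$ is immediate because $D \le D'$ means $n_x \le n'_x$ at every $x \in X^{(1)}$, and the Kato filtration is increasing. For $\Br^{\divv}(X|D') \subseteq \Br(X^o)$, note that for $x \in X^{(1)} \cap X^o$ we have $n'_x = 0$, so $\chi_x \in \Fil_0 H^2(K_x) = H^2(\sO^h_{X,x})$ by \lemref{lem:Hensel-complete}(3); in other words, $\chi$ is unramified at every codimension one point of $X^o$, and \lemref{lem:Fil-D-U} applied to $X^o$ puts $\chi$ in $\Br(X^o)$. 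Finally, $\Br(X^o) \inj \Br(K)$ is the standard Brauer injectivity for regular Noetherian schemes.

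For (2), the containment $\subseteq$ is a special case of (1). For the reverse, I take $\chi \in \Br(X^o)$; then at every $x \in X^{(1)} \setminus D^\dagger$ the class is automatically unramified, while at each of the finitely many generic points $x_1,\ldots,x_r$ of $D^\dagger$ (where $n_{x_i} \ge 1$), exhaustiveness \eqref{eqn:Kato-filt-0} gives integers $m_i$ with $\chi_{x_i} \in \Fil_{m_i} H^2(K_{x_i})$. Choosing any $D' \in \Div_{D^\dagger}(X)$ with $n'_{x_i} \ge m_i$ (e.g.\ $D' = nD$ for $n$ large enough, since $D$ has support $D^\dagger$) places $\chi$ in $\Br^\divv(X|D')$. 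This simultaneously gives both displayed colimits. Part (3) reduces to (2): any $\chi \in \Br(K)$ is represented by an Azumaya algebra over some Zariski dense open $U \subseteq X$, so $\chi \in \Br(U)$. Letting $D^\dagger$ be the union of the codimension one components of $X \setminus U$ (a finite set), $\chi$ lies in $\Br(X \setminus D^\dagger)$, and the argument of (2) produces an effective $D' \in \Div(X)$ with $\chi \in \Br^{\divv}(X|D')$. The converse containment is part of (1).

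For (4), I invoke Definition~\ref{defn:Kato-filt}(1): in characteristic zero, $\Fil_0 H^2(K_x) = H^2(\sO^h_{X,x})$ while $\Fil_n H^2(K_x) = H^2(K_x)$ for all $n \ge 1$. Hence the condition defining $\Br^{\divv}(X|D)$ is vacuous at every generic point of $D^\dagger$ (where $n_x \ge 1$) and equivalent to being unramified at every other codimension one point. Applying \lemref{lem:Fil-D-U} to $X^o$ identifies this with $\Br(X^o)$, matching (1) with equality throughout. The only mildly delicate point in the whole argument is the input used for (3), namely that every Brauer class over $K$ extends to a dense open subscheme; the remaining work is a careful bookkeeping between the Kato filtration indices and the multiplicities $n_x$.
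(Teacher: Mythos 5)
Your proposal is correct and follows essentially the same route as the paper: part (1) via \lemref{lem:Hensel-complete} and \lemref{lem:Fil-D-U}, part (2) via the exhaustiveness \eqref{eqn:Kato-filt-0} of the ramification filtration at the finitely many generic points of $D^\dagger$, part (3) by spreading a class in $\Br(K)$ to a dense open (the paper cites \cite[Lem.~III.1.16]{Milne-etale} for this), and part (4) directly from Definition~\ref{defn:Kato-filt}(1). The only difference is that you spell out the details the paper leaves implicit.
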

\begin{proof}
Part (1) follows easily from Lemmas~\ref{lem:Hensel-complete} and
~\ref{lem:Fil-D-U}. To prove (2), let $w \in \Br(X^o)$. For every
irreducible component $D^\dagger_i$ of $D^\dagger$, ~\eqref{eqn:Kato-filt-0} says that
the image of $w$ in $\Br(K_{x_i})$ lies in $\Fil_{m_i} H^2(K_{x_i})$ for some $m_i \ge 0$,
where $D^\dagger_i = \ov{\{x_i\}}$. We now take $D = \sum_i m_i D^\dagger_i \in  
\Div_{D^\dagger}(X)$. Then it is clear that $w \in \Br^\divv(X|D)$.
Part (3) follows from (2) and \cite[Lem.~III.1.16]{Milne-etale}, and (4) follows
from the definition of $\Br^\divv(X|D)$.
\end{proof}

We shall now define the Brauer group of a modulus pair.
We let $\sC(X)$ denote the set of integral curves
on $X$ and let $\sC(X|D)$ denote the subset of $\sC(X)$ consisting of those curves
which are not contained in $D$. For any $C \in \sC(X)$, we let $\nu \colon C_n \to
X$ denote the canonical map from the normalization of $C$ and let $\nu^*(D)$
denote the scheme theoretic Pull-back of $D$. We let $C^o_n = \nu^{-1}(X^o)$.

\begin{defn}\label{defn:BGM-0}
 We let $\Br(X|D)$ denote the subgroup of $\Br(X^o)$ consisting of elements $\chi$
 such that for every $C \in \sC(X|D)$, the Brauer class $\nu^*(\chi)
 \in \Br(C^o_n)$ lies in the subgroup $\Br^\divv(C_n|\nu^*(D))$.
The group $\Br(X|D)$ will be called the Brauer group of the modulus pair $(X,D)$.
\end{defn}

The Brauer group of modulus pairs has the following functorial properties first of
which is not clear for $\Br^\divv(X|D)$.
Recall that an admissible (resp. coadmissible) morphism of modulus pairs
$f \colon (X', D') \to (X,D)$ is a morphism of schemes $f \colon X' \to X$ such that
$D' \le f^*(D)$ (resp. $D' \ge f^*(D)$) as Cartier divisors on $X'$.
One says that $f$ is strict if $D' = f^*(D)$.

\begin{prop}\label{prop:BGM-1}
  Let $f \colon (X', D') \to (X,D)$ be a coadmissible morphism of modulus pairs over
  $k$. Then the pull-back on {\'e}tale cohomology induces a homomorphism
  \[
    f^* \colon \Br(X|D) \to \Br(X'|D')
  \]
  such that $f^*((\Br(X|E)) \subset \Br(X'|E')$ if $E \le D$ and $D' \ge E' \ge f^*(E)$.
  If $g \colon (X'', D'') \to (X', D')$ is another coadmissible morphism of modulus
  pairs, then $(f \circ g)^* = g^* \circ f^*$.
\end{prop}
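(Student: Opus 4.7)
The plan is to first note that coadmissibility $D' \ge f^*(D)$ forces ${X'}^o \subseteq f^{-1}(X^o)$, so $f$ restricts to $f^o \colon {X'}^o \to X^o$ and gives a pullback $f^{o*} \colon \Br(X^o) \to \Br({X'}^o)$. Given $\chi \in \Br(X|D)$, one must verify that $f^*(\chi) := f^{o*}(\chi)$ lies in $\Br(X'|D')$: for every integral curve $C' \subset X'$ with $C' \not\subset D'$, the pullback ${\nu'}^*(f^*(\chi))$ must lie in $\Br^\divv(C'_n \mid {\nu'}^*(D'))$. For this I would consider the composite $f \circ \nu' \colon C'_n \to X$ and let $C \subset X$ denote the reduced closure of its image.

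There are two cases. If $C$ is a single point $x$, then since $\nu'({C'}^o_n) \subseteq {X'}^o$ is nonempty and $f({X'}^o) \subseteq X^o$, one has $x \in X^o$; hence ${\nu'}^*(f^*(\chi))$ is the pullback along $C'_n \to \Spec(k(x))$ of $\chi_x \in \Br(k(x))$, so it extends to $\Br(C'_n) \subseteq \Br^\divv(C'_n \mid {\nu'}^*(D'))$ by \lemref{lem:Brauer-reln}(1). If $C$ is a curve, then $C \not\subset D$ (as $(f \circ \nu')({C'}^o_n) \subset X^o$), so $C \in \sC(X|D)$, and since $C'_n$ is normal and integral the universal property of normalization factors $f \circ \nu'$ uniquely as $\nu_C \circ g$ with $g \colon C'_n \to C_n$; the hypothesis on $\chi$ then gives $\nu_C^*(\chi) \in \Br^\divv(C_n \mid \nu_C^*(D))$.

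The crux is to control the ramification of $g^*(\nu_C^*(\chi))$ at each closed point $y' \in C'_n$, with $y := g(y') \in C_n$. Since $g$ is a finite type morphism of regular integral curves, the induced local map on completions $\wh{\sO_{C_n, y}} \to \wh{\sO_{C'_n, y'}}$ sends a uniformizer to $u \pi_{y'}^e$ (with $u$ a unit and $e \ge 1$ the local ramification index), and the induced residue field extension $k(y')/k(y)$ is finite, so $\wh{K_{y'}}/\wh{K_y}$ is a finite extension of complete discrete valuation fields. By \lemref{lem:Hensel-complete}(1), passing to completions does not alter $H^2$, and the image of $g^*(\nu_C^*(\chi))$ at $y'$ coincides with $\Ress(\nu_C^*(\chi)_y)$; \lemref{lem:Kato-PB-PF}(3) then places it in $\Fil_{e n_y} H^2(\wh{K_{y'}})$, where $n_y$ is the multiplicity of $\nu_C^*(D)$ at $y$. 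The identity $g^*(\nu_C^*(D)) = {\nu'}^*(f^*(D)) \le {\nu'}^*(D')$ yields $n_{y'} \ge e n_y$ for the multiplicity $n_{y'}$ of ${\nu'}^*(D')$ at $y'$, and monotonicity of the filtration completes this case.

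The auxiliary statements follow immediately: the containment $f^*(\Br(X|E)) \subseteq \Br(X'|E')$ under $E \le D$ and $D' \ge E' \ge f^*(E)$ is just the first part applied to the coadmissible morphism $(X', E') \to (X, E)$, and $(f \circ g)^* = g^* \circ f^*$ is inherited from the functoriality of étale pullback combined with the fact that compositions of coadmissible morphisms are coadmissible. The principal obstacle is the ramification-index bookkeeping along $g$, compounded by the fact that $g$ need not be a finite morphism of schemes globally, which forces the argument onto completions at individual closed points rather than through a global finite extension.
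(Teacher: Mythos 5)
Your proof is correct and follows essentially the same route as the paper's: split into the cases where $f$ maps $C'$ to a closed point (necessarily in $X^o$) or dominates a curve $C \in \sC(X|D)$, factor $f \circ \nu'$ through the normalization $C_n$, and bound the ramification of the pulled-back class at each closed point $y'$ of $C'_n$ using the behaviour of Kato's filtration under restriction together with the inequality $\nu'^*(D') \ge g^*(\nu_C^*(D))$. Your local step — invoking \lemref{lem:Kato-PB-PF}(3) to get $\Ress(\Fil_{n_y}H^2) \subseteq \Fil_{e n_y}H^2$ and then using $n_{y'} \ge e\, n_y$ — is if anything slightly more careful than the paper's direct appeal to Definition~\ref{defn:Kato-filt}, which does not track the ramification index $e$ explicitly.
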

\begin{proof}
We let $U' = f^{-1}(X^o)$ so that $X'^o \subset U'$. We then have the pull-back map
  $f^* \colon \Br(X^o) \to \Br(U') \subset \Br(X'^o)$. Suppose now that $\chi \in
  \Br(X|D)$ and let $C' \in \sC(X'|D')$. If the image of $C'$ under $f$
  is a closed point, then
  this closed point must lie in $X^o$. In the latter case, it is clear that
  the pull-back of $\chi$ under the composite map $C'_n \to U' \to X^o$ lies in
  $\Br(C'_n) \subset \Br(C'_n|\nu'^*(D')) = \Br^\divv(C'_n|\nu'^*(D'))$.

Otherwise, the scheme theoretic image of $C'$ under $f$ is a curve $C$ which is
  necessarily integral and lies in $\sC(X|D)$. Since $f \colon C' \to C$ is also
  dominant, we get a commutative diagram
  \begin{equation}\label{eqn:BGM-1-0}
    \xymatrix@C1pc{
      C'_n \ar[r]^-{\nu'} \ar[d]_-{g} & X' \ar[d]^-{f} \\
      C_n \ar[r]^-{\nu} & X.}
  \end{equation}
  If we let $E = \nu^*(D)$ and $E' = \nu'^*(D')$, then we get $E' \ge g^*(E)$ by our
  assumption. In other words, $g \colon (C'_n, E') \to (C_n, E)$ is a dominant morphism
  of one-dimensional modulus pairs. We let $L$ (resp. $L'$) be the function field of
  $C$ (resp. $C'$).

We now let $\omega = \nu^*(\chi)$ and let
  $x \in E'$ be a closed point. If $x \notin g^{-1}(E)$, then $g^*(\chi) \in
  \Fil_0 H^2(L'_x)$. If $x \in g^{-1}(E)$, we let $y = g(x)$. We then get an
  inclusion of Henselian discrete valuation fields $L_y \inj L'_x$. It follows from
  Definition~\ref{defn:Kato-filt} that this inclusion induces a map
  $\Br(L_y) \to \Br(L'_x)$ which preserves the ramification filtrations.
  In particular, if $n_y$ (resp. $n_x$) denotes the multiplicity of $E$ (resp. $E'$)
  at $y$ (resp. $x$), then we get $g^*(w_y) \in \Fil_{n_y} \Br(L'_x) \subset
  \Fil_{n_x} \Br(L'_x)$, where the latter inclusion holds  because $E' \ge g^*(E)$.
  It follows that $g^*(\omega) \in \Br(C'_n|E')$. We have thus shown that
  $f^*(\chi) \in \Br(X'|D')$. The inclusion $f^*((\Br(X|E)) \subset \Br(X'|E')$, as well
  as the composition law, is clear from the definition of the pull-back map. This
  concludes the proof.
\end{proof}

\begin{prop}\label{prop:BGM-2}
  Let $f \colon (X', D') \to (X,D)$ be a finite morphism of modulus pairs over $k$.
  Assume that $f$ is
strict. Then the norm map $f_*(\sO^\times_{X'}) \to \sO^\times_X$ induces a homomorphism
  \[
    f_* \colon \Br(X'|D') \to \Br(X|D)
  \]
  such that $f_*((\Br(X'|E')) \subset \Br(X|E)$ if $E \le D$ and $E' = f^*(E)$.
  If $g \colon (X'', D'') \to (X', D')$ is another finite and strict morphism
  of modulus pairs, then $(f \circ g)_* = f_* \circ g_*$. 
\end{prop}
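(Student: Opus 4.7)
The norm map $f_*(\sO^\times_{X'}) \to \sO^\times_X$ associated to the finite morphism $f$ induces a pushforward $f_* \colon \Br(X') \to \Br(X)$ on {\'e}tale cohomology. The strictness $D' = f^*(D)$ gives $f^{-1}(X^o) = X'^o$, so this restricts to $f_* \colon \Br(X'^o) \to \Br(X^o)$. Functoriality $(f \circ g)_* = f_* \circ g_*$ is immediate from the functoriality of the norm, and the subsidiary claim about $E \le D$ with $E' = f^*(E)$ is the main claim applied to the pairs $(X, E)$ and $(X', E')$. The substantive content is therefore to show $f_*(\Br(X'|D')) \subseteq \Br(X|D)$.

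Fix $\chi' \in \Br(X'|D')$ and $C \in \sC(X|D)$ with normalization $\nu \colon C_n \to X$. One must show that for every closed point $y \in C_n$ with $x = \nu(y)$ and $m_y := \ord_y(\nu^*(D))$, the image of $\nu^*(f_*(\chi'))$ in $H^2(K_{C_n, y})$ lies in $\Fil_{m_y} H^2(K_{C_n, y})$. The key input is proper base change applied to the Cartesian square
\[
\xymatrix{
P \ar[r]^{q} \ar[d]_{p} & X' \ar[d]^{f} \\
C_n \ar[r]^{\nu} & X
}
\]
with $P := C_n \times_X X'$; this is valid for $\sO^\times$ because $f$ is finite (hence proper) and the norm commutes with base change. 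It yields the identity $\nu^* \circ f_* = p_* \circ q^*$ on $\Br$.

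Henselizing at $y$, write $B := \sO^h_{C_n, y}$ and $A := \sO^h_{X, x}$; the standard fact that the base change of $f$ to $\Spec(A)$ equals $\coprod_i \Spec(\sO^h_{X', x'_i})$ for $f^{-1}(x) = \{x'_i\}$ expresses the Henselian pieces of $P$ over $y$ as the Henselian local factors of the finite semilocal $B$-algebra $\prod_i (B \otimes_A \sO^h_{X', x'_i})$. Each such factor $B_\alpha$ is a $1$-dimensional Henselian local ring whose minimal primes yield Henselian discrete valuation fields $L_\alpha$, finite over $L_y := \mathrm{Frac}(B)$ and lying over a unique $K_{x'_{i(\alpha)}}$. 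The base-change identity localized at $y$ then reads: the image of $\nu^*(f_*(\chi'))$ in $H^2(L_y)$ equals $\sum_\alpha \Cores_{L_\alpha/L_y}(\chi'|_{L_\alpha})$. Since $\chi' \in \Br(X'|D')$ places $\chi'|_{K_{x'_{i(\alpha)}}}$ in $\Fil_{n_{x'_{i(\alpha)}}} H^2$, Lemma~\ref{lem:Kato-PB-PF}(3) yields $\chi'|_{L_\alpha} \in \Fil_{e_\alpha n_{x'_{i(\alpha)}}} H^2(L_\alpha)$, where $e_\alpha$ is the ramification index of $L_\alpha / K_{x'_{i(\alpha)}}$. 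The strictness $D' = f^*(D)$ forces the numerical identity $e_\alpha n_{x'_{i(\alpha)}} = e'_\alpha m_y$ with $e'_\alpha$ the ramification index of $L_\alpha/L_y$, both sides computing the multiplicity at $L_\alpha$ of the Cartier divisor $q^*(D') = p^*(\nu^*(D))$. Hence $\chi'|_{L_\alpha} \in \Fil_{e'_\alpha m_y} H^2(L_\alpha)$, and Lemma~\ref{lem:Kato-PB-PF}(2) gives $\Cores_{L_\alpha/L_y}(\chi'|_{L_\alpha}) \in \Fil_{m_y} H^2(L_y)$; summing over $\alpha$ completes the argument.

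The principal obstacle is the careful bookkeeping in the Henselian decomposition of $P$ over $y$: the scheme $P$ may be non-reduced or carry embedded points, so one must justify the product decomposition above and the ramification identity $e_\alpha n_{x'_{i(\alpha)}} = e'_\alpha m_y$ by identifying (after normalization) each $L_\alpha$ with the fraction field of a Henselization of one of the $C'_{j,n}$ at a preimage of $y$, the $C'_j$ being the irreducible components of $f^{-1}(C)_{\mathrm{red}}$ dominating $C$. In characteristic zero the claim is vacuous, since $\Br^\divv(-|-)$ then coincides with $\Br(-^o)$ by Lemma~\ref{lem:Brauer-reln}(4); in characteristic $p$ all parts of Lemma~\ref{lem:Kato-PB-PF} are available and the argument above applies verbatim.
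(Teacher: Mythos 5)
Your proposal follows essentially the same route as the paper's proof: identify the localized pushforward with a sum of corestrictions between Henselian discrete valuation fields via compatibility of the norm with base change (the paper invokes \cite[Prop.~3.8.1, Lem.~3.8.6]{CTS} for precisely this), then conclude from the numerical identity $n_y = e_{y/x}\, n_x$ forced by strictness together with Lemma~\ref{lem:Kato-PB-PF}(2). Where you go beyond the paper is the reduction to curves when $\dim X > 1$: the paper simply asserts that one may assume $\dim X = \dim X' = 1$, whereas you make the fibre product $P = C_n \times_X X'$ and its Henselian decomposition explicit, which is the honest content of that reduction. One piece of your bookkeeping is off, however: when $\dim X' > 1$ the points $x'_i \in f^{-1}(x)$ are closed points of codimension $>1$, so ``$K_{x'_{i(\alpha)}}$'' is not a discrete valuation field and the step ``$\chi'|_{K_{x'_{i(\alpha)}}} \in \Fil_{n_{x'_{i(\alpha)}}}$, then apply Lemma~\ref{lem:Kato-PB-PF}(3)'' does not parse. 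The repair is the one your closing paragraph already gestures at: identify each $L_\alpha$ with the Henselian local field of the normalization $C'_{j,n}$ of a component $C'_j$ of $f^{-1}(C)$ dominating $C$ at a point $z$ over $y$; since $C'_j \in \sC(X'|D')$, the defining condition of $\Br(X'|D')$ gives directly that $\chi'|_{L_\alpha}$ lies in $\Fil_m$ with $m = \ord_z(\nu'^*D') = \ord_z(\nu'^*f^*D) = e'_\alpha m_y$, and Lemma~\ref{lem:Kato-PB-PF}(2) finishes; part (3) of that lemma is not needed at all. With that correction (and the multiplicities $\ell_\alpha$ coming from non-reducedness of $P$, which are harmless since the filtration steps are subgroups), your argument is complete and agrees with the paper's in the one-dimensional case.
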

\begin{proof}
Since $f$ is strict, we have $X'^o = f^{-1}(X^o)$ and the norm $f_*(\sO^\times_{X'}) \to
  \sO^\times_X$ induces $f_* \colon \Br(X'^o) \to \Br(X^o)$.
  To show that $f_*$ preserves the ramification filtration, we can assume that
  $\dim(X) = \dim(X') = 1$. We let $K'$ be the function field of $X'$.

 We let $w \in \Br(X'|D')$ and $x \in D^\dagger$.
  By \cite[Prop.~3.8.1, Lem.~3.8.6]{CTS}, there is a commutative diagram
  \begin{equation}\label{eqn:BGM-2-0}
    \xymatrix@C1pc{
      \Br(X'^o) \ar[r] \ar[d]_-{f_*} & {\underset{y \in f^{-1}(x)}\prod} \
      \Br(A'_y) \ar[d]^-{f_* = \Cores} \\
      \Br(X^o) \ar[r] & \Br(K_x),}
  \end{equation}
  where ${\underset{y \in f^{-1}(x)}\prod} A'_y = X'^o \times_{X^o} \Spec(K_x)$
  and the horizontal arrows are the pull-back maps. We fix $y \in f^{-1}(x)$
  and let $w_y$ be the restriction of $w$ to $\Br(A'_y) \cong \Br(K'_y)$. 
It suffices to show that $\Cores_{{K'_y}/{K_x}}(w_y) \in \Fil_{n_x} \Br(K_x)$.
But this follows from \lemref{lem:Kato-PB-PF} because $n_y = n_x e_{y/x}$,
  where $e_{y/x}$ is the ramification index of ${K'_y}/{K_x}$ by the definition of
  $f^*(D)$.
\end{proof}

It is clear that $\Br^\divv(X|D) \cong \Br(X|D)$ when $X$ is a curve. For future study,
it will be of interest to know an answer to the following.

\begin{ques}\label{ques:Div-curve}
  Let $(X,D)$ be a modulus pair over $k$. Is it true that $\Br^\divv(X|D) \cong \Br(X|D)$?
\end{ques}


\section{Brauer-Manin pairing for modulus pairs}
\label{sec:BM-mod**}
In this section, we shall define an extension of the classical Brauer-Manin pairing
between the Picard and Brauer groups of smooth projective curves over local fields to
setting of relative Picard and Brauer groups of modulus pairs.
Throughout this section, we fix a local field $k$ of exponential characteristic
$p \ge 1$ and a connected regular projective scheme $X$ of dimension $d \ge 1$ over $k$.
We also fix a divisor $D \subset X$ and let $D^\dagger = D_\red$.
We let $j \colon X^o \inj X$ and $\iota \colon D \inj X$ be the inclusions,
where $X^o = X \setminus D$. We let $K$ denote the function field of $X$.
All other notations will be those of \S~\ref{sec:BGM*}.

\subsection{Idele class group}\label{sec:CGM}
For any $C \in \sC(X)$, we let $k(C)$ be the function field of
$C$. Given $C \in \sC(X|D)$, we 
let $k(C)_\infty = {\underset{x \in \nu^{-1}(D^\dagger)}\bigoplus} k(C)_x$, where
$\nu \colon C_n \to X$ is the canonical map from the normalization of $C$. 
We let $k(C)^\times_\infty = {\underset{x \in \nu^{-1}(D^\dagger)}\bigoplus} k(C)^\times_x, \
I(C_n|D) = {\underset{x \in \nu^{-1}(D^\dagger)}\bigoplus} (1 + \sI_D\sO^h_{C_n, x})$ and
$\wh{I}(C_n|D) = {\underset{x \in \nu^{-1}(D^\dagger)}\bigoplus}
(1 + \sI_D \wh{\sO_{C_n, x}})$,
where $\sI_D \subset \sO_X$ is the ideal sheaf defining $D$.
The inclusions $k(C) \inj k(C)_x$ induce a canonical map
$\iota_C \colon k(C)^\times \to {k(C)^\times_\infty}/{I(C_n|D)}$.
We also have the map $\partial_{C_n} \colon
k(C)^\times \xrightarrow{\divf} \sZ_0(C_n^o) \xrightarrow{\nu_*} \sZ_0(X^o)$,
where $C^o_n = \nu^{-1}(X^o)$.
We let $\sO_{C_n, \infty} = \sO_{C_n, \nu^{-1}(D^\dagger)}$.

It is easy to check that the left square in the diagram
\begin{equation}\label{eqn:Chow-Parshin}
  \xymatrix@C.8pc{
    {\underset{C \in \sC(X|D)}\bigoplus} {K}^M_1(\sO_{C_n, \infty},
    \sI_D \sO_{C_n, \infty}) \ar[r]^-{\divf} \ar[d] &
    \sZ_0(X^o) \ar[r] \ar[d] & \CH_0(X|D) \ar@{.>}[d] \ar[r] & 0 \\
    {\underset{C \in \sC(X|D)}\bigoplus} k(C)^\times \ar[r]^-{(\divf, \iota_C)} &
    \sZ_0(X^o) \bigoplus \left({\underset{C \in \sC(X|D)}\bigoplus}
      \frac{k(C)^\times_\infty}{I(C_n|D)}\right) \ar[r] &
      \CH_0(X|D)' \ar[r] & 0}
  \end{equation}
  is commutative, where the left vertical arrow is the canonical inclusion,
  the middle vertical arrow is the identity map of $\sZ_0(X^o)$ and
  $\CH_0(X|D)'$ is the cokernel of the left horizontal arrow in
  the bottom row. A straightforward application of the weak approximation theorem
  (e.g., see \cite[Lem.~6.3]{Kerz-MRL}) allows one to conclude that the left square
  induces a natural isomorphism $\CH_0(X|D) \xrightarrow{\cong} \CH_0(X|D)'$.
  In other words, there is a canonical exact sequence
  \begin{equation}\label{eqn:Chow-Parshin-0}
{\underset{C \in \sC(X|D)}\bigoplus} k(C)^\times \xrightarrow{(\divf, \iota_C)} 
    \sZ_0(X^o) \bigoplus \left({\underset{C \in \sC(X|D)}\bigoplus}
      \frac{k(C)^\times_\infty}{I(C_n|D)}\right) \to \CH_0(X|D) \to 0.
    \end{equation}

\begin{remk}\label{remk:BMP-compln}
  We remark that the canonical map $\frac{k(C)^\times_\infty}{I(C_n|D)} \to
\frac{\wh{k(C)}^\times_\infty}{\wh{I}(C_n|D)}$ is an isomorphism for
every $C \in \sC(X|D)$. In particular, the exact sequence ~\eqref{eqn:Chow-Parshin-0}
remains unchanged if we replace $k(C)^\times_\infty$ by $\wh{k(C)}^\times_\infty$
and $I(C_n|D)$ by $\wh{I}(C_n|D)$ for $C \in \sC(X|D)$. This can be easily verified. 
\end{remk}

\begin{defn}\label{defn:Idele}
     The idele class group $C(X^o)$ is the cokernel of the map
      \[
      {\underset{C \in \sC(X|D)}\bigoplus} k(C)^\times \xrightarrow{(\divf, \iota_C)} 
    \sZ_0(X^o) \bigoplus \left({\underset{C \in \sC(X|D)}\bigoplus}
      {\wh{k(C)}^\times_\infty}\right).
  \]
  \end{defn}

One can show using \cite[Prop.~5.3]{GKR} that $C(X^o)$ depends only on
$X^o$ and not on $X$. Using Remark~\ref{remk:BMP-compln},
it is clear that there is a canonical surjective morphism
  of pro-abelian groups $\{C(X^o)\} \surj \{\CH_0(X|D)\}_{D \in \Div_{D^\dagger}(X)}$.
  Taking the limit, we get a homomorphism of abelian groups
  \begin{equation}\label{eqn:Idele-0}
    \theta_{X^o} \colon C(X^o) \to {\underset{D \in \Div_{D^\dagger}(X)}
      \varprojlim} \ \CH_0(X|D).
  \end{equation}
It is well known (e.g., see the proof of \cite[Prop.~5.3]{GKR}) that the sequence
    ~\eqref{eqn:Chow-Parshin-0} and the one in the definition of $C(X^o)$ are
    covariantly functorial for a finite morphism of modulus pairs $f \colon (X',D') \to
    (X,D)$. Hence, they induce push-forward maps $f_* \colon \CH_0(X'|D') \to \CH_0(X|D)$
    and $f_* \colon C(X'^o) \to C(X^o)$.
    Furthermore, ~\eqref{eqn:Idele-0} is compatible with the push-forward maps.

\subsection{The pairing between Brauer and Chow groups with modulus}
\label{sec:Pair**}
Let the notations be as above.
We shall now define a bilinear pairing between the Brauer group and the Chow group
of the modulus pair $(X,D)$. In the following sections, we shall prove several
properties of this pairing which will be key to the proofs of our main results.

Given a closed point $x \in X^o$ with the inclusion
$\iota_x \colon \Spec(k(x)) \inj X^o$,
we have the pull-back map $\iota^*_x \colon \Br(X^o) \to \Br(k(x))$. By composing with
$\inv_{k(x)}$, we get a canonical map ${\iota}^*_x \colon \Br(X^o) \to {\Q}/{\Z}$.
We thus get a bilinear pairing $\Br(X^o) \times \sZ_0(X^o) \to {\Q}/{\Z}$ given by
$\<\omega, \sum_i n_i[x_i]\> = \sum_i n_i {\iota}^*_{x_i}(\omega)$.

Suppose now that $d = 1$ and $x \in D$, where $D = \sum_x n_x [x]$.
Since $K_x$ is a Henselian discrete
valuation field whose residue field is a local field, the cup product of
~\eqref{eqn:NR-0} and \lemref{lem:Hensel-complete}(1) (we note here that
\lemref{lem:Hensel-complete}(1) holds in characteristic zero as well, see
\cite[p.~311]{Kato-80}) together induce a
bilinear pairing $\Br(K_x) \times K^\times_x \to {\Q}/{\Z}$. By composing with
the canonical map $\Br(X^o) \to \Br(K_x)$, this yields a pairing
$\Br(X^o) \times K^\times_x \to {\Q}/{\Z}$. Moreover, it follows from
Definition~\ref{defn:Kato-filt} that the induced pairing
$\Br(X|D) \times K^\times_x \to {\Q}/{\Z}$ uniquely factors through
$\Br(X|D) \times {K^\times_x}/{\Fil_{n_x} K^\times_x} \to {\Q}/{\Z}$.
Summing over the points of $D$, we get a pairing
$\Br(X|D) \times {K^\times_\infty}/{I(X|D)} \to {\Q}/{\Z}$.

It follows from the above discussion that for any $d \ge 1$, there
exist bilinear pairings
\begin{equation}\label{eqn:BMP-0*}
  \Br(X^o) \times \left[\sZ_0(X^o) \bigoplus
    \left({\underset{C \in \sC(X|D)}\bigoplus} {k(C)^\times_\infty} 
    \right)\right] \to {\Q}/{\Z}; \ \ \mbox{and}
\end{equation}
\begin{equation}\label{eqn:BMP-0}
  \Br(X|D) \times \left[\sZ_0(X^o) \bigoplus
    \left({\underset{C \in \sC(X|D)}\bigoplus} \frac{k(C)^\times_\infty}{I(C_n|D)} 
    \right)\right] \to {\Q}/{\Z}.
\end{equation}

One checks (cf. Remark~\ref{remk:BMP-compln}) that the pairing
  ~\eqref{eqn:BMP-0} remains unchanged if we replace
  $k(C)^\times_\infty$ by $\wh{k(C)}^\times_\infty$ and $I(C_n|D)$ by
  $\wh{I}(C_n|D)$ for $C \in \sC(X|D)$.

  \vskip .2cm
  
We recall the following reciprocity law due to Kato-Saito \cite{Kato-Saito-83}
when $d =1$.
Let $m \neq 0$ be an integer. Using the localization sequence, together with the
purity theorem (due to Gabber when $p \nmid m$) and \corref{cor:Gersten-10}
when $m = p^n$), we get maps
\begin{equation}\label{eqn:SD-4}
  H^3(K, {\Z}/m(2)) \xrightarrow{\partial_X} {\underset{x \in X_{(0)}}\bigoplus}
  H^2(k(x), {\Z}/m(1))
  \xrightarrow{\sum_x\inv_{k(x)}} {\Z}/m.
\end{equation}

\begin{lem}\label{lem:Rec-Law}
  The composite map $(\sum_x \inv_{k(x)}) \circ \partial_X$ in ~\eqref{eqn:SD-4} is zero.
\end{lem}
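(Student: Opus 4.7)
The plan is to split $m = p^a m'$ with $p \nmid m'$ (with the convention $p^a = 1$ when $\Char(k) = 0$) and use the decomposition $\Z/m(2) \cong \Z/p^a(2) \oplus \Z/m'(2)$ in $\sD_\et(X)$ from \secref{sec:EMC} to reduce to the two separate cases: $m$ a power of $p$ and $m$ coprime to $p$. In both cases the argument has the same shape. One interprets $\partial_X$ as the boundary in a localization/Gersten sequence on $X$, identifies the next map in that sequence with the Gysin pushforward $\sum_x (\iota_x)_*$, and then applies a trace homomorphism on $X$ whose composition with each $(\iota_x)_*$ equals $\inv_{k(x)}$.

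For the $p$-primary part $m = p^n$, the approach is to identify $H^3(K, \Z/p^n(2)) \cong H^1(K, W_n\Omega^2_{K,\log})$ and $H^2(k(x), \Z/p^n(1)) \cong H^1(k(x), W_n\Omega^1_{k(x),\log})$, so that $\partial_X$ becomes the Kato residue $\res$ appearing in the Gersten resolution \eqref{eqn:Gersten-0}. The exact sequence \eqref{eqn:Gersten-9-0} then exhibits $H^2(X, W_n\Omega^2_{X,\log})$ as the cokernel of $\res$, so for every $\chi \in H^1(K, W_n\Omega^2_{K,\log})$ one has $\sum_x (\iota_x)_* \partial_x \chi = 0$ in $H^2(X, W_n\Omega^2_{X,\log})$. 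Composing with the trace isomorphism $\Tr_X$ of \propref{prop:Gersten-9} and invoking the compatibility $\Tr_X \circ (\iota_x)_* = \inv_{k(x)}$ from \corref{cor:Gersten-10} yields $\sum_x \inv_{k(x)} \partial_x \chi = 0$.

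For the prime-to-$p$ part with $(m,p) = 1$, Gabber's absolute purity identifies $H^4_x(X, \Z/m(2)) \cong H^2(k(x), \Z/m(1))$, under which the boundary in the localization long exact sequence becomes $\partial_x$ and the map $H^4_x(X, \Z/m(2)) \to H^4(X, \Z/m(2))$ becomes the Gysin pushforward $(\iota_x)_*$. Passing to the direct limit over dense opens of $X$ produces the exact sequence
\[
  H^3(K, \Z/m(2)) \xrightarrow{\partial_X}
  \bigoplus_{x \in X_{(0)}} H^2(k(x), \Z/m(1))
  \xrightarrow{\sum (\iota_x)_*} H^4(X, \Z/m(2)),
\]
so $\sum_x (\iota_x)_* \partial_x \chi = 0$. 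One then invokes the proper pushforward trace $\Tr_X \colon H^4(X, \Z/m(2)) \to H^2(k, \Z/m(1)) \xrightarrow{\inv_k} \Z/m$ attached to the smooth projective morphism $f \colon X \to \Spec(k)$. Functoriality of the trace along $\Spec(k(x)) \xrightarrow{\iota_x} X \to \Spec(k)$ identifies $\Tr_X \circ (\iota_x)_*$ with $\inv_k \circ \Cores_{k(x)/k}$, which coincides with $\inv_{k(x)}$ by the norm-compatibility of Hasse invariants for local fields.

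The main technical input, already established in the paper by \propref{prop:Gersten-9} and \corref{cor:Gersten-10}, is precisely the construction of the $p$-primary trace map on $X$ compatible with the closed-point invariants; this is the delicate step because it must be performed for the logarithmic Hodge-Witt realization. Once it is in hand, reciprocity in each prime-power part follows by a short exactness-plus-trace argument, and the prime-to-$p$ part reduces to the classical Poincar\'e duality/trace formalism on the smooth projective curve $X$ over the local field $k$.
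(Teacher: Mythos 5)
Your argument is sound, but it takes a genuinely different route from the paper's: the paper disposes of \lemref{lem:Rec-Law} by a bare citation to the reciprocity law of Kato--Saito \cite[\S~5, p.~120]{Kato-Saito-83}, whereas you derive it internally from the trace machinery of \S~\ref{sec:TRACE}. Your reduction is legitimate and non-circular: \propref{prop:Gersten-9} and \corref{cor:Gersten-10} are established in \S~\ref{sec:HWD}, well before \lemref{lem:Rec-Law} is invoked, and their construction rests only on residue reciprocity for Milnor $K$-theory of the function field (\cite[Lem.~4]{Kato-83}, used at ~\eqref{eqn:Gersten-2}). Once $\Tr_X$ with $\Tr_X\circ(\iota_x)_* = \inv_{k(x)}$ is in hand, the $p$-primary reciprocity is indeed the triviality that two consecutive maps in the complex ~\eqref{eqn:Gersten-9-0} compose to zero, and the prime-to-$p$ part is the classical trace formalism for the smooth projective curve (the needed compatibility for $p\nmid m$ is \cite[Lem.~9.7]{GKR}, which the paper itself uses in ~\eqref{eqn:LD-GD-2}). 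The one step your write-up passes over, and which deserves an explicit sentence, is that $\partial_X$ in ~\eqref{eqn:SD-4} is defined as the localization boundary followed by the inverse of the purity isomorphism, while the map you precompose with $\sum_x(\iota_x)_*$ in ~\eqref{eqn:Gersten-9-0} is Kato's residue from the Gersten resolution ~\eqref{eqn:Gersten-0}; the agreement of the two is exactly the commutativity of the left square of ~\eqref{eqn:Gersten-7} in the $p$-primary case, and Gabber purity together with the standard computation of the boundary in the prime-to-$p$ case. With that compatibility recorded, your proof is complete and has the advantage of being self-contained within the paper and of making transparent that the cohomological reciprocity is a formal consequence of the trace construction; the citation to \cite{Kato-Saito-83} buys brevity and sidesteps that compatibility check.
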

\begin{proof}
  This is proven in \cite[\S~5, p.~120]{Kato-Saito-83}.
  \end{proof}

\begin{prop}\label{prop:BMP-1}
  The pairings ~\eqref{eqn:BMP-0*} and ~\eqref{eqn:BMP-0} descend to bilinear maps
  of abelian groups
  \[
    \Br(X^o) \times C(X^o) \to {\Q}/{\Z}; \ \mbox{and}
    \]
    \[
    \Br(X|D) \times \CH_0(X|D) \to {\Q}/{\Z}.
  \]
\end{prop}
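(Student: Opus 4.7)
The plan is to reduce the vanishing on all defining relations to the reciprocity law \lemref{lem:Rec-Law}, applied to the normalization $\nu\colon C_n \to X$ of each integral curve $C \in \sC(X|D)$. In view of Definition~\ref{defn:Idele} and ~\eqref{eqn:Chow-Parshin-0}, both $C(X^o)$ and $\CH_0(X|D)$ are cokernels of maps out of $\bigoplus_{C \in \sC(X|D)} k(C)^\times$, so it suffices to show that for each such $C$ and each $f \in k(C)^\times$, the pairing of $\omega$ with $(\divf(f), \iota_C(f))$ vanishes in $\Q/\Z$. Since the pairings ~\eqref{eqn:BMP-0*} and ~\eqref{eqn:BMP-0} are defined point-by-point via pull-back to the residue field at each closed point, they are compatible with $\nu$. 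Replacing $\omega$ by $\nu^*(\omega) \in \Br(C_n^o)$ -- which, by Definition~\ref{defn:BGM-0}, lies in $\Br^\divv(C_n|\nu^*(D))$ when $\omega \in \Br(X|D)$ -- reduces us to the case where $X$ is itself a regular projective integral curve over a local field, $\omega \in \Br(K)$ is unramified on $X^o$, and $f \in K^\times$.

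In this situation, the strategy is to establish
\[
  \<\omega, (\divf(f), \iota_C(f))\> \;=\; \sum_{x \in X_{(0)}} \inv_{k(x)}\bigl(\partial_x(\omega \cup f)\bigr),
\]
with $\partial_X$ the map of ~\eqref{eqn:SD-4}, and then to invoke \lemref{lem:Rec-Law}. At a point $x \in X^o$ the class $\omega|_{K_x}$ extends to $H^2(\sO^h_{X,x})$, so the standard tame-symbol formula $\partial_x(\omega \cup f) = v_x(f)\cdot \omega|_{k(x)}$ identifies the $x$-th contribution with $v_x(f)\cdot \iota_x^*(\omega)$; at a point $x \in D^\dagger$, the pairing is by construction $\inv_{k(x)} \circ \partial_x$ applied to $\omega \cup f$, via ~\eqref{eqn:NR-0} and \lemref{lem:Hensel-complete}(1). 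Lifting $\omega$ to a class in $H^2(X^o, {\Z}/m(1))$ for some $m$ killing it (using ~\eqref{eqn:Mot-coh-2}) places $\omega \cup f$ in $H^3(K, {\Z}/m(2))$, and the sum is then zero by \lemref{lem:Rec-Law}.

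For the second pairing we must additionally check that ~\eqref{eqn:BMP-0*} kills each subgroup $I(C_n|D) \subset k(C)^\times_\infty$ whenever $\omega \in \Br(X|D)$, so that ~\eqref{eqn:BMP-0} is well defined. For $u_x \in 1 + \sI_{\nu^*(D)}\sO^h_{C_n,x}$ at $x \in \nu^{-1}(D^\dagger)$ of multiplicity $m_x$ in $\nu^*(D)$, we have $u_x \in 1 + \pi_x^{m_x}\sO^h_{C_n,x}$ while $\nu^*(\omega)|_{K_x} \in \Fil_{m_x} H^2(K_x)$ by Definition~\ref{defn:BGM-0}, so Definition~\ref{defn:Kato-filt}(2) (with $L = L' = K_x$) directly yields $\<\nu^*(\omega)|_{K_x}, u_x\> = 0$. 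The principal technical obstacle is matching our two flavours of local contribution with the residue $\partial_x$ of ~\eqref{eqn:SD-4}: one needs that the isomorphism $H^3(K_x) \cong \Q/\Z$ in \lemref{lem:Hensel-complete}(1) factors as $\inv_{k(x)} \circ \partial_x$, and that the tame-symbol formula recovers the naive contribution $v_x(f)\cdot \iota_x^*(\omega)$ at points of $X^o$. Once these compatibilities are granted, the desired vanishing follows purely formally from \lemref{lem:Rec-Law}.
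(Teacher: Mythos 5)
Your proposal is correct and follows essentially the same route as the paper: reduce to the case of a regular curve via the normalizations $\nu\colon C_n\to X$ (using \propref{prop:BGM-1} and the projection formula for invariants of restricted Brauer classes), and then identify the pairing against $(\divf(f),\iota_C(f))$ with $\sum_x \inv_{k(x)}\partial_x(\omega\cup f)$, which vanishes by the Kato--Saito reciprocity law (\lemref{lem:Rec-Law}). The paper packages the same computation through the idele group $I(X)$ and the diagram ~\eqref{eqn:BMP-1-2}, whereas you make the tame-symbol compatibility at points of $X^o$ explicit (the paper asserts it as ``clearly compatible''), and you re-verify the factorization through $I(C_n|D)$, which the paper had already built into the construction of ~\eqref{eqn:BMP-0}; these are presentational differences only.
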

\begin{proof}
We let $C \in \sC(X|D)$ and let $\nu \colon C_n \to X$ be the canonical map from the
  normalization of $C$. We write $E = \nu^*(D)$.
  It follows from \propref{prop:BGM-1} and the remark below
  ~\eqref{eqn:Chow-Parshin-0} that there is a diagram
  \begin{equation}\label{eqn:BMP-1-0}
    \xymatrix@C1pc{
      \Br(X|D) \times \sZ_0(X^o) \ar@<-7ex>[d]_{\nu^*} \ar[r] &  {\Q}/{\Z} \ar[d]^-{\id} \\
      \Br(C_n|E) \times \sZ_0(C^o_n) \ar@<-7ex>[u]_-{\nu_*} \ar[r] & {\Q}/{\Z}.}
    \end{equation}
    It follows easily form the construction of ~\eqref{eqn:BMP-0} that this diagram is
    commutative. Using this, and a similar commutative diagram
    associated to the first pairing, it suffices to prove the
    proposition when $d =1$. We shall thus assume that $X$ is a curve.

We let $w \in \Br(X^o)$ and let $\chi_w \colon
    \sZ_0(X^o) \bigoplus {K^\times_\infty} \to {\Q}/{\Z}$ be the associated
    character. We shall show that the composite map
    \begin{equation}\label{eqn:BMP-1-1}
      K^\times \to \sZ_0(X^o) \bigoplus K^\times_\infty \xrightarrow{\chi_w}
      {\Q}/{\Z}
    \end{equation}
    is zero. This will automatically imply that the composite map
    \[
       K^\times \to \sZ_0(X^o) \bigoplus ({K^\times_\infty}/{I(X|D)}) \xrightarrow{\chi_w}
       {\Q}/{\Z}
     \]
     is zero if $w \in \Br(X|D)$. 

We let $I(X) := {\underset{x \in X_{(0)}}{\prod'}} K^\times_x$ denote the restricted
    product with respect to the subgroups $(\sO^h_{X,x})^\times$. We consider the diagram
    \begin{equation}\label{eqn:BMP-1-2}
      \xymatrix@C1pc{
        K^\times \ar[r]^-{\alpha} \ar[dr]_-{(\divf, \iota_X)} & I(X) \ar[r]^-{\gamma}
        \ar[d]^-{\beta} & \Hom_{\Ab}(H^2(K), {\Q}/{\Z}) \ar[d] \\
        & \sZ_0(X^o) \bigoplus {K^\times_\infty} \ar[r]^-{\chi_w} &
        \Hom_{\Ab}(\Br(X^o), {\Q}/{\Z}),}
    \end{equation}
    where $\alpha$ is the canonical inclusion,
    the bottom horizontal arrow is induced by the pairing ~\eqref{eqn:BMP-1-1}
    and the right vertical arrow is induced by the canonical inclusion
    $\Br(X^o) \inj \Br(K)$. To describe $\beta$, we write
    $I(X) = I(X^o) \bigoplus K^\times_\infty$. We let $\beta$ to be identity on
    $K^\times_\infty$ and $\beta(a_x) = v_x(a_x)$ if $a_x \in K^\times_x$ with
    $x \in X^o$. This is well-defined because $v_x((\sO^h_{X,x})^\times) = 0$.
    It is clear that the left triangle commutes.

To define $\gamma$, we let $\chi \in H^2(K)$ and $(a_x) \in I(X)$. It follows
    from \lemref{lem:Brauer-reln}(3) that $\chi \in \Br(U)$ for some dense
    open subscheme $U \subset X^o$. We conclude by \cite[Thm.~2.7(4)]{Saito-Invent}
    that $\<\chi, a_x\> = 0$ for all $x \in U$. In particular, the sum
    $\sum_{x \in X_{(0)}} \<\chi, a_x\>$ is finite. This shows that the cup product
    of \lemref{lem:Hensel-complete}(4) induces a pairing
    \begin{equation}\label{eqn:BMP-1-3}
      H^2(K) \times I(X) \to {\Q}/{\Z},
    \end{equation}
    which is clearly compatible with ~\eqref{eqn:BMP-0*}.
    Letting $\gamma$ denote the induced map $I(X) \to \Hom_{\Ab}(H^2(K), {\Q}/{\Z})$,
it follows that the right square in ~\eqref{eqn:BMP-1-2} is commutative. 
Hence, it suffices to show that $\gamma \circ \alpha = 0$.

We let $a \in K^\times, \ \chi \in H^2(K)$ and write $b = \<\chi, a\> \in
H^3(K)$. We let $a_x$ (resp. $\chi_x$) be the image of $a$ (resp. $\chi$) in
$K^\times_x$ (resp. $H^3(K_x)$). We need to show that
$\sum_{x \in X_{(0)}} \<\chi_x, a_x\> = 0$. Equivalently, we need to show that
$(\sum_x \inv_{k(x)}) \circ \partial_X(b) = 0$ in ~\eqref{eqn:SD-4}.
But this follows from \lemref{lem:Rec-Law}.  
\end{proof}

The following is straightforward from the construction of the pairings
~\eqref{eqn:BMP-0*} and ~\eqref{eqn:BMP-0}.

\begin{cor}\label{cor:Mod-non-mod}
  There is a commutative diagram of bilinear pairings
  \begin{equation}\label{eqn:Mod-non-mod-0}
    \xymatrix@C1pc{
      ``{{\underset{n}\varinjlim}}" \Br(X|nD) \times ``{{\underset{n}\varprojlim}}"
      \CH_0(X|nD) \ar@<-7ex>[d] \ar[r] &  {\Q}/{\Z} \ar[d]^-{\id} \\
      \Br(X^o) \hspace*{.4cm} \times \hspace*{.4cm}
      C(X^o) \ar@<-7ex>[u] \ar[r] & {\Q}/{\Z}}
    \end{equation}
    between ind and pro abelian groups.
\end{cor}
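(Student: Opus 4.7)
The plan is to unwind the two pairings at the finite level and show that they fit into a commutative square before passing to the limits. Throughout, we shall use the isomorphism ${\varinjlim}_n \Br(X|nD) \xrightarrow{\cong} \Br(X^o)$ coming from \lemref{lem:Brauer-reln}(2), together with the canonical map $\theta_{X^o} \colon C(X^o) \to {\varprojlim}_n \CH_0(X|nD)$ of ~\eqref{eqn:Idele-0}, as the two non-trivial vertical arrows of the diagram.

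I would first fix an integer $n \ge 1$ and explain what has to be checked. By \propref{prop:BMP-1}, we have on one hand the pairing $\Br(X^o) \times C(X^o) \to {\Q}/{\Z}$ induced by ~\eqref{eqn:BMP-0*} and on the other hand the pairing $\Br(X|nD) \times \CH_0(X|nD) \to {\Q}/{\Z}$ induced by ~\eqref{eqn:BMP-0}. Using the inclusion $\Br(X|nD) \subseteq \Br(X^o)$ and the projection $\pi_n \colon C(X^o) \to \CH_0(X|nD)$ (which exists because the map of \defref{defn:Idele} factors through that of ~\eqref{eqn:Chow-Parshin-0}, cf. Remark~\ref{remk:BMP-compln}), I want to show that the induced pairings on $\Br(X|nD) \times C(X^o)$ agree. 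Since both pairings are constructed component-wise, this reduces to two local checks. For the components in $\sZ_0(X^o)$, both pairings simply apply $\iota_x^*$ for closed points $x \in X^o$, so they tautologically coincide. For the components at $D$ (parameterized by curves $C \in \sC(X|D)$ and points $x \in \nu^{-1}(D^\dagger)$), ~\eqref{eqn:BMP-0*} pairs $w \in \Br(X^o)$ with $a_x \in \wh{k(C)}_x^\times$ via $\<w, a_x\>$ in $\Br(K_{C,x})$, while ~\eqref{eqn:BMP-0} uses the same formula but factored through ${\wh{k(C)}_x^\times}/{(1 + \sI_{nD} \wh{\sO_{C_n,x}})}$; by \defref{defn:BGM} together with \defref{defn:Kato-filt}, this factorization is valid precisely because $w \in \Br(X|nD)$ annihilates $\Fil_{n \cdot n_x} K^\times_{C,x} = 1 + \sI_{nD}\wh{\sO_{C_n,x}}$ via the ramification filtration. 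Hence the two pairings agree on $\Br(X|nD) \times C(X^o)$.

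Next I would assemble these compatibilities over all $n$. The collection of pairings $\{\Br(X|nD) \times \CH_0(X|nD) \to {\Q}/{\Z}\}_{n \ge 1}$ is compatible with the structure maps of the ind-system $\{\Br(X|nD)\}$ and the pro-system $\{\CH_0(X|nD)\}$, as is transparent from the fact that both arise by restriction from the single pairing ~\eqref{eqn:BMP-0*}. Therefore, passage to the colimit in the Brauer variable and the inverse limit in the Chow variable yields a well-defined pairing
\[
  {\underset{n}\varinjlim} \ \Br(X|nD) \times {\underset{n}\varprojlim} \ \CH_0(X|nD) \to {\Q}/{\Z}
\]
which, by construction, fits into the required square when we use the identification ${\varinjlim}_n \Br(X|nD) = \Br(X^o)$ on the left and the map $\theta_{X^o}$ on the right.

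This is essentially a bookkeeping argument and there is no hard obstruction. The only point that requires care is the compatibility at the boundary places $x \in \nu^{-1}(D^\dagger)$ of a curve $C \in \sC(X|D)$, where one must verify that a Brauer class in $\Br(X|nD)$ restricts to a character of $K_{C,x}^\times$ that is trivial on the filtration subgroup $\Fil_{n \cdot n_x} K_{C,x}^\times$; this is precisely the content of \defref{defn:BGM} combined with \lemref{lem:Hensel-complete}(4) (which identifies the ramification filtration on $H^2(K_{C,x})$ with the annihilator of the logarithmic filtration on $K_{C,x}^\times$ under the cup product pairing). Everything else is formal.
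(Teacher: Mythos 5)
Your proposal is correct and follows exactly the route the paper intends: the paper offers no proof beyond the remark that the corollary "is straightforward from the construction of the pairings ~\eqref{eqn:BMP-0*} and ~\eqref{eqn:BMP-0}", and your argument is precisely the natural unwinding of that remark — both pairings restrict from the single pairing ~\eqref{eqn:BMP-0*}, the modulus pairing factors through the quotient by $I(C_n|nD)$ because of Kato's filtration (Definition~\ref{defn:Kato-filt} and \lemref{lem:Hensel-complete}(4)), and compatibility with the transition maps lets one pass to the (co)limits. The only cosmetic point is that at a boundary place the relevant filtration index is $n$ times the multiplicity of $\nu^*(D)$ at $x$ rather than literally $n\cdot n_x$ when $\nu$ is ramified, but this does not affect the argument.
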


The following corollary provides a quick proof of 
the Brauer-Manin duality isomorphism $\beta_X \colon \Br(X) \xrightarrow{\cong}
(\CH_0(X)^\star)_\tor$ of Lichtenbaum \cite{Lichtenbaum} and Saito \cite{Saito-Invent},
assuming that $\beta_X$ is injective. Recall that the main difficulty in proving that
$\beta_X$ is an isomorphism lies in showing its surjectivity.

\begin{cor}\label{cor:LS-thm}
  Let $X$ be a geometrically connected smooth projective curve over $k$.
  Then the pairing of \propref{prop:BMP-1} (with $D = \emptyset$) induces
  an isomorphism $\beta_X \colon \Br(X) \xrightarrow{\cong} (\CH_0(X)^\star)_\tor$.
\end{cor}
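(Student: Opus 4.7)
The plan is to decompose $\Br(X)$ by primes and reduce to proving, for each integer $n \ge 1$, that the induced map $\beta_{X,n} \colon {}_n\Br(X) \to (\CH_0(X)/n)^\star$ is surjective (injectivity is assumed throughout). Since $\Br(X)$ is torsion and $\CH_0(X)/n$ is finite and discrete (by \corref{cor:Pic-sequence-2} together with the compactness of $\Pic^0(X)$, which makes $\Pic^0(X)/n$ finite), passing to the colimit over $n$ yields $(\CH_0(X)^\star)_\tor \cong \varinjlim_n (\CH_0(X)/n)^\star$. The prime-to-$p$ component of $\beta_{X,n}$ is an isomorphism by the classical Tate-Lichtenbaum duality (using the Kummer sequence $0 \to \mu_m \to \G_m \to \G_m \to 0$ for $\gcd(m,p)=1$ in place of~\eqref{eqn:GL*}), so the new input is the $p$-primary part.

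For the $p$-primary part, the exact sequence~\eqref{eqn:GL*} with $r = 1$ yields
\[
0 \to \Pic(X)/p^n \to H^1(X, W_n\Omega^1_{X, \log}) \to {}_{p^n}\Br(X) \to 0
\]
(as in the proof of \lemref{lem:H1-fin}). By \propref{prop:Top-Coh-X}, $\Pic(X)/p^n = F_{1,1}$ is the open profinite subgroup of the torsion-by-profinite group $H^1(X, W_n\Omega^1_{X,\log})$, with discrete torsion quotient ${}_{p^n}\Br(X)$, and \thmref{thm:Duality-Main} provides a perfect self-pairing on $H^1(X, W_n\Omega^1_{X,\log})$.

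The key step is to check that the cup product vanishes on $\Pic(X)/p^n \times \Pic(X)/p^n$. Given two line bundles, Riemann-Roch for $X$ allows one to represent them by divisors $D_1, D_2$ with disjoint supports $Z_1, Z_2$; then $\dlog(\sO_X(D_1))$ and $\dlog(\sO_X(D_2))$ lift to classes in $H^1_{Z_1}(X, W_n\Omega^1_{X,\log})$ and $H^1_{Z_2}(X, W_n\Omega^1_{X,\log})$ respectively, so their cup product lies in $H^2_{Z_1 \cap Z_2}(X, W_n\Omega^2_{X,\log}) = 0$. Combining this isotropy with the Pontryagin dual of the short exact sequence above and the self-duality from \thmref{thm:Duality-Main}, the perfect pairing descends to a surjection ${}_{p^n}\Br(X) \surj (\Pic(X)/p^n)^\star$. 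To identify this surjection with $\beta_{X,p^n}$, one uses the identity
\[
\omega \cup \dlog(\sO_X(x)) = (\iota_x)_*(\iota_x^*(\omega))
\]
in $H^2(X, W_n\Omega^2_{X,\log})$ for any closed point $x$ and any $\omega \in H^1(X, W_n\Omega^1_{X,\log})$; applying $\Tr_X$ and invoking \corref{cor:Gersten-10} gives $\inv_{k(x)}(\iota_x^*(\omega))$, matching the Brauer-Manin pairing at $[x]$. Extending linearly in divisors handles all of $\Pic(X)/p^n$, and together with the assumed injectivity of $\beta_X$ this completes surjectivity of the $p$-primary component.

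The principal obstacle is establishing the Gysin--cup-product compatibility in the logarithmic Hodge-Witt setting, namely the cycle-class identification $\dlog(\sO_X(x)) = (\iota_x)_*(1)$ in $H^1(X, W_n\Omega^1_{X,\log})$ and the projection formula $(\iota_x)_*(\iota_x^*(\omega)) = \omega \cup (\iota_x)_*(1)$ in $H^2(X, W_n\Omega^2_{X,\log})$. These should reduce, via the purity isomorphism at $x$ (given by cup product with $\dlog([t_x])$, cf.~\eqref{eqn:Gersten-7}), to a local computation that one then propagates globally using the dga structure of $W_n\Omega^\bullet_{X,\log}$.
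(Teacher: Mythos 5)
Your proposal is correct and its skeleton matches the paper's: both arguments pivot on the self-duality of $H^1(X, W_n\Omega^1_{X,\log})$ from Theorem~\ref{thm:Duality-Main} applied to the exact sequence $0 \to {\CH_0(X)}/{p^n} \to H^1(X, W_n\Omega^1_{X,\log}) \to {}_{p^n}\Br(X) \to 0$, on the prime-to-$p$ duality for the $\ell \neq p$ components, and on the Gysin/projection-formula compatibility $\Tr_X(\tilde\omega \cup \cyc(x)) = \inv_{k(x)}(\iota_x^*\omega)$ that you correctly single out as the crux (the paper dispatches it as ``an easy exercise'' in the commutativity of the diagram ~\eqref{eqn:LS-thm-0}). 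Where you differ is in the packaging: the paper dualizes the whole exact sequence and runs a four-lemma argument to show that $\alpha_{p^n} \colon {\CH_0(X)}/{p^n} \to ({}_{p^n}\Br(X))^\star$ is an isomorphism and then applies Pontryagin duality, whereas you prove that ${\CH_0(X)}/{p^n}$ is isotropic for the cup-product self-pairing (by moving divisors to disjoint supports and using $H^2_{Z_1 \cap Z_2} = 0$) and descend the perfect pairing to a surjection ${}_{p^n}\Br(X) \surj ({\CH_0(X)}/{p^n})^\star$. Your isotropy statement is in fact a consequence of the commutativity the paper asserts (pair two classes of ${\CH_0(X)}/{p^n}$ against each other and note that one of them dies in ${}_{p^n}\Br(X)$), so your route makes explicit a piece of the ``easy exercise'' and is a clean alternative; the remaining compatibility you flag as the principal obstacle is exactly the content left implicit in the paper as well, and your purity-based local reduction is the right way to do it. One small correction: in characteristic $p$ the group ${\CH_0(X)}/{p^n}$ is profinite but need not be finite (the paper is careful to claim only profiniteness, with a footnote that this is where smoothness of $X$ enters); this does not affect your argument, since the identification $(\CH_0(X)^\star)_\tor \cong {\varinjlim}_n \, ({\CH_0(X)}/{n})^\star$ and the Pontryagin duality between profinite groups and discrete torsion groups require only profiniteness.
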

\begin{proof}
We fix an integer $n \ge 1$ and look at the diagram
  \begin{equation}\label{eqn:LS-thm-0}
    \xymatrix@C1pc{
      0 \ar[r] & {\CH_0(X)}/{p^n} \ar[r] \ar[d]_{\alpha_{p^n}} & H^1(X, W_n\Omega^1_{X, \log})
      \ar[d]^-{\beta_{p^n}} \ar[r] & _{p^n}\Br(X) \ar[r] \ar[d]^-{\gamma_{p^n}} & 0 \\
      0 \ar[r] & (_{p^n}\Br(X))^\star \ar[r]  & H^1(X, W_n\Omega^1_{X, \log})^\star \ar[r] &
      ({\CH_0(X)}/{p^n})^\vee, & }
    \end{equation}
    where $\alpha_{p^n}$ and $\gamma_{p^n}$ are induced by the pairing
    ~\eqref{eqn:Mod-non-mod-0}. The map $\beta_{p^n}$ is induced by
    the pairing of \thmref{thm:Duality-Main}.
    From the definition of the torsion-by-profinite topology of
    $H^1(X, W_n\Omega^1_{X, \log})$, one knows that the image of the map
    ${\CH_0(X)}/{p^n} \to H^1(X, W_n\Omega^1_{X, \log})$
    in the top row ~\eqref{eqn:LS-thm-0}
    is open (see ~\eqref{eqn:H1-fin-4} and \S~\ref{sec:Top-X}).
    In particular, the quotient topology of $_{p^n}\Br(X)$ is discrete.
    This shows that the bottom arrow of ~\eqref{eqn:LS-thm-0} is defined and
    exact. It is an easy exercise that this diagram is commutative.

The middle arrow is bijective by \thmref{thm:Duality-Main}.
    Since $\gamma_{p^n}$ is known to be injective (cf. \lemref{lem:BM-pf-0}), it follows
    that $\alpha_{p^n}$ is an isomorphism. Since
    ${\CH_0(X)}/{p^n}$ is profinite\footnote{This is the only place where the
      smoothness of $X$ is used.}, it follows from \lemref{lem:Prof-dual} and
    Pontryagin duality between profinite and discrete torsion groups that
    $\gamma_{p^n}$ maps $_{p^n}\Br(X)$ isomorphically onto $({\CH_0(X)}/{p^n})^\star$.
    Taking the limit, we conclude that
    $\beta_X \colon \Br(X)\{p\} \xrightarrow{\cong} (\CH_0(X)^\star)\{p\}$
    is an isomorphism. 
 An identical argument (with \thmref{thm:Duality-Main}
    replaced by Saito-Tate duality, see \cite[Thm.~9.9]{GKR})
    shows that $\beta_X \colon \Br(X)\{p'\} \xrightarrow{\cong} (\CH_0(X)^\star)\{p'\}$.
    is also bijective. This concludes the proof.
\end{proof}

We end this section with the following functorial property of the Brauer-Manin
pairings of modulus pairs.

\begin{lem}\label{lem:BMP-2}
  Let $f \colon (X',D) \to (X,D)$ be a finite and strict morphism between modulus pairs,
  where $X$ and $X'$ are connected regular projective curves over $k$. Then we have
  a commutative diagram
  \begin{equation}\label{eqn:BMP-2-0}
    \xymatrix@C1pc{
\Br(X'|D') \times \CH_0(X'|D') \ar@<-7ex>[d]_-{f_*} \ar[r] &  {\Q}/{\Z}
      \ar[d]^-{\id} \\
      \Br(X|D) \times \CH_0(X|D) \ar@<-7ex>[u]_-{f^*} \ar[r] & {\Q}/{\Z}.}
    \end{equation}
  \end{lem}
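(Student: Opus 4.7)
The plan is to exploit the uniform local description of the Brauer--Manin pairing: both sides can be computed as sums of local contributions indexed by closed points of $X$, and the desired compatibility then reduces to a local identity at each point, which is an immediate consequence of \lemref{lem:Kato-PB-PF}.

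First, I reduce to the level of the presentation of $\CH_0(X|D)$. Since $X$ is a curve, $\sC(X|D) = \{X\}$, and the exact sequence ~\eqref{eqn:Chow-Parshin-0} writes $\CH_0(X|D)$ as a quotient of $\sZ_0(X^o) \oplus K^\times_\infty/I(X|D)$. Arguing as in the proof of \propref{prop:BMP-1}, the pairing extends to a uniform local pairing: for $(\omega, (a_x)) \in \Br(X^o) \times \bigoplus_{x \in X_{(0)}} K_x^\times$, the value is $\sum_x \widetilde{\inv}_{K_x}(\omega_x \cup [a_x])$, where $\widetilde{\inv}_{K_x}\colon H^3(K_x) \xrightarrow{\cong} \Q/\Z$ is the isomorphism of \lemref{lem:Hensel-complete}(1). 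Under this description, the pull-back $f^*$ on the presentation sends $(a_x)_x$ to $(\res_{K_{f(x')}\to K'_{x'}}(a_{f(x')}))_{x'}$, while the push-forward $f_*$ on the Brauer side is computed locally by corestriction. It then suffices to check, for each closed point $x \in X$ with preimages $x'_1, \ldots, x'_r$ in $X'$, each $a \in K_x^\times$, and each $\omega \in \Br(X'|D')$, the local identity
\[
\widetilde{\inv}_{K_x}\bigl((f_*\omega)_x \cup [a]\bigr) \,=\, \sum_{i = 1}^r \widetilde{\inv}_{K'_{x'_i}}\bigl(\omega_{x'_i} \cup [\res_{K_x\to K'_{x'_i}}(a)]\bigr).
\]

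The local identity is a three-step computation combining standard facts. By the local description of the Brauer push-forward in ~\eqref{eqn:BGM-2-0}, one has $(f_*\omega)_x = \sum_i \Cores_{K'_{x'_i}/K_x}(\omega_{x'_i})$ in $\Br(K_x)$. The projection formula for cup products under restriction and corestriction in {\'e}tale cohomology yields
\[
\Cores_{K'_{x'_i}/K_x}(\omega_{x'_i}) \cup [a] \,=\, \Cores_{K'_{x'_i}/K_x}\bigl(\omega_{x'_i} \cup [\res_{K_x\to K'_{x'_i}}(a)]\bigr)
\]
in $H^3(K_x)$. Finally, \lemref{lem:Kato-PB-PF}(1) gives $\widetilde{\inv}_{K_x} \circ \Cores_{K'_{x'_i}/K_x} = \widetilde{\inv}_{K'_{x'_i}}$ as maps $H^3(K'_{x'_i}) \to \Q/\Z$. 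Chaining these three identities proves the local equality, and summing over $x$ yields the global compatibility.

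The remaining matter is to ensure that $f^*$ and $f_*$ descend correctly from the presentation to $\CH_0(X|D)$ and $\Br(X|D)$, and that the local computation is well-posed within the modulus setting. Both rely on strictness $f^*(D) = D'$, which implies $n_{x'_i} = e_{x'_i/x} n_x$: this ensures that $\res$ carries $1 + \fm_{K_x}^{n_x}$ into $1 + \fm_{K'_{x'_i}}^{n_{x'_i}}$ so that $f^*$ preserves the subgroup $I(X|D)$, while \lemref{lem:Kato-PB-PF}(2) ensures that $\Cores$ sends $\Fil_{n_{x'_i}} H^2(K'_{x'_i})$ into $\Fil_{n_x} H^2(K_x)$ so that the target pairing lies in the correct filtration. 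The main difficulty is the bookkeeping of ramification under strictness; the key technical input, \lemref{lem:Kato-PB-PF}, is already in place.
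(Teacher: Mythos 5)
Your proposal is correct, and it reaches the conclusion by a route that differs from the paper's in where the local verification takes place. The paper exploits the surjection $\sZ_0(X^o) \surj \CH_0(X|D)$ (the left square of its diagram) to reduce the whole statement to checking the identity $\inv_{k(x)}(\iota_x^*(f_*w')) = \sum_i n_i \inv_{k(y_i)}(\iota_{y_i}^*(w'))$ on single closed points $x \in X^o$; the computation then happens entirely in the Brauer groups of the residue fields $k(x)$, $k(y_i)$ (finite extensions of $k$), and the points of $D$ and the idele part of the presentation are never touched. You instead verify a uniform identity at every closed point of $X$ at the level of the two-dimensional local fields $K_x$, combining the local description of $f_*$ from ~\eqref{eqn:BGM-2-0}, the projection formula, and the corestriction compatibility $\widetilde{\inv}_{K_x}\circ\Cores = \widetilde{\inv}_{K'_{x'_i}}$ of \lemref{lem:Kato-PB-PF}(1). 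Your route is heavier in that it relies on the compatibility (established inside the proof of \propref{prop:BMP-1}, via the right square of ~\eqref{eqn:BMP-1-2}) between the residue-field pairing on $\sZ_0(X^o)$ and the $K_x$-level pairing on ideles, and on the well-definedness of $f^*$ on the presentation; but it buys a single statement that treats the $0$-cycle part and the $K^\times_\infty$ part on the same footing and makes visible exactly where strictness enters (the equality $n_{x'_i} = e_{x'_i/x} n_x$ feeding into \lemref{lem:Kato-PB-PF}(2) and the filtration bookkeeping). Both arguments ultimately rest on the same results of Kato (\S 3.2 of his paper) and of Colliot-Th\'el\`ene--Skorobogatov, so the inputs are essentially shared even though the reductions differ.
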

  \begin{proof}
We first note that $f^*$ is the well-known flat pull-back map between
    the Chow groups with modulus and $f_*$ is defined by \propref{prop:BGM-2}.
    To show the commutativity of
    ~\eqref{eqn:BMP-2-0}, we fix a Brauer class $w' \in \Br(X'|D')$ and let
    $w = f_*(w')$. We need to show that the right side triangle in the diagram
   \begin{equation}\label{eqn:BMP-2-1}  
\xymatrix@C1.2pc{ 
\sZ_0(X^o) \ar@{->>}[r] \ar[d]_-{f^*}  & \CH_0(X|D) \ar[d]_-{f^*} \ar[dr]^-{\chi_w} & \\
\sZ_0(X'^o) \ar@{->>}[r] &  \CH_0(X'|D') \ar[r]^-{\chi_{w'}} & {\Q}/{\Z}}
\end{equation}
is commutative.
Since the left square is commutative and its top horizontal arrow is surjective,
it suffices to show that the outer trapezium in ~\eqref{eqn:BMP-2-1} is commutative
when evaluated on every free generator of $\sZ_0(X^o)$.

Let $x \in X^o$ be  a closed point and let $f^*([x]) =
\stackrel{r}{\underset{i = 1}\sum} n_i [y_i]$. Let $\iota_x \colon \Spec(k(x)) \inj X^o$
and $\iota_{y_i} \colon \Spec(k(y_i)) \inj X'^o$ be the inclusion maps.
By the definition of the Brauer-Manin pairing in \S~\ref{sec:Pair**}, we need to show
that $\inv_{k(x)}(\iota^*_x(w)) = \stackrel{r}{\underset{i = 1}\sum} n_i
\inv_{k_{y_i}}(\iota^*_{y_i}(w'))$. But this follows by a combination of
\cite[Prop.~3.8.1, Lem.~3.8.6]{CTS} and \cite[\S~3.2, Prop.~1(2)]{Kato-80}.
\end{proof}

\section{Local vs. global duality for $\G_m$ on compact curve}
\label{sec:LGD}
In this section, we shall establish a duality for some cohomology groups of $\G_m$ on
a smooth projective curve over a local field and show that it is
compatible with the duality {\`a} la Kato \cite{Kato-80} for the local cohomology of
$\G_m$ at closed points of the curve. This compatibility
will be another key step in the proof of \thmref{thm:Main-0}.

\subsection{Local duality for $\G_m$}\label{sec:LD-units}
Let $R$ be a equicharacteristic Henselian discrete valuation ring whose residue field
$\ff$ is a local field of exponential characteristic $p \ge 1$.
Let $L$ denote the quotient field of $R$. Let $x = \Spec(\ff)$ denote the
closed point of $W = \Spec(R)$. We let $\wh{W} = \Spec(\wh{R})$.
We choose a common uniformizer $\pi$ of $R$ and $\wh{R}$,
and let $W_n = \Spec(R/{(\pi^n)})$ for $n \ge 1$. 
The following result is due to Kato \cite[\S~3.5]{Kato-80}.

\begin{thm}\label{thm:Kato-Rec}
  There is a perfect pairing of topological abelian groups
  \[
    \Br(L) \times (L^\times)^\pf \to {\Q}/{\Z},
  \]
  where $\Br(L)$ and $L^\times$ are endowed with the discrete and the Kato topologies,
  respectively.
\end{thm}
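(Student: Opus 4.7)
The plan is to build the pairing from the cup product followed by the invariant map, check that it is continuous for the Kato topology, reduce to the complete case, and then prove perfectness by a d\'evissage along the ramification filtration.

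For the construction, I would first observe that for any integer $m \ge 1$, the norm residue isomorphism identifies $L^\times/m$ with $H^1(L, {\Z}/m(1))$ and $_m\Br(L)$ with $H^2(L, {\Z}/m(1))$, so the cup product together with $\inv_\ff \circ \partial_L \colon H^3(L) \xrightarrow{\cong} {\Q}/{\Z}$ from \lemref{lem:Hensel-complete}(1) yields a pairing $_m\Br(L) \times L^\times/m \to {\Z}/m$. Passing to the direct limit in $m$ gives $\Br(L) \times L^\times \to {\Q}/{\Z}$. To show that this pairing is continuous for the Kato topology on $L^\times$ (and the discrete topology on $\Br(L)$), I would use~\eqref{eqn:Kato-filt-0} to write each $\chi \in \Br(L)$ as an element of $\Fil_n H^2(L)$ for some $n$; then \lemref{lem:Hensel-complete}(4) shows that the induced character of $L^\times$ annihilates the Kato-open subgroup $\Fil_n L^\times$, hence factors through a continuous character of $(L^\times)^\pf$. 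This gives the continuous map $\Br(L) \to ((L^\times)^\pf)^\star$.

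Next I would reduce to the complete case. The isomorphism $H^q(L) \xrightarrow{\cong} H^q(\wh{L})$ of~\eqref{eqn:Hensel-complete-4}, together with the fact (used in the proof of \lemref{lem:Hensel-complete}) that $L^\times/\Fil_n L^\times \xrightarrow{\cong} \wh{L}^\times/\Fil_n \wh{L}^\times$, implies that both $\Br(L) = \Br(\wh{L})$ and $(L^\times)^\pf = (\wh{L}^\times)^\pf$ as topological groups, so the statement reduces to the case where $L$ is a complete discrete valuation field, i.e., a 2-dimensional local field in the classical sense.

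The heart of the argument is perfectness at finite level, which I would establish separately for the prime-to-$p$ and $p$-primary parts. For the prime-to-$p$ part, the Kummer identification $\mu_m \subset L$ makes the pairing $H^2(L, \mu_m) \times H^1(L, \mu_m) \to H^3(L, \mu_m^{\otimes 2}) \cong {\Z}/m$, and perfectness is Kato's local duality for 2-dimensional local fields. For the $p$-primary part I would use~\eqref{eqn:GL*} to identify $_{p^m}\Br(L) \cong H^1(L, W_m\Omega^1_{L,\log})$ and $L^\times/p^m \cong H^0(L, W_m\Omega^1_{L,\log})$, so the pairing becomes the cup product landing in $H^1(L, W_m\Omega^2_{L,\log}) \cong H^3(L, {\Z}/{p^m}(2)) \cong {\Z}/{p^m}$; perfectness follows from the analogue of Kato's duality for 2-dimensional local fields proved using the refined Swan conductor. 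To assemble these finite-level perfect pairings into the asserted topological duality, I would d\'evissage along the filtrations: the bottom piece $\Fil_0 H^2(L) = H^2(\sO_L) \cong H^2(\ff)$ (by \lemref{lem:Hensel-complete}(3) and proper base change for the Henselian local ring $\sO_L$) pairs with $L^\times/\sO_L^\times \cong {\Z}$ via local class field theory for the 1-dimensional local field $\ff$, which is a classical perfect pairing; the higher graded pieces $\Fil_n/\Fil_{n-1}$ of $H^2(L)$ and $L^\times$ are identified via Kato's refined Swan conductor (as in~\eqref{eqn:Hensel-complete-3}) with spaces of differentials on $\ff$, and the induced pairing on graded pieces is perfect by self-duality of $\Omega^\bullet_\ff$ combined with the duality for $\ff$.

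The main obstacle I anticipate is the verification of perfectness at the level of graded pieces for the ramification filtration when $p > 0$: one must show that Kato's refined Swan conductor realizes the adjoint to the logarithmic filtration on $L^\times$ under our pairing, and then that the induced pairing of $\Omega^2_\ff \oplus \Omega^1_\ff$ with an appropriate quotient of $\ff$-vector spaces built from $\Fil_{n-1}L^\times/\Fil_n L^\times$ is perfect. This is the most delicate and technical step; once this is in place, the perfectness of the global pairing of topological groups follows by taking limits, using \lemref{lem:dual-surj} and the fact that $(L^\times)^\pf$ is the inverse limit of the Kato-open finite-index quotients.
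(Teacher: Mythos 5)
The paper does not prove this theorem: it is stated as a result of Kato \cite[\S~3.5]{Kato-80} for the completed field $\wh{L}$, and the only argument supplied is the remark that $\Br(L)\to\Br(\wh{L})$ and $(L^\times)^\pf\to(\wh{L}^\times)^\pf$ are topological isomorphisms, which reduces the Henselian case to Kato's. Your reduction to the complete case via ~\eqref{eqn:Hensel-complete-4} and $L^\times/\Fil_n L^\times\cong\wh{L}^\times/\Fil_n\wh{L}^\times$ is exactly this step, and your construction of the pairing and the continuity argument via ~\eqref{eqn:Kato-filt-0} and \lemref{lem:Hensel-complete}(4) are correct. Up to that point you match the paper.

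Where you go beyond the paper is in attempting to reprove Kato's duality for the complete two-dimensional local field, and there the proposal has a genuine gap. First, invoking ``Kato's local duality for 2-dimensional local fields'' to justify perfectness is circular, since that duality \emph{is} the statement being proved; the prime-to-$p$ part does follow from standard cohomological duality for complete discrete valuation fields, but the $p$-primary part is precisely the hard content and your d\'evissage stops at the decisive step. Second, the d\'evissage as described does not obviously compute $(L^\times)^\pf$: the subgroups $1+\fm_L^n$ are \emph{not} of finite index in $\sO_L^\times$ (the quotient $\sO_L^\times/(1+\fm_L)\cong\fl^\times$ is infinite since $\fl$ is a local field), so filtering only by the ramification filtration produces graded pieces $\fl^\times$ and $\fl$ which are infinite locally compact groups, not finite ones. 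The Kato topology's open finite-index subgroups involve the ``horizontal'' subgroups $(1+I)$ as well, and one must check that the continuous finite-order characters of each graded piece (i.e.\ of $\fl^\times$ via class field theory of $\fl$, and of $\fl\cong\fm_L^{n-1}/\fm_L^n$ via the residue--trace pairing with $\Omega^1_\fl$, noting $\Omega^2_\fl=0$ here) are exactly the images of $\mathrm{gr}_n\,\Fil_\bullet H^2(L)$ under the refined Swan conductor, whose image in $\Omega^1_\fl$ must also be identified. This is the technical heart of \cite[\S~3]{Kato-80}, and acknowledging it as ``the most delicate step'' without carrying it out leaves the theorem unproved. If, as in the paper, you are content to cite Kato for the complete case, your argument is complete and agrees with the paper; as a self-contained proof it is not.
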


We remark that Kato proved this result for $\wh{L}$. However, it is easily checked
that the canonical maps $\Br(L) \to \Br(\wh{L})$ and $({L}^\times)^\pf \to  
(\wh{L}^\times)^\pf$ are topological isomorphisms.

Let $\gamma_L \colon \Br(L) \to (L^\times)^\star$ denote the map induced by the
above pairing. It follows from \thmref{thm:Kato-Rec}
(see \cite[Rem.~2.14]{Saito-Invent}) that there is a commutative diagram of exact
sequences
\begin{equation}\label{eqn:Loc-dual-1}
  \xymatrix@C1pc{
    0 \ar[r] & \Br(R) \ar[r] \ar[d]_-{\gamma_R} & \Br(L) \ar[r] \ar[d]^-{\gamma_L} &
    H^3_{\et,x}(W, \sO^\times_W) \ar[r] \ar[d]^-{\gamma_x} & 0 \\
    0 \ar[r] & {\Z}^\star \ar[r]^-{(v_L)^\star} & (L^\times)^\star \ar[r] & (R^\times)^\star
    \ar[r] & 0,}
\end{equation}
where the duals $(L^\times)^\star$ and $(R^\times)^\star$ are considered with respect to the
Kato topology.

In the above diagram, $\gamma_R$ is an isomorphism because it is the composition 
$\Br(R) \xrightarrow{\cong} \Br(\ff) \xrightarrow{\inv_\ff} {\Q}/{\Z} = {\Z}^\star$.
$\gamma_L$ is an isomorphism if $p > 1$ by \cite[\S~3.5, Rem.~4]{Kato-80}.
It maps $\Br(L)$ isomorphically onto the subgroup $((L^\times)^\star)_\tor$ if $p =1$.
It follows that $\gamma_x$ is an isomorphism if $p > 1$, and maps
$H^3_{x}(W, \sO^\times_W)$ isomorphically onto $((R^\times)^\star)_\tor$ if $p =1$.
Recall that each $\sO^\times(W_n)$ is endowed with the adic topology.

\begin{lem}\label{lem:Loc-dual-3}
  For $R$ as above, we have the following.
  \begin{enumerate}
    \item
      There exists a unique isomorphism $\gamma_x \colon H^3_{x}(W, \sO^\times_W)
      \xrightarrow{\cong} ((R^\times)^\star)_\tor$ such that ~\eqref{eqn:Loc-dual-1} is a
      commutative diagram.
\item
 If $p > 1$, each of the groups $(R^\times)^\star, \  (\wh{R}^\times)^\star$ and
 $(\sO^\times({W_n}))^\star$ ($n \ge 1$) is torsion.
\item
  The canonical maps $H^3_{x}(W, \sO^\times_W) \to H^3_{x}(\wh{W}, \sO^\times_{\wh{W}})$
  and $(\wh{R}^\times)^\star \to  (R^\times)^\star$ are isomorphisms.
\item
If $p > 1$, one has a canonical isomorphism
\begin{equation}\label{eqn:Loc-dual-5}
  \gamma_x \colon H^3_{x}(W, \sO^\times_W) \xrightarrow{\cong}
  {\varinjlim}_n \ (\sO^\times(W_n))^\star,
\end{equation}
where the duals on the right hand side are taken with respect to the adic topology.
\item
  If $p =1$, the canonical map $(\ff^\times)^\star \to (\wh{R}^\times)^\star$
  induces an isomorphism $(\ff^\times)^\star \cong ((\wh{R}^\times)^\star)_\tor$.
\end{enumerate}
\end{lem}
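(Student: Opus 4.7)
The plan is to derive all five statements from Kato's Theorem~\ref{thm:Kato-Rec} (applied to both $R$ and $\wh R$), combined with the comparison of Kato and adic topologies on the Artinian quotients $\sO^\times(W_n)$ already recorded in \lemref{lem:Kato-1}. I would prove the parts in the order (1), (2), (3), (4), (5).

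For (1), a snake-lemma chase in \eqref{eqn:Loc-dual-1} suffices. The map $\gamma_R$ is an isomorphism because $\Br(R)\cong \Br(\ff)\cong \Q/\Z = \Z^\star$, by the Henselian property of $R$ together with local class field theory, while $\gamma_L$ is an isomorphism onto $((L^\times)^\star)_\tor$ (with equality when $p>1$) by \thmref{thm:Kato-Rec} and the fact that $\Br(L)$ is torsion. Since $H^3_x(W,\sO^\times_W)$ is a quotient of $\Br(L)$, it is likewise torsion, and a five-lemma comparison of \eqref{eqn:Loc-dual-1} with its torsion-truncated bottom row yields a unique isomorphism $\gamma_x\colon H^3_x(W,\sO^\times_W)\xrightarrow{\cong}((R^\times)^\star)_\tor$. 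Part (2) is then immediate for $p>1$: $(L^\times)^\star\cong \Br(L)$ is torsion, as is its quotient $(R^\times)^\star\cong H^3_x(W,\sO^\times_W)$; the same argument works for $\wh R^\times$; and the Kato-continuous surjection $\wh R^\times\twoheadrightarrow\sO^\times(W_n)$ embeds $(\sO^\times(W_n))^\star$ (whose Kato quotient topology agrees with the adic topology by \lemref{lem:Kato-1}) into the torsion group $(\wh R^\times)^\star$.

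For (3), the isomorphism $H^3_x(W,\sO^\times_W)\cong H^3_x(\wh W,\sO^\times_{\wh W})$ comes from the localization sequences for $W$ and $\wh W$ together with $\Br(R)\cong\Br(\ff)\cong\Br(\wh R)$ (Henselian property) and $\Br(L)\cong\Br(\wh L)$ (\cite[Lem.~21]{Kato-Invitation}); the isomorphism of Pontryagin duals then follows by applying (1) to both $R$ and $\wh R$ and comparing the resulting $\gamma_x$'s. For (4), every $\chi\in(\wh R^\times)^\star$ is torsion by (2), so kills some open subgroup $\Fil^I_n K^M_1(\wh R)\supseteq (1+\pi^n\wh R)$ and hence factors through $\sO^\times(W_n)=\wh R^\times/(1+\pi^n\wh R)$ equipped with its Kato quotient topology. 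By \lemref{lem:Kato-1} this coincides with the adic topology, so the factored character lies in $(\sO^\times(W_n))^\star$; the canonical map $\varinjlim_n(\sO^\times(W_n))^\star\to(\wh R^\times)^\star$ is therefore surjective, and its injectivity is automatic because $\wh R^\times=\varprojlim\sO^\times(W_n)$ with surjective transition maps. Composing with $\gamma_x$ from (1) and the isomorphism from (3) yields the desired identification.

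For (5), the principal-units subgroup $1+\pi\wh R$ is uniquely divisible in characteristic zero, via the convergent formal logarithm $\log(1+x)=\sum(-1)^{n+1}x^n/n$ which makes sense because every positive integer is a unit in $\wh R$. Dualizing the exact sequence $1\to 1+\pi\wh R\to\wh R^\times\to\ff^\times\to 1$ and using that $\Hom_\Ab(D,\Q/\Z)$ is torsion-free for any divisible $D$ gives $((\wh R^\times)^\vee)_\tor\cong((\ff^\times)^\vee)_\tor$; since the Kato topology on $\wh R^\times$ is discrete and $(\ff^\times)^n$ is open in $\ff^\times$ (being the image of the smooth morphism $\G_m\xrightarrow{n}\G_m$), we have $(\wh R^\times)^\star=(\wh R^\times)^\vee$ and $((\ff^\times)^\vee)_\tor=(\ff^\times)^\star$, completing the proof. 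The main obstacle I anticipate is the reconciliation of Kato and adic Pontryagin duals on $\sO^\times(W_n)$ needed in (4), which fortunately is already packaged in \lemref{lem:Kato-1}.
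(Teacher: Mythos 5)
Your overall strategy coincides with the paper's: parts (1) and (2) are read off from the diagram ~\eqref{eqn:Loc-dual-1}, part (4) from \lemref{lem:Kato-1} together with the limit formalism of \lemref{lem:dual-surj}, and part (3) by comparing the situations for $R$ and $\wh{R}$. Your argument for (5) is a nice self-contained replacement for the paper's citation of \cite[Lem.~5.9]{GKR}: the unique divisibility of $1+\pi\wh{R}$ via the $T$-adic logarithm, together with the Cohen splitting $\wh{R}\cong\ff[[T]]$, does give $((\wh{R}^\times)^\vee)_\tor\cong((\ff^\times)^\vee)_\tor$, and your identification of this with $(\ff^\times)^\star$ is the content of ~\eqref{eqn:Char-0-dual-1}.

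Three points need repair. First, in (2) you deduce that $(L^\times)^\star$ is torsion from ``$(L^\times)^\star\cong\Br(L)$'', but \thmref{thm:Kato-Rec} only yields $\Br(L)\cong((L^\times)^{\pf})^\star\cong((L^\times)^\star)_\tor$ (cf. \lemref{lem:Prof-dual}); the assertion that $\gamma_L$ hits all of $(L^\times)^\star$ when $p>1$ is precisely the statement that $(L^\times)^\star$ is torsion, so your argument is circular as written. The paper imports this as a separate input from \cite[\S~3.5, Rem.~4]{Kato-80}; alternatively one can verify directly from \lemref{lem:Kato-1} that every continuous character of $\wh{R}^\times\cong\ff^\times\times\ff^{\N}$ and of $\wh{L}^\times$ has finite image when $p>1$. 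Second, in (4) the reason a character $\chi\in(\wh{R}^\times)^\star$ kills $1+\pi^n\wh{R}$ for some $n$ is continuity (its kernel is open, hence contains a basic open subgroup $\Fil^I_n K^M_1(\wh{R})\supseteq 1+\fm^n$), not torsion-ness; a torsion character of a topological group need not have open kernel. Third, deriving the second isomorphism of (3) by ``applying (1) to $R$ and $\wh{R}$'' only identifies $((\wh{R}^\times)^\star)_\tor$ with $((R^\times)^\star)_\tor$; this suffices for $p>1$ by (2), but for $p=1$ it does not give the stated isomorphism of the full duals. The paper instead compares the bottom rows of ~\eqref{eqn:Loc-dual-1} for $R$ and $\wh{R}$ using the isomorphism $(\wh{L}^\times)^\star\xrightarrow{\cong}(L^\times)^\star$, which treats both cases uniformly; you should either adopt that route or restrict your claim accordingly.
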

\begin{proof}
 To prove (2), note that the map $(\sO^\times({W_n}))^\star \inj  (\wh{R}^\times)^\star$
  is injective for all $n \ge 1$. Using the bottom row of ~\eqref{eqn:Loc-dual-1},
  it suffices therefore to show that $(L^\times)^\star$ and $(\wh{L}^\times)^\star$ are
  torsion groups. Using the definition of Kato topology, it is an easy exercise that the
  canonical map $(\wh{L}^\times)^\star \to (L^\times)^\star$ is an isomorphism. But
\cite[\S~3.5, Rem.~4]{Kato-80} says that $(\wh{L}^\times)^\star$ is a torsion group
if $p > 1$. The statement (1) is already shown above.
The statement (3) follows easily by comparing ~\eqref{eqn:Loc-dual-1} for $R$ and
$\wh{R}$ and using the isomorphism $(\wh{L}^\times)^\star \xrightarrow{\cong}
(L^\times)^\star$. To show (4),
  we note that the map ${R}/{(\pi^n)} \to {\wh{R}}/{(\pi^n)}$ is bijective
  for $n \ge 1$.  Furthermore, the map
  $\wh{R}^\times \to {\varprojlim}_n \ \sO^\times({W_n})$
  is an isomorphism of topological abelian groups by \lemref{lem:Kato-1}.
  It is an elementary checking (cf. \lemref{lem:dual-surj}) that the induced map
  ${\varinjlim}_n \ (\sO^\times({W_n}))^\star \to (\wh{R}^\times)^\star$
  is bijective. 

To prove (5), we can replace $(\wh{R}^\times)^\star$ by $(\wh{R}^\times)^\vee$
  since  $\wh{R}^\times$ is a discrete group. By
  \lemref{lem:Char-0-dual}, we can also replace $(\ff^\times)^\star$ by
  $((\ff^\times)^\vee)_\tor$. We are now done because it is clear that
  $(\ff^\times)^\vee \to (\wh{R}^\times)^\vee$ is injective, and it is surjective at the
  level of torsion subgroups by \cite[Lem.~5.9]{GKR}. This concludes the proof.
\end{proof}

\subsection{Global duality for $\G_m$}\label{sec:GD-units}
Let $k$ be a local field of exponential characteristic $p \ge 1$ and let $X$ be a
smooth and geometrically connected projective curve over $k$.
Let $K$ denote the function field of
$X$. The subject of this subsection is the construction of a perfect duality
between $H^0(X, \sO^\times_X)$ and $H^3(X, \sO^\times_X)$ which is compatible with
the local duality of \S~\ref{sec:LD-units}.

One can deduce from the
Hochschild-Serre spectral sequence that there is an isomorphism
$H^3(X, \sO^\times_X) \cong H^2(k, \Pic(X_s))$, where $X_s$ is the base change of $X$
by $k_s$. Using the decomposition $\Pic(X_s) \cong \Pic^0(X_s) \oplus \Z$ as
$G_k$-module and the vanishing $H^2(k, \Pic^0(X_s)) = 0$
(see \cite[Thm.~7.8]{Milne-Duality}), one deduces an isomorphism
$\nu_X \colon H^3(X, \sO^\times_X) \cong (G_k)^\star \cong (k^\times)^\star$.
However, this isomorphism does not serve our purpose. The reason is that it is
not known if $\nu_X$ is compatible with the local duality isomorphism $\gamma_x$ in
\lemref{lem:Loc-dual-3} for a closed point $x \in X$, in positive characteristic.
As mentioned above, this compatibility is very crucial for proving our main results.
The compatibility of $\nu_X$ with the local duality seems like a
challenging independent problem. The main obstacle in solving this is the failure
of purity for $H^2_x(X, W_n\Omega^1_{X.\log})$.

To serve our purpose, we shall construct a different pairing between
$H^0(X, \sO^\times_X)$ and $H^3(X, \sO^\times_X)$. We shall show that this pairing is
perfect and is compatible with the local duality isomorphism $\gamma_x$ for
$x \in X_{(0)}$. We do not have any guess whether the two duality maps coincide.
We let $H^i(X, \wh{\Z}(j)) := {\varprojlim}_n \ H^i(X, {\Z}/n(j))$,
where the limit is taken over $\N$. We begin with the following.

\begin{lem}\label{lem:GD-units-0}
There exists a canonical bilinear pairing
\begin{equation}\label{eqn:GD-units-00}
H^3(X, \sO^\times_X) \times H^0(X, \sO^\times_X) \to {\Q}/{\Z}.
\end{equation}
\end{lem}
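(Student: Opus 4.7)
The plan is to construct the pairing through cup product and a canonical trace map, working in $\Z/n$-coefficient cohomology and passing to the colimit over $n \in \N$. By \lemref{lem:Mot-coh-3}, there is a canonical isomorphism $H^3(X, \sO^\times_X) \cong H^3(X, \Q/\Z(1)) = \varinjlim_n H^3(X, \Z/n(1))$, so it suffices to construct compatible bilinear pairings $H^3(X, \Z/n(1)) \times k^\times \to \Z/n$ for each $n \ge 1$.

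For a fixed $n$, I would send $a \in k^\times = H^0(X, \sO^\times_X)$ to its residue class $\bar a \in k^\times/n$, identified via the norm residue (or Kummer theory in the prime-to-$p$ case) with $H^1(\Spec(k), \Z/n(1))$, and then pull back along $f \colon X \to \Spec(k)$ to obtain $\tilde a \in H^1(X, \Z/n(1))$. Cupping with any $\chi \in H^3(X, \Z/n(1))$ yields $\chi \cup \tilde a \in H^4(X, \Z/n(2))$. The pairing is completed by composing with a canonical trace isomorphism
\[
  \tr_{X,n} \colon H^4(X, \Z/n(2)) \xrightarrow{\cong} \Z/n.
\]

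Constructing $\tr_{X,n}$ is the main task. For the prime-to-$p$ part, I would obtain it from the Leray spectral sequence $H^p(k, R^q f_* \mu_n^{\otimes 2}) \Rightarrow H^{p+q}(X, \mu_n^{\otimes 2})$: Poincar{\'e} duality for the smooth projective curve $X_{k_s}$ over the separably closed field $k_s$ gives $R^2 f_* \mu_n^{\otimes 2} \cong \mu_n$ and $R^q f_* \mu_n^{\otimes 2} = 0$ for $q \ge 3$, while $cd(k) = 2$ kills every $E_2^{p,q}$ with $p \ge 3$. On the diagonal $p+q = 4$, the only surviving summand is $E_2^{2,2} = H^2(k, \mu_n) \cong \Z/n$ via $\inv_k$, and the outgoing differentials from this position land in groups that vanish for cohomological-dimension reasons, so $H^4(X, \mu_n^{\otimes 2}) \cong \Z/n$. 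For the $p$-primary part $n = p^s$, the definition $\Z/p^s(2) = W_s\Omega^2_{X,\log}[-2]$ identifies $H^4(X, \Z/p^s(2))$ with $H^2(X, W_s\Omega^2_{X,\log})$, and the trace is then furnished by \propref{prop:Gersten-9}. Bilinearity of each pairing $H^3(X, \Z/n(1)) \times k^\times \to \Z/n$ is automatic from that of the cup product.

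The last step is to verify compatibility of these pairings with the transition morphisms $\Z/n(j) \to \Z/mn(j)$ from ~\eqref{eqn:Mot-coh-0}--\eqref{eqn:Mot-coh-1}, so that they assemble into a pairing on the colimit $H^3(X, \Q/\Z(1)) \cong H^3(X, \sO^\times_X)$. The main obstacle is this compatibility in the $p$-primary case, where one must check that $\Tr_X$ is compatible with the structure maps $R^{n-m}$ and $\underline{p}^m$ on log Hodge-Witt cohomology from ~\eqref{eqn:Zhao*-0}. Here the explicit description of $\Tr_X$ in terms of the trace maps $\Tr_x$ at closed points (\corref{cor:Gersten-10}) should be the key, reducing the question to the corresponding compatibility of local trace maps for two-dimensional local fields under restriction and Verschiebung, which is standard.
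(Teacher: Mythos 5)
Your proposal is correct and follows essentially the same route as the paper: replace $H^3(X,\sO^\times_X)$ by $H^3(X,{\Q}/{\Z}(1))$ via \lemref{lem:Mot-coh-3}, cup with the Kummer image of $a\in k^\times$ in $H^1(X,{\Z}/n(1))$, apply the trace on $H^4(X,{\Z}/n(2))$ (Saito--Tate duality in the prime-to-$p$ case, $\Tr_X$ from \propref{prop:Gersten-9} via \corref{cor:Limit-duality-1} in the $p$-primary case), and pass to the limit using compatibility with the transition maps as in ~\eqref{eqn:Limit-duality}. The only cosmetic differences are that you unpack the prime-to-$p$ trace via the Leray spectral sequence rather than citing the duality theorem directly, and that you route $a$ through $H^1(k,{\Z}/n(1))$ and pull back rather than applying the Kummer boundary on $X$ itself.
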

\begin{proof}
By \lemref{lem:Mot-coh-3}, we can replace $H^3(X, \sO^\times_X)$ by
  $H^3(X, {\Q}/{\Z}(1))$. Using ~\eqref{eqn:Limit-duality} and the analogous diagram
  for the sheaves ${\Z}/{n}(r)$ (with $n$ prime to $p$), we get a bilinear pairing
  between ind-abelian and pro-abelian groups
  \[
    ``{{\underset{n}\varinjlim}}" \ H^i(X, {\Z}/n(j)) \times
    ``{{\underset{n}\varprojlim}}" \ H^{i'}(X, {\Z}/n(j')) \to
    ``{{\underset{n}\varinjlim}}" \ H^{i+i'}(X, {\Z}/n(j+j')).
  \]
Taking the limits, we conclude by the Saito-Tate duality
  \cite{Saito-Duality} (see also \cite[Thm.~9.9]{GKR}) in the prime-to-$p$ case
  and \corref{cor:Limit-duality-1} in the $p$-primary case
  that there is a bilinear pairing between abelian groups
  \begin{equation}\label{eqn:GD-units-1}
    H^3(X, {\Q}/{\Z}(1)) \times H^1(X, \wh{\Z}(1)) \to {\Q}/{\Z}.
    \end{equation}
Composing this with the canonical maps (see ~\eqref{eqn:Mot-coh-0})
\begin{equation}\label{eqn:GD-units-2} 
H^0(X, \sO^\times_X) \to {\varprojlim}_n \ {H^0(X, \sO^\times_X)}/n 
\inj H^1(X, \wh{\Z}(1)),
\end{equation}
we get the desired pairing.
\end{proof}

In the remainder of this subsection, our goal is to prove the perfectness of
~\eqref{eqn:GD-units-00} with respect to the discrete topology on
$H^3(X, \sO^\times_X)$ and the adic topology on $H^0(X, \sO^\times_X) \cong k^\times$.
We begin by noting that the diagrams in ~\eqref{eqn:Mot-coh-1} give rise to an exact
sequence of pro-abelian groups
\begin{equation}\label{eqn:GD-units-3}
  0 \to \{{H^0(X, \sO^\times_X)}/{n}\} \to \{H^1(X, {\Z}/n(1))\} \to
  \{_n\Pic^0(X)\} \to 0,
\end{equation}
whose transition maps are induced by the canonical surjections
${\Z}/{mn} \surj {\Z}/{n}$. Taking the limits, we get an exact sequence
\begin{equation}\label{eqn:GD-units-4}
  0 \to {\varprojlim}_n \ {H^0(X, \sO^\times_X)}/n \to H^1(X, \wh{\Z}(1))
  \to {\varprojlim}_n \ _n\Pic^0(X) \to 0.
  \end{equation}

  Applying the dual functor in the category of abelian groups, we get an
  exact sequence
 \begin{equation}\label{eqn:GD-units-5}  
0 \to  {\varinjlim}_n \ (_n\Pic^0(X))^\vee \to
     {\varinjlim}_n \ (H^1(X, {\Z}/n(1)))^\vee \to
     {\varinjlim}_n \ ({H^0(X, \sO^\times_X)}/n)^\vee \to 0.
     \end{equation}

\begin{lem}\label{lem:GD-units-6} 
  We have ${\underset{n}\varprojlim} \ _n\Pic^0(X) = {\underset{n}\varinjlim} \
  (_n\Pic^0(X))^\vee = 0$.
\end{lem}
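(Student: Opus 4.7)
My plan is to reduce both vanishing statements to showing that $\Pic^0(X)$ has no nonzero divisible element. Recall that any Hausdorff profinite abelian group $G$ has this property: writing $G = \varprojlim_U G/U$ over open subgroups $U$ of finite index, any divisible element of $G$ projects to a divisible element of each finite quotient $G/U$, which must be $0$; since $\bigcap_U U = 0$, the element itself is $0$. Since $\Pic^0(X)$ is profinite by \corref{cor:Pic-sequence-2}(2), it follows that $\bigcap_{m \ge 1} m \Pic^0(X) = 0$.

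For the inverse limit, I would unwind the transition maps in the pro-system. According to \eqref{eqn:Mot-coh-1}, the surjection $\Z/mn \surj \Z/n$ is realized via the multiplication-by-$m$ morphisms between the Kummer sequences, so the induced transition ${}_{mn}\Pic^0(X) \to {}_n\Pic^0(X)$ is multiplication by $m$. A compatible sequence $(a_n)$ therefore satisfies $a_n = m a_{mn}$ for every pair $(m,n)$, which forces $a_n \in m\Pic^0(X)$ for every $m$, and hence $a_n = 0$ by the preceding paragraph.

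For the colimit of duals, my first observation is that ${}_n\Pic^0(X)$ is finite for each $n$, since $\Picc^0(X)[n]$ is a finite $k$-group scheme (as $\Picc^0(X)$ is an abelian variety). The dual transition $({}_n\Pic^0(X))^{\vee} \to ({}_{mn}\Pic^0(X))^{\vee}$ sends a character $f$ to $f \circ (m \cdot)$, which vanishes precisely when $f$ annihilates the image $m \cdot {}_{mn}\Pic^0(X) = {}_n\Pic^0(X) \cap m\Pic^0(X)$. Because ${}_n\Pic^0(X)$ is finite, the descending chain of subgroups $\{{}_n\Pic^0(X) \cap m\Pic^0(X)\}_m$ (indexed by divisibility) must stabilize, and its stable value equals $\bigcap_m ({}_n\Pic^0(X) \cap m\Pic^0(X)) = {}_n\Pic^0(X) \cap \bigl(\bigcap_m m\Pic^0(X)\bigr) = 0$. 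Consequently every character in $({}_n\Pic^0(X))^{\vee}$ pulls back to zero at a sufficiently large level, so the colimit vanishes. I do not anticipate any real difficulty beyond carefully identifying the transition maps in the pro- and ind-systems.
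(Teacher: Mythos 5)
Your proof is correct and follows essentially the same route as the paper: both arguments reduce everything to the vanishing of $\bigcap_m m\,\Pic^0(X)$ for the profinite group $\Pic^0(X)$ (the paper phrases this as the vanishing of the Tate module of a profinite group, passing through $J_X(k)$), and both deduce the vanishing of the colimit of duals from the pro-system $\{_n\Pic^0(X)\}$ being essentially zero. The only cosmetic difference is that you verify the essential vanishing of the pro-system by the explicit stabilization of the chain $_n\Pic^0(X) \cap m\Pic^0(X)$, where the paper invokes the Mittag--Leffler criterion for the (finite) terms.
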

\begin{proof}
  One knows that there is a canonical inclusion
  ${\varprojlim}_n \ _n\Pic^0(X)  \inj {\varprojlim}_n \ _n J_X(k)$, where $J_X$
  is the Jacobian variety of $X$ (e.g., see \cite[Rem.~1.5]{Milne-Jac}).
  On the other hand, $J_X(k)$ is a profinite abelian group
  (see the proof of \corref{cor:Pic-sequence-2}) and ${\varprojlim}_n \  _n J_X(k)$
  coincides with the Tate module of $J_X(k)$. It follows that
  ${\varprojlim}_n \ _n J_X(k) = 0$ as the Tate module of a profinite abelian
  group is zero.

  To show that ${\varinjlim}_n \ (_n\Pic^0(X))^\vee$ is zero, it suffices to
  show the stronger assertion that the pro-abelian group
  $\{_n\Pic^0(X)\}$ is zero as it would imply that the ind-abelian group
  $\{(_n\Pic^0(X))^\vee\}$ is zero.
  But this follows from our first assertion and the well known fact that a
  pro-abelian group satisfying the Mittag-Leffler condition is zero if and only if its
  limit is zero (e.g., see \cite[Tag~07KV, Lem.~15.86.13]{SP}).
\end{proof}

\begin{lem}\label{lem:GD-units-7} 
  The map $\delta_X \colon H^3(X, \sO^\times_X) \to  H^0(X, \sO^\times_X)^\vee$,
  induced by ~\eqref{eqn:GD-units-00}, is injective.
\end{lem}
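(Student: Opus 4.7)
\medskip

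The strategy is to exploit the fact that $H^3(X, \sO^\times_X)$ is torsion (by \lemref{lem:Mot-coh-3}, it equals $H^3(X, \Q/\Z(1))$), so any element in its kernel is $n$-torsion for some $n \ge 1$ and the pairing can be analyzed modulo $n$. Suppose $\chi \in H^3(X, \sO^\times_X)$ satisfies $\delta_X(\chi) = 0$. Write $n \chi = 0$ and let $\wt{\chi} \in H^1(X, \wh{\Z}(1))^\vee$ denote the character induced by $\chi$ via the pairing of \lemref{lem:GD-units-0}. Then $\wt{\chi}$ is also $n$-torsion, so it factors through $H^1(X, \wh{\Z}(1))/n$, and the hypothesis $\delta_X(\chi) = 0$ asserts precisely that $\wt{\chi}$ vanishes on the image of $k^\times \cong H^0(X, \sO^\times_X)$ inside $H^1(X, \wh{\Z}(1))$.

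The central step is to show that, modulo $n$, this image is everything. By \lemref{lem:GD-units-6} combined with \eqref{eqn:GD-units-4}, the natural map identifies $H^1(X, \wh{\Z}(1))$ with the profinite completion $\wh{k^\times} = \varprojlim_m k^\times/m$. Because $k$ is a local field, one has a topological decomposition $k^\times \cong \Z \times \sO_k^\times$ in which $\sO_k^\times$ is already profinite (and hence equals its own profinite completion), giving $\wh{k^\times} \cong \wh{\Z} \times \sO_k^\times$. Reducing both descriptions modulo $n$ yields that the canonical map $k^\times/n \to \wh{k^\times}/n$ is an isomorphism. Consequently $\wt{\chi}$ vanishes on all of $H^1(X, \wh{\Z}(1))/n$, and hence on the whole of $H^1(X, \wh{\Z}(1))$, so $\wt{\chi} = 0$.

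It remains to conclude $\chi = 0$ from the vanishing of $\wt{\chi}$, which is to say that one must verify left non-degeneracy of the pairing $H^3(X, \sO^\times_X) \times H^1(X, \wh{\Z}(1)) \to \Q/\Z$. At each finite torsion level $n$ this reduces to a perfect pairing: in the $p$-primary part one uses the perfect pairing of \thmref{thm:Duality-Main} (passed to the limit through \corref{cor:Limit-duality-1}), and in the prime-to-$p$ part one invokes Saito-Tate duality together with Kummer theory to compare $H^3(X, \Z/n(1))$ with the relevant dual group containing the image of $H^1(X, \wh{\Z}(1))/n$. The main obstacle is exactly this last limit step: one must reconcile the fact that the $p$-primary and prime-to-$p$ dualities are of different flavor (the former using Hodge-Witt and landing in $H^2(X, W_n\Omega^2_\log)$, the latter using the Saito-Tate trace on $H^3(X, \mu_n^{\otimes 2})$), and one needs to control Mittag-Leffler issues to ensure that left non-degeneracy survives the passage to the ind/pro-limits. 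Once this is in place, combining it with $\wt{\chi} = 0$ forces $\chi = 0$ and concludes the proof.
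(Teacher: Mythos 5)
Your first two steps are sound, but the proof does not close: the statement you defer to the end --- left non-degeneracy of the pairing $H^3(X,\sO^\times_X)\times H^1(X,\wh{\Z}(1))\to \Q/\Z$ --- is the entire content of the lemma, and in the form you have set it up it does not follow from the finite-level dualities. Your class $\chi$ is represented by some $\chi_n\in H^3(X,\Z/n(1))$, and the character $\wt{\chi}$ it induces on $H^1(X,\wh{\Z}(1))={\varprojlim}_m H^1(X,\Z/m(1))$ factors through the projection $\pi_n$ onto $H^1(X,\Z/n(1))$. The vanishing of $\wt{\chi}$ therefore only tells you that $\langle\chi_n,-\rangle$ kills the \emph{image} of $\pi_n$, and the level-$n$ duality lets you conclude $\chi_n=0$ only if $\pi_n$ is surjective. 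That surjectivity is exactly what you wave at with ``Mittag-Leffler issues,'' and it is not available: the cokernel of the transition map $H^1(X,\Z/{mn}(1))\to H^1(X,\Z/n(1))$ injects into $H^2(X,\Z/m(1))$, and there is no reason for the stable image to be all of $H^1(X,\Z/n(1))$; note also that Mittag-Leffler controls $\varprojlim^1$, not surjectivity of the projections onto a fixed term.

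The repair is to avoid dualizing the inverse limit altogether, which is what the paper does: factor $\delta_X$ as $H^3(X,\Q/\Z(1))\to{\varinjlim}_n H^1(X,\Z/n(1))^\vee\to{\varinjlim}_n(k^\times/n)^\vee\to(k^\times)^\vee$. The first arrow is a filtered colimit of the injections furnished by \thmref{thm:Duality-Main} and Saito--Tate duality, hence injective with no limit issues arising; the second is injective because dualizing $0\to k^\times/n\to H^1(X,\Z/n(1))\to{}_n\Pic^0(X)\to 0$ and using ${\varinjlim}_n({}_n\Pic^0(X))^\vee=0$ (\lemref{lem:GD-units-6}) kills the ambiguity; the third is injective since $k^\times\to k^\times/n$ is onto. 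Concretely, from $\delta_X(\chi)=0$ you should extract only that $\langle\chi_n,-\rangle$ vanishes on $k^\times/n\subset H^1(X,\Z/n(1))$ --- which needs nothing about $\wh{k^\times}$ --- so that $\chi_n$ lies in the image of $({}_n\Pic^0(X))^\vee$ and dies in the colimit. Your computation $\wh{k^\times}/n\cong k^\times/n$ is correct, but it proves an intermediate statement ($\wt{\chi}=0$ on the full inverse limit) that is both stronger than necessary and not the one from which the conclusion actually follows.
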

\begin{proof}
  By \lemref{lem:Mot-coh-3} and ~\eqref{eqn:GD-units-2}, $\delta_X$ is the same as
  the composite map
  \[
H^3(X, {\Q}/{\Z}(1)) \to  {\varinjlim}_n \ H^1(X, {\Z}/n(1))^\vee
   \to {\varinjlim}_n \  ({H^0(X, \sO^\times_X)}/n)^\vee \to
   H^0(X, \sO^\times_X)^\vee.
 \]
   The first arrow in this sequence is injective in the $p$-primary case by
   \thmref{thm:Duality-Main}
   and isomorphism in the prime-to-$p$ case by \cite[Thm.~9.9]{GKR}.
   The second arrow is injective by ~\eqref{eqn:GD-units-5} and \lemref{lem:GD-units-6}.
   The third arrow is easily seen to be injective. This concludes the proof.
 \end{proof}

The following shows the compatibility between the local and global duality maps.

\begin{lem}\label{lem:LD-GD}
  Let $x \in X_{(0)}, \ R = \sO^h_{X,x}$ and let the notations be as in
Lemma~\ref{lem:Loc-dual-3}. Then the diagram
  \begin{equation}\label{eqn:LD-GD-0}
    \xymatrix@C1pc{
      H^3_{x}(W, \sO^\times_W) \ar[d]_-{\gamma_x} \ar[r] & H^3(X, \sO^\times_X)
      \ar[d]^-{\delta_X} \\
      (R^\times)^\star \ar[r] & (H^0(X, \sO^\times_X))^\vee}
  \end{equation}
  is commutative if we let the top horizontal arrow be the forget support map and
  the bottom horizontal arrow be the dual of the canonical pull-back map
  $H^0(X, \sO^\times_X) \to R^\times$.
\end{lem}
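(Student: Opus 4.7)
The plan is to reduce the commutativity of ~\eqref{eqn:LD-GD-0} to finite coefficients and then invoke the trace compatibility built into the construction of $\Tr_X$ in \S~\ref{sec:TRACE}. Since $H^3(X, \sO^\times_X)$ and $H^3_x(W, \sO^\times_W)$ are torsion groups (the latter sitting inside $\Br(L)$ via ~\eqref{eqn:Loc-dual-1}), their images in the dual groups lie in the torsion subgroups, and hence by Lemmas~\ref{lem:Mot-coh-3} and ~\ref{lem:Loc-dual-3} both $\gamma_x$ and $\delta_X$ assemble from finite-level pairings. Concretely, for each $n \ge 1$, the Kummer/$\dlog$ exact sequences yield the finite-level pairing
\[
  H^3(X, \Z/n(1)) \times H^1(X, \Z/n(1)) \xrightarrow{\cup} H^4(X, \Z/n(2)) \xrightarrow{\Tr_X} \Z/n,
\]
and analogously the local pairing on $H^3_x(W, \Z/n(1)) \times H^1(W, \Z/n(1))$ composed with a local trace on $H^4_x(W, \Z/n(2))$. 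In the $p$-primary case $n = p^s$, these cohomology groups unfold via $\Z/p^s(j) = W_s\Omega^j_{X,\log}[-j]$ and $\Tr_X$ is the one from Proposition~\ref{prop:Gersten-9}. Thus checking ~\eqref{eqn:LD-GD-0} reduces to verifying the commutativity of the corresponding finite-level diagram for every $n \ge 1$.

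Next, I would apply the projection formula for the Gysin morphism to collapse each finite-level diagram to a single trace identity. Writing $\iota_* \colon H^*_x(X, -) \to H^*(X, -)$ for the forget-support map and $u^* \colon H^*(X, -) \to H^*(W, -)$ for restriction to the Henselian local ring at $x$ (which identifies $H^3_x(W, -) \cong H^3_x(X, -)$ by Henselization invariance of {\'e}tale cohomology with supports), for $\alpha \in H^3_x(W, \Z/n(1))$ and $a \in H^1(X, \Z/n(1))$ one has
\[
  \iota_*(\alpha) \cup a \ = \ \iota_*(\alpha \cup u^*(a)) \ \in \ H^4(X, \Z/n(2)).
\]
Consequently, the commutativity of the mod-$n$ diagram boils down to the single identity
\[
  \Tr_X \circ \iota_* \ = \ \Tr_x \colon H^4_x(W, \Z/n(2)) \to \Z/n,
\]
where $\Tr_x$ denotes the local trace defining $\gamma_{x}$.

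Finally, I would verify this trace identity prime by prime. For $n = p^s$, the local trace $\Tr_x$ factors through Kato's purity isomorphism $H^2_x(W, W_s\Omega^2_{W,\log}) \cong H^1(k(x), W_s\Omega^1_{k(x),\log})$ given by cup product with $\dlog([t_x])$, and the required compatibility with $\Tr_X \circ (\iota_x)_*$ is exactly the commutativity of diagram ~\eqref{eqn:Gersten-10-0} in Corollary~\ref{cor:Gersten-10}. For $(n,p) = 1$, the analogous statement amounts to the classical compatibility of Gabber's purity trace $H^4_x(X, \mu_n^{\otimes 2}) \cong \Z/n$ with the global Tate trace on $H^4(X, \mu_n^{\otimes 2})$, which is a reformulation of the reciprocity law in Lemma~\ref{lem:Rec-Law} (cf.\ \cite[\S~5]{Kato-Saito-83}). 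The step one would a priori expect to be the main obstacle---the $p$-primary trace compatibility---has been defused in advance by the explicit construction of $\Tr_X$ in \S~\ref{sec:TRACE}; as emphasized in Remark~\ref{remk:Gersten-9-1}, $\Tr_X$ was set up precisely so that this compatibility holds by design, and what remains is essentially bookkeeping.
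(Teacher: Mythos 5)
Your proposal follows essentially the same route as the paper's proof: reduce to $\Z/n$ coefficients for each $n$, use the cup-product/projection-formula compatibilities to collapse the diagram, and boil everything down to the local--global trace compatibility, which in the $p$-primary case is exactly Corollary~\ref{cor:Gersten-10} and diagram~\eqref{eqn:Gersten-8}. One correction: in the prime-to-$p$ case the compatibility of the local purity trace with the global trace is \emph{not} a reformulation of the reciprocity law of Lemma~\ref{lem:Rec-Law} --- that law asserts the vanishing of the \emph{sum} of local invariants of a global class and is used for a different purpose (descent of the Brauer--Manin pairing to the Chow group) --- the correct reference for the needed normalization is \cite[Lem.~9.7]{GKR}, which is what the paper cites. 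Finally, to identify $\gamma_x$ (defined via Kato's pairing on $\Br(K_x)\times K_x^\times$ through ~\eqref{eqn:Loc-dual-1}) with your cup-product-plus-local-trace pairing on $H^3_{x}(W,\Z/n(1))\times H^1(W,\Z/n(1))$, you still need the top rows of the paper's diagram ~\eqref{eqn:LD-GD-2}, i.e.\ that Kato's pairing is computed by the cup product followed by $\partial_x$ and $\Tr_x$; this is routine but should be stated rather than left implicit.
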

\begin{proof}
Using ~\eqref{eqn:Loc-dual-1}, the lemma is equivalent to showing that the diagram
 \begin{equation}\label{eqn:LD-GD-1}
    \xymatrix@C1pc{ 
H^2(K_x) \ar[r]^-{\partial_x} \ar[d]_-{\gamma_{K_x}} & H^3(X, \sO^\times_X) 
  \ar[d]^-{\delta_X} \\
  (K^\times_x)^\star \ar[r]^-{\phi_x} &  (H^0(X, \sO^\times_X))^\vee}
\end{equation}
is commutative, where $\phi_x$ is induced by the canonical pull-back map.
To show this, we can replace $ H^3(X, \sO^\times_X)$ by
$H^3(X, {\Q}/{\Z}(1))$ using \lemref{lem:Mot-coh-3}. It also suffices to
prove the commutativity after replacing ${\Q}/{\Z}$ by ${\Z}/n$ for all $n \ge 1$.

We now fix an integer $n \ge 1$. We let $\alpha \in H^2(K_x, {\Z}/n(1))$ and
$\beta \in H^0(X, \sO^\times_X)$ be arbitrary elements. We look at the diagram
 \begin{equation}\label{eqn:LD-GD-2}
    \xymatrix@C1pc{ 
      H^2(K_x, {\Z}/n(1)) \times H^1(K_x, {\Z}/n(1)) \ar@<-7ex>[d]_-{\partial_x}
      \ar[r]^-{\cup} & H^3(K_x, {\Z}/n(2)) \ar[d]^-{\partial_x}_-{\cong} \ar[r]^-{\Tr_x}
      & {\Z}/n \ar[dd]^-{\id} \\
      H^3_{x}(W, {\Z}/n(1)) \times H^1(W, {\Z}/n(1))  \ar@<-7ex>[u]
      \ar@<-7ex>[d]_-{\cong} \ar[r]^-{\cup} &
      H^4_{x}(W, {\Z}/n(2)) \ar[d]^-{\cong}  & \\
      H^3_{x}(X, {\Z}/n(1)) \times H^1(X, {\Z}/n(1))  \ar@<-7ex>[u]
      \ar@<-7ex>[d] \ar[r]^-{\cup} & H^4_{x}(X, {\Z}/n(2)) \ar[d]
      \ar[r]^-{\epsilon_x} & {\Z}/n \ar[d]^-{\id} \\
      H^3(X, {\Z}/n(1)) \times H^1(X, {\Z}/n(1))
      \ar@<-7ex>[u]_-{\id} \ar@<-7ex>[d]_-{\id} \ar[r]^-{\cup} &
      H^4(X, {\Z}/n(2)) \ar[r]^-{\Tr_X} & {\Z}/n \\
      H^3(X, {\Z}/n(1)) \times {H^0(X, \sO^\times_X)}/n \ar@<-7ex>[u]_-{\psi_X}
      \ar[r] &   H^4(X, {\Z}/n(2)) \ar[u]^-{\id} \ar[ur]_-{\Tr_X}, &}
  \end{equation}
  where $\psi_X$ is the canonical map (see ~\eqref{eqn:GD-units-3}).

All squares on the left are induced by the cup products, pull-back and the boundary
maps in {\'e}tale cohomology. In particular, these squares are known to be
commutative (e.g., see ~\eqref{eqn:Duality-Main-4} for the commutativity of the top
square on the left).
The upper square on the right is commutative by the definition of $\Tr_x$
(see \corref{cor:Gersten-10} and note that this also holds when $p \nmid n$).
The lower square on the right commutes by ~\eqref{eqn:Gersten-8} when
$n = p^n$ and by \cite[Lem.~9.7]{GKR} when $p \nmid n$.
The bottom right triangle clearly commutes.

We let $\theta \colon H^2(K_x, {\Z}/n(1)) \to H^3(X, {\Z}/n(1))$ be the
    composition of all vertical arrows going down on the extreme left of
    ~\eqref{eqn:LD-GD-2} and let $\theta' \colon {H^0(X, \sO^\times_X)}/n \to
    H^1(K_x, {\Z}/n(1))$ be the composition of all vertical arrows going up
    in the middle of ~\eqref{eqn:LD-GD-2}. It follows then by a diagram chase that
    $\Tr_x(\alpha \cup \theta'(\beta)) = \Tr_X(\theta(\alpha) \cup \psi_X(\beta))$.
On the other hand, it is straightforward to check that
    \[
      \<\phi_x \circ \gamma_{K_x}(\alpha), \beta\> = \Tr_x(\alpha \cup \theta'(\beta))
      \ \ \mbox{and} \ \
      \<\delta_X \circ \partial_x(\alpha), \beta\> = \Tr_X(\theta(\alpha) \cup
      \psi_X(\beta)).
    \]
    This proves the commutativity of ~\eqref{eqn:LD-GD-1} and concludes the proof of
    the lemma.
\end{proof}

We shall also need the following independent result for proving 
the perfectness of ~\eqref{eqn:GD-units-00} as well as in the proof of
\lemref{lem:Dual-surjection}.

\begin{lem}\label{lem:Lifting}
  Let ${k'}/{k}$ be a finite field extension of $k$. Then the inclusion
  $k^\times \inj k'^\times$ induces a surjective map $(k'^\times)^\star \surj
  (k^\times)^\star$ when the duals are taken with respect to the adic topologies.
\end{lem}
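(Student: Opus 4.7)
The plan is to show by a direct construction that any continuous character $\chi \colon k^\times \to \Q/\Z$ extends to a continuous character $\chi' \colon k'^\times \to \Q/\Z$. The strategy splits the extension problem into a ``unit part'' handled by Pontryagin duality for profinite groups (equivalently, by injectivity of $\Q/\Z$ in $\Ab$) and a ``valuation part'' handled by divisibility of $\Q/\Z$.

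First, I would fix uniformizers $\pi$ of $k$ and $\pi'$ of $k'$, and write $\pi = u \cdot (\pi')^e$ with $u \in \sO_{k'}^\times$, where $e$ denotes the ramification index of ${k'}/k$. With respect to the adic topologies, one has topological direct product decompositions $k^\times \cong \pi^{\Z} \times \sO_k^\times$ and $k'^\times \cong (\pi')^{\Z} \times \sO_{k'}^\times$, in which the discrete factor $\pi^\Z$ (resp.\ $(\pi')^\Z$) is identified with $\Z$ via $v_k$ (resp.\ $v_{k'}$), and $\sO_k^\times$ (resp.\ $\sO_{k'}^\times$) is an open profinite subgroup. Moreover, as recorded in \S~\ref{sec:Pic-loc}, the adic topology of $k^\times$ coincides with the subspace topology induced from the adic topology of $k'^\times$.

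Next, given $\chi \in (k^\times)^\star$, let $\phi := \chi|_{\sO_k^\times}$ and $\alpha := \chi(\pi) \in \Q/\Z$. Since $\Q/\Z$ is discrete, $\ker(\phi)$ is an open subgroup of the profinite group $\sO_k^\times$, so $\phi$ factors through a finite quotient. I would extend $\phi$ to a continuous $\psi \colon \sO_{k'}^\times \to \Q/\Z$ as follows: choose an open subgroup $V' \subseteq \sO_{k'}^\times$ with $V' \cap \sO_k^\times \subseteq \ker(\phi)$ (possible because the topology on $\sO_k^\times$ is induced from that of $\sO_{k'}^\times$). Then $\phi$ descends to a character of the finite abelian subgroup $\sO_k^\times/(V' \cap \sO_k^\times) \hookrightarrow \sO_{k'}^\times/V'$, and by injectivity of $\Q/\Z$ in the category of abelian groups this extends to a character of $\sO_{k'}^\times/V'$; composing with the quotient $\sO_{k'}^\times \surj \sO_{k'}^\times/V'$ yields the desired continuous $\psi$.

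Finally, using that $\Q/\Z$ is divisible, I would pick $\beta \in \Q/\Z$ satisfying $e\beta = \alpha - \psi(u)$ and define
\[
\chi'(\pi'^n \cdot v) := n\beta + \psi(v) \qquad (n \in \Z,\ v \in \sO_{k'}^\times).
\]
Continuity of $\chi'$ follows from continuity of $\psi$ together with the fact that $(\pi')^\Z$ is discrete, and the identity $\chi'|_{k^\times} = \chi$ is immediate from the construction: on $\sO_k^\times$ one has $\chi'|_{\sO_k^\times} = \psi|_{\sO_k^\times} = \phi$, and $\chi'(\pi) = \chi'(u \cdot (\pi')^e) = \psi(u) + e\beta = \alpha$. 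Since the construction is purely mechanical once the extension $\psi$ is produced, the only real content is the profinite extension step, which is elementary; there is no serious obstacle.
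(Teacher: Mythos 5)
Your proof is correct, but it takes a genuinely different route from the paper's. The paper first extends $\chi$ to a continuous $\T$-valued character of $k'^\times$ by invoking the general extension theorem for characters on closed subgroups of locally compact abelian groups (citing Folland, which requires knowing $k^\times$ is closed in $k'^\times$), and then must separately argue that this extension has finite order --- via the intermediate subgroup $G = k^\times \cdot \sO^\times_{k'}$ and the finiteness of ${k'^\times}/G \cong {\Z}/e$ --- so that it actually lands in ${\Q}/{\Z} = \T_\tor$. You instead build the extension by hand: split off the uniformizers, extend the unit part through a finite quotient using injectivity of ${\Q}/{\Z}$ in $\Ab$, and handle the valuation direction by divisibility of ${\Q}/{\Z}$, solving $e\beta = \alpha - \psi(u)$. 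Your construction automatically produces a ${\Q}/{\Z}$-valued continuous extension, so the finite-order step the paper needs disappears; the price is that you must make the compatibility $\pi = u(\pi')^e$ explicit, which you do correctly, and your check of $\chi'|_{k^\times} = \chi$ on $\pi$ and $\sO^\times_k$ suffices since these generate $k^\times$. Both arguments ultimately rest on the same topological input, namely that the adic topology of $k^\times$ is the subspace topology from $k'^\times$ (equivalently, that every open subgroup of $\sO^\times_k$ contains the trace of an open subgroup of $\sO^\times_{k'}$); your use of profiniteness of $\sO^\times_{k'}$ to replace an open neighborhood by an open subgroup is the one small step worth spelling out, but it is standard. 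Your approach is the more elementary and self-contained of the two.
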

\begin{proof}
  Let $\chi \in  (k^\times)^\star$.
  Since the inclusion $k^\times \inj k'^\times$ is continuous and its
  image is closed (e.g., see \cite[\S~7.2]{Kato-cft-1} or
  \cite[Prop.~II.5.7]{Neukirch}) with respect to  the adic
  topologies, it follows from \cite[Cor.~4.42]{Folland} that there exists a
  continuous homomorphism $\chi' \colon k'^\times \to \T$ whose restriction to $k^\times$
  is $\chi$. It remains to show that $\chi'$ has finite order.
  Since $\sO^\times_{k'}$ is profinite, it is clear that $\chi'$ has finite order when
  we restrict it to $\sO^\times_{k'}$. We let $G = (k^\times) \cdot (\sO^\times_{k'}) \subset
  k'^\times$. Then $G$ is a closed subgroup of $k'^\times$ and $\chi'$ has finite order
  (say, $m$) on $G$.

  We now look at the commutative diagram of exact sequences
  \begin{equation}\label{eqn:Dual-surjection-7}
      \xymatrix@C1pc{
        0 \ar[r] & G \ar[r] \ar@{->>}[d] & k'^\times \ar[r] \ar[d]^-{\chi'} &
        {\Z}/{e} \ar[d]
        \ar[r] & 0 \\
       0 \ar[r] & {\Z}/m \ar[r] & \T \ar[r]^-{m} & \T \ar[r] & 0,}
\end{equation}
where $e$ is the ramification index of ${k'}/{k}$.
It follows that $\chi'(k'^\times)$ is finite. This concludes the proof.
\end{proof}

We can now prove the main result of \S~\ref{sec:LGD}.

\begin{thm}\label{thm:GD-units-main}
  The bilinear pairing of ~\eqref{eqn:GD-units-00} induces a perfect pairing of
  topological abelian groups
\begin{equation}\label{eqn:GD-units-main*}
      \delta_X \colon H^3(X, \sO^\times_X) \times H^0(X, \sO^\times_X)^\pf
      \to {\Q}/{\Z}.
      \end{equation}
  \end{thm}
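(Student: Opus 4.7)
The perfectness of the pairing amounts to two statements: (a) the map $\delta_X \colon H^3(X, \sO^\times_X) \to ((H^0(X, \sO^\times_X))^\pf)^\star$ is an isomorphism of topological abelian groups, and (b) the adjoint map $H^0(X, \sO^\times_X)^\pf \to H^3(X, \sO^\times_X)^\star$ is an isomorphism of topological abelian groups. Since $H^3(X, \sO^\times_X)$ is discrete torsion (being isomorphic to $H^3(X, {\Q}/{\Z}(1))$ by \lemref{lem:Mot-coh-3}) and $(k^\times)^\pf$ is profinite, both targets are automatically endowed with the expected topology (discrete on one side, profinite on the other), and (b) would follow from (a) by applying Pontryagin duality in $\pfd$ (\lemref{lem:Topological-0}), using $((k^\times)^\pf)^{\star\star} \cong (k^\times)^\pf$. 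So the work concentrates on (a).

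That the image of $\delta_X$ lies in $((k^\times)^\pf)^\star \cong ((k^\times)^\star)_\tor$ (\lemref{lem:Prof-dual}) is immediate because $H^3(X, \sO^\times_X)$ is torsion: any character $\delta_X(\omega)$ has finite order $n$, hence factors through the finite discrete quotient $k^\times/n$ and extends to $(k^\times)^\pf = \varprojlim_n k^\times/n$. Injectivity of $\delta_X$ as a map to $(H^0(X, \sO^\times_X))^\vee$ is exactly \lemref{lem:GD-units-7}, and this remains true after factoring through $((k^\times)^\pf)^\star$.

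The core step is surjectivity. From the construction of the pairing in \lemref{lem:GD-units-0}, combined with the exact sequence ~\eqref{eqn:GD-units-4} and the vanishing ${\varprojlim}_n {}_n\Pic^0(X) = 0$ (\lemref{lem:GD-units-6}), the natural map $(k^\times)^\pf \cong {\varprojlim}_n H^0(X, \sO^\times_X)/n \to H^1(X, \wh{\Z}(1))$ is an isomorphism, and the pairing is identified with
\[
H^3(X, {\Q}/{\Z}(1)) \times H^1(X, \wh{\Z}(1)) \to {\Q}/{\Z}.
\]
Split this pairing into its $p$-primary and prime-to-$p$ parts. For the prime-to-$p$ component, the Saito--Tate duality \cite[Thm.~9.9]{GKR} gives a perfect pairing $H^i(X, \mu_m) \times H^{3-i}(X, \mu_m) \to {\Z}/m$ for $(m,p) = 1$; passing to the limit recovers perfectness for $H^3\{p'\}$ against $(k^\times)^\pf\{p'\}$. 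For the $p$-primary part, identify
\[
{\varinjlim}_n H^2(X, W_n\Omega^1_{X,\log}) \cong H^3(X, \sO^\times_X)\{p\},
\qquad
{\varprojlim}_n H^0(X, W_n\Omega^1_{X,\log}) \cong (k^\times)^\pf\{p\},
\]
where the second identification again uses ${\varprojlim}_n {}_{p^n}\Pic^0(X) = 0$. Then \corref{cor:Limit-duality-1} provides a continuous epimorphism from $H^3(X, \sO^\times_X)\{p\}$ onto $((k^\times)^\pf\{p\})^\star$, which combined with the already established injectivity yields an isomorphism.

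The subtlest point is the $p$-primary step: \corref{cor:Limit-duality-1} only delivers a continuous epimorphism in that direction rather than an isomorphism outright, so one must carefully thread this with the injectivity from \lemref{lem:GD-units-7} (which itself required \thmref{thm:Duality-Main}, i.e., Theorem~\ref{thm:Main-5}) and with the Tate-module vanishing that converts Witt-differential cohomology into the profinite completion of $k^\times$. Once (a) is in hand, the topological statement and (b) are formal.
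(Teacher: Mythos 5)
Your overall architecture (reduce to showing $\delta_X$ is a bijection onto the continuous dual, split into prime-to-$p$ and $p$-primary parts, treat the prime-to-$p$ part by Saito--Tate, and deduce the topological statement formally at the end) matches the paper. The prime-to-$p$ part and the formal reduction at the end are fine. But there is a genuine gap at the step you dismiss as ``immediate'': showing that the image of $\delta_X$ lies in the \emph{continuous} dual $(k^\times)^\star$. Your argument is that a character of finite order $n$ factors through ``the finite discrete quotient $k^\times/n$''. In characteristic $p>0$ with $p\mid n$ this fails: $k^\times/p^s$ is an \emph{infinite} group (e.g.\ $(1+t\F_q[[t]])/(p^s\text{-th powers})$ is infinite), and an abstract finite-order character of $k^\times$ need not be continuous for the adic topology --- the profinite group $1+\fm_k\cong\prod_{\N}\Z_p$ has many discontinuous characters killed by $p$. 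This continuity statement is in fact the hardest part of the theorem; the paper proves it by passing through a closed point $x\in X$: it uses the compatibility of $\delta_X$ with Kato's local duality (\lemref{lem:LD-GD}), the identification $\gamma_x\colon H^3_x(W,\sO^\times_W)\cong\varinjlim_n(\sO^\times(W_n))^\star$ with duals taken for the \emph{adic} topology (\lemref{lem:Loc-dual-3}(4)), and the surjectivity of the forget-support map $H^3_x(X,\sO^\times_X)\{p\}\to H^3(X,\sO^\times_X)\{p\}$, which in turn rests on the vanishing $H^2(U,W_n\Omega^1_{U,\log})=0$ for $U=X\setminus\{x\}$ affine. None of this is in your plan.

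Your alternative route for the $p$-primary surjectivity via \corref{cor:Limit-duality-1} does not repair this, and has its own topological defect: the continuous epimorphism there lands in $\bigl({\varprojlim}_n\,H^0(X,W_n\Omega^1_{X,\log})\bigr)^\star$, where the source carries the profinite topology induced by Zhao's duality (\S~\ref{sec:Top-X}), not the adic topology of $k^\times$. The Tate-module vanishing of \lemref{lem:GD-units-6} identifies ${\varprojlim}_n\,H^0(X,W_n\Omega^1_{X,\log})$ with ${\varprojlim}_n\,k^\times/p^n$ only as abstract groups; to compare continuous duals you would have to show the two topologies agree (or at least that $k^\times\to H^0(X,W_n\Omega^1_{X,\log})$ is adic-to-Zhao continuous), which is essentially the same difficulty you skipped. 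The paper sidesteps this entirely by obtaining surjectivity from the same local diagram: $\gamma_x$ is bijective and $(k(x)^\times)^\star\to(\wh{\sO^h_{X,x}}{}^\times)^\star\to(k^\times)^\star$ is surjective by \lemref{lem:Lifting}. To complete your proof you need to supply the local-global compatibility and the affine vanishing statement, or an independent argument that $\delta_X(\omega)$ is adic-continuous for $p$-power torsion $\omega$.
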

  \begin{proof}
We can identify $H^0(X, \sO^\times_X)$ with $k^\times$ and
    $H^3(X, \sO^\times_X)$ with $H^3(X, {\Q}/{\Z}(1))$ (cf. \lemref{lem:Mot-coh-3}).
     In the first step, we show that $\delta_X \colon H^3(X, \sO^\times_X) \to
     (k^\times)^\vee$ factors through $(k^\times)^\star$. Since $(k^\times)^\star \cong
     \Hom_\Tab(k^\times, {\Q}/{\Z}\{p'\}) \bigoplus 
    \Hom_\Tab(k^\times, {\Q_p}/{\Z_p})$ and $H^3(X, \sO^\times_X)$ is a torsion
    group, it suffices to consider the prime-to-$p$ and $p$-primary cases
    separately.

On the prime-to-$p$ torsion subgroup,  $\delta_X$ is identified with the composite map
    \begin{equation}\label{eqn:GD-units-main-0}
      H^3(X, {\Q}/{\Z}(1))\{p'\} \to  {\underset{n \in k^\times}\varinjlim} \
      (H^1(X, {\Z}/n(1))^\vee \to {\underset{n \in k^\times}\varinjlim} \
      ({k^\times}/n)^\vee \to (k^\times)^\vee.
 \end{equation}
 Since the groups $H^i(X, {\Z}/n(j))$ are finite for $n \in k^\times$
 (e.g., see \cite[Thm.~9.9]{GKR}),
 we have $H^1(X, {\Z}/n(1))^\star \cong H^1(X, {\Z}/n(1))^\vee$.
 Since ${k^\times}/n$ is also discrete (and finite),
 it follows that $({k^\times}/n)^\star \cong
 ({k^\times}/n)^\vee$ for every $n \ge 1$. Hence, the above sequence of maps has a
 factorization
 \begin{equation}\label{eqn:GD-units-main-1}
   H^3(X, {\Q}/{\Z}(1))\{p'\} \to  {\underset{n \in k^\times}\varinjlim} \
 H^1(X, {\Z}/n(1))^\star \to {\underset{n \in k^\times}\varinjlim} \
 ({k^\times}/n)^\star \to (k^\times)^\star.
 \end{equation}

To prove the $p$-primary case, we can assume $p > 1$. We
   choose a closed point $x \in X$. We follow the
   notations of \lemref{lem:Loc-dual-3} and look at the commutative diagram
   \begin{equation}\label{eqn:GD-units-main-2}
   \xymatrix@C1pc{
  H^3_{x}(W, \sO^\times_W)\{p\} \ar[d]_-{\gamma_x} \ar[r] & H^3(X, \sO^\times_X)\{p\}
      \ar@{^{(}->}[dr]^-{\delta_X} \ar@{.>}[d] & \\
      {\varinjlim}_n \ H^0(W_n, \sO^\times_{W_n})^\star  \ar[r] &
      (k^\times)^\star \ar@{^{(}->}[r] &  (k^\times)^\vee.}
  \end{equation}
  Note that $\delta_X$ is injective by \lemref{lem:GD-units-7}.
It is clear that the map $H^0(W_n, \sO^\times_{W_n})^\star  \to (k^\times)^\vee$
  factors through $(k^\times)^\star$ for every $n \ge 1$. In particular, the bottom
  left arrow is defined. Moreover, the outer trapezium commutes by \lemref{lem:LD-GD}.
  To show that the image of $\delta_X$ lies in $ (k^\times)^\star$, it suffices therefore
  to show that the top horizontal arrow is surjective.

To show the last claim, we set $U = X \setminus \{x\}$. We now recall that
  $H^i(Z, \sO^\times_Z)$ is a torsion group for all $i \ge 2$ if $Z\in \{X,U\}$.
  Using the support cohomology exact sequence, this implies that
  $H^i_{x}(W, \sO^\times_W) \cong H^i_{x}(X, \sO^\times_X)$ is a torsion group for
  $i \ge 3$. (the case of interest $i = 3$ also follows from
  \lemref{lem:Loc-dual-3}). In particular, the sequence
  \[
  H^3_{x}(W, \sO^\times_W)\{p\} \to H^3(X, \sO^\times_X)\{p\} \to H^3(U, \sO^\times_U)\{p\}
\]
is exact.

On the other hand, there is a surjection
$H^2(U, W_n\Omega^1_{U, \log}) \surj ~_{p^n}H^3(U, \sO^\times_U)$ for any $n \ge 1$, as
follows from ~\eqref{eqn:Mot-coh-0}. It suffices therefore to show that
$H^2(U, W_n\Omega^1_{U, \log}) = 0$ for all $n \ge 1$.
 For $n =1$, note that in the exact sequence
  \[
    0 \to \Omega^1_{U, \log} \to \Omega^1_{U} \xrightarrow{\pi - \ov{F}}
    \frac{\Omega^1_{U}}{d\sO_U} \to 0
  \]
  (see ~\eqref{eqn:dlog-2-*}), the middle and the term on the right are coherent
  $\sO_U$-modules. In particular, their cohomological dimension is
  zero as $U$ is affine. It follows that $H^2(U, \Omega^1_{U, \log}) = 0$.
  The general case follows by induction on $n$ using the cohomology sequence
  associated to ~\eqref{eqn:Zhao*-0}.

We shall now show that $\delta_X \colon  H^3(X, \sO^\times_X) \to (k^\times)^\star$
is bijective. It is injective by \lemref{lem:GD-units-7}.
To prove its surjectivity between the prime-to-$p$ torsion subgroups, note that
all arrows in ~\eqref{eqn:GD-units-main-1} are isomorphisms (the first arrow
is an isomorphism by the Saito-Tate duality, see \cite[Thm.~9.9]{GKR}).
To prove the surjectivity of $\delta_X$ between the $p$-primary torsion subgroups,
note that this map is the same as the dotted arrow in ~\eqref{eqn:GD-units-main-2}.
The map $\gamma_x$ is bijective by \lemref{lem:Loc-dual-3}(4). It suffices therefore
to show that the map ${\varinjlim}_n \ H^0(W_n, \sO^\times_{W_n})^\star  \to (k^\times)^\star$
is surjective. Equivalently, the map $(\wh{R}^\times)^\star \to (k^\times)^\star$
is surjective, where recall that $R = \sO^h_{X,x}$.
But this latter map is induced by the canonical inclusion
$k^\times \inj \wh{R}^\times$. We are now done because the composite map
$(k(x)^\times)^\star \to (\wh{R}^\times)^\star \to (k^\times)^\star$ is surjective by
\lemref{lem:Lifting}.

To prove the perfectness of ~\eqref{eqn:GD-units-main*}, we let $F = (k^\times)^\pf$.
We now note that
the canonical map $F^\star \to (k^\times)^\star$ is an isomorphism, as one
easily checks using \cite[Prop.~II.5.7]{Neukirch} and the profiniteness of $\sO^\times_k$.
It follows that ~\eqref{eqn:GD-units-00} gives rise to ~\eqref{eqn:GD-units-main*}
and the resulting map $\delta_X \colon H^3(X, {\Q}/{\Z}(1)) \to F^\star$ is an
isomorphism.
Since this map is clearly continuous and $F$ is profinite, it follows that
~\eqref{eqn:GD-units-main*} is a perfect pairing of topological abelian groups.
This concludes the proof.
\end{proof}

\begin{remk}\label{remk:Vanish-open-p}
  We showed in the proof of \thmref{thm:GD-units-main} that $H^3(U, \sO^\times_U)\{p\}
  = 0$ if $U$ is the complement of a closed point in $X$. But the
  proof actually shows that $H^i(U, \sO^\times_U)\{p\} = 0$ if $U$ is
  any affine open in $X$ and $i \ge 3$ is any integer. This observation will be used
  in the proof of \thmref{thm:Main-0}.
  \end{remk}

\begin{cor}\label{cor:GD-units-main-3}
  For any nonempty effective Cartier divisor $D \subset X$, the forget support map
  $H^3_{D}(X, \sO^\times_{X}) \to H^3(X, \sO^\times_{X})$ is surjective.
\end{cor}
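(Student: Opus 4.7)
The plan is to reduce the statement to a single closed point in the support of $D$ and then translate it, via the local–global duality compatibility of \lemref{lem:LD-GD}, to a surjectivity statement about Pontryagin duals that can be read off from \lemref{lem:Lifting}.

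First I would observe that, since étale local cohomology depends only on the reduced support, one has a canonical direct sum decomposition
\[
  H^3_D(X,\sO^\times_X) \;=\; H^3_{D_\red}(X,\sO^\times_X) \;=\; \bigoplus_{x \in D_\red} H^3_x(X,\sO^\times_X),
\]
so that it suffices to show that for every closed point $x$ of $X$ the forget-support map $H^3_x(X,\sO^\times_X) \to H^3(X,\sO^\times_X)$ is surjective. By excision this map identifies with $H^3_x(W,\sO^\times_W) \to H^3(X,\sO^\times_X)$ for $W = \Spec(R)$ with $R = \sO^h_{X,x}$.

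Next I would invoke \lemref{lem:LD-GD}, whose commutative square fits into the isomorphisms $\gamma_x \colon H^3_x(W,\sO^\times_W) \xrightarrow{\cong} ((R^\times)^\star)_\tor$ of \lemref{lem:Loc-dual-3}(1) and $\delta_X \colon H^3(X,\sO^\times_X) \xrightarrow{\cong} (k^\times)^\star$ of \thmref{thm:GD-units-main}. This reduces the problem to showing that the homomorphism
\[
  \phi_x \colon ((R^\times)^\star)_\tor \;\longrightarrow\; (k^\times)^\star
\]
induced by the continuous pullback $k^\times \hookrightarrow R^\times$ (adic topology on the source, Kato topology on the target) is surjective.

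Finally I would use the residue map $R^\times \twoheadrightarrow k(x)^\times$, which is continuous from the Kato topology to the adic topology (for $p>1$ this follows from \lemref{lem:Kato-1}; for $p=1$ it is automatic since the Kato topology is discrete). Pullback along this map yields $(k(x)^\times)^\star \to (R^\times)^\star$, whose image is torsion and hence lies in $((R^\times)^\star)_\tor$. The composition
\[
  (k(x)^\times)^\star \;\longrightarrow\; ((R^\times)^\star)_\tor \;\xrightarrow{\phi_x}\; (k^\times)^\star
\]
is dual to $k^\times \hookrightarrow R^\times \twoheadrightarrow k(x)^\times$, which is simply the inclusion of the base field into the residue field, and \lemref{lem:Lifting} asserts precisely that this composite is surjective. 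The surjectivity of $\phi_x$ then follows.

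The only potentially delicate point in executing this plan is the bookkeeping between the Kato and adic topologies around the map $R^\times \to k(x)^\times$ and around the inclusion $k^\times \hookrightarrow R^\times$; both continuity statements, however, are already built into the constructions of \S\ref{sec:Kato-T} and have been implicitly used in the proof of \thmref{thm:GD-units-main}, so no new input is required.
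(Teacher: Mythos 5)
Your proposal is correct and follows essentially the same route as the paper: the paper's own (one-line) proof likewise combines \lemref{lem:Loc-dual-3}, \lemref{lem:LD-GD} and \thmref{thm:GD-units-main} to reduce the claim to the surjectivity of $(k(x)^\times)^\star \to (k^\times)^\star$, which is \lemref{lem:Lifting}. Your additional details (the decomposition of $H^3_D$ over the points of $D_\red$, excision to $\Spec(\sO^h_{X,x})$, and the factorization of $\phi_x$ through $(k(x)^\times)^\star$, whose elements are automatically torsion) are exactly the steps the paper leaves implicit, and they check out.
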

\begin{proof}
  Using Lemmas~\ref{lem:Loc-dual-3}, ~\ref{lem:LD-GD} and \thmref{thm:GD-units-main},
  it suffices to show that the map $(k(x)^\times)^\star \to (k^\times)^\star$ is
  surjective for any $x \in D$. But this follows from \lemref{lem:Lifting}.
\end{proof}

\section{Continuity of Brauer-Manin pairing}\label{sec:CBMP}
Let $k$ be a local field of exponential characteristic $p \ge 1$.
The goal of this section is to show that the Brauer-Manin pairing for
a 1-dimensional modulus pair $(X,D)$ over $k$ is continuous with respect to the
discrete topology of $\Br(X|D)$ and the adic topology of $\Pic(X|D)$.
This is an important step in the proofs of the main results.  We shall also prove few
more properties of this pairing which will be used in the proof of \thmref{thm:Main-1}.
We begin with the following general statement about regular (but not necessarily smooth)
curves.

\begin{lem}\label{lem:Limit-nD}
  Let $X$ be a connected and regular projective curve over $k$ and let $D \subset X$ be
  a divisor with the complement $X^o$. Then the map  $\theta_{X^o} \colon C(X^o) \to
  {\underset{D' \in \Div_{D_\red}(X)} \varprojlim} \ \CH_0(X|D')$ is bijective.
  In particular, the map $C(X^o) \to \CH_0(X|D')$ is surjective for every
$D' \in \Div_{D_\red}(X)$.
  \end{lem}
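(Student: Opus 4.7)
The plan is to prove the bijectivity of $\theta_{X^o}$ by a Mittag-Leffler argument that leverages the explicit cokernel descriptions of both sides, from which the ``in particular'' claim will follow. If $D = \emptyset$ the claim is trivial, so assume $D_\red \ne \emptyset$. Since $\Div_{D_\red}(X)$ contains the cofinal countable subsystem $\{nD_\red\}_{n \ge 1}$, I may restrict to this subsystem. For $X$ a curve we have $\sC(X|nD_\red) = \{X\}$, so by Definition~\ref{defn:Idele} and \eqref{eqn:Chow-Parshin-0} the groups $C(X^o)$ and $\CH_0(X|nD_\red)$ are the cokernels of
\[
\phi \colon K^\times \to \sZ_0(X^o) \oplus \wh{K}^\times_\infty \quad \text{and} \quad
\phi_n \colon K^\times \to \sZ_0(X^o) \oplus \wh{K}^\times_\infty/\wh{I}(X|nD_\red),
\]
respectively, where $\phi_n$ is $\phi$ followed by the canonical projection. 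The tower of targets of $\phi_n$ has surjective transitions with inverse limit $\sZ_0(X^o) \oplus \wh{K}^\times_\infty$, since the direct sum over the finite set $D_\red$ commutes with $\varprojlim$ and $\wh{K}_x^\times = \varprojlim_n \wh{K}_x^\times/(1 + \fm_x^n)$ for each $x \in D_\red$.

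Applying $\varprojlim$ to the short exact sequence of towers $\{0 \to \mathrm{Im}(\phi_n) \to \sZ_0(X^o) \oplus \wh{K}^\times_\infty/\wh{I}(X|nD_\red) \to \CH_0(X|nD_\red) \to 0\}$, and observing that the transitions on $\mathrm{Im}(\phi_n)$ are surjective (all $\phi_n$ share the same source $K^\times$, and $\phi_n$ factors through $\phi_{n+1}$), Mittag-Leffler yields a short exact sequence
\[
0 \to \varprojlim_n \mathrm{Im}(\phi_n) \to \sZ_0(X^o) \oplus \wh{K}^\times_\infty \to \varprojlim_n \CH_0(X|nD_\red) \to 0.
\]
The proof will then reduce to the identification $\varprojlim_n \mathrm{Im}(\phi_n) = \mathrm{Im}(\phi)$, for then the right-hand term equals $C(X^o)$, giving the desired bijection.

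The main obstacle is this identification, which will follow from the injectivity of $\phi_n$ for every $n \ge 1$. Given a compatible system $(\phi_n(f_n)) \in \varprojlim_n \mathrm{Im}(\phi_n)$, the compatibility $\phi_n(f_{n+1}) = \phi_n(f_n)$ together with $\ker \phi_n = \{1\}$ forces $f_n = f_{n+1}$, so the system stabilizes to a single $f \in K^\times$ with $\phi(f) = (\phi_n(f_n))$. The injectivity of $\phi_n$ is the heart of the argument: if $f \in \ker \phi_n$, then $\divf(f) = 0$ on $X^o$ and $f \in 1 + \fm_x^n\wh{\sO}_{X,x}$ at each $x \in D_\red$. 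The latter forces $v_x(f) = 0$ on $D_\red$, hence $\divf(f) = 0$ on all of $X$ and $f \in H^0(X, \sO^\times_X) = E^\times$, where $E := H^0(X, \sO_X)$ is a finite field extension of $k$. Since $E$ embeds into the residue field $k(x)$ via $\sO_{X,x} \twoheadrightarrow k(x)$, we have $E \cap \fm_x\wh{\sO}_{X,x} = 0$; applying this to $f - 1 \in E$ at any $x \in D_\red$ yields $f = 1$. The ``in particular'' statement then follows, since the canonical projection from the inverse limit of a surjective inverse system to any of its terms is surjective.
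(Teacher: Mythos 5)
Your proof is correct and follows the same core route as the paper's: both reduce, via the presentation \eqref{eqn:Chow-Parshin-0}, Remark~\ref{remk:BMP-compln} and the cofinal subsystem $\{nD_\red\}$, to the completeness statement that $\wh{K}_x^\times \to \varprojlim_n \wh{K}_x^\times/(1+\fm_x^n\wh{\sO_{X,x}})$ is bijective for each $x \in D_\red$. Where you go beyond the paper is in making explicit why bijectivity on the middle terms of the presentations transfers to the cokernels: the paper asserts this ``suffices,'' while you supply the Mittag--Leffler argument on the image tower (which gives surjectivity of the limit of cokernels) and, more substantially, the injectivity of each $\phi_n$ --- via $\Ker(\phi_n) \subset H^0(X,\sO_X^\times)$ together with $H^0(X,\sO_X)\cap \fm_x\wh{\sO_{X,x}} = 0$ for the field $E = H^0(X,\sO_X)$ --- which identifies $\varprojlim_n \mathrm{Im}(\phi_n)$ with $\mathrm{Im}(\phi)$ and hence gives injectivity. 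This is a correct and worthwhile filling-in of a step the paper leaves implicit, and your use of $E$ rather than $k$ correctly accounts for $X$ not being assumed geometrically connected here.
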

  \begin{proof}
    We can replace the right hand side by ${\varprojlim}_n \CH_0(X|nD)$.
    Using Remark~\ref{remk:BMP-compln}, we can replace ${k(X)}^\times_\infty$ (resp.
    $I(X|nD)$) by $\wh{k(X)}^\times_\infty$ (resp. $\wh{I}(X|nD)$) in
    ~\eqref{eqn:Chow-Parshin-0}.
It suffices now to show the stronger statement that the canonical map
$\sZ_0(X^o) \bigoplus {\wh{k(X)}^\times_\infty} \to {\varprojlim}_n
\left(\sZ_0(X^o) \bigoplus \frac{\wh{k(X)}^\times_\infty}{\wh{I}(X|nD)}\right)$
is bijective. It is clear that the right hand side of the latter map is
$\sZ_0(X^o) \bigoplus \left({\varprojlim}_n
  \frac{\wh{k(X)}^\times_\infty}{\wh{I}(X|nD)}\right)$.
It remains therefore to show that for every $x \in D$, the map
$\wh{k(X)}_x^\times \to {\varprojlim}_n \frac{\wh{k(X)}^\times_x}{\Fil_n \wh{k(X)}^\times_x}$
is bijective. But this can be easily checked using the completeness of
$\wh{\sO_{X,x}}$ and the strict exact sequence of pro-abelian groups
\[
  0 \to \left\{\frac{\wh{\sO_{X,x}}^\times}{\Fil_n \wh{k(X)}^\times_x}\right\}_n \to
  \left\{\frac{\wh{k(X)}^\times_x}{\Fil_n \wh{k(X)}^\times_x}\right\}_n \xrightarrow{v_x}
  \Z \to 0.
\]

To see that $C(X^o) \to \CH_0(X|D')$ is surjective, we only have to note that
$\{\CH_0(X|nD'\}_n$ is an inverse system indexed by $\N$ with surjective transition maps
whose limit is $C(X^o)$.
\end{proof}

We now let $X$ be a smooth projective geometrically integral curve
over $k$ and $D \subset X$ an effective divisor with support $D^\dagger$.
We write $D = {\underset{x \in D^\dagger}\sum} n_x [x] \in \Div(X)$. Let
$j \colon X^o = X \setminus D \inj X$ be the inclusion. We shall consider
$C(X^o) = \varprojlim_n \CH_0(X|nD)$ as a topological abelian group, endowed with the
inverse limit of the adic topologies of $\CH_0(X|nD)$ using \lemref{lem:Limit-nD}.

\begin{prop}\label{prop:BM-cont}
  The pairings
  \[
    \Br(X|D) \times \CH_0(X|D) \to {\Q}/{\Z}, \ \ \mbox{and} \ \
    \Br(X^o) \times C(X^o) \to {\Q}/{\Z}
  \]
  are continuous.
\end{prop}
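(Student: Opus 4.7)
The plan is to reduce continuity of both pairings to the local constancy of the Brauer evaluation $\operatorname{ev}_\omega \colon X^o(k) \to \Q/\Z$ attached to each $\omega \in \Br(X^o)$. Since both Brauer groups are discrete, continuity of a pairing is equivalent to continuity, for each fixed $\omega$, of the character $\chi_\omega$ on the right-hand factor. Because $\deg \colon \CH_0(X|D) \to \Z$ is continuous (it factors through the topological quotient $\CH_0(X|D) \twoheadrightarrow \CH_0(X)$ of \corref{cor:Pic-sequence-2}), the open subgroup $\CH_0(X|D)_0 \cong \Pic^0(X|D)$ captures the whole picture via translation by coset representatives.

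The main reduction uses the albanese map. Fix a base point $P \in X^o(k)$. By \corref{cor:Pic-sequence-2}(1), for $n \gg 0$ the map $\phi^n_{X|D} \colon U_n(X^o)(k) \to \Pic^0(X|D)$ is continuous, open, and has dense image, so its image $V$ is a nonempty open subset of $\Pic^0(X|D)$ and $\phi^n_{X|D} \colon U_n(X^o)(k) \twoheadrightarrow V$ is a topological quotient. Hence continuity of $\chi_\omega|_V$ is equivalent to continuity of $\chi_\omega \circ \phi^n_{X|D}$; once the latter is known, translating by any $v_0 \in V$ yields an open neighborhood $V - v_0$ of $0$ on which $\chi_\omega$ is continuous, and the homomorphism property propagates this to all of $\Pic^0(X|D)$.

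Writing $z \in U_n(X^o)(k)$ as an effective 0-cycle $\sum_j [x_j]$ with distinct closed points $x_j$ of $X^o$, the composite reads
\[
(\chi_\omega \circ \phi^n_{X|D})(z) \;=\; \sum_j \inv_{k(x_j)}(\iota_{x_j}^*\omega) - n \cdot \inv_k(\iota_P^*\omega),
\]
so continuity reduces to local constancy of the evaluation $\operatorname{ev}_\omega$ on adic spaces of closed points. The key input is: for any $k'$-rational point $y_0$ of $X^o_{k'}$ with $k'/k$ finite, the class $\omega - p^*\iota_{y_0}^*\omega$ dies in the Henselian local ring $\sO^h_{X^o_{k'},\, y_0}$ because $\Br(R) \xrightarrow{\sim} \Br(k')$ whenever $R$ is Henselian local with residue field $k'$; it therefore dies on some \'etale neighborhood $V' \to X^o_{k'}$ of $y_0$. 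Combined with the implicit-function-theorem property of \'etale morphisms over local fields (cf.\ \cite[Thm.~10.5.1]{CTS}), this produces an adic neighborhood of $y_0$ in $X^o(k')$ on which $\operatorname{ev}_\omega \equiv \iota_{y_0}^*\omega$. Galois descent along a finite extension $k'/k$ that splits every residue field $k(x_j)$ upgrades this to local constancy at $z$.

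For the idele-class pairing, a limit argument suffices: by \lemref{lem:Brauer-reln}(2), every $\omega \in \Br(X^o)$ lies in $\Br(X|nD)$ for some $n$, and by \corref{cor:Mod-non-mod} the character $\chi_\omega$ on $C(X^o)$ factors through the projection $C(X^o) \to \CH_0(X|nD)$; this projection is continuous for the inverse-limit topology, and $\chi_\omega$ on $\CH_0(X|nD)$ is continuous by the argument above. The principal obstacle is the local-constancy step: the Henselian-field argument must be pushed through the symmetric-product construction of $U_n(X^o)(k)$, and handling closed points of varying residue fields is precisely the role of the Galois descent.
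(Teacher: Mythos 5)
Your overall strategy for the main case coincides with the paper's: reduce to continuity of the single character $\chi_\omega$ (Brauer groups being discrete), push the problem through the albanese map $\phi^n_{X|D}$, which is open with dense image on $k$-points, and then invoke local constancy of the point-evaluation $x \mapsto \inv(\iota_x^*\omega)$ together with the homomorphism property to spread continuity from one nonempty open set to all of $\Pic^0(X|D)$. That part is sound.

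There is, however, a genuine gap: your entire argument presupposes $X^o(k) \neq \emptyset$. You fix a base point $P \in X^o(k)$ in the first line, and every tool you invoke afterwards --- \thmref{thm:Pic-rep} (component structure of $\Picc(X|D)$), \thmref{thm:ALB-rel}, and \corref{cor:Pic-sequence-2}(1) --- is stated only under that hypothesis. Over a local field a geometrically integral smooth projective curve need not have any rational point, and the proposition makes no such assumption. The paper devotes the bulk of its proof to exactly this case: it uses Temkin's alteration theorem to produce a finite extension $k'/k$ and a finite cover $X' \to X$ of $p$-power degree with $X'^o(F) \neq \emptyset$, shows $f_*\colon \Br(X'^o) \to \Br(X^o)$ is surjective (the cokernel is killed by $p^n$ yet $p$-divisible), writes $\omega = f_*(\omega')$, and deduces continuity of $\chi_\omega$ from that of $\chi_{\omega'}$ via the continuity of $f^*$ on Picard schemes and the projection formula of \lemref{lem:BMP-2}. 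None of this is present in your proposal, and it is not a routine reduction.

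Two smaller points. First, you work with $U_n(X^o)(k) = (U_n/S_n)(k)$, whose points are effective $0$-cycles supported at closed points with possibly nontrivial residue fields; this is what forces your vague ``Galois descent'' step. The paper avoids the issue entirely by using the open $U \subset (X^o)^r$ \emph{before} dividing by $S_r$, so that $U(k)$ consists of $r$-tuples of $k$-rational points and only the continuity of $x \mapsto \inv_k(\iota_x^*\omega)$ on $X^o(k)$ is needed (this is \cite[Prop.~10.5.2]{CTS}; your Henselian-local-ring sketch is essentially its standard proof and is fine for rational points). Second, in characteristic zero a much shorter argument is available ($\omega$ is $n$-torsion and $\CH_0(X|D)/n$ is finite discrete), which the paper uses; your argument would also work there but is heavier than necessary.
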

\begin{proof}
Let $\beta_{X|D} \colon \Br(X|D) \to \CH_0(X|D)^\vee$ and $\beta_{X^o} \colon
  \Br(X^o) \to C(X^o)^\vee$ be the induced homomorphisms.
As mentioned in \S~\ref{sec:Duality-X}, the proposition is equivalent to
the statement that the image of $\beta_{X|D}$ (resp. $\beta_{X^o}$) lies in
$\CH_0(X|D)^\star$ (resp. $C(X^o)^\star$). Note that this uses the discreteness of
$\Br(X^o)$.
Since $\varinjlim_n \CH_0(X|nD)^\star \xrightarrow{\cong} C(X^o)^\star$,
it suffices to show that $\beta_{X|D}(\Br(X|D)) \subset \CH_0(X|D)^\star$.
To prove the latter statement, we
  fix $w \in \Br(X|D)$ and let $\chi_w \colon \CH_0(X|D) \to {\Q}/{\Z}$ be the
  induced character.

 Assume first that $p = 1$. As  $nw = 0$ for some $n \ge 1$, the map
  $\chi_w \colon \CH_0(X|D) \to {\Q}/{\Z}$ must factor through ${\CH_0(X|D)}/n$.
  On the other hand, ${\CH_0(X|D)}/n$ is a finite discrete group,
  as one easily checks using ~\eqref{eqn:CCM-0}, \cite[Prop.~II.5.7]{Neukirch}
  and the Kummer sequence.
  In particular, the map $\chi_w \colon {\CH_0(X|D)}/n \to {\Q}/{\Z}$ is continuous.
  It follows that $\beta_{X|D}(w) \in \CH_0(X|D)^\star$.
  We shall now assume in the rest of the proof that $p > 1$.

We begin by proving the special case of the proposition when $D$ and
$X^o(k)$ are not empty. We fix a closed point $P \in X^o(k)$.
As $\chi_w$ is a group homomorphism, it will be enough to show that
  it is continuous at some point of $\Pic(X|D)$.
We now note that the composition 
  $\chi^1_w \colon X^o(k) \inj \sZ_0(X^o) \surj \Pic(X|D) \xrightarrow{\chi_w} {\Q}/{\Z}$
  (see ~\eqref{eqn:BMP-2-1}) is given by $\chi^1_w(x) = \inv_{k(x)}(\iota^*_x(w))$, where
  $\iota_x \colon \Spec(k(x)) \inj X^o$ is the inclusion. It follows from
  \cite[Prop.~10.5.2]{CTS} (see also \cite[Prop.~8.2.9]{Poonen})
  that $\chi^1_w$ is continuous. In particular, the map $\chi^r_w \colon (X^o(k))^r \to
  {\Q}/{\Z}$, given by $\chi^r_w(x_1, \ldots , x_r) = \stackrel{r}{\underset{i =1}\sum}
  \chi^1_w(x_i)$, is continuous for every $r \ge 1$.

We let $\alb_{X|D} \colon X^o(k) \to \Picc^0(X|D)$ be the albanese map,
  given by $\alb_{X|D}(x) = [x] - [P]$ (cf. ~\eqref{eqn:ALB}).
  It follows from \cite[Thm.~10.5.1]{CTS} and \thmref{thm:ALB-rel} that for all
  $r \gg 0$, there exists a Zariski open subscheme $U \subset (X^o)^r$ 
  such that the composite map $\phi^r_{X|D} \colon U(k) \inj (X^o(k))^r \to
  \Picc^0(X|D)(k) = \Pic^0(X|D)$ between the adic spaces is open and has a dense image.
  We now look at the commutative diagram
  \begin{equation}\label{eqn:BM-cont-0}
    \xymatrix@C1pc{
      U(k) \ar[r]^-{\phi^r_{X|D}} \ar[d]_-{\chi^r_w} & \Pic^0(X|D) \ar[d]^-{\chi_w} \\
      {\Q}/{\Z} \ar[r]^-{-r\chi_w(P)} & {\Q}/{\Z}.}
  \end{equation}
  We showed above that the left vertical arrow is continuous. Since the bottom
  horizontal arrow is clearly continuous, it follows that $\chi_w \circ \phi^r_{X|D}$
  is continuous. Since the image of $\phi^r_{X|D}$ is open dense and
  the map $U(k) \to \phi^r_{X|D}(U(k))$ is open, it follows that
  $\chi_w$ is continuous on the nonempty open subset $\phi^r_{X|D}(U(k))$.
  This proves the proposition when $X^o(k)$ and $D$ are not empty.

We assume next that $D \neq \emptyset$ but $X(k) = \emptyset$. We let
  $G^\star(X^o) = {\varinjlim}_n \ \CH_0(X|nD)^\star$ and
  $G^\vee(X^o) = {\varinjlim}_n \ \CH_0(X|nD)^\vee$.
  Recall from \lemref{lem:Brauer-reln} that ${\varinjlim}_n \Br(X|nD) \xrightarrow{\cong}
  \Br(X^o)$. Suppose we know that the map $\beta_{X^o} \colon \Br(X^o) \to G^\vee(X^o)$
  factors through $G^\star(X^o)$. Then it follows from ~\eqref{eqn:Mod-non-mod-0} that
  $\beta_{X|D}(\chi) \in G^\star(X^o) \cap \CH_0(X|D)^\vee$. In particular, the composite
  map $ \CH_0(X|nD) \surj \CH_0(X|D) \xrightarrow{\chi_w}  {\Q}/{\Z}$ is
  continuous for $n \gg 0$. This implies that $\chi_w$ is continuous because
  the first arrow is a quotient map of adic spaces by \corref{cor:Pic-sequence-1} and
  \cite[Thm.~10.5.1]{CTS}. It remains therefore to show that
  $\beta_{X^o}(\Br(X^o)) \in G^\star(X^o)$.

We fix an element $w \in \Br(X^o)$ and let $\chi_w = \beta_{X^o}(w)$. By
  \cite[Thm.~4.3.1]{Temkin} (see also \cite[Thm.~2.2]{Geisser-Alt}),
  we can find a finite field extension ${k'}/k$ and a geometrically connected
  smooth projective curve $Y$ over $k'$ together with a finite morphism
  $g \colon Y \to X_{k'}$ such that the degrees of ${k'}/k$ and $g$
  are powers of $p$ and $Y$ admits a strict semi-stable reduction over $k'$. That is,
  there exists a projective and flat morphism $\phi \colon \sY \to \Spec(\sO_{k'})$
  such that $\sY$ is regular whose generic fiber is $Y$ and whose (scheme theoretic)
  closed fiber $Y_s$ is a reduced curve with only double point singularities.

  We let $Y^o$ (resp. $E^\dagger$) be the inverse image of $X^o$ (resp. $D^\dagger$)
  under the composite map $Y \to X_{k'} \to X$ and let $\sY^o$ be the complement
  of the closure of $E^\dagger$ in $\sY$.
  By an application of Weil conjectures over the residue field $\F_q$ of $k'$, we can
  find a closed point $P_s \in \sY^o \cap (Y_s)_\reg$ whose degree over $\F_q$ is a
  power of $p$. It is easy to see that there exists a closed point $P \in Y^o$
  whose degree over $k'$ is same as that of $P_s$ over $\F_q$. We let
  $F = k'(P)$ and let $X' = Y_{F}$. Let $f \colon X' \to X$ be the composite finite
  surjective map. We let $X'^o = f^{-1}(X^o)$ and $E = f^*(D)$.  
  It follows then that $X'$ is a smooth and geometrically connected curve over
  the local field $F$ such that $P \in X'^o(F)$ and $\deg(f) = p^n$ for some $n \ge 1$.

  We have the canonical maps $\Br(X^o) \xrightarrow{f^*} \Br(X'^o) \xrightarrow{f_*}
  \Br(X^o)$ whose composition is multiplication by $p^n$ (e.g., see
  \cite[\S~3.8]{CTS}). In particular, $\coker(f_*)$ is torsion of exponent $p^n$.
  On the other hand, $\coker(f_*)$ is a $p$-divisible group by \cite[Thm.~3.2.3]{CTS}.
  It follows that $f_*$ is surjective. In particular, $w = f_*(w')$ for some
  $w' \in \Br(X'^o)$. We can assume that $w' \in \Br(X'|nE)$ for some $n \gg 0$.
Since $X'^o(F) \neq \emptyset$, the previous case of the
lemma shows that $\beta_{X'|nE}(w') \in \CH_0(X'|nE')^\star$. We let
$\chi_{w'} = \beta_{X'|nE}(w')$.

We now look at the diagram
   \begin{equation}\label{eqn:BM-cont-2}
    \xymatrix@C1pc{
      \CH_0(X|nD) \ar[d]_-{f^*} \ar[r]^-{\chi_w}  & {\Q}/{\Z} \\
      \CH_0(X'|nE) \ar[ur]_-{\chi_{w'}}. &}
  \end{equation}
  It follows from \lemref{lem:BMP-2} that this diagram is commutative.
  Since $f^* \colon \Picc(X|nD) \to \Picc(X'|nE)$ is a morphism between locally
  of finite type $k$-schemes by \thmref{thm:Pic-rep}, it follows from
  \cite[Prop.~5.4]{Conrad} that the left vertical arrow is 
  continuous. We deduce that $\chi_w$ is continuous. 
  This concludes the proof when $D \neq \emptyset$.

To prove the proposition for $(X, \emptyset)$, we choose a nonempty effective 
  divisor $D \subset X$ and let $w \in \Br(X) \subset \Br(X|D)$.
   We have shown above that the composite
   map $\CH_0(X|D) \surj \CH_0(X) \xrightarrow{\chi_w} {\Q}/{\Z}$ is continuous.
   On the other hand, it follows from \corref{cor:Pic-sequence-1}
  and \cite[Thm.~10.5.1]{CTS} that the first arrow is a topological quotient map.
  We deduce that $\chi_w$ is continuous. 
\end{proof}

\begin{lem}\label{lem:Dual-surjection}
Let ${k'}/k$ be a finite field extension. Let $X' = X_{k'}, \ D' = D_{k'}$ and
$X'^o = X' \setminus D'$. Let $f \colon X' \to X$ be the projection map.
Then the following hold.
\begin{enumerate}
\item
  The map $f^* \colon \Pic(X|D) \to \Pic(X'|D')$ is a closed embedding of
  adic spaces.
\item
$f^*$ induces a surjective homomorphism
  $(f^*)^\star \colon \Pic(X'|D')^\star \surj \Pic(X|D)^\star$.
\end{enumerate}
\end{lem}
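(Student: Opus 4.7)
For part (1), the plan is to reduce to a general statement about adic topologies of $k$-points. By \thmref{thm:Pic-rep}, $\Picc(X'|D') \cong \Picc(X|D) \times_k k'$, so that $f^*$ is identified with the canonical map $\Picc(X|D)(k) \to \Picc(X|D)(k')$. For any locally of finite type $k$-scheme $Y$, I would show that $Y(k) \to Y(k')$ is a closed embedding of adic spaces by covering $Y$ with affines, embedding each into some $\A^n_k$, and reducing to the elementary fact that $k^n \inj k'^n$ is a closed embedding. The latter holds because $k$ is a closed $k$-linear subspace of the finite-dimensional $k$-vector space $k'$, whose adic topology agrees with the product adic topology inherited from any $k$-basis (as recalled in \S\ref{sec:Pic-loc}).

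For part (2), the strategy is to first extend $\chi \in \Pic(X|D)^\star$ to a continuous $\T$-valued character $\chi' \colon \Pic(X'|D') \to \T$ via Pontryagin duality applied to the closed inclusion from (1) (using \cite[Cor.~4.42]{Folland}), and then to arrange that $\chi'$ in fact takes values in $\Q/\Z$. The latter would proceed by d\'evissage: the quotient $\Pic(X'|D')/\Pic^0(X'|D')$ is a subgroup of $\Z$; the further quotient $\Pic^0(X'|D') \surj \Pic^0(X')$ has profinite target by \corref{cor:Pic-sequence-2}, so any continuous $\T$-valued character of it automatically has finite image; and by \propref{prop:Pic-sequence} the affine kernel $\Pic^0(X'|D')_{\rm aff}$ decomposes as a product of groups of the form $k'^\times$ and $\W_m(k')$. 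On the $k'^\times$ factors the argument of \lemref{lem:Lifting} applies verbatim. On each $\W_m(k')$ factor, I would exploit the identification with $k'^m$ as adic spaces (via $\psi_{m,k'}$ from \S\ref{sec:Kato-T}) together with a topological $k$-linear splitting $k' = k \oplus M$ to construct a torsion-valued extension that factors through the projection to $k^m$.

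The main obstacle I expect is the case when $D^\dagger \not\subseteq X(k)$, since \propref{prop:Pic-sequence} is stated under the hypothesis that the support of the modulus is rational. To handle this, I would pass to a finite extension $k''/k$ over which all points of $D^\dagger$ become $k''$-rational, carry out the above d\'evissage over $k''$ to build a $\Q/\Z$-valued extension of $\chi$ on $\Pic(X_{k''}|D_{k''})$, and then descend to $\Pic(X'|D')$ by averaging along the norm map $N_{k''/k'} \colon \Pic(X_{k''}|D_{k''}) \to \Pic(X'|D')$, using the identity $N_{k''/k'} \circ (\text{base change from } k') = [k'':k'] \cdot \id$ together with the divisibility of $\Q/\Z$ to cancel the resulting degree factor.
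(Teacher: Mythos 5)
Part (1) is fine and is exactly the paper's argument: after identifying $f^*$ with the inclusion $\Picc(X|D)(k) \subset \Picc(X|D)(k')$ via \thmref{thm:Pic-rep}, the paper simply cites \cite[Prop.~5.11(3)]{Conrad}, which is precisely the general statement you propose to reprove by hand. The skeleton of your part (2) --- extend $\chi$ to a $\T$-valued character by \cite[Cor.~4.42]{Folland}, then force torsion values by d\'evissage through the affine part of $\Pic^0(X'|D')$ and the profinite quotient $\Pic^0(X')$ --- is also the paper's. But two of your steps do not work as written.

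\emph{The Witt factors.} The map $\psi_{m,k'} \colon \W_m(k') \to k'^m$ is an isomorphism of schemes, hence of adic spaces, but \emph{not} of groups (the paper warns about this explicitly in \S~\ref{sec:Kato-T}), and the $k$-linear projection $k'^m \to k^m$ attached to a splitting $k' = k \oplus M$ is not induced by a ring homomorphism $k' \to k$, so it corresponds to no homomorphism of Witt groups. Your proposed ``torsion-valued extension factoring through the projection to $k^m$'' therefore cannot be constructed. The step is in any case trivial by a different route, which is what the paper uses: in characteristic $p$ one has $(1-aT^i)^{p^N} = 1 - a^{p^N}T^{ip^N} \equiv 1 \bmod T^{m+1}$ for $p^N > m$, so $\W_m(k')$ is killed by $p^N$ and any homomorphism $\W_m(k') \to \T$ lands in the finite subgroup of $p^N$-torsion points of $\T$, hence is automatically $\Q/\Z$-valued. (In characteristic zero the Witt factors are unions of profinite open subgroups, so the conclusion is again automatic.)

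\emph{The non-rational case.} The obstacle you flag is not actually present: the sequence ~\eqref{eqn:CCM-0} together with \lemref{lem:Adic-Witt} yields an exact sequence of topological groups $0 \to F \to \Pic^0(X|D) \to \Pic^0(X) \to 0$ with $F$ a quotient of $\prod_i k(x_i)^\times \times \W_{n_i-1}(k(x_i))$, with no rationality hypothesis on $D^\dagger$; the paper runs the d\'evissage directly on this sequence and on its analogue for $(X',D')$, applying the argument of \lemref{lem:Lifting} to each residue field extension $k(x_i) \subset k(y_j)$. For that application one must first check that $\chi$ has finite image on the factors $k(x_i)^\times$ (the paper's preliminary ``claim''); your phrase ``the argument of \lemref{lem:Lifting} applies verbatim'' glosses over this input, and your detour through $k''$ would need it just the same, since the Lifting argument over $k''$ takes as input the finiteness of $\chi$ on subgroups of $\Pic^0(X|D)$ indexed by the non-rational points of $D^\dagger$. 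Moreover the norm-averaging descent is both superfluous and ill-defined: if you have produced $\wt{\chi}$ on $\Pic(X''|D'')$ with $\wt{\chi} \circ g^* = \chi$ (where $g \colon X'' \to X$ is the base change), then $\wt{\chi} \circ h^*$ with $h \colon X'' \to X'$ already answers the question because $h^* \circ f^* = g^*$; whereas ``cancelling the degree factor by divisibility of $\Q/\Z$'' is not an operation on characters, since division by $[k'':k']$ in $\Q/\Z$ is not unique. Finally, you should reduce to $\Pic^0$ at the outset using the divisibility of $\Q/\Z$, as the paper does; otherwise the Folland extension can take non-torsion values on the free quotient $\Pic(X'|D')/\Pic^0(X'|D') \subseteq \Z$ and has to be corrected there by hand.
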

\begin{proof}
 By \thmref{thm:Pic-rep}, the pull-back map $f^* \colon \Pic(X|D) \to \Pic(X'|D')$
  is the canonical inclusion $\Picc(X|D)(k) \subset \Picc(X|D)(k')$. To prove (1), we
  therefore need to show that the inclusion $\Picc(X|D)(k) \subset \Picc(X|D)(k')$ of
  adic spaces is closed. But this follows from \cite[Prop.~5.11(3)]{Conrad} since the
  inclusion $k \subset k'$ is easily seen to be closed (e.g., see
  \cite[Prop.~II.5.7]{Neukirch}).

Since ${\Q}/{\Z}$ is a divisible group, one easily reduces the proof of (2) to showing
  that the map $(f^*)^\star \colon \Pic^0(X'|D')^\star \to \Pic^0(X|D)^\star$ is surjective.
  Let $\chi \colon \Pic^0(X|D) \to {\Q}/{\Z}$ be a continuous character.
By part (1) of the lemma and \cite[Cor.~4.42]{Folland}, $\chi$ extends to a continuous
character $\chi' \colon \Pic^0(X'|D') \to \T$. It remains to show that the image of
$\chi'$ lies in ${\Q}/{\Z}$.
We write $F = F_1 \times F_2$, where $F_1$ is the cokernel of the diagonal inclusion
$k^\times \inj \stackrel{r}{\underset{i =1}\prod} k(x_i)^\times$ and
$F_2 = \stackrel{r}{\underset{i =1}\prod} \W_{n_i-1}(k(x_i))$.
   We then note using
  ~\eqref{eqn:CCM-0} and the proof of \lemref{lem:Kato-1} (see also
  \propref{prop:Pic-sequence}) that there is an exact sequence of topological groups
  \begin{equation}\label{eqn:Dual-surjection-3}
    0 \to F \to \Pic^0(X|D) \to \Pic^0(X) \to 0.
    \end{equation}
We claim that $\chi(F)$ is finite.

Since $F_2$ is a torsion group of bounded exponent, $\chi(F_2)$ must be finite.
For every $1 \le i \le r$, we have an exact sequence of topological groups
  \begin{equation}\label{eqn:Dual-surjection-4}
    0 \to \sO^\times_{k(x_i)} \to k(x_i)^\times \to \Z \to 0.
    \end{equation}  
Since $\sO^\times_{k(x_i)}$ is a profinite group, its image under $\chi$ must be finite.
In particular, it is a cyclic subgroup of ${\Q}/{\Z}$ of the type ${\Z}/n$ for
    some integer $n \ge 1$. We now look at the commutative diagram of exact sequences
    \begin{equation}\label{eqn:Dual-surjection-5}
      \xymatrix@C1pc{
        0 \ar[r] & \sO^\times_{k(x_i)} \ar[r] \ar[d] &  k(x_i)^\times \ar[r] \ar[d] & \Z 
        \ar[d] \ar[r] & 0 \\
        0 \ar[r] & \chi(\sO^\times_{k(x_i)}) \ar[r] & {\Q}/{\Z} \ar[r]^-{n} &
        {\Q}/{\Z} \ar[r] & 0.}
    \end{equation}
    Since the image of the right vertical arrow must be finite, it follows that
    $\chi(k(x_i)^\times)$ is finite. Summing over $1 \le i \le r$, we get that
    $\chi(F_1)$ is finite. This proves the claim.

 We let $D'_\red = \{y_1, \ldots , y_s\}$ and $D' = \sum_i m_i [y_i]$.
    We let $F' = F'_1 \times F'_2$, where
    $F'_1$ is the cokernel of the diagonal inclusion
  $k'^\times \inj \stackrel{s}{\underset{i =1}\prod} k(y_i)^\times$ and
  $F'_2 = \stackrel{s}{\underset{i =1}\prod} \W_{m_i-1}(k(y_i))$.
  We then have an exact sequence similar to ~\eqref{eqn:Dual-surjection-3}.
  Since $F'_2$ is a torsion group of bounded exponent, $\chi'(F_2)$ must be finite.
  On the other hand, each $\chi'(k(y_i)^\times)$ is finite by \lemref{lem:Lifting}.
  Summing over $1 \le i \le s$, we get that $\chi'(F'_1)$ is finite. We deduce that
  $\chi'(F')$ is finite.

Finally, we look at the commutative diagram of exact sequences
\begin{equation}\label{eqn:Dual-surjection-8}
      \xymatrix@C1pc{
        0 \ar[r] & F' \ar[r] \ar@{->>}[d]_-{\chi'} & \Pic^0(X'|D') \ar[r]^-{\alpha}
        \ar[d]^-{\chi'} &
      \Pic^0(X') \ar[r] \ar[d]^-{\wt{\chi}'} & 0 \\
            0 \ar[r] & {\Z}/{m'} \ar[r] & \T \ar[r]^-{m'} &
      \T \ar[r] & 0.}
  \end{equation}

It follows from \corref{cor:Pic-sequence-1} that
$\alpha$ is a quotient map. Since $m' \circ \chi'$ is continuous, it follows that
$\wt{\chi}'$ is a continuous homomorphism. On the other hand, $ \Pic^0(X')$ is
profinite. It follows that the image of $\wt{\chi}'$ is finite. We conclude that the
image of $\Pic^0(X'|D')$ under $\chi'$ is finite.
We have thus shown that $\chi' \in \Pic^0(X'|D')^\star$.
This proves (2) and concludes the proof of the lemma.
\end{proof}

\section{Perfectness of Brauer-Manin pairing for modulus pairs}
\label{sec:PBMP}
The goal of this section is to prove the perfectness of the Brauer-Manin pairing for
a one-dimensional modulus pair. We shall also prove \thmref{thm:Main-0} using the
perfectness of the Brauer-Manin pairing.
We let $k$ be a local field of exponential characteristic $p \ge 1$. Let $X$ be a
geometrically integral smooth projective curve over $k$ and let $D \subset X$ be an
effective divisor. We let $j \colon X^o \inj X$ be the inclusion of the complement of
$D$. We write $D = \sum_i n_i [x_i]$, where $D^\dagger = \{x_1, \ldots , x_r\}$ is the
support of $D$ with reduced closed subscheme structure.
We let $K$ denote the function field of $X$.
For $x \in X_{(0)}$, we let $X_x = \Spec(\sO_{X,x})$ and $X^h_x = \Spec(\sO^h_{X,x})$.

\subsection{Proof of \thmref{thm:Main-1}}\label{sec:Pf-1}
By  \propref{prop:BM-cont}, the Brauer-Manin pairing with modulus
(cf. \propref{prop:BMP-1}) induces continuous homomorphisms
$\beta_{X|D} \colon \Br(X|D) \to \CH_0(X|D)^\star$ and $\beta_{X^o} \colon \Br(X^o) \to
C(X^o)^\star$. We begin by showing the injectivity of these homomorphisms.

\begin{lem}\label{lem:BM-pf-0}
  $\beta_{X|D}$ is a monomorphism.
\end{lem}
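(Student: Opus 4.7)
The plan is to reduce the injectivity of $\beta_{X|D}$ to the classical Brauer--Manin injectivity for smooth projective curves over local fields, due to Lichtenbaum \cite{Lichtenbaum} in characteristic zero and Saito \cite{Saito-Invent} in positive characteristic. Suppose $w \in \Br(X|D)$ satisfies $\chi_w = 0$ on $\CH_0(X|D)$. Since $X$ is an integral curve, one has $\sC(X|D) = \{X\}$, so the presentation ~\eqref{eqn:Chow-Parshin-0} collapses to a surjection
\[
  \sZ_0(X^o) \oplus (K^\times_\infty/I(X|D)) \surj \CH_0(X|D).
\]
Lifting the zero character $\chi_w$ through this surjection shows that the Brauer--Manin pairing with $w$ vanishes on each summand separately. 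From the first summand I conclude that $\iota^*_x(w) = 0 \in \Br(k(x))$ for every closed point $x \in X^o$, and from the second summand that the induced character $\chi_{w,x_i} \colon K^\times_{x_i}/(1 + \fm^{n_i}_{x_i}) \to \Q/\Z$ is zero for every $x_i \in D^\dagger$.

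Next I would apply Kato's local duality (\thmref{thm:Kato-Rec}). By definition of $\Br(X|D) \subset \Br^\divv(X|D)$, the image $w_{x_i}$ of $w$ in $\Br(K_{x_i})$ lies in $\Fil_{n_i} H^2(K_{x_i})$, so by \lemref{lem:Hensel-complete}(4) it annihilates $\Fil_{n_i} K^M_1(K_{x_i}) = 1 + \fm^{n_i}_{x_i}$ under Kato's pairing. Together with the vanishing of $\chi_{w,x_i}$ on the quotient $K^\times_{x_i}/(1 + \fm^{n_i}_{x_i})$, this forces $w_{x_i}$ to annihilate all of $K^\times_{x_i}$, and the perfectness of \thmref{thm:Kato-Rec} then yields $w_{x_i} = 0 \in \Br(K_{x_i})$. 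The main obstacle lies precisely here: synthesising the two partial pieces of information at $D$ (filtration membership plus vanishing on the quotient) into the full vanishing of $w_{x_i}$ uses in an essential way the duality characterization of the Kato ramification filtration.

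To finish, since $w_{x_i} = 0 \in \Br(K_{x_i})$ for every $x_i \in D^\dagger$, the residue sequence for the Brauer group of the regular curve $X$ shows that $w \in \Br(X^o)$ extends to a class in $\Br(X)$. The injectivity $\Br(\sO^h_{X,x_i}) \hookrightarrow \Br(K_{x_i})$ for the Henselian regular local ring $\sO^h_{X,x_i}$ further gives $\iota^*_{x_i}(w) = 0 \in \Br(k(x_i))$, so $\iota^*_x(w) = 0$ for every closed point $x \in X$. Since the natural map $\sZ_0(X) \surj \CH_0(X) \cong \Pic(X)$ is surjective, the character on $\Pic(X)$ induced by pairing with $w$ is identically zero, and the classical Brauer--Manin injectivity then forces $w = 0$, hence $w = 0 \in \Br(X|D)$.
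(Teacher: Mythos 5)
Your proposal is correct and follows essentially the same route as the paper's proof: split the vanishing character into its components on $\sZ_0(X^o)$ and on $K^\times_\infty/I(X|D)$, combine the latter with the ramification-filtration condition defining $\Br(X|D)$ and Kato's local duality to get $w_{x}=0$ in $\Br(K_x)$ for $x\in D$, deduce via the localization sequence and the Henselian local rings that $w$ extends to $\Br(X)$ with $\iota_x^*(w)=0$ at every closed point, and conclude by the Lichtenbaum--Saito injectivity. The only cosmetic difference is that you make explicit, via \lemref{lem:Hensel-complete}(4), the step the paper leaves implicit when it invokes Kato's existence theorem to kill $w_x$.
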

\begin{proof}
 Let $w \in \Br(X|D)$ be such that $\chi_w := \beta_{X|D}(w) = 0$.
   For $x \in X_{(0)}$, let $\iota_x \colon \Spec(k(x)) \inj X$ be the
   inclusion. We let $w_x$ be the image of $w$ under the canonical composite map
  $\Br(X^o) \inj \Br(K) \to \Br(K_x)$. Let $\chi^1_w \colon \sZ_0(X^o) \to {\Q}/{\Z}$ and
  $\chi^2_w \colon {K^\times_\infty} \to {\Q}/{\Z}$ be the two components of $\chi_w$
  (see \S~\ref{sec:Pair**}). Our assumption implies that $\chi^i_w = 0$ for $i = 1,2$.

Suppose $x \in D$. Since $\chi^2_w = 0$, it follows from
  \cite[\S~6, Thm.~1]{Kato-cft-1} that $w_x = 0$.
The commutative diagram of exact sequences
  \begin{equation}\label{eqn:BM-pf-0-1}
    \xymatrix@C1pc{
      0 \ar[r] & \Br(X_x) \ar[r] \ar[d] & \Br(K) \ar[r] \ar[d] &
      H^3_{x}(X_x, \sO^\times_{X_x}) \ar[d]^-{\cong} \\
    0 \ar[r] & \Br(X^h_x) \ar[r] & \Br(K_x) \ar[r] &
    H^3_{x}(X^h_x, \sO^\times_{X^h_x})}
\end{equation}
implies that as an element of $\Br(K)$, the Brauer class $w$ lies in $\Br(X_x)$
and it dies in $\Br(X^h_x)$. In particular, $\iota^*_x(w) = 0$.
Since $\chi^1_w = 0$, we also have $\iota^*_x(w) = 0$ for every $x \in X^o_{(0)}$.
It follows that $w$ is a class in $\Br(X)$ such that $\iota^*_x(w) = 0$ for
$x \in X_{(0)}$. We conclude from \cite[Thm.~9.2]{Saito-Invent} that $w = 0$.
This finishes the proof.
\end{proof}

\begin{lem}\label{lem:Br-Main-00}
 We have the following.
\begin{enumerate}
  \item
If $p > 1$, the map $\beta_{X|D} \colon \Br(X|D) \to \CH_0(X|D)^\star$
is an isomorphism.
\item
  If $p =1$, the map $\beta_{X|D} \colon \Br(X|D) \to (\CH_0(X|D)^\star)_\tor$
is an isomorphism.
\end{enumerate}
\end{lem}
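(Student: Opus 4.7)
The strategy is a four-term diagram chase. Since $\beta_{X|D}$ is already injective by \lemref{lem:BM-pf-0}, the task reduces to showing surjectivity onto $\CH_0(X|D)^\star$ (resp.\ onto $(\CH_0(X|D)^\star)_\tor$ in case~(2)). For the $p=1$ case, I would first reduce to reduced $D$: \lemref{lem:Brauer-reln}(4) gives $\Br(X|D) = \Br(X^o) = \Br(X|D^\dagger)$, while \lemref{lem:Char-0-dual} identifies $(\CH_0(X|D)^\star)_\tor$ with $\CH_0(X|D^\dagger)^\star$, so both cases can be handled uniformly by comparing the suitable exact sequences on the Brauer and Chow sides.

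The first main step is to build a Brauer-side exact sequence. Applying the Gysin/localization sequence for $j\colon X^o \hookrightarrow X$, and using regularity of $X$ to get $\Br(X)\hookrightarrow \Br(X^o)$, one obtains a four-term exact sequence whose third term is $\bigoplus_{x\in D^\dagger} H^3_x(X,\G_m)$. Excising to $\sO^h_{X,x}$ and using \lemref{lem:Loc-dual-3}, the local term is canonically $(R^\times_x)^\star$ (torsion part in case $p=1$). Restricting the residue map to $\Br(X|D)$, and invoking the identifications in \lemref{lem:Hensel-complete}(3),(4), the filtration step cuts out the subgroup dual to $\sO^\times(D_x)$; concretely I would establish the exact sequence
\[
0 \to \Br(X) \to \Br(X|D) \xrightarrow{\partial'} \bigoplus_{x \in D^\dagger} \frac{\Fil_{n_x} H^2(K_x)}{H^2(\sO^h_{X,x})} \to H^3(X, \G_m),
\]
whose exactness at the two middle terms follows directly from the definition of $\Br(X|D)$ together with the global localization sequence.

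The second main step is to align this with the Chow-dual complex from \lemref{lem:Main-exact-00} (or \lemref{lem:Main-exact-003} in the reduced-$D$ case when $p=1$) in the diagram
\[
\xymatrix@C.7pc{
0 \ar[r] & \Br(X) \ar[r] \ar[d]_-{\beta_X}^-{\cong} & \Br(X|D) \ar[r] \ar[d]^-{\beta_{X|D}} & \bigoplus_x \tfrac{\Fil_{n_x}H^2(K_x)}{H^2(\sO^h_{X,x})} \ar[r] \ar[d]^-{\cong} & H^3(X,\G_m) \ar[d]^-{\delta_X}_-{\cong} \\
0 \ar[r] & \CH_0(X)^\star \ar[r] & \CH_0(X|D)^\star \ar[r] & (\sO^\times(D))^\star \ar[r] & (k^\times)^\star.
}
\]
The outer vertical isomorphisms come from Lichtenbaum--Saito (\corref{cor:LS-thm}), pointwise Kato duality (\lemref{lem:Loc-dual-3}), and global $\G_m$-duality (\thmref{thm:GD-units-main}) respectively. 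Commutativity of the right square is exactly \lemref{lem:LD-GD} summed over $x \in D^\dagger$, and commutativity of the middle and left squares follows from the naturality of the cup product under the restriction and residue maps that define the Brauer--Manin pairing in~\S\ref{sec:Pair**}. Given $\chi \in \CH_0(X|D)^\star$, the chase proceeds as follows: lift its image in $(\sO^\times(D))^\star$ along the third vertical isomorphism to $\omega \in \bigoplus_x \Fil_{n_x}H^2(K_x)/H^2(\sO^h_{X,x})$; commutativity of the right square forces $\omega$ to die in $H^3(X,\G_m)$, so $\omega$ lifts to some $w \in \Br(X|D)$ by top exactness; commutativity of the middle square then shows that $\chi - \beta_{X|D}(w)$ lies in the image of $\CH_0(X)^\star$, which is in the image of $\Br(X) \xrightarrow{\beta_X} \CH_0(X)^\star$ by \corref{cor:LS-thm}.

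The main obstacle is the verification of the middle-square commutativity, which requires matching the Brauer-side residue map with the Chow-side Kummer boundary $\partial_{X|D}$ under the local Kato duality identification at each $x \in D^\dagger$. This amounts to a refinement of \lemref{lem:LD-GD} across the filtration steps, and the proof should follow the same cup-product compatibility used there, combined with the filtration-preserving characterization of \lemref{lem:Hensel-complete}(4). In the $p=1$ case, I would additionally need to check that the identifications made via \lemref{lem:Char-0-dual} and the $p=1$ version of \lemref{lem:Loc-dual-3}(5) intertwine correctly, so that the entire diagram chase stays within the torsion subgroups as required for statement~(2).
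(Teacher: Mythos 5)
Your strategy is essentially the paper's: compare the localization sequence for $\G_m$ on $X$ with the Pontryagin dual of the relative-units sequence ~\eqref{eqn:CCM-0}, use the Lichtenbaum--Saito isomorphism $\beta_X$, local Kato duality, and the global duality $\delta_X$ of \thmref{thm:GD-units-main} as the outer vertical isomorphisms, invoke \lemref{lem:LD-GD} for the right square, and chase. The one genuine difference is that you work at the finite level $D$ with the subquotients $\Fil_{n_x}H^2(K_x)/H^2(\sO^h_{X,x})$, whereas the paper first proves that $\beta_{X^o}\colon \Br(X^o)\to \varinjlim_n \CH_0(X|nD)^\star$ is an isomorphism and then descends to $\Br(X|D)$ using Definition~\ref{defn:BGM-0}. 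Your finite-level variant is workable, but note that the identification $\Fil_{n_x}H^2(K_x)/H^2(\sO^h_{X,x})\xrightarrow{\cong}(\sO^\times(D_x))^\star$ (adic topology on the target) is not literally \lemref{lem:Loc-dual-3}, which only records the limit statement ~\eqref{eqn:Loc-dual-5}; you would need to assemble it from \thmref{thm:Kato-Rec}, \lemref{lem:Hensel-complete}(3),(4), the splitting $K_x^\times/(1+\fm^{n_x})\cong \Z\times \sO^\times(D_x)$, and the Kato-versus-adic topology comparison of Lemmas~\ref{lem:Kato-1} and ~\ref{lem:Adic-Witt}. That is fillable but is real work you have deferred.

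The genuine gap is that your bottom row is only available under a rationality hypothesis you never arrange. The exactness of ~\eqref{eqn:Chow-Parshin-4*-0} at $\CH_0(X|D)^\star$ (Lemmas~\ref{lem:Main-exact-00} and ~\ref{lem:Main-exact-003}), and indeed the continuity of $\partial_{X|D}$ needed to form $(\partial_{X|D})^\star$, are proven in \S~\ref{sec:Pont-mod*} only under the standing assumption $\emptyset\neq D^\dagger\subset X(k)$ (with a base point in $X^o(k)$), because the argument rests on the decomposition of $\Picc^0(X|D)_{\rm aff}$ into $(\G_m)^{r-1}\times\prod_i\W_{n_i-1}$ from \propref{prop:Pic-sequence}, which requires the points of $D^\dagger$ to be $k$-rational. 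For a general modulus pair your diagram therefore does not yet exist. The paper closes this by choosing a finite extension ${k'}/k$ with ${\rm Supp}(D_{k'})\subset X(k')$, proving surjectivity of $\beta_{X'|D'}$ there, and descending via the commutative square relating $f_*$ on Brauer groups to $(f^*)^\star$ on character groups (\lemref{lem:BMP-2}), together with the surjectivity of $(f^*)^\star$ from \lemref{lem:Dual-surjection}. Without this descent step (or an independent proof of the bottom-row exactness for non-rational $D^\dagger$), your argument only proves the lemma in the split case.
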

\begin{proof}
In view of \lemref{lem:BM-pf-0}, we only need to show that $\beta_{X|D}$ is surjective.
We first prove this surjectivity under the assumption that $D^\dagger \subset X(k)$.
  It follows from \lemref{lem:Brauer-reln} that
  $\Br(X|D^\dagger) = \Br(X|D) = \Br(X^o)$ when $p = 1$.
  Using \lemref{lem:Char-0-dual}, we can assume in this case that $D$ is reduced.
  Hence, $D$ will assumed to be reduced (and $n =1$) in the following argument
  when $p =1$.

We let $G^\star(X^o) = {\varinjlim}_n \ \CH_0(X|nD)^\star$.
  We shall first show that the map $\beta_{X^o} \colon \Br(X^o) \to G^\star(X^o)$
 is surjective. We consider the diagram
  \begin{equation}\label{eqn:Br-Main-01}
    \xymatrix@C.8pc{
      0 \ar[r] & \Br(X) \ar[r]^-{j^*} \ar[d]_-{\beta_X} & \Br(X^o) \ar[r]^-{\partial_X}
      \ar[d]^-{\beta_{X^o}} & {\underset{x \in D}\oplus} H^3_{x}(X, \sO^\times_X)
      \ar[d]^-{\gamma_{X|D}} \ar[r]^-{\eta_{X|D}} & H^3(X, \sO^\times_X) \ar[r]
      \ar[d]^-{\delta_{X}} & 0 \\
        0 \ar[r] & \CH_0(X)^\star \ar[r]^-{j^*} & G^\star(X^o) \ar[r] &
        {\underset{x \in D}\oplus} H^0(X^h_x, \sO^\times_{X^h_x})^\star
        \ar[r]^-{{(\iota^*_D)}^\star} & (k^\times)^\star \ar[r] & 0,}
      \end{equation}
where $\gamma_{X|D}$ is the direct sum of maps $\gamma_x$ given by
      \lemref{lem:Loc-dual-3} and $\delta_X$ is given by \lemref{lem:GD-units-7}.
The top row is the localization sequence for the {\'e}tale sheaf $\sO^\times_X$
and hence is exact except that we need to explain the surjectivity of $\eta_{X|D}$.
But this follows from \corref{cor:GD-units-main-3}.
The bottom row is a complex which is exact at $\CH_0(X)^\star$ and $G^\star(X^o)$
      by Lemmas~\ref{lem:Main-exact-00}, ~\ref{lem:Main-exact-003} and
      ~\ref{lem:Loc-dual-3}.

 The left square in ~\eqref{eqn:Br-Main-01} is commutative by
      \corref{cor:Mod-non-mod} and the right square is commutative by
      \lemref{lem:LD-GD}. To show the commutativity of the middle square,
 we look at the diagram
    \begin{equation}\label{eqn:Br-Main-02}
    \xymatrix@C.8pc{  
      \Br(X|nD) \ar[r] \ar[d]_{\beta_{X|nD}} & {\underset{x \in D}\oplus} \Br(K_x)
      \ar[d]^-{\sum_x \gamma_{K_x}} \\
      \CH_0(X|nD)^\star \ar[r] & {\underset{x \in D}\oplus} (K^\times_x)^\star}
  \end{equation}
  for $n \ge 1$, where the dual of $K^\times_x$ is taken with respect to its
  Kato topology. This diagram is commutative by the definition of $\beta_{X|D}$, where
  $\gamma_{K_x}$ is as in ~\eqref{eqn:Loc-dual-1}. When we restrict $\beta_{X^o}$ to
  $\Br(X|nD)$, the middle square in ~\eqref{eqn:Br-Main-01} commutes because
  it is the composition of ~\eqref{eqn:Br-Main-02} with the sum (over $x \in D$)
  of right squares in  ~\eqref{eqn:Loc-dual-1}. Taking the limit over $n \ge 1$,
  we see that the middle square in ~\eqref{eqn:Br-Main-01} commutes.

To show that $\beta_{X^o}$ is an isomorphism, we note that $\beta_X$ is an isomorphism
  by \cite[Thm.~4]{Lichtenbaum} and \cite[Thm.~9.2]{Saito-Invent}.
The arrow $\gamma_{X|D}$ is an isomorphism by \lemref{lem:Loc-dual-3}, and
  $\delta_X$ is an isomorphism by \thmref{thm:GD-units-main}.
An easy diagram chase shows that the bottom row of ~\eqref{eqn:Br-Main-01} is
  exact and $\beta_{X^o}$ is an isomorphism. This also proves that
  $\beta_{X|D}$ is surjective when $p =1$.
  To pass from $\beta_{X^o}$ to $\beta_{X|D}$ when $p >1$, we fix a character
  $\chi \in \CH_0(X|D)^\star$. It follows from the surjectivity of $\beta_{X^o}$ that
  there exists a class $w \in \Br(X^o) = {\underset{n}\bigcup} \ \Br(X|nD)$ such that
$\chi = \chi_w := \beta_{X^o}(w)$.  It follows from ~\eqref{eqn:Chow-Parshin-0} and
the definition of $\beta_{X|D}$ that the map
$\chi_w \colon K^\times_\infty \to {\Q}/{\Z}$ annihilates $I(X|D)$.
Using Definition~\ref{defn:BGM-0}, this forces $w$ to lie in $\Br(X|D)$. In this case,
we must have $\chi = \beta_{X|D}(w)$. This finishes the proof of surjectivity of
$\beta_{X|D}$ when $D^\dagger \subset X(k)$.

To prove the general case, we choose a finite field extension ${k'}/k$
such that ${\rm Supp}(D_{k'}) \subset X(k')$. We let $X' = X_{k'}$ and $D' = D_{k'}$.
We let $f \colon (X',D') \to (X,D)$ be the projection and consider the diagram
\begin{equation}\label{eqn:Br-Main-03}
    \xymatrix@C1pc{  
      \Br(X'|D') \ar[r]^-{\beta_{X'|D'}} \ar[d]_-{f_*} & \CH_0(X'|D')^\star
      \ar[d]^-{(f^*)^\star} \\
      \Br(X|D) \ar[r]^-{\beta_{X|D}} & \CH_0(X|D)^\star.}
  \end{equation}
 This diagram is commutative by \lemref{lem:BMP-2}. We have shown above that
  $\beta_{X'|D'}$ is surjective, and \lemref{lem:Dual-surjection} says that
  the right vertical arrow is surjective. It follows that $\beta_{X|D}$ is
  surjective. This concludes the proof.
\end{proof}

\vskip .2cm

{End of the proof of \thmref{thm:Main-1}:}
Combine Lemmas~\ref{lem:Prof-dual}, ~\ref{lem:Brauer-reln}, ~\ref{lem:BM-pf-0},
~\ref{lem:Br-Main-00} and \propref{prop:BM-cont}.
$\hfill\qed$

\vskip .2cm

As an application of \thmref{thm:Main-1}, we get the following result about the
norm map between the Brauer groups of regular curves over local fields.

\begin{thm}\label{thm:Br-Main-04}
  Let $X$ be a geometrically connected regular quasi-projective curve over a local
  field $k$ and let $k'$ be a finite field extension of $k$. Let $f \colon X_{k'} \to X$
  be the projection. Assume that $X$ is either smooth or affine.
  Then the map $f_* \colon \Br(X_{k'}) \to \Br(X)$ is surjective.
\end{thm}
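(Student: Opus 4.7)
The plan is to deduce the surjectivity of $f_*$ from the various Brauer-Manin duality theorems established earlier in the paper, combined with the surjectivity of the Pontryagin dual of the corresponding pull-back on Chow groups. The argument splits naturally according to whether $X$ is smooth projective, smooth affine, or merely regular affine.

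When $X$ is smooth projective, \corref{cor:LS-thm} provides a perfect pairing identifying $\Br(X)$ with $(\CH_0(X)^\star)_\tor$, and similarly for $X_{k'}$. Applying \lemref{lem:BMP-2} with trivial modulus $D = D' = \emptyset$, one obtains a commutative square in which $f_* \colon \Br(X_{k'}) \to \Br(X)$ corresponds to the Pontryagin dual of the flat pull-back $f^* \colon \CH_0(X) \to \CH_0(X_{k'})$. \lemref{lem:Dual-surjection}(2), applied with $D = \emptyset$, asserts precisely that this dual map is surjective (preserving torsion, by inspection of its proof), and the desired conclusion follows in this case.

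For $X$ smooth affine, I would embed $X$ into its smooth projective compactification $\bar X$ over $k$ and let $D$ be the reduced complementary divisor. For each $n \ge 1$, combining \thmref{thm:Main-1} (together with \lemref{lem:Br-Main-00}, which sharpens the perfect pairing into a genuine isomorphism $\beta_{X|D}$ in the $p$-primary case and onto the torsion in characteristic zero) with \lemref{lem:BMP-2} applied to the modulus pair $(\bar X, nD)$ and with \lemref{lem:Dual-surjection}(2), one sees that $f_* \colon \Br(\bar X_{k'}|nD_{k'}) \to \Br(\bar X|nD)$ is surjective for every $n$. Passing to the direct limit over $n$ and invoking \lemref{lem:Brauer-reln}(2), which identifies $\varinjlim_n \Br(\bar X|nD)$ with $\Br(X^o) = \Br(X)$, then yields the required surjectivity on $\Br(X)$.

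The remaining regular affine case, in which $X$ need not be smooth over the possibly imperfect local field $k$, is the most delicate: $X$ may fail to be generically smooth, so no smooth projective compactification exists and \thmref{thm:Main-1} is not directly applicable. My plan would be to work through the function field $K$ of $X$, using the embedding $\Br(X) \inj \Br(K)$ arising from Auslander--Goldman purity (so $\Br(X)$ is the subgroup of classes unramified at every closed point of $X$), together with \thmref{thm:Main-10}, which identifies $\Br(K) \cong (C(K)^{\pf})^\star$, to lift elements from $\Br(X)$ to $\Br(K')$. The hardest technical step is then to show that the lift can be chosen to lie in $\Br(X_{k'}) \subset \Br(K')$, i.e., to be unramified at every closed point of $X_{k'}$; this should follow from a careful analysis of corestriction on local ramification filtrations via \lemref{lem:Kato-PB-PF}(2), combined with the compatibility of global and local dualities expressed in \lemref{lem:LD-GD}.
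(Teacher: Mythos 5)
Your first case ($X$ smooth projective) is correct and is essentially the paper's own argument: combine the perfect pairing of \thmref{thm:Main-1} with $D=\emptyset$, the compatibility diagram of \lemref{lem:BMP-2}, and the surjectivity of $(f^*)^\star$ from \lemref{lem:Dual-surjection}. The trouble begins with your second case. Over a local field of characteristic $p>0$, which is imperfect, a smooth affine curve need \emph{not} admit a smooth projective compactification: the regular projective model of its function field is unique and may fail to be smooth over $k$ at the points at infinity. All of the machinery you invoke --- \thmref{thm:Main-1}, \lemref{lem:BMP-2}, \lemref{lem:Dual-surjection}, and the representability and topology results on $\Pic(\ov X|D)$ underlying them --- is proved only for modulus pairs whose ambient projective curve is \emph{smooth}. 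The paper accordingly runs your limit argument only when the regular compactification $\ov X$ happens to be smooth (which covers $p=1$), and handles the remaining case ($p>1$, $X$ affine) by an entirely different method. Your third case, which would in fact also have to absorb the smooth affine curves without smooth compactifications, is only a plan, and its two key steps are precisely the unproved ones: the surjectivity of $\Br(K_{k'})\to\Br(K)$ does not follow from \thmref{thm:Main-10} (whose proof again relies on a smooth model), and no argument is offered that a lift in $\Br(K_{k'})$ of an everywhere-unramified class can be corrected to be unramified on $X_{k'}$.

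What the paper actually does for $p>1$ and $X$ affine avoids duality for non-smooth models altogether. Since $f_*\circ f^*$ is multiplication by $\deg(f)$, the group $\coker(f_*)$ is killed by $[k':k]$; when $k'/k$ is purely inseparable this is a power of $p$, while $\coker(f_*)$ is $p$-divisible because the Brauer group of a regular affine curve in characteristic $p$ is $p$-divisible (\cite[Thm.~3.2.3]{CTS}), so $\coker(f_*)=0$ and one reduces to $k'/k$ separable. Temkin's alteration theorem then supplies a finite extension $l/k$ and a geometrically connected smooth projective curve $\ov Y$ over $l$ with a finite surjection $g\colon \ov Y\to\ov X$ of $p$-power degree; the smooth case applied to $\ov Y$, together with the same divisibility argument applied to $g_*$, forces $f_*$ to be surjective. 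Your duality argument can be retained for the smooth-compactification case, but an input of this alteration-plus-divisibility type is needed to complete the proof.
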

\begin{proof}
 We choose an open embedding $j \colon X \inj \ov{X}$, where $\ov{X}$ is a connected
  regular projective curve. Then $\ov{X}$ is necessarily geometrically connected.
  We let $D = \ov{X} \setminus X$ with the reduced closed subscheme structure.
  We let $\ov{X}' = \ov{X}_{k'}, \ X' = X_{k'}$ and $D' = D_{k'}$. If $\ov{X}$ is smooth
  (e.g., when $p =1$), we conclude the proof by combining \propref{prop:BGM-2},
  \lemref{lem:Dual-surjection} and \thmref{thm:Main-1}. We shall
  now assume that $p > 1$ and $X$ is affine.

If ${k'}/k$ is purely inseparable, then
  $\coker(f_*)$ is annihilated by some power of $p$
(e.g., see \cite[\S~3.8]{CTS}). On the other hand, $\coker(f_*)$ is $p$-divisible
by \cite[Thm.~3.2.3]{CTS}. It follows that $\coker(f_*) = 0$.
In general, we can get a factorization $X_{k'} \to X_{k''} \to X$, where
${k''}/k$ is separable and ${k'}/{k''}$ is purely inseparable.
Since $X_{k''}$ is a geometrically connected regular affine curve,
the map $\Br(X_{k'}) \to \Br(X_{k''})$ is surjective.
We can thus assume that ${k'}/k$ is separable.

As in the proof of \propref{prop:BM-cont}, we can find a finite field extension
  $l/k$, a geometrically connected smooth projective curve $\ov{Y}$ over $l$, and a
  finite surjective morphism of $k$-schemes $g \colon \ov{Y} \to \ov{X}$ such that
  $\deg(g)$ and $[l:k]$ are some powers of $p$. We let $Y = g^{-1}(X)$ and
  $E = g^*(D)$. We let $A = l \otimes_k k'$ and $\ov{Y}_A = \ov{Y} \times_{\Spec(l)}
  \Spec(A)$. Since ${k'}/k$ is separable, we see that
  $A = \stackrel{r}{\underset{i =1}\prod} l_i$, where each ${l_i}/l$ is a finite
  separable field extension.

 We consider the diagrams
  \begin{equation}\label{eqn:Br-Main-05}
    \xymatrix@C1pc{
      \stackrel{r}{\underset{i =1}\prod} \ov{Y}_{l_i} \ar[r]^-{g'} \ar[d]_-{h} & \ov{X}'
      \ar[d]^-{f} & &  \stackrel{r}{\underset{i =1}\oplus} \Br({Y}_{l_i})
      \ar[r]^-{g'_*} \ar[d]_-{h_*} & \Br({X}')  \ar[d]^-{f_*} \\
      \ov{Y} \ar[r]^-{g} & \ov{X} & & \Br({Y}) \ar[r]^-{g_*} & \Br(X),}
  \end{equation}
  where the left square is Cartesian whose all arrows are finite.
  The right square is commutative by \propref{prop:BGM-2}.

We now proceed as follows. Since $\ov{Y}$ is a geometrically connected smooth projective
  curve over $l$, we have argued previously that $h_* \colon \Br(Y_{l_i}) \to \Br(Y)$ is
  surjective for each $i$. Since $\deg(g) = p^n$ for some $n \ge 1$, we have also
  seen previously that $g_* \colon \Br(Y) \to \Br(X)$ is surjective. In particular,
  $f_* \circ g'_* = g_* \circ h_*$ is surjective. It follows that $f_*$ is surjective.
  This concludes the proof.
\end{proof}

\subsection{Proof of \thmref{thm:Main-0}}\label{sec:Pf-0}
We let the notations and assumptions be as stated in the beginning of
\S~\ref{sec:PBMP}. We shall divide the proof of \thmref{thm:Main-0} into several
cases. We begin with the following case.
Recall that $H^q_{cc}(X^o, \G_m) = {\varprojlim}_n H^q(X, \G_{m, (X,nD)}) \cong
 {\varprojlim}_n H^q(X, \sK^M_{1, (X,nD)})$.

\begin{lem}\label{lem:Main-pf-0}
  One has $H^0_{cc}(X^o, \sO^\times_{X^o}) = H^i(X^o, \sO^\times_{X^o}) = 0$ for all
  $i \ge 3$.
\end{lem}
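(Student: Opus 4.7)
The plan is to treat the two assertions separately, after noting that the lemma forces $D\neq\emptyset$ (so that $X^o$ is an affine smooth curve over $k$); otherwise $H^0_{cc}(X,\G_m)=k^\times\neq 0$.

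For the vanishing of $H^0_{cc}(X^o,\G_m)$, I would just unwind the definition. The sheaf $\G_{m,(X,nD)}$ is by construction the kernel of $\G_{m,X}\twoheadrightarrow\iota_*(\G_{m,nD})$, so $H^0(X,\G_{m,(X,nD)})$ is the kernel of the restriction map
\[
k^\times \;=\; H^0(X,\G_m)\;\longrightarrow\;H^0(nD,\G_m)\;=\;\sO(nD)^\times.
\]
Since $k$ embeds in each local Artinian quotient $\sO_{X,x}/\fm_x^{n_x}$, the map of rings $k\hookrightarrow \sO(nD)$ is injective, hence so is the map on units. Thus each term of the pro-system is zero and so is its inverse limit.

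For $H^i(X^o,\G_m)=0$ with $i\ge 3$, I would use that $H^i(X^o,\G_m)$ is torsion for $i\ge 2$ on a regular scheme, and treat the $p$-primary and prime-to-$p$ parts separately. For the $p$-part, the right diagram of \eqref{eqn:Mot-coh-0} together with the identification $\sO^\times_{X^o}/p^s \cong W_s\Omega^1_{X^o,\log}$ (via $\dlog$) gives a surjection $H^{i-1}(X^o,W_s\Omega^1_{X^o,\log})\twoheadrightarrow {}_{p^s}H^i(X^o,\G_m)$. For $i\ge 3$ the left-hand side vanishes by exactly the argument used at the end of the proof of \thmref{thm:GD-units-main}: the exact sequence \eqref{eqn:dlog-2-*} on the affine scheme $X^o$ displays $W_s\Omega^1_{X^o,\log}$ as the kernel of a map between coherent sheaves, which have no cohomology in positive degrees on $X^o$, and one then inducts on $s$ via \eqref{eqn:Zhao*-0}. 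This is precisely the observation recorded in \remref{remk:Vanish-open-p}.

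For the prime-to-$p$ part, the Kummer sequence reduces the vanishing of $H^i(X^o,\G_m)\{p'\}$ for $i\ge 3$ to the vanishing of $H^i(X^o,\mu_n)$ for $n$ prime to $p$ and $i\ge 3$. When $i\ge 4$ this follows from the cohomological dimension bound $\cd_\ell(X^o)\le \dim(X^o)+\cd_\ell(k)\le 1+2=3$. For $i=3$ I would apply Saito--Tate duality (\cite[Thm.~9.9]{GKR}) in the form
\[
H^3(X^o,\mu_n)\;\cong\;H^0_c(X^o,\Z/n)^\vee,
\]
and observe that $H^0_c(X^o,\Z/n)=\Ker\bigl(H^0(X,\Z/n)\to H^0(D,\Z/n)\bigr)$ is the kernel of the injection $\Z/n\hookrightarrow(\Z/n)^{|D^\dagger|}$ (using that $X$ is connected and $D^\dagger$ is nonempty), hence is zero. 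Neither step presents a real obstacle; the content is entirely in packaging the duality and affine-vanishing inputs that the paper has already assembled.
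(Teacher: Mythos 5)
Your handling of $H^0_{cc}$, of the $p$-primary part (via Remark~\ref{remk:Vanish-open-p}), and of the prime-to-$p$ part in degrees $i\ge 4$ agrees with the paper's proof. The gap is in the case $i=3$, prime to $p$. First, the duality is misstated: in the Saito--Tate pairing for a curve over a local field the degrees must add to $4$ and the twists to $2$, so $H^3(X^o,\mu_n)$ is dual to $H^1_{c}(X^o,\mu_n)$, not to $H^0_{c}(X^o,\Z/n)$ (the latter is dual to $H^4(X^o,\mu_n^{\otimes 2})$). Second, and fatally, the intermediate claim $H^3(X^o,\mu_n)=0$ is false in general, so no bookkeeping repair can rescue the argument. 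The Kummer sequence gives
\[
0\to \Br(X^o)/n \to H^3(X^o,\mu_n)\to {}_nH^3(X^o,\sO^\times_{X^o})\to 0,
\]
and $\Br(X^o)/n$ need not vanish: already for $X^o=\G_{m,k}\subset \P^1_k$ with $\mu_n\subset k$ one has $H^1_{c}(X^o,\mu_n)\supseteq \coker\bigl(\mu_n(k)\to \mu_n(k)\times\mu_n(k)\bigr)\cong \Z/n\neq 0$, hence $H^3(X^o,\mu_n)\neq 0$; concretely, classes such as $(t,a)_n$ with $a$ a non-residue in $\sO^\times_k$ are not $n$-divisible in $\Br(\G_{m,k})$. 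The lemma only asserts the vanishing of the quotient ${}_nH^3(X^o,\sO^\times_{X^o})$, and your reduction to $\mu_n$-coefficients discards precisely that distinction.

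The paper avoids this by staying with $\G_m$-coefficients throughout: Corollary~\ref{cor:GD-units-main-3} (which rests on the global duality of Theorem~\ref{thm:GD-units-main} and Lemma~\ref{lem:Lifting}) says the forget-support map $H^3_{D}(X,\sO^\times_X)\to H^3(X,\sO^\times_X)$ is surjective, so the localization sequence embeds $H^3(X^o,\sO^\times_{X^o})\{p'\}$ into $H^{4}_{D}(X,\sO^\times_X)\{p'\}$. The latter is a quotient of $H^{4}_{D}(X,{\Q}/{\Z}\{p'\}(1))\cong \bigoplus_{x\in D}H^2(k(x),{\Q}/{\Z}\{p'\})$ by Gabber's purity, and this colimit vanishes because it is the dual of the prime-to-$p$ Tate module of $\mu(k(x))$, which is finite. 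Your proposal supplies no substitute for this input, so the $i=3$, prime-to-$p$ case remains unproved as written.
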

\begin{proof}
It is easy to see that $H^0(X, \sK^M_{1, (X,nD)}) = 0$ for every
$n \ge 1$. In particular, $H^0_{cc}(X^o, \sO^\times_{X^o}) = 0$. To show 
that $H^i(X^o, \sO^\times_{X^o}) = 0$ for $i \ge 3$ is equivalent to show that
$H^i(X^o, \sO^\times_{X^o})_\tor = 0$ for all $i \ge 3$ (e.g., see \cite[Lem.~3.5.3]{CTS}).
It already follows from Remark~\ref{remk:Vanish-open-p} that
$H^i(X^o, \sO^\times_{X^o})\{p\} = 0$ for all $i \ge 3$.

We now show the prime-to-$p$ case.
For $i \ge 4$, we have a surjection $H^i(X^o, {\Q}/{\Z}\{p'\}(1)) \surj
H^i(X^o, \sO^\times_{X^o})\{p'\}$. On the other hand, the term on the left of this
surjection is zero because $cd(X^o) \le 3$. For $i =3$, note that there is an
inclusion $H^3(X^o, \sO^\times_{X^o})\{p'\} \inj H^{4}_{D}(X, \sO^\times_X)\{p'\}$ by
\corref{cor:GD-units-main-3}.
On the other hand, there is a canonical surjection
$H^{4}_{D}(X, {\Q}/{\Z}\{p'\}(1)) \surj H^{4}_{D}(X, \sO^\times_X)\{p'\}$.
Using Gabber's purity, the term on the left of this surjection
is isomorphic to ${\underset{x \in D}\bigoplus}  H^{2}(k(x),  {\Q}/{\Z}\{p'\})$.

We are thus reduced to showing that $H^2(k',  {\Q}/{\Z}\{p'\}) = 0$ for $i \ge 2$ if
${k'}/k$ is any finite field extension. By \cite[Thm.~9.9]{GKR}, we have an isomorphism
$H^2(k',  {\Q}/{\Z}\{p'\}) \cong {\varinjlim}_{p \nmid n}
\Hom_{\Ab}(\mu_n(k'), {\Q}/{\Z})$, where the limit is taken with respect to
the maps $\mu_{mn}(k') \xrightarrow{m} \mu_n(k')$.
But this limit is zero since $({k'}^\times)\{p'\}$ is finite.
This concludes the proof.
\end{proof}

Recall that there is an exact sequence
\begin{equation}\label{eqn:Main-pf-1}
  {\Z}^r \to \CH_0(X) \xrightarrow{j^*} \CH_0(X^o) \to 0,
\end{equation}
where the first arrow is the sum of cycle class maps for points in $D$.
We endow $\CH_0(X^o)$ with the quotient of the adic topology of $\CH_0(X)$
via this exact sequence. Note that this topology of $\CH_0(X^o)$ is
independent of any choice of a regular compactification of $X^o$.
We shall consider $H^2_{cc}(X^o, \sO^\times_{X^o})$ to be a discrete abelian group.

\begin{lem}\label{lem:Main-pf-2}
  There is a perfect pairing of topological abelian groups
  \[
    \CH_0(X^o)^\pf \times H^2_{cc}(X^o, \sO^\times_{X^o}) \to {\Q}/{\Z}.
  \]
\end{lem}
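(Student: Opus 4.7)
My plan is to identify $H^2_{cc}(X^o, \G_m)$ as a concrete subgroup of $\Br(X)$ and transport this, via the Brauer-Manin duality of \corref{cor:LS-thm}, into an identification with the Pontryagin dual of $\CH_0(X^o)^\pf$. The asserted pairing is then the canonical duality between a profinite group and its continuous dual.

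First I would compute $H^2_{cc}(X^o, \G_m)$ using the short exact sequence of {\'e}tale sheaves $0 \to \G_{m,(X,nD)} \to \G_m \to \iota_* \G_{m,nD} \to 0$ on $X$. Since $nD$ is a disjoint union of spectra of Artinian local rings with residue fields $k(x)$, invariance of the {\'e}tale site under nilpotent thickenings combined with the vanishing $H^i(\Spec k(x), \G_a) = 0$ for $i\ge 1$ (normal basis theorem) gives $H^i(nD,\G_m)\cong \bigoplus_{x\in D}H^i(k(x),\G_m)$ for $i\ge 1$. Hilbert 90 yields $H^1(nD,\G_m)=0$, so the long exact sequence produces an injection $H^2(X,\G_{m,(X,nD)})\hookrightarrow \Br(X)\xrightarrow{\rho}\bigoplus_{x\in D}\Br(k(x))$ identifying $H^2(X,\G_{m,(X,nD)})$ with $\Ker(\rho)$. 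As $\rho$ is visibly independent of $n$, all transition maps in the inverse system are the identity of this common subgroup, whence $H^2_{cc}(X^o, \G_m)\cong \Ker(\rho)$.

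Next I would use the duality $\Br(X)\cong (\CH_0(X)^\pf)^\star$ of \corref{cor:LS-thm} together with \lemref{lem:Prof-dual} to identify $\rho$ with the Pontryagin dual of the canonical map $\alpha\colon \bigoplus_{x\in D}\Z\to \CH_0(X)$, $1_x\mapsto [x]$. Indeed, by the explicit construction of the Brauer-Manin pairing in \S~\ref{sec:Pair**}, the composition $\Br(X)\to \Br(k(x))\xrightarrow{\inv_{k(x)}}\Q/\Z$ coincides with evaluation $\chi\mapsto \<\chi,[x]\>$ of the associated character on $\CH_0(X)^\pf$ at the image of $[x]$. Applying profinite completion (right exact) to the surjection $\bigoplus_{x\in D}\Z\xrightarrow{\alpha}\CH_0(X)\twoheadrightarrow \CH_0(X^o)\to 0$ and then Pontryagin dualizing, one obtains the exact sequence
\[
 0\to (\CH_0(X^o)^\pf)^\star \to (\CH_0(X)^\pf)^\star \xrightarrow{\alpha^\star} \Bigl(\bigoplus_{x\in D}\wh{\Z}\Bigr)^\star,
\]
whose terms are canonically identified with those of the sequence $0\to H^2_{cc}(X^o,\G_m)\to \Br(X)\xrightarrow{\rho}\bigoplus_{x\in D}\Br(k(x))$ from the previous paragraph. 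This yields a natural isomorphism $H^2_{cc}(X^o, \G_m)\cong (\CH_0(X^o)^\pf)^\star$.

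To conclude, note that $\CH_0(X^o)^\pf$ is profinite by construction and $H^2_{cc}(X^o, \G_m)\subset \Br(X)$ is discrete torsion. The pairing of the lemma is obtained by restricting the Brauer-Manin pairing along $H^2_{cc}(X^o, \G_m)\hookrightarrow \Br(X)$ and descending through $\CH_0(X)^\pf\twoheadrightarrow \CH_0(X^o)^\pf$; this descent is legitimate precisely because elements of $H^2_{cc}(X^o, \G_m)$ annihilate $\alpha(\bigoplus\Z)$, which was the content of the identification $\Ker(\rho)=(\CH_0(X^o)^\pf)^\star$. Under the identifications above, the resulting pairing is the canonical duality pairing between the profinite group $\CH_0(X^o)^\pf$ and its continuous dual, and so is perfect. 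The main subtlety lies in the second step: verifying that the {\'e}tale restriction $\rho$ really does match the algebraic Pontryagin dual $\alpha^\star$. This, however, is not a deep obstacle but a careful bookkeeping exercise reducing to the explicit formula $\<\chi,[x]\>=\inv_{k(x)}(\iota_x^*\chi)$ built into the Brauer-Manin pairing of \S~\ref{sec:Pair**}.
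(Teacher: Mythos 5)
Your argument is correct and follows essentially the same route as the paper's proof: identify $H^2_{cc}(X^o, \G_m)$ with $\Ker\bigl(\Br(X) \to \bigoplus_{x \in D^\dagger} \Br(k(x))\bigr)$ via the long exact sequence, match this kernel with $\CH_0(X^o)^\star$ using the Brauer--Manin duality of \corref{cor:LS-thm} together with the compatibility of restriction to closed points with evaluation of characters on cycle classes, and conclude by Pontryagin duality between profinite and discrete torsion groups. The only differences are cosmetic: you spell out the computation of $H^i(nD,\G_m)$ that the paper leaves implicit, and you phrase the comparison through profinite completions rather than directly through $(-)^\star$ and \lemref{lem:Prof-dual}, which amounts to the same thing.
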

\begin{proof}
  We look at the diagram
  \begin{equation}\label{eqn:Main-pf-2-0}
    \xymatrix@C1pc{
    0 \ar[r] & H^2_{cc}(X, \sO^\times_{X^o}) \ar[r] \ar@{.>}[d] &
    \Br(X) \ar[r] \ar[d]^-{\beta_X} & \Br(D^\dagger) \ar[d]^-{\beta_{D^\dagger}} \\
    0 \ar[r] & \CH_0(X^o)^\star \ar[r]^-{(j^*)^\star} & \CH_0(X)^\star \ar[r] &
    ({\Q}/{\Z})^r,}
\end{equation}
where the right vertical arrow is the sum (over $x \in D^\dagger$)
of the maps ${\rm inv}_{k(x)}$.
It follows from the exactness of the top row that the canonical map
$H^2_{cc}(X, \sO^\times_{X^o}) \to  H^2(X, \sK^M_{1,(X,D^\dagger)})$ is an
isomorphism.

It is immediate from the construction of the Brauer-Manin pairing ~\eqref{eqn:BMP-0}
(with $D = \emptyset$) that the right square in ~\eqref{eqn:Main-pf-2-0} is commutative.
The middle and the right vertical arrows are isomorphisms. It follows that there
is a unique isomorphism $\beta^1_{X^o} \colon H^2_{cc}(X^o, \sO^\times_{X^o})
\xrightarrow{\cong} \CH_0(X^o)^\star$ such that ~\eqref{eqn:Main-pf-2-0} is commutative.
Since $\beta^1_{X^o}$ is automatically continuous, we get
a continuous pairing $\CH_0(X^o)^\pf \times H^2_{cc}(X^o, \sO^\times_{X^o}) \to
{\Q}/{\Z}$. Since $\CH_0(X)^\star$ is a torsion group, $\CH_0(X^o)^\star$ must also be
torsion. We can now apply \lemref{lem:Prof-dual} and the Pontryagin duality 
between profinite and discrete torsion groups to conclude the proof.
\end{proof}

\vskip .3cm

{End of the proof of \thmref{thm:Main-0}:}
Combine \thmref{thm:Main-1} with Lemmas~~\ref{lem:Prof-dual}, \ref{lem:Brauer-reln},
~\ref{lem:Main-pf-0} and ~\lemref{lem:Main-pf-2}.
$\hfill\qed$

\begin{cor}\label{cor:Open-perf-pairing}
  There is a perfect pairing of topological abelian groups
  \[
    \Br(X^o) \times C(X^o)^{\pf} \to {\Q}/{\Z}.
  \]
\end{cor}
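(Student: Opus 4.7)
The plan is to deduce Corollary~\ref{cor:Open-perf-pairing} by passing to the limit of the perfect pairings at each finite modulus level provided by Theorem~\ref{thm:Main-1}. The three main ingredients are already available: Lemma~\ref{lem:Brauer-reln}(2) gives $\Br(X^o) = \varinjlim_n \Br(X|nD)$; Lemma~\ref{lem:Limit-nD} gives $C(X^o) = \varprojlim_n \CH_0(X|nD)$ with surjective transition maps; and Corollary~\ref{cor:Mod-non-mod} provides the compatibility of the global pairing on $\Br(X^o) \times C(X^o)$ with the perfect finite-level pairings $\Br(X|nD) \times \CH_0(X|nD)^{\pf} \to \Q/\Z$.

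First I would establish the bilinear pairing $\Br(X^o) \times C(X^o)^{\pf} \to \Q/\Z$ itself. The underlying map $\Br(X^o) \times C(X^o) \to \Q/\Z$ is Proposition~\ref{prop:BMP-1}, and its continuity follows from Proposition~\ref{prop:BM-cont}. Since $\Br(X^o)$ is discrete and the target $\Q/\Z$ is discrete torsion, every induced character $C(X^o) \to \Q/\Z$ factors uniquely through $C(X^o)^{\pf}$, giving the pairing in the statement.

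The next step is to identify $C(X^o)^{\pf}$ with $\varprojlim_n \CH_0(X|nD)^{\pf}$. By Corollary~\ref{cor:Pic-sequence-2} the projections $C(X^o) \surj \CH_0(X|nD)$ are topological quotients, so a fundamental system of open neighborhoods of $0$ in $C(X^o)$ consists of pullbacks of open subgroups of the $\CH_0(X|nD)$. Any open finite-index subgroup of $C(X^o)$ therefore contains the preimage of some open finite-index subgroup of $\CH_0(X|nD)$, and the directed systems of finite discrete quotients coincide cofinally, yielding the desired identification. In particular, $C(X^o)^{\pf}$ is profinite.

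The final step applies Pontryagin duality to conclude perfectness. Since $C(X^o)^{\pf}$ is an inverse limit of profinite groups with surjective transition maps, an application of Lemma~\ref{lem:dual-surj} (or direct computation in the profinite category) gives $(C(X^o)^{\pf})^{\star} \cong \varinjlim_n (\CH_0(X|nD)^{\pf})^{\star}$. Theorem~\ref{thm:Main-1} identifies each term on the right with $\Br(X|nD)$, and the colimit on the right is $\Br(X^o)$. The compatibility of Corollary~\ref{cor:Mod-non-mod} ensures that this limit isomorphism is induced by our pairing, so $\Br(X^o) \xrightarrow{\cong} (C(X^o)^{\pf})^{\star}$. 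Since $C(X^o)^{\pf}$ is profinite, Pontryagin duality between profinite and discrete torsion groups then yields $C(X^o)^{\pf} \xrightarrow{\cong} \Br(X^o)^{\star}$, so the pairing is perfect. The only substantive verification is the identification of $C(X^o)^{\pf}$ with $\varprojlim_n \CH_0(X|nD)^{\pf}$; once this is in hand, the remainder is a formal consequence of the finite-level duality and the interchange of Pontryagin duals with directed/inverse limits of profinite groups.
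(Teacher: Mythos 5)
Your proposal is correct and follows essentially the same route as the paper: the paper's one-line proof combines Theorem~\ref{thm:Main-0} (whose $q=2$ case is itself proved by exactly the limit argument you give, via Theorem~\ref{thm:Main-1}, Lemma~\ref{lem:Brauer-reln}, Lemma~\ref{lem:Prof-dual} and Lemma~\ref{lem:dual-surj}) with Lemma~\ref{lem:Limit-nD}, so you have merely inlined one level of citation. The only small imprecision is your reason for the pairing descending to $C(X^o)^{\pf}$: what matters is that $\Br(X^o)$ is a torsion group, so each character $\chi_w$ has finite image and hence lies in $(C(X^o)^\star)_\tor \cong (C(X^o)^{\pf})^\star$, not the discreteness of $\Br(X^o)$.
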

\begin{proof}
  Combine \thmref{thm:Main-0} and \lemref{lem:Limit-nD}.
  \end{proof}

\subsection{Duality for the cohomology of $\G_{m,K}$}
\label{sec:Dual-K}
Let the notations and assumptions be as stated in the beginning of
\S~\ref{sec:PBMP}. Recall that $K$ denotes the function field of $X$.
We let $H^q_{cc}(K, \G_m) := {\underset{D \in \Div(X)}\varprojlim} H^q(X,
\G_{m, (X,D)})$. We let $C(K)$ be the cokernel of the canonical map
$K^\times \xrightarrow{\alpha} \wh{I}(X) := {\underset{x \in X_{(0)}}{\prod'}}
\wh{K}^\times_x$, where $\alpha$ is the canonical inclusion (cf. ~\eqref{eqn:BMP-1-2}).
We endow $H^1_{cc}(K, \G_m)$ with the inverse limit of the adic topologies
of $\Pic(X|D)$ for $D \in \Div(X)$, and $\Br(K)$ with the discrete topology.

To prove the duality theorem for the {\'e}tale cohomology of
$\G_{m,K}$, we need the following.

\begin{lem}\label{lem:Limit-U}
  For every effective divisor $D \subset X$ with complement $X^o$, the canonical maps
  $H^1_{cc}(K, \G_m) \to H^1_{cc}(X^o, \G_m) \to H^1(X, \G_{m, (X,D)})$ are surjective.
\end{lem}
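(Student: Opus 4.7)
The plan is to use the adelic (id\`elic) description of the relative Picard group, combined with Mittag-Leffler for the countable subsystem. Throughout, I identify $H^1(X,\G_{m,(X,E)})$ with $\Pic(X|E)$, so the two limits in question are
\[
H^1_{cc}(K,\G_m)=\varprojlim_{E\in\Div(X)}\Pic(X|E),\qquad H^1_{cc}(X^o,\G_m)=\varprojlim_{n}\Pic(X|nD).
\]

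\emph{Second map.} I would first record that for $E\le E'$ the transition map $\Pic(X|E')\twoheadrightarrow\Pic(X|E)$ is surjective: given $(\sL,u)\in\Pic(X|E)$, the line bundle $\sL$ is the same and one must extend the trivialization $u$ from $E$ to $E'$; since $\sL|_{E'}$ is invertible on the zero-dimensional scheme $E'$ it is trivializable, and $\sO^\times(E')\twoheadrightarrow\sO^\times(E)$ is surjective because the kernel is a nilpotent ideal. Applied to the countable chain $\{\Pic(X|nD)\}_n$, the Mittag-Leffler condition holds and so the projection to $\Pic(X|D)$ is surjective.

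\emph{First map.} For any nonzero effective $E\subset X$ I would use the classical id\`elic short exact sequence
\[
0\to K^\times\to \wh{I}(X)/V_E\to\Pic(X|E)\to 0,
\]
where $V_E=\prod_{x\in E_\red}(1+\fm_x^{n_{E,x}}\wh{\sO}_{X,x})\times\prod_{x\notin E_\red}\wh{\sO}^\times_{X,x}$, so that
\[
\wh{I}(X)/V_E\ \cong\ \prod_{x\in E_\red}\wh{K}^\times_x/(1+\fm_x^{n_{E,x}}\wh{\sO}_{X,x})\ \times\ \bigoplus_{x\notin E_\red}\Z.
\]
(The injectivity of $K^\times$ uses $X$ projective plus $E\ne 0$, so that $k^\times\cap V_E=\{1\}$.) Passing to inverse limits along each of the systems $\Div(X)\setminus\{0\}$ and $\{nD\}_{n\ge 1}$, and using that the constant pro-group $K^\times$ has vanishing $\varprojlim^1$, both limits are identified with cokernels of $K^\times$. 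It then suffices to show that the induced map of middle terms $\varprojlim_{E}\wh{I}(X)/V_E\to\varprojlim_n\wh{I}(X)/V_{nD}$ is surjective.

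\emph{Computing the limits.} I would show $\varprojlim_E \wh{I}(X)/V_E\cong\wh{I}(X)$ and $\varprojlim_n\wh{I}(X)/V_{nD}\cong\prod_{x\in D_\red}\wh{K}^\times_x\times\bigoplus_{x\notin D_\red}\Z$. The second limit is immediate from completeness of $\wh{K}^\times_x$. For the first, a compatible family $(\beta_E)$ determines $\beta_x\in\wh{K}^\times_x$ for every $x$; the key point is that $\{x:v_x(\beta_x)\ne 0\}$ is finite, which I would prove by choosing some $y_0\in X_{(0)}$ with $v_{y_0}(\beta_{y_0})=0$, setting $E=[y_0]$, and observing that compatibility with the divisors $[y_0]+[x]$ forces every such $x$ to contribute to the (finite-support) $\bigoplus$-component of $\beta_E\in\wh{I}(X)/V_E$. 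Finally the map $\wh{I}(X)\to\prod_{x\in D_\red}\wh{K}^\times_x\times\bigoplus_{x\notin D_\red}\Z$, given by identity on $D_\red$-components and by $v_x$ elsewhere, is surjective: given target data with finite-support valuations $(n_x)$, lift by taking uniformizers $\pi_x^{n_x}$ at $x\notin D_\red$, ensuring almost all components are units so that the tuple lies in the restricted product.

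\emph{Expected obstacle.} The routine part is Mittag-Leffler and the idele presentation. The genuine content is the identification $\varprojlim_E\wh{I}(X)/V_E=\wh{I}(X)$, since a priori this limit ranges over the uncountable set $\Div(X)$ and could contain unrestricted products; one must check that the ``almost all $\beta_x$ are units'' condition is forced by the $\bigoplus$-constraints already present at each finite divisor $E$. Once that is in hand, the surjectivities follow formally.
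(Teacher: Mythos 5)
Your proposal is correct and takes essentially the same route as the paper: the paper also presents the relative Picard groups idelically as cokernels of $K^\times$ in quotients of $\wh{I}(X)$ (packaged through the idele class groups $C(K)$ and $C(X^o)$ and \lemref{lem:Limit-nD}), and its key step is precisely your identification $\wh{I}(X)\cong\varprojlim_E \wh{I}(X)/V_E$, with the restricted-product condition forced by the finite-support $\bigoplus$-constraints at each finite level. The only cosmetic difference is that the paper organizes the computation via $C(X^o)=\varprojlim_n\CH_0(X|nD)$ rather than via the congruence subgroups $V_E$ directly.
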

\begin{proof}
  Using \lemref{lem:Limit-nD}, it suffices to show that for every reduced effective
  divisor $D \subset X$ with $X^o = X \setminus D$, the map
  $C(K) \to C(X^o)$ is surjective, and the map $C(K) \to {\varprojlim}_{D} C(X^o)$ is
  bijective, where the limit is over all reduced effective divisors $D \subset X$.

  We have a commutative diagram of exact sequences
  \begin{equation}\label{eqn:Limit-0-0}
    \xymatrix@C.8pc{
      0 \ar[r] & K^\times \ar[r]^-{\alpha} \ar@{=}[d] & \wh{I}(X) \ar[r]
      \ar[d]^-{\beta_D} &
      C(K) \ar[d]^-{\gamma_D} \ar[r] & 0 \\
      0 \ar[r] & K^\times \ar[r]^-{\alpha_D} & \sZ_0(X^o) \oplus B(D)
      \ar[r] & C(X^o) \ar[r] & 0,}
  \end{equation}
  where we let $B(D):= \wh{K}^\times_\infty = {\prod}_{x \in D} \wh{K}^\times_x$.
  We let $A(D) = \sZ_0(X^o) \oplus B(D)$.
  It is straightforward to check that each $\beta_D$ is surjective.
  In particular, each transition map of the cofiltered system $\{A(D)\}$
  is surjective. It remains to show that the map
  $\beta := {\varprojlim}_D \beta_D \colon \wh{I}(X) \to {\varprojlim}_D A(D)$ is
  bijective.

It is easy to see that ${\prod}_{x \in X_{(0)}} \wh{K}^\times_x
  \xrightarrow{\cong} {\varprojlim}_D B(D)$. 
  We let $\pi_D \colon A(D) \to B(D)$ denote the projection.
  For $D' > D$, the transition map $\Phi_{D'>D} \colon A(D') \to A(D)$
  has the property that if $(a_1, a_2, a_3) \in A(D')$ with
  $a_1 \in \sZ_0(X \setminus D'), \ a_2 \in {\prod}_{x \in D} \wh{K}^\times_x$ and
$a_3 \in {\prod}_{x \in D' \setminus D} \wh{K}^\times_x$, then
\begin{equation}\label{eqn:Limit-0-1}
\Phi_{D'>D}(a_1) = a_1, \ \Phi_{D'>D}(a_2) = a_2 \ \mbox{and} \ \Phi_{D'>D}(a_3) =
{\prod}_{x \in D' \setminus D} v_x(a_3).
\end{equation}
It follows that $\pi_{D} \circ \Phi_{D'>D} = \Phi_{D'>D} \circ \pi_{D'}$.
That is, the projection $\{\pi_D\}  \colon \{A(D)\} \to  \{B(D)\}$
is a strict morphism of pro-abelian groups. Taking the limits, we get a
commutative diagram
\begin{equation}\label{eqn:Limit-0-2}
  \xymatrix@C1pc{
    {\varprojlim}_D A(D) \ar[r]^{\pi} & {\prod}_{x \in X_{(0)}} \wh{K}^\times_x \\
    & \wh{I}(X) \ar[lu]^-{\beta} \ar[u]_-{\beta'},}
\end{equation}
where $\beta'$ is the canonical inclusion
$\wh{I}(X) \inj {\prod}_{x \in X_{(0)}} \wh{K}^\times_x$. In particular, $\beta$ is
injective.

We claim that $\pi$ is injective. Indeed, if $(a_D) \in {\varprojlim}_D A(D)$
is an element indexed by reduced divisors $D \subset X$
with $(a_D) = (a_0, a_1)$ (where $a_0 \in \sZ_0(X\setminus D)$ and $a_1 \in B(D)$)
such that $\pi((a_D)) = 0$, then we must have $a_1 =0$ for each reduced divisor
$D \subset X$. That is, $(a_D) \in {\varprojlim}_D \sZ_0(X \setminus D)
= {\bigcap}_D \sZ_0(X \setminus D)$. The claim now follows because it is not hard to
see that ${\bigcap}_D \sZ_0(X \setminus D) =0$.

To prove that $\beta$ is surjective, let $(a_D) \in {\varprojlim}_D A(D)
\subset {\prod}_{x \in X_{(0)}} \wh{K}^\times_x$. Let $b = \pi((a_D))$ and write
$b = (b_x)_{x \in X_{(0)}}$. If $v_x(b_x) \neq 0$ for infinitely many
$x \in X_{(0)}$, then it follows from ~\eqref{eqn:Limit-0-1} that
the projection map ${\varprojlim}_D A(D) \to A(D)$ is not defined at $b$ for any
reduced divisor $D \subset X$. But this is absurd. We conclude that
$b = \pi((a_D)) \in \wh{I}(X)$. This shows that $\beta$ is surjective,
and concludes the proof.
\end{proof}

Using \lemref{lem:Limit-U}, we endow $C(K)$ with the inverse limit of the 
adic topologies of $\Pic(X|D)$ for $D \in \Div(X)$.
We can now prove the duality theorem for the cohomology of $\G_{m,K}$.

\begin{thm}\label{thm:Main-6}
For every integer $q \neq 0$, there is a bilinear pairing
  \[
    H^q(K, \G_m) \times H^{3-q}_{cc}(K, \G_m) \to {\Q}/{\Z}
  \]
  which induces perfect pairings of topological abelian groups
  \[
    H^1(K, \G_{m})^{\pf} \times H^2_{cc}(K, \G_{m}) \to {\Q}/{\Z},
    \]
    \[
      H^2(K, \G_{m}) \times H^1_{cc}(K, \G_{m})^{\pf} \to {\Q}/{\Z}
      \]
      and
      \[
      H^3(K, \G_{m}) \times H^0_{cc}(K, \G_{m})^{\pf} \to {\Q}/{\Z}.  
      \]
      \end{thm}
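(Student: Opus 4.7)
The plan is to reduce the two extreme cases $q=1$ and $q=3$ to vanishing statements and to deduce the main case $q=2$ by assembling the open-curve dualities of Corollary~\ref{cor:Open-perf-pairing} in the limit. For $q=3$, I would show $H^3(K, \G_m) = 0$ by writing $H^i(K, \G_m) = \varinjlim_{X^o} H^i(X^o, \sO^\times_{X^o})$ over dense affine opens and applying Remark~\ref{remk:Vanish-open-p} for the $p$-primary part together with $cd(X^o) \leq 3$ for the prime-to-$p$ part; also $H^0_{cc}(K, \G_m) = 0$ because $\ker(k^\times \to \sO^\times(D)) = 0$ for every nonempty $D$. For $q=1$, $H^1(K, \G_m) = \Pic(\Spec K) = 0$ is immediate, and to check $H^2_{cc}(K, \G_m) = 0$ I would use the exact sequence arising from $0 \to \G_{m,(X,D)} \to \G_{m,X} \to \iota_* \G_{m,D} \to 0$ (combined with $H^1(D, \G_m) = 0$) to identify $H^2(X, \G_{m,(X,D)})$ with $\ker(\Br(X) \to \Br(D))$; since the transition maps in this pro-system are inclusions into $\Br(X)$, the inverse limit is $\ker(\Br(X) \to \prod_{x \in X_{(0)}} \Br(k(x)))$, which vanishes by Corollary~\ref{cor:LS-thm} since the injection $\Br(X) \hookrightarrow \CH_0(X)^\star$ factors through evaluation at closed points.

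For the main case $q=2$, I would first identify $H^1_{cc}(K, \G_m) = \varprojlim_D \Pic(X|D)$ with $C(K)$. Combining the cycle class isomorphism $\CH_0(X|D) \cong \Pic(X|D)$ of Lemma~\ref{lem:CCM-iso}, the identification $C(X^o) \cong \varprojlim_n \CH_0(X|nD)$ of Lemma~\ref{lem:Limit-nD}, and the isomorphism $C(K) \cong \varprojlim_{D_\red} C(X^o)$ of Lemma~\ref{lem:Limit-U} yields $C(K) \cong H^1_{cc}(K, \G_m)$ as topological abelian groups (both carry the inverse-limit topology from the adic topologies of the $\Pic(X|D')$). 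The pairings $\Br(X^o) \times C(X^o) \to \Q/\Z$ of Corollary~\ref{cor:Open-perf-pairing}, being compatible with the transition maps in the filtered system $\{\Br(X^o)\}$ with colimit $\Br(K)$ (by Lemma~\ref{lem:Brauer-reln}(3)) and in the cofiltered system $\{C(X^o)\}$, assemble into the desired pairing $\Br(K) \times C(K) \to \Q/\Z$.

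To prove perfectness I would first argue that $C(K)^\pf \cong \varprojlim_D C(X^o)^\pf$ is profinite, and that each $\Pic(X|D)^\star$ is torsion (any continuous character has finite order on the profinite open subgroup $\Pic^0(X|D)$, and any homomorphism $\Z \to \Q/\Z$ is torsion on the discrete quotient). Passing to Pontryagin duals in the limit then yields
\begin{equation*}
\Br(K) = \varinjlim_D \Br(X^o) \xrightarrow{\cong} \varinjlim_D \bigl(C(X^o)^\pf\bigr)^\star = \varinjlim_D C(X^o)^\star \xrightarrow{\cong} C(K)^\star = \bigl(C(K)^\pf\bigr)^\star,
\end{equation*}
where the first isomorphism is Corollary~\ref{cor:Open-perf-pairing}, the second uses Lemma~\ref{lem:Prof-dual} together with $C(X^o)^\star$ being torsion, the third is Lemma~\ref{lem:dual-surj} applied to the cofiltered system $\{C(X^o)\}$ with surjective projections (Lemma~\ref{lem:Limit-nD}), and the final equality again uses Lemma~\ref{lem:Prof-dual}. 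The reverse isomorphism $C(K)^\pf \xrightarrow{\cong} \Br(K)^\star$ then follows from the standard Pontryagin duality between the categories of profinite and discrete-torsion abelian groups.

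The principal obstacle I anticipate is the \emph{topological} compatibility in passing to the limit: ensuring that the abstract bijections above upgrade to isomorphisms of topological abelian groups, and in particular that $C(K)^\pf$ coincides with $\varprojlim_D C(X^o)^\pf$ as a profinite group. This reduces to checking that every open finite-index subgroup of $C(K)$ is pulled back from some $C(X^o)$, which should follow from unwinding the inverse-limit topology and using that each $\Pic(X|D)$ has its topology generated by open subgroups (via the profinite open subgroup $\Pic^0(X|D)$). Once this step is secured, the rest of the argument is a formal assembly of the finite-level dualities.
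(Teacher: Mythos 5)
Your overall strategy coincides with the paper's: the theorem is obtained by passing to the limit over $\Div(X)$ in the open-curve duality (Theorem~\ref{thm:Main-1} and Corollary~\ref{cor:Open-perf-pairing}), using Lemmas~\ref{lem:Prof-dual}, \ref{lem:dual-surj}, \ref{lem:Brauer-reln} and \ref{lem:Limit-U} for the middle pairing, Lemma~\ref{lem:Main-pf-0} for the outer one, and the vanishing of $H^2_{cc}(K,\G_m)$ via the injectivity of $\Br(X)\to\prod_{x}\Br(k(x))$ for the first. The assembly is sound, but two of your justifications are wrong as stated, though both are repairable.

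First, $\Pic^0(X|D)$ is \emph{not} profinite once $\deg(D)\ge 2$: by Proposition~\ref{prop:Pic-sequence} it is an extension of the profinite group $\Pic^0(X)$ by $(\G_m)^{r-1}\times\prod_i\W_{n_i-1}$ on $k$-points, and $k^\times$ (and, in characteristic $0$, $\W_m(k)\cong k^m$) is not compact. So your parenthetical argument that $\Pic(X|D)^\star$ is torsion fails. The conclusion is still true when $p>1$ --- most directly because $\CH_0(X|nD)^\star\cong\Br(X|nD)$ by Lemma~\ref{lem:Br-Main-00}(1) and Brauer groups of regular schemes are torsion --- but it is \emph{false} when $p=1$, where $\W_m(k)$ admits continuous characters of infinite order. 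The cleanest fix, uniform in the characteristic, is to carry $(\,\cdot\,)_\tor$ through your chain: $\varinjlim_D(C(X^o)^\pf)^\star=\varinjlim_D(C(X^o)^\star)_\tor=(\varinjlim_D C(X^o)^\star)_\tor=(C(K)^\star)_\tor=(C(K)^\pf)^\star$, using that taking torsion commutes with filtered colimits; then no torsionness claim is needed. Second, $cd(X^o)\le 3$ only kills $H^i(X^o,{\Q}/{\Z}\{p'\}(1))$ for $i\ge 4$, not for $i=3$; the vanishing of $H^3(X^o,\sO^\times_{X^o})\{p'\}$ needs the purity argument via Corollary~\ref{cor:GD-units-main-3}, but this is exactly Lemma~\ref{lem:Main-pf-0}, which you can simply cite. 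Finally, your ``principal obstacle'' is not actually one: you never need $C(K)^\pf\cong\varprojlim_D C(X^o)^\pf$, since Lemma~\ref{lem:Prof-dual} applied directly to $C(K)$, together with Pontryagin duality between profinite and discrete torsion groups, already upgrades the bijection $\Br(K)\cong(C(K)^\pf)^\star$ to a perfect topological pairing.
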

      \begin{proof}
The existence of the pairing is an immediate consequence of  
\thmref{thm:Main-0} and a limit argument (over $D \in \Div(X)$).
The perfectness of the third pairing follows from \lemref{lem:Main-pf-0}
using a limit argument. The perfectness of the second pairing follows
by applying \thmref{thm:Main-1}, Lemmas~~\ref{lem:Prof-dual},
~\ref{lem:dual-surj}, \ref{lem:Brauer-reln} and ~\ref{lem:Limit-U},
and taking limit over $\Div(X)$.
To prove the perfectness of the first pairing, we only need to show that
$H^2_{cc}(K, \G_{m}) = 0$. But this follows by taking limit over $\Div(X)$
of the top exact sequence in ~\eqref{eqn:Main-pf-2-0} and observing that
the map $\Br(X) \to {\prod}_{x \in X_{(0)}} \Br(k(x)) \cong {\varprojlim}_D \Br(D_\red)$ is
injective by \lemref{lem:BM-pf-0} (with $D = \emptyset$).
\end{proof}

By ~\eqref{eqn:BMP-1-2}, we have a bilinear pairing
$\Br(K) \times \wh{I}(X) \to {\Q}/{\Z}$, and the proof of \propref{prop:BMP-1}
shows that this factors through $\Br(K) \times C(K) \to {\Q}/{\Z}$.
The following result is the global version of \cite[Thm.~2.10, Cor.~2.12]{Saito-Invent}
and provides an explicit description of $\Br(K)$.

\begin{cor}\label{cor:Main-6-0}
 The above pairing induces a perfect pairing of topological abelian groups 
 \[
   \Br(K) \times C(K)^{\pf} \to {\Q}/{\Z}.
 \]
 In particular, $\Br(K) \xrightarrow{\cong} (C(K)^\star)_\tor$.
\end{cor}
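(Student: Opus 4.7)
The plan is to deduce \corref{cor:Main-6-0} formally from the $q=2$ case of \thmref{thm:Main-6}. The two ingredients needed are (i) a topological isomorphism $C(K) \cong H^1_{cc}(K,\G_m)$, and (ii) the identification, under this isomorphism, of the Brauer-Manin pairing $\Br(K) \times \wh{I}(X) \to \Q/\Z$ coming from \eqref{eqn:BMP-1-2} with the cup-product pairing of \thmref{thm:Main-6}. The final assertion about $(C(K)^\star)_\tor$ will then be a formal consequence of \lemref{lem:Prof-dual} together with the fact that $\Br(K)$ is a discrete torsion group.

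For step (i), I would combine three facts. First, $H^1_{cc}(K,\G_m) = \varprojlim_{D \in \Div(X)} H^1(X,\G_{m,(X,D)}) = \varprojlim_{D \in \Div(X)} \Pic(X|D)$, which is identified with $\varprojlim_{D} \CH_0(X|D)$ via the cycle class isomorphism of \lemref{lem:CCM-iso}. Second, for each reduced divisor $D^\dagger \subset X$, \lemref{lem:Limit-nD} identifies $C(X^o_{D^\dagger})$ with $\varprojlim_{D' \in \Div_{D^\dagger}(X)} \CH_0(X|D')$. Third, the very argument given in the proof of \lemref{lem:Limit-U} (with reduced divisors varying) shows that $C(K) \cong \varprojlim_{D^\dagger} C(X^o_{D^\dagger})$. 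Combining these via a standard cofinal/iterated limit argument in the cofiltered set $\Div(X)$ yields a canonical bijection $C(K) \xrightarrow{\cong} H^1_{cc}(K,\G_m)$; since both groups carry, by definition, the inverse limit of the adic topologies on the $\Pic(X|D)$, this is a topological isomorphism.

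For step (ii), I would check that both pairings are compatible, in the evident sense, with the Brauer-Manin pairings $\Br(X|D) \times \CH_0(X|D) \to \Q/\Z$ of \thmref{thm:Main-1} for varying $D \in \Div(X)$. On one side, this compatibility is essentially built into the construction: the pairing of \eqref{eqn:BMP-1-2} is defined exactly by assembling local duality pairings at each $x \in X_{(0)}$ and then using \lemref{lem:Brauer-reln}(3) to write $\Br(K) = \varinjlim_D \Br(X|D)$, so that \propref{prop:BMP-1} and \corref{cor:Mod-non-mod} realize $\Br(K) \times C(K) \to \Q/\Z$ as the colimit-limit of the modulus Brauer-Manin pairings. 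On the other side, the pairing in \thmref{thm:Main-6} was obtained by taking limits over $\Div(X)$ of the corresponding pairings in \thmref{thm:Main-0}, which in turn were extracted from the same \thmref{thm:Main-1} via the same cup-product/trace formalism. Consequently, the two pairings agree on each finite stage and therefore after passage to the limit-colimit, completing the identification.

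Once (i) and (ii) are in place, the perfectness of $\Br(K) \times C(K)^\pf \to \Q/\Z$ is just the $q=2$ statement of \thmref{thm:Main-6}. For the last sentence of the corollary, the resulting continuous isomorphism $\Br(K) \xrightarrow{\cong} (C(K)^\pf)^\star$ combined with \lemref{lem:Prof-dual} gives $\Br(K) \cong (C(K)^\star)_\tor$, since $\Br(K)$ is torsion. The only substantive issue is the careful bookkeeping of topologies across the double limit in step (i) and the verification that the two cup-product constructions literally coincide in step (ii); I do not foresee any genuinely new difficulty beyond tracking through the construction of \thmref{thm:Main-6} from \thmref{thm:Main-1} and \propref{prop:BMP-1}.
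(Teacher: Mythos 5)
Your proposal is correct and follows essentially the route the paper intends: the paper states the corollary without a separate proof precisely because, after endowing $C(K)$ with the inverse limit of the adic topologies of the $\Pic(X|D)$ via \lemref{lem:Limit-U} (which identifies $C(K)$ with $H^1_{cc}(K,\G_m)$) and observing via \propref{prop:BMP-1} and \corref{cor:Mod-non-mod} that the idelic pairing coincides with the limit of the modulus Brauer--Manin pairings, the statement is the $q=2$ case of \thmref{thm:Main-6}, with the last assertion following from \lemref{lem:Prof-dual}. Your write-up just makes these identifications explicit.
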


\section{Brauer-Manin pairing for singular curves}\label{sec:BMS}
  In this section, we shall apply \thmref{thm:Main-1} to extend the Brauer-Manin
  pairing for smooth projective curves {\`a} la Lichtenbaum-Saito to singular curves
  over local fields.
  To achieve this, we shall introduce a refined version of the Brauer group for
  singular varieties. We shall then show that this refined group is directly related
  to the Picard group for singular curves.
We fix a local field $k$ of exponential characteristic $p \ge 1$.

\subsection{Levine-Weibel Brauer group of singular varieties}
  \label{sec:LWBG}
  Before we introduce the refined Brauer group for singular quasi-projective varieties,
  we recall some known facts about the completions of local
  rings. For any semilocal ring $R$, let $\wh{R}$ denote the completion of $R$ with
  respect to its Jacobson radical.
 Let $A$ be a local integral domain which is essentially of finite type over a
 field and let $B$ denote the integral closure of $A$. We let $\fm_A$ (resp. $\fm_B$)
 denote the maximal ideal of $A$ (resp. the Jacobson radical of $B$).
 It is an elementary fact (e.g., see \cite[Chap.~10]{AM})
 that $\wh{A} \inj \wh{B} \cong B \otimes_A \wh{A} \cong \prod_i \wh{B_{\fm_i}}$,
 where the product is over all maximal ideals of $B$.
 Furthermore, $\wh{B}$ coincides with the normalization of $\wh{A}$ if $\dim(A) = 1$.  In
 particular, it is a product of discrete valuation rings $\wh{B_{\fm_i}}$.
 In the latter case, it is also easy to see using the faithfully flatness property of
 $A \inj \wh{A}$ that $\wh{A} \cap K = A$, where $K$ is the quotient field of $A$.

We let $X$ be an integral quasi-projective curve over $k$.  Let $\pi \colon
   {X'} \to X$ be the normalization of $X$ and let $K$ denote the function field of $X$.
   For every closed point $x \in X$
 and every $y \in \pi^{-1}(x)$, let $U^0_{x,y} \subset \wh{K}_y$ be the image of
   $(\wh{\sO_{X,x}})^\times$ under the composite map
  $\wh{\sO_{X,x}} \inj \wh{\sO_{X',y}} \inj \wh{K}_y$.

 \begin{defn}\label{defn:LWB-0}
     We let $\Br^{\lw}(X)$ be the subgroup of $\Br(X_\reg)$ consisting of elements $w$
     such that for every singular point $x \in X$ and every $y \in \pi^{-1}(x)$, the
     image of $w$ under the canonical map $\Br(X_\reg) \inj \Br(K) \to \Br(\wh{K}_y)$ has
     the property that the associated character (under Kato's pairing)
     $\chi_w \colon \wh{K}^\times_y \to {\Q}/{\Z}$ annihilates $U^0_{x,y}$.
\end{defn}

Let $X$ be an integral quasi-projective variety over $k$.
  Let $\sC^\lci(X)$ be the set of all integral curves $C \subset X$
  such that $C$ is not contained in $X_\sing$ and the inclusion $C \inj X$ is a local
  complete intersection at every point of $C \cap X_\sing$. 
 \begin{defn}\label{defn:LWB-1}
We let $\Br^{\lw}(X)$ be the subgroup of $\Br(X_\reg)$ consisting of elements $w$
such that for every $C \in \sC^\lci(X), \ x \in X_\sing \cap C$
and $y \in \nu^{-1}(x)$, the image of $w$ under the canonical composite map
$\Br(X_\reg) \to \Br(\nu^{-1}(X_\reg)) \to \Br(\wh{k(C)}_y)$ has
     the property that the associated character (under Kato's pairing)
     $\chi_w \colon \wh{k(C)}^\times_y \to {\Q}/{\Z}$ annihilates $U^0_{x,y}$,
     where $\nu \colon C_n \to X$ is the canonical map.
\end{defn}

It easily follows from the above definitions that $\Br^{\lw}(X) \subset \Br^{\lw}(U)$
if $U \subset X$ is open. This is in contrast with the classical Brauer group of
singular varieties. It is also clear that $\Br^{\lw}(X) = \Br(X)$ if $X$ is regular.
We shall refer to $\Br^{\lw}(X)$ as the `Levine-Weibel Brauer group' of $X$.

\subsection{The Brauer-Manin pairing}\label{sec:BMP-sing}
For the rest of \S~\ref{sec:BMS}, we shall work with the following set-up.
  Let $X$ be a geometrically integral projective curve over $k$ and $\pi \colon
  X_n \to X$ the normalization of $X$. It is clear that ${X}_n$ is geometrically
  integral. We shall assume that ${X}_n$ is smooth over $k$ (this is automatic if
  $p =1$).
  We let $K$ denote the function field of $X$. We let $S$ be the singular locus of $X$
  with the reduced closed subscheme structure
  and let $E$ be the scheme theoretic inverse image of $S$ in ${X}_n$. We let
  $X^o = X_\reg$ be the regular locus of $X$. We represent this datum in the
   Cartesian squares
  \begin{equation}\label{eqn:BMS-0}
    \xymatrix@C1pc{
      E \ar[r]^-{\iota'} \ar[d]_-{\pi'} & {X}_n \ar[d]^-{\pi} & X^o \ar[l]_-{j'}
      \ar@{=}[d] \\
      S \ar[r]^-{\iota} & X & X^o \ar[l]_-{j}.}
  \end{equation}

We have recalled the definition of the Levine-Weibel Chow group $\CH^{\lw}_0(X)$
  and the isomorphism $\cyc_X \colon \CH^{\lw}_0(X) \xrightarrow{\cong} \Pic(X)$
  after \lemref{lem:Base-extn}. We let $\CH^{\lw}_0(X)_0 =
  \Ker(\deg \colon \CH^{\lw}_0(X) \to \Z) \cong \Pic^0(X)$.
  We shall use $\Pic(X)$ and $\CH^{\lw}_0(X)$ interchangeably.
In order to construct a duality between the Levine-Weibel Brauer group and
  the Picard group of $X$, we need to give a new description of the latter group.

For a point $x \in S$, we let
  $E_x = \pi^{-1}(x)$ and $\wh{K}_{x,\infty} = {\underset{y \in E_x}\prod} \wh{K}_y$.
  We let $U^0_{x,\infty} = {\underset{y \in E_x}\prod} U^0_{x,y}$.
  We have the canonical maps $K^\times \inj  \wh{K}^\times_{x,\infty} \surj
  {\wh{K}^\times_{x,\infty}}/{U^0_{x,\infty}}$. We let $\delta_x$ be the composite map.
  Let $\delta = {\underset{y \in E_x}\prod} \delta_x$,

\begin{lem}\label{lem:LW-0}
    There is a canonical exact sequence
 \begin{equation}\label{eqn:LW-3}
     0 \to k^\times \to  K^\times \xrightarrow{(\divf, \delta)}
      \sZ_0(X^o) \bigoplus \left({\underset{x \in S}\prod} \frac{\wh{K}^\times_{x, \infty}}
        {U^0_{x,\infty}}\right) \to \CH^{\lw}_0(X) \to 0.
\end{equation}
    \end{lem}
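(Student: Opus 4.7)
The plan is to use the cycle-class isomorphism $\cyc_X\colon \CH^{\lw}_0(X)\xrightarrow{\sim}\Pic(X)$ and prove the equivalent assertion that the four-term sequence is exact with $\Pic(X)$ in place of $\CH^{\lw}_0(X)$. The strategy is to compare it with the classical divisorial description of $\Pic(X_n)$ and to extract the singular discrepancy via the Mayer--Vietoris sequence attached to the conductor Milnor square $(X, X_n, S_c, E_c)$, where $S_c$ and $E_c$ denote the conductor subschemes.

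Exactness at $k^\times$ is trivial. For exactness at $K^\times$, suppose $f\in K^\times$ satisfies $\divf_{X^o}(f)=0$ and $\delta(f)=0$. The latter forces the image of $f$ in each $\wh{K}_y$ to lie in $U^0_{x,y}\subset \wh{\sO}^\times_{X_n,y}$, so $v_y(f)=0$ for each $y \in E$; combined with the former, $f$ has trivial divisor on $X_n$. Since $X_n$ is geometrically connected, smooth and projective over $k$, I conclude $f \in H^0(X_n, \sO^\times_{X_n})=k^\times$.

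For the remaining exactness, I will assemble a commutative $3\times 3$ diagram and apply the snake lemma. The first ingredient is the classical adelic sequence $0\to k^\times \to K^\times \to \sZ_0(X_n) \to \Pic(X_n)\to 0$. The second is the Mayer--Vietoris sequence for the conductor Milnor square, which yields
\[
0\to \bigoplus_{x\in S}\wh{\sO}^\times_{X_n,E_x}/\wh{\sO}^\times_{X,x} \to \Pic(X)\to \Pic(X_n)\to 0
\]
after identifying the local unit quotient $\sO^\times_{X_n,E_x}/\sO^\times_{X,x}$ with its completed counterpart $\wh{\sO}^\times_{X_n,E_x}/\wh{\sO}^\times_{X,x}$: a snake-lemma comparison of the sequences $1\to 1+\ff \to \sO^\times_{X,x}\to \sO^\times_{S_c,x}\to 1$ and $1\to 1+\ff \to \sO^\times_{X_n,E_x}\to \sO^\times_{E_c,E_x}\to 1$ (with $\ff$ the conductor ideal, common to both rings), together with the analogous completed versions, shows that both quotients equal $\sO^\times_{E_c,E_x}/\sO^\times_{S_c,x}$, since $\sO^\times_{S_c,x}$ and $\sO^\times_{E_c,E_x}$ are Artinian and hence unchanged by completion. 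The third ingredient is the local valuation sequence
\[
0\to \wh{\sO}^\times_{X_n,E_x}/\wh{\sO}^\times_{X,x} \to \wh{K}^\times_{x,\infty}/U^0_{x,\infty}\xrightarrow{(v_y)_{y\in E_x}} \bigoplus_{y\in E_x}\Z \to 0
\]
relating the local factor of the middle term of our sequence to the local factor of $\sZ_0(X_n)$ at $x$.

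The main obstacle will be verifying the commutativity of the resulting grid, in particular that the connecting map delivered by Mayer--Vietoris agrees, up to sign, with the one induced on local pieces by $(\divf,\delta)$. This amounts to unwinding how a rational function $f\in K^\times$ simultaneously determines its divisor on $X_n$ and its image in $\wh{K}^\times_{x,\infty}/U^0_{x,\infty}$, and to showing that these two pieces of data patch to the correct boundary in the conductor Mayer--Vietoris sequence; once this compatibility is in hand, the snake lemma produces the desired exact sequence.
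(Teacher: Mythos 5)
Your route is genuinely different from the paper's and, as an architecture, it works. The paper never passes through $\Pic(X_n)$ or the conductor Milnor square: it compares the sequence directly with the defining presentation $\sO^\times_{X,S}\xrightarrow{\divf}\sZ_0(X^o)\to\CH^{\lw}_0(X)\to 0$ of the Levine--Weibel Chow group, and reduces everything to the exactness of $0\to\sO^\times_{X,S}\to K^\times\xrightarrow{\delta}\prod_{x\in S}\wh{K}^\times_{x,\infty}/U^0_{x,\infty}\to 0$, proving surjectivity of $\delta$ by the conductor trick $(1+I_x)\cong\prod_{y}(1+I_x\wh{\sO}_{X_n,y})$ together with weak approximation, and the kernel computation from $\wh{A}\cap K=A$. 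Your three-by-three comparison with the classical adelic presentation of $\Pic(X_n)$ and the Units--Pic sequence of the conductor square buys you something real: the diagram chase through the exact columns makes the weak-approximation step unnecessary (surjectivity onto $\Pic(X)$ falls out of the surjectivity of $\sZ_0(X_n)\surj\Pic(X_n)$ plus the injectivity of the Units--Pic boundary), and it makes the relation to $\Pic(X_n)$ explicit. The price is that you must construct the map $Q:=\prod_{x\in S}\wh{K}^\times_{x,\infty}/U^0_{x,\infty}\to\Pic(X)$ yourself and verify the compatibilities, which the paper's formulation sidesteps by defining the cokernel first and identifying it with $\CH^{\lw}_0(X)$ afterwards.

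Two points must still be nailed down. First, your local valuation sequence $0\to\wh{\sO}^\times_{X_n,E_x}/\wh{\sO}^\times_{X,x}\to\wh{K}^\times_{x,\infty}/U^0_{x,\infty}\to\bigoplus_{y\in E_x}\Z\to 0$ is only correct if $U^0_{x,\infty}$ is the image of the \emph{diagonal} map $(\wh{\sO_{X,x}})^\times\to\prod_{y\in E_x}\wh{K}^\times_y$; if one reads it literally as the product $\prod_y U^0_{x,y}$ of the componentwise images, then already for a node ($\wh{\sO_{X,x}}=k[[u,v]]/(uv)$) one gets $\prod_y U^0_{x,y}=\prod_y\wh{\sO}^\times_{X_n,y}$, the kernel of the valuations becomes trivial, and both your third ingredient and the lemma itself fail. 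The diagonal reading is the one forced by the statement (and by the paper's own surjectivity argument), so make it explicit. Second, the deferred compatibility is not merely a sign check: it contains the definition of the map $Q\to\Pic(X)$ (a class in $\ker(v)\cong\sO^\times_{E_c,E_x}/\sO^\times_{S_c,x}$ is descent data gluing the trivial bundle on $X_n$ to a bundle on $X$, and a general class additionally twists by the divisor of its valuations) and the verification that $(\divf,\delta)(f)$ maps to zero, i.e., that $f$ itself trivializes the glued bundle $\sO_{X_n}(\divf_{X_n}(f))$ with gluing datum $\delta(f)$. This is standard but it is exactly where the content of the lemma sits, so it has to be written out before the snake-lemma chase can be invoked.
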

    \begin{proof}
We consider the diagram
      \begin{equation}\label{eqn:LW-1}
        \xymatrix@C.8pc{
          \sO^\times_{X,S} \ar[r]^-{\divf} \ar[d] & \sZ_0(X^o) \ar[r] \ar[d] & \CH^{\lw}_0(X)
          \ar@{.>}[d] \ar[r] & 0 \\
     K^\times \ar[r]^-{(\divf, \delta)} &
      {\sZ_0(X^o) \bigoplus \left({\underset{x \in S}\prod} \frac{\wh{K}^\times_{x, \infty}}
          {U^0_{x,\infty}}\right)} \ar[r] &  \ov{\CH^{\lw}_0(X)} \ar[r] & 0,}
  \end{equation}
  where the vertical arrows are the canonical inclusions and
  $\ov{\CH^{LW}_0(X)}$ is defined to make the bottom row exact.
The top row is exact by definition of $\CH^{\lw}_0(X)$.
It is clear that the left square is commutative. This yields a unique
homomorphism $\psi_X \colon \CH^{\lw}_0(X) \to \ov{\CH^{\lw}_0(X)}$ such that the
right square also commutes. 

To prove that $\psi_X$ is bijective, it is equivalent to show that 
\begin{equation}\label{eqn:LW-2}
  0 \to \sO^\times_{X,S} \to K^\times \xrightarrow{\delta} {\underset{x \in S}\prod}
  \frac{\wh{K}^\times_{x, \infty}} {U^0_{x,\infty}} \to 0
  \end{equation}
is exact.
Now, it is clear that if $f \in K^\times$ is such that $\delta_x(f) = 0$ for every
$x \in S$, then $f \in {\underset{x\in S}\bigcap} \sO^\times_{X,x}$ (see the first
paragraph of \S~\ref{sec:LWBG}). But it is not
hard to see that $\sO^\times_{X,S} = {\underset{x\in S}\bigcap} \sO^\times_{X,x}$,
using the fact that $\sO^\times_{X,x}$ is the set of rational functions on $X$ which are
regular without a zero or a pole in a neighborhood of $x$. It remains to show that
$\delta$ is surjective.

We let $I_x \subset \wh{\sO_{X,x}}$ be a conductor ideal for the normalization
$\wh{\sO_{X,x}} \inj \wh{\sO_{X_n, E_x}} = {\underset{y \in E_x}\prod} \wh{\sO_{X_n,y}}$.
Then $I_x \surj I_x \wh{\sO_{X_n,y}} \subset \wh{\fm_y}$ for every $y \in E_x$, where the
latter is the maximal ideal of $\wh{\sO_{X_n,y}}$. It is also clear that
$\sqrt{I_x \wh{\sO_{X_n,y}}} = \wh{\fm_y}$. As
$I_x \xrightarrow{\cong} {\underset{y \in E_x}\prod} I_x \wh{\sO_{X_n,y}}$, it follows
that $(1+I_x) \xrightarrow{\cong} {\underset{y \in E_x}\prod} (1 + I_x \wh{\sO_{X_n,y}})$.
In particular, ${\underset{y \in E_x}\prod} (1 + \wh{\fm_y}^n) \subset U^0_{x, \infty}$
for all $n \gg 0$. In other words, we have a factorization
\begin{equation}\label{eqn:LW-4}
  K^\times \to {\underset{x \in S}\prod} \frac{\wh{K}^\times_{x, \infty}}{U^n_{x,\infty}}
  \surj {\underset{x \in S}\prod} \frac{\wh{K}^\times_{x, \infty}}{U^0_{x,\infty}}
\end{equation}
for all $n \gg 0$, where we let
$U^n_{x,\infty} = {\underset{y \in E_x}\prod} (1 + \wh{\fm_y}^n)$.
We are now done because the left arrow is well known to be surjective as an application
of the approximation lemma in number theory (e.g., see \cite[Lem.~6.3]{Kerz-MRL}).
This proves the exactness of ~\eqref{eqn:LW-2}.

To conclude the proof of the lemma, it remains to show that
$k^\times = \Ker(\divf) \bigcap \Ker(\delta)$. But this is easy because we have
shown that $\Ker(\divf) \bigcap \Ker(\delta) = \Ker(\divf) \bigcap \sO^\times_{X,S}$
and the latter is $k^\times$ because $X$ is geometrically integral.
\end{proof}

We now define the Brauer-Manin pairing for $X$ as follows.
Let $w \in \Br^{\lw}(X)$. We define the associated character
$\chi_w \colon \sZ_0(X^o) \bigoplus ({\underset{x \in S}\prod} \wh{K}^\times_{x, \infty})
\to {\Q}/{\Z}$ exactly as we did to define the pairing ~\eqref{eqn:BMP-0*} in
\S~\ref{sec:Pair**}. It follows from the definition of $\Br^{\lw}(X)$ that
$\chi_w$ factors through $\sZ_0(X^o) \bigoplus {\underset{x \in S}\prod}
\frac{\wh{K}^\times_{x, \infty}}{U^0_{x,\infty}}$. The proof of the claim that
$\chi_w \circ (\divf, \delta) = 0$ is identical to the proof of \propref{prop:BMP-1},
mutatis mutandis.  We have thus shown the following.

\begin{prop}\label{prop:BMP-sing-M}
  There exists a canonical  bilinear pairing
  \begin{equation}\label{eqn:BMP-sing-0}
    \Br^{\lw}(X) \times \CH^{\lw}_0(X) \to {\Q}/{\Z}.
    \end{equation}
\end{prop}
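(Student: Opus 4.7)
The plan is to construct the pairing exactly as in the modulus case of \propref{prop:BMP-1}, with the exact sequence of \lemref{lem:LW-0} playing the role of \eqref{eqn:Chow-Parshin-0}. Fix $w \in \Br^{\lw}(X)$. First, for every closed point $z \in X^o$ with inclusion $\iota_z \colon \Spec(k(z)) \inj X^o$, the assignment $z \mapsto \inv_{k(z)}(\iota_z^*(w))$ extends $\Z$-linearly to a character on $\sZ_0(X^o)$. Secondly, for every $x \in S$ and $y \in E_x$, pulling $w$ back under $\Br(X^o) \inj \Br(K) \to \Br(\wh{K}_y)$ and applying Kato's cup-product pairing (\thmref{thm:Kato-Rec}) yields a continuous character $\chi_{w,y} \colon \wh{K}_y^\times \to {\Q}/{\Z}$. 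By Definition~\ref{defn:LWB-1}, this character kills $U^0_{x,y}$, and hence the sum over $y \in E_x$ descends to $\wh{K}^\times_{x,\infty}/U^0_{x,\infty}$. Combining these, we obtain a well-defined homomorphism
\[
  \chi_w \colon \sZ_0(X^o) \bigoplus \Bigl({\underset{x \in S}\prod}
  \frac{\wh{K}^\times_{x,\infty}}{U^0_{x,\infty}}\Bigr) \to {\Q}/{\Z}.
\]

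Next I would show $\chi_w \circ (\divf,\delta) = 0$ on $K^\times$. This is the reciprocity step, and it is essentially the only non-formal part of the argument. I would run the same diagram chase as in the second half of the proof of \propref{prop:BMP-1}: factor through the restricted idele group $I(X_n) = {\prod'}_{y \in (X_n)_{(0)}} \wh{K}_y^\times$ of the smooth projective model $X_n$, write $\chi_w \circ (\divf,\delta)(f)$ as a sum of local pairings $\sum_y \<\chi_w, f\>_y$ in $H^3(K)$, and then invoke \lemref{lem:Rec-Law} to conclude that this sum is zero. The only modification is that for $y \in E_x$ (with $x \in S$), one uses $\nu \colon X_n \to X$ to factor through $\Br(\wh{K}_y)$, which is possible because $w$ is defined on $X^o$ and hence on $\nu^{-1}(X^o) \subset X_n$, while the contribution at these $y$ is precisely the component of $\chi_w$ coming from the second direct summand.

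Granted this vanishing, the character $\chi_w$ factors uniquely through the cokernel of $(\divf,\delta)$, which by \lemref{lem:LW-0} is exactly $\CH^{\lw}_0(X)$. The resulting map $\Br^{\lw}(X) \to \Hom_\Ab(\CH^{\lw}_0(X), {\Q}/{\Z})$ is manifestly bilinear, giving the desired pairing \eqref{eqn:BMP-sing-0}.

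The main obstacle is the reciprocity verification, but this is essentially free once one observes that the local pairings at points of $E_x$ are just the restrictions to $\wh{K}_y^\times$ of the Kato pairing on the henselization of $X_n$ at $y$; hence the global reciprocity on $K$ (the function field of both $X$ and $X_n$) provided by \lemref{lem:Rec-Law} applies verbatim. No further subtlety is introduced by the singularities of $X$ because the definition of $\Br^{\lw}(X)$ was tailored precisely so that $w$ contributes trivially on the subgroup $U^0_{x,\infty}$, which is exactly the obstruction to descending the character from the smooth model to a character on $\CH^{\lw}_0(X)$.
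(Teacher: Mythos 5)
Your proposal is correct and follows essentially the same route as the paper: the authors define $\chi_w$ componentwise exactly as for the pairing \eqref{eqn:BMP-0*}, observe that Definition~\ref{defn:LWB-0} forces it to kill $U^0_{x,\infty}$, verify $\chi_w\circ(\divf,\delta)=0$ by the same reciprocity argument as in \propref{prop:BMP-1} (via \lemref{lem:Rec-Law} on the normalization), and then invoke \lemref{lem:LW-0}. Your write-up merely makes explicit the steps the paper compresses into ``mutatis mutandis.''
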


\subsection{Perfectness of the pairing}\label{sec:Br-sing-main}
We now state and prove the main result of \S~\ref{sec:BMS}. We shall continue to work
with the set-up described in \S~\ref{sec:BMP-sing}.
Since $X$ is geometrically integral, it follows from \cite[Thm.~9.2.5, 9.4.8]{Kleiman}
that there is a group scheme $\Picc(X)$ over $k$ which is locally of finite type
and whose identity component $\Picc^0(X)$ is a smooth quasi-projective group scheme over
$k$, and $\Picc(X)(k) \cong \Pic(X)$. One also has that
$\Picc(X_{k'}) \cong \Picc(X)_{k'}$
for every field extension ${k'}/k$. It follows that $\Pic(X)$ is equipped with its
canonical adic topology and $\Pic^0(X)$ is its open subgroup. We shall let
$\Br^{\lw}(X)$ have the discrete topology.

\begin{lem}\label{lem:BMP-sing-cont}
The pairing ~\eqref{eqn:BMP-sing-0} is continuous.
\end{lem}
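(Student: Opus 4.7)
The strategy is to reduce to the continuity statement for 1-dimensional modulus pairs already established in Proposition~\ref{prop:BM-cont}, using the normalization $\pi\colon X_n \to X$ and a judiciously chosen divisor $D$ supported on $E$.

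\textbf{Step 1 (Promoting $w$ to a Brauer class with modulus).} Let $w \in \Br^{\lw}(X)$ and view it as an element of $\Br(X_n \setminus E)$ via the identification $\pi\colon X_n \setminus E \xrightarrow{\cong} X^o$. For each $x \in S$ and each $y \in E_x$, the proof of Lemma~\ref{lem:LW-0} shows that if $I_x \subset \wh{\sO_{X,x}}$ is a conductor ideal for $\wh{\sO_{X,x}} \inj \wh{\sO_{X_n,E_x}}$, then $1 + I_x\wh{\sO_{X_n,y}} \subset U^0_{x,y}$, and hence $1 + \wh{\fm_y}^{\,n_y} \subset U^0_{x,y}$ for some $n_y \ge 1$. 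Since $w \in \Br^{\lw}(X)$, the local character $\chi_{w,y}$ annihilates $U^0_{x,y}$, and in particular annihilates $1 + \wh{\fm_y}^{\,n_y}$. By Lemma~\ref{lem:Hensel-complete}(4), this means $\chi_{w,y} \in \Fil_{n_y} H^2(\wh{K}_y)$, so setting $D := \sum_{y \in E} n_y [y]$ we obtain $w \in \Br(X_n | D)$ (cf.~Definitions~\ref{defn:BGM}, \ref{defn:BGM-0}).

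\textbf{Step 2 (Factorization through a topological quotient).} By Proposition~\ref{prop:BM-cont}, the induced character $\widetilde{\chi}_w \colon \CH_0(X_n|D) \to \Q/\Z$ is continuous with respect to the adic topology. We next compare the presentation of $\CH_0(X_n|D)$ given by~\eqref{eqn:Chow-Parshin-0} (together with Remark~\ref{remk:BMP-compln}) with the presentation of $\CH^{\lw}_0(X)$ given by~\eqref{eqn:LW-3}. Since $\wh{I}(X_n|D) = \prod_{y \in E}(1 + \wh{\fm_y}^{\,n_y}) \subseteq \prod_{x \in S} U^0_{x,\infty}$ by construction, there is a canonical surjection $q\colon \CH_0(X_n|D) \surj \CH^{\lw}_0(X)$. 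Unwinding the definitions of the two Brauer--Manin pairings in~\eqref{eqn:BMP-0} and~\eqref{eqn:BMP-sing-0}, both of which are defined component-wise through Kato's local pairing on each $\wh{K}_y$ and the invariant map at each point of $X^o$, one sees that $\widetilde{\chi}_w = \chi_w \circ q$.

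\textbf{Step 3 (Topological quotient and conclusion).} The surjection $q$ is the one in~\eqref{eqn:Pic-Sing**-3} for $Y = X$ (the reduced conductor $S$ having pullback $E_{\red} \le D$), possibly composed with the surjection $\CH_0(X_n|D) \surj \CH_0(X_n|\pi^*(S))$. By Corollary~\ref{cor:Pic-sequence-2}(4),(5), both of these are topological quotients on the degree zero parts, and hence so is their composition $q^0 \colon \Pic^0(X_n|D) \surj \Pic^0(X)$; since the degree zero subgroups are open, $q$ itself is a topological quotient map. Continuity of $\widetilde{\chi}_w$ therefore implies continuity of $\chi_w$.

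\textbf{Anticipated obstacle.} The main technical point is Step~2, namely verifying that the map $q\colon \CH_0(X_n|D) \surj \CH^{\lw}_0(X)$ is genuinely compatible with the two Brauer--Manin pairings. This requires carefully matching the two cokernel presentations: on the smooth side the local data at $y \in E$ lies in $\wh{K}^\times_y / (1+\wh{\fm_y}^{\,n_y})$, while on the singular side it is collapsed further to $\wh{K}^\times_{x,\infty}/U^0_{x,\infty}$. The condition $w \in \Br^{\lw}(X)$ is precisely the vanishing of $\chi_w$ on the extra collapsing, which is what makes the factorization through $q$ work.
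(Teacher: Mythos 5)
Your proposal is correct and follows essentially the same route as the paper: promote $w$ to a class in $\Br(X_n|D)$ for a suitable divisor $D$ over the conductor, apply Proposition~\ref{prop:BM-cont} to get continuity on $\CH_0(X_n|D)$, check compatibility of the two pairings under the surjection $\CH_0(X_n|D)\surj\CH^{\lw}_0(X)$, and conclude via the topological-quotient property from Corollary~\ref{cor:Pic-sequence-1}. The only cosmetic difference is that the paper promotes $w$ using $\Br(X^o)={\varinjlim}_n\,\Br(X_n|nD)$ from Lemma~\ref{lem:Brauer-reln}, whereas you extract the modulus directly from the definition of $\Br^{\lw}(X)$; both work.
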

\begin{proof}
It suffices to show that the image of the induced homomorphism
  $\beta_X \colon \Br^{\lw}(X) \to \Pic(X)^\vee$ lies in $\Pic(X)^\star$
  (cf. \S~\ref{sec:Duality-X}).
To that end, we choose a conductor ideal sheaf
  $\sI_Z \subset \sO_X$ associated to $\pi$ so that we have an isomorphism
  $(1 + \sI_{nZ}) \xrightarrow{\cong} \pi_*((1 + \sI_{nD}))$ for every $n \ge 1$, where
  we let $Z \subset X$ be the closed subscheme defined by
  $\sI_Z$ and $D$ the scheme theoretic inverse image of $Z$ under $\pi$.
  It follows from \corref{cor:Pic-sequence-1} that the canonical map
  $\Pic(X_n|D) \surj \Pic(X_n)$ has a factorization
  $\Pic(X_n|D) \surj \Pic(X) \surj \Pic(X_n)$ in which all maps are
topological quotient maps between adic spaces.

We now let $w \in \Br^{\lw}(X)$ and let $\chi_w \colon \Pic(X) \to {\Q}/{\Z}$ be
the associated character. Since $w \in \Br(X^o)$, it lies in $\Br(X_n|D)$ for some
conductor closed subscheme $Z \subset X$ and $D = Z \times_X X_n$ as above.
It is easily seen that the composite map $\chi_w \colon \Pic(X_n|D) \surj \Pic(X) \to
{\Q}/{\Z}$ is same as the one in ~\eqref{eqn:BMP-1-1} (see Remark~\ref{remk:BMP-compln}).
\propref{prop:BM-cont} then implies that this composite map is continuous.
Since $\Pic(X_n|D) \surj \Pic(X)$ is a topological quotient, it follows that
$\chi_w$ is continuous on $\Pic(X)$. This concludes the proof.
\end{proof}

Let the notations be as in the proof of \lemref{lem:BMP-sing-cont}.
From ~\eqref{eqn:LW-4}, one also deduces the following.

\begin{lem}\label{lem:LW-5}
  For all $m \gg 0$, we have $\Br^\lw(X) \subseteq \Br(X_n|mD)$.
\end{lem}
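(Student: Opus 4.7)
The plan is to translate the defining conditions of $\Br^\lw(X)$ and $\Br(X_n|mD)$ into conditions on the same subgroups of $\wh{K}_y^\times$ for $y \in E$, and show that the former implies the latter for all $m \ge 1$ (so in particular for $m \gg 0$). First, I would observe that since $X_n$ is a curve, $\Br(X_n|mD) = \Br^\divv(X_n|mD)$ by the remark after Question~\ref{ques:Div-curve}, and hence $\Br(X_n|mD)$ consists of those $w \in \Br(X_n^o)$ whose image in $H^2(K_y)$ lies in $\Fil_{m n_y} H^2(K_y)$ for every $y \in E$, where $n_y$ is the multiplicity of $D$ at $y$. Since $\pi$ restricts to an isomorphism $X_n^o = X_n \setminus E \xrightarrow{\cong} X_\reg$, any $w \in \Br^\lw(X) \subseteq \Br(X_\reg)$ is automatically an element of $\Br(X_n^o) \subseteq \Br(K)$, and one only needs to verify the ramification condition at the points $y \in E$.

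The key technical step is to compare $U^0_{x,y}$ with the Kato filtration on $\wh{K}_y^\times$. Fix $x \in S$ and $y \in E_x = \pi^{-1}(x)$, and let $I_x = (\sI_Z)_x$ be the stalk of the conductor ideal. The image of $1 + I_x$ under the composition $\wh{\sO_{X,x}} \to \wh{\sO_{X_n,y}} \hookrightarrow \wh{K}_y$ is $1 + I_x \wh{\sO_{X_n,y}}$. Since $\wh{\sO_{X_n,y}}$ is a complete discrete valuation ring, the ideal $I_x \wh{\sO_{X_n,y}}$ equals $\wh{\fm_y}^{n_y}$, and this is precisely the multiplicity of $y$ in the divisor $D = Z \times_X X_n$ by construction. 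Thus
\[
  U^0_{x,y} \supseteq 1 + \wh{\fm_y}^{n_y} = \Fil_{n_y}\wh{K}_y^\times.
\]
This inclusion depends only on the chosen conductor data $(Z,D)$ and is uniform in $w$.

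With this in hand, the conclusion is immediate. For any $w \in \Br^\lw(X)$, the character $\chi_w \colon \wh{K}_y^\times \to \Q/\Z$ induced by Kato's pairing annihilates $U^0_{x,y}$ by Definition~\ref{defn:LWB-0}, and hence annihilates $\Fil_{n_y}\wh{K}_y^\times \supseteq \Fil_{m n_y}\wh{K}_y^\times$ for every integer $m \ge 1$. By \lemref{lem:Hensel-complete}(4), this means $w \in \Fil_{m n_y} H^2(\wh{K}_y)$, and via the isomorphism $H^2(K_y) \xrightarrow{\cong} H^2(\wh{K}_y)$ (together with compatibility of the Kato filtration under completion, as used in the proof of \lemref{lem:Hensel-complete}(4)), we obtain $w \in \Fil_{m n_y} H^2(K_y)$ for every $y \in E$. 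Summing over $y$ gives $w \in \Br(X_n|mD)$.

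There is no substantial obstacle: the argument is essentially a bookkeeping exercise that tracks the conductor ideal through the completion and applies the characterization of the Kato filtration in \lemref{lem:Hensel-complete}(4). The mildly delicate point is that the projection $\wh{\sO_{X,x}} \twoheadrightarrow \wh{\sO_{X_n,y}}$ need not be injective when $\pi^{-1}(x)$ has several points, but this plays no role since we only need the image of $1 + I_x$, which is computed ideal-theoretically in the target DVR. In fact the argument shows that $\Br^\lw(X) \subseteq \Br(X_n|D)$ already; the formulation with $m \gg 0$ suffices for the intended application in the proof of \thmref{thm:Main-3}.
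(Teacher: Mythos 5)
Your proof is correct and follows essentially the same route as the paper: the paper deduces the lemma from the computation in the proof of \lemref{lem:LW-0} (recorded in ~\eqref{eqn:LW-4}) that the image of $1+I_x$ in $\wh{K}^\times_{x,\infty}$ is $\prod_{y}(1+I_x\wh{\sO_{X_n,y}})$, which is exactly your inclusion $\Fil_{n_y}\wh{K}_y^\times\subseteq U^0_{x,y}$ combined with \lemref{lem:Hensel-complete}(4). Your version is marginally sharper in pinning the threshold at $m\ge 1$ (i.e.\ $\Br^{\lw}(X)\subseteq \Br(X_n|D)$ already), whereas the paper only records the uniform bound $m\gg 0$; either suffices for the application.
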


\begin{thm}\label{thm:BMP-sing-M2}
  The pairing ~\eqref{eqn:BMP-sing-0} induces a perfect pairing of
  topological abelian groups
  \[
    \Br^{\lw}(X) \times \CH^{\lw}_0(X)^\pf \to {\Q}/{\Z}.
  \]
\end{thm}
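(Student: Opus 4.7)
The plan is to deduce the perfect pairing from Corollary \ref{cor:Open-perf-pairing} applied to the smooth curve $X_n$ with $X_n^o = X^o$, by identifying $\Br^\lw(X)$ as the Pontryagin dual of a natural quotient of $C(X^o)^\pf$, and then recognizing this quotient as $\CH^\lw_0(X)^\pf$. The crucial bridge is an exact sequence
\[
0 \to {\textstyle\prod_{x \in S}} U^0_{x,\infty}/k^\times \to C(X^o) \to \CH^\lw_0(X) \to 0,
\]
which I would derive by a direct diagram chase comparing the presentation of $C(X^o)$ in Definition \ref{defn:Idele} (noting from Lemma \ref{lem:Limit-U}-type reasoning that only $C = X_n$ contributes for the smooth curve $X_n$) with the presentation of $\CH^\lw_0(X)$ given by Lemma \ref{lem:LW-0}. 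The only non-routine point in this chase is showing that $K^\times \cap \prod_x U^0_{x,\infty} = k^\times$: if $f \in K^\times$ has no zero or pole on $X^o$ and its image in each $\wh{K}_y$ lies in the image of $\wh{\sO_{X,x}}^\times$, then the Noetherian identity $\wh{\sO_{X,x}} \cap K = \sO_{X,x}$ gives $f \in \sO_{X,x}^\times$ for each $x \in S$, and combined with regularity on $X^o$ this forces $f$ to be a global unit on the projective curve $X$, hence a constant.

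Next I would equip $\prod_x U^0_{x,\infty}/k^\times$ with its natural topology and verify that the displayed sequence lives in $\pfd$ and is topologically exact. Each $U^0_{x,y}$ is the continuous image of the profinite group $\wh{\sO_{X,x}}^\times$ in $\wh{K}_y^\times$, hence is profinite; the quotient by $k^\times$ is torsion-by-profinite. To ensure that $C(X^o) \to \CH^\lw_0(X) = \Pic(X)$ is a topological quotient, I would factor it as $C(X^o) \twoheadrightarrow \Pic(X_n|D) \twoheadrightarrow \Pic(X)$, where $D = \pi^{-1}(Z)$ for a conductor subscheme $Z \subset X$; the first map is a topological quotient by Lemma \ref{lem:Limit-nD} and the inverse-limit topology, and the second by Corollary \ref{cor:Pic-sequence-2}(v) applied to $X$ and its normalization (using \eqref{eqn:Pic-Sing**-3}). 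Passing to profinite completions (using that $\prod_x U^0_{x,\infty}$ is profinite, while $k^\times \cong \sO_k^\times \times \Z$ is torsion-by-profinite) then yields an exact sequence
\[
0 \to \bigl({\textstyle\prod_{x \in S}} U^0_{x,\infty}/k^\times\bigr)^\pf \to C(X^o)^\pf \to \CH^\lw_0(X)^\pf \to 0
\]
in $\pfd$.

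With this in hand, I would invoke the perfect pairing $\Br(X^o) \times C(X^o)^\pf \to \Q/\Z$ of Corollary \ref{cor:Open-perf-pairing}, so that $\Br(X^o) \cong (C(X^o)^\pf)^\star$. Applying Pontryagin duality (Lemmas \ref{lem:Topological-0}, \ref{lem:Dual-ex}, and \ref{lem:Prof-dual}) to the sequence above produces a short exact sequence
\[
0 \to (\CH^\lw_0(X)^\pf)^\star \to \Br(X^o) \to \bigl(({\textstyle\prod_{x \in S}} U^0_{x,\infty}/k^\times)^\pf\bigr)^\star \to 0,
\]
whose left-hand term is exactly the subgroup of $\Br(X^o)$ consisting of characters annihilating each $U^0_{x,y}$. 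By the construction of the Brauer-Manin pairing in Proposition \ref{prop:BMP-sing-M} together with Definitions \ref{defn:LWB-0} and \ref{defn:LWB-1}, this subgroup is precisely $\Br^\lw(X)$. Hence $\Br^\lw(X) \cong (\CH^\lw_0(X)^\pf)^\star$, and Pontryagin duality between profinite and discrete torsion groups returns $\CH^\lw_0(X)^\pf \cong \Br^\lw(X)^\star$. Continuity is already known from Lemma \ref{lem:BMP-sing-cont}, so the pairing is perfect.

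The main obstacle will be the topological bookkeeping in the second step: showing that $C(X^o) \to \CH^\lw_0(X)$ is a topological quotient and that profinite completion preserves exactness of the relevant sequence. Concretely, one must confirm that the image of $\prod_x U^0_{x,\infty}/k^\times$ in $C(X^o)^\pf$ is closed with the correct induced subgroup topology, which reduces (using Corollary \ref{cor:Pic-sequence-2} and Proposition \ref{prop:Pic-sequence}) to comparing the affine part of $\Picc^0(X_n|mD)$ with the contribution of $\prod_x U^0_{x,\infty}$ at each level $m$. Once these topological matters are settled, the duality formalism of Section \ref{sec:PDuality} closes the argument without further input.
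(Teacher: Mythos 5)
Your overall architecture is viable and lands close to the paper's own argument, just run one level up: the paper fixes a conductor $D$ on the normalization with $\Br^{\lw}(X)\subseteq \Br(X_n|D)$ (Lemma~\ref{lem:LW-5}), invokes the finite-level isomorphism $\beta_{X_n|D}$ of Lemma~\ref{lem:Br-Main-00}, and uses the injectivity of $\CH^{\lw}_0(X)^\star\to\CH_0(X_n|D)^\star$ coming from Corollary~\ref{cor:Pic-sequence-1}, whereas you pass to the limit $C(X^o)$ and invoke Corollary~\ref{cor:Open-perf-pairing}. Both versions hinge on the same two inputs, namely the modulus duality for the smooth normalization and the idelic presentation of $\CH^{\lw}_0(X)$ in Lemma~\ref{lem:LW-0}; your computation that the kernel of $C(X^o)\to\CH^{\lw}_0(X)$ is $\prod_{x}U^0_{x,\infty}$ modulo the diagonal $k^\times$, including the step $K^\times\cap\prod_x U^0_{x,\infty}=k^\times$, is correct.

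The problem is the second step of your plan, which is both false as stated and, fortunately, unnecessary. The groups in your short exact sequence do not lie in $\pfd$: $\wh{\sO_{X,x}}^\times$ is not profinite (it surjects continuously onto the residue field's unit group $k(x)^\times\cong\Z\times\sO^\times_{k(x)}$, which is not compact), so $U^0_{x,y}$ is not profinite, and $k^\times\cong\Z\times\sO^\times_k$ is not torsion-by-profinite since $\Z$ is not torsion; for a node the quotient $\prod_x U^0_{x,\infty}/k^\times$ still has a discrete torsion-free quotient via the valuations. Hence Lemma~\ref{lem:Dual-ex} does not apply and the ``main obstacle'' you describe cannot be resolved in the form you pose it. But none of that machinery is needed. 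Well-definedness and continuity of $\beta_X$ is exactly Lemma~\ref{lem:BMP-sing-cont}, which you already cite. For injectivity, if $w\in\Br^{\lw}(X)$ induces the zero character on $\CH^{\lw}_0(X)$, then its character on $C(X^o)$ vanishes (it kills the $U^0_{x,y}$ by Definition~\ref{defn:LWB-0} and descends to zero), so $w=0$ by Corollary~\ref{cor:Open-perf-pairing}. For surjectivity, pull a continuous torsion character $\chi$ of $\CH^{\lw}_0(X)$ back along the continuous surjection $q\colon C(X^o)\to\CH^{\lw}_0(X)$, realize $\chi\circ q$ as some $w\in\Br(X^o)=(C(X^o)^\pf)^\star$, and observe that since $\chi\circ q$ kills each $U^0_{x,y}$ we get $w\in\Br^{\lw}(X)$ with $\beta_X(w)=\chi$. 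Replacing your dualized exact sequence by this direct character chase (and finishing with Lemma~\ref{lem:Prof-dual} and Pontryagin duality between profinite and discrete torsion groups) closes the proof.
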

\begin{proof}
The continuity of the pairing follows from Lemmas~\ref{lem:Prof-dual} and
  ~\ref{lem:BMP-sing-cont}. As in the proof of \thmref{thm:Main-1}, it
  suffices to show that the map
  $\beta_X \colon  \Br^{\lw}(X) \to F$ is bijective if we let
  $F = \CH^{\lw}_0(X)^\star$ when $p > 1$ and $F =  (\CH^{\lw}_0(X)^\star)_\tor$ when $p =1$.

Using \lemref{lem:LW-5}, we choose conductor
  subschemes $Z \subset X$ and $D = Z \times_X X_n \subset X_n$ such that
  $\Br^\lw(X) \subseteq \Br(X_n|D)$.
 We let $G = \CH_0(X_n|D)^\star$ (resp. $(\CH_0(X_n|D)^\star)_\tor$) if  $p >  1$ (resp.
  $p = 1$). We consider the commutative diagram
\begin{equation}\label{eqn:BMP-sing-M2-3}
  \xymatrix@C1.3pc{
    \Br^{\lw}(X) \ar[r]^-{\beta_X} \ar@{^{(}->}[d] & F \ar@{^{(}->}[d] \\
    \Br(X_n|D) \ar[r]^-{\beta_{X_n|D}} & G.}
  \end{equation}
 
The right vertical arrow is injective by \corref{cor:Pic-sequence-1}.
  Since $\beta_{X_n|D}$ is injective by \lemref{lem:BM-pf-0}, it follows that
  $\beta_X$ is injective. To prove its surjectivity, we let $\chi \in F$.
  If we consider $\chi$ as an element of $G$ , then \lemref{lem:Br-Main-00}
  says that $\chi = \beta_{X_n|D}(w)$ for some $w \in  \Br(X_n|D)$. In this case,
  it follows directly from the definition of $\Br^{\lw}(X)$ and \lemref{lem:LW-0}
  that $w \in \Br^{\lw}(X)$. This implies subsequently that
  $\beta_X(w) = \chi$. This concludes the proof of the theorem.
\end{proof}

\vskip .4cm

\noindent\emph{Acknowledgements.}
JR and SS would like to thank IISc, Bangalore for hosting them
during the work on this paper.

\vskip .4cm

\end{document}